\newcommand\blfootnote[1]{%
  \begingroup
  \renewcommand\thefootnote{}\footnote{#1}%
  \addtocounter{footnote}{-1}%
  \endgroup
}
\title{Load Balancing in Hypergraphs \blfootnote{Parts of this work have been presented in the $55^\text{th}$ Annual Allerton Conference on Communication, Control, and Computing}}
\author{Payam Delgosha and Venkat Anantharam\\[2mm]
\small Department of Electrical Engineering and Computer Sciences\\
\small University of California, Berkeley\\
\small \{pdelgosha, ananth\} @ berkeley.edu
}
\newcommand{\ev}[1]{\mathbb{E} \left [ #1 \right ] }
\newcommand{\pr}[1]{\mathbb{P} \left ( #1 \right ) }
\newcommand{\norm}[1]{\left \Vert #1 \right \Vert}
\newcommand{\snorm}[1]{\Vert #1 \Vert}
\newcommand{\one}[1]{\mathbbm{1} \left [ #1 \right ]}
\newcommand{\oneu}[1]{\mathbbm{1}_{#1}}
\newtheorem{lem}{Lemma}
\newtheorem{thm}{Theorem}
\newtheorem{definition}{Definition}
\newtheorem{prop}{Proposition}
\newtheorem{rem}{Remark}
\newtheorem{cor}{Corollary}
\newcommand{\mH}{\mathcal{H}}
\newcommand{\mbH}{\bar{\mathcal{H}}}
\newcommand{\bmu}{\bar{\mu}}
\newcommand{\vbmu}{\vec{\bar{\mu}}}
\newcommand{\vnu}{\vec{\nu}}
\newcommand{\mA}{\mathcal{A}}
\newcommand{\mP}{\mathcal{P}}
\newcommand{\mU}{\mathcal{U}}
\newcommand{\mL}{\mathcal{L}}
\newcommand{\mM}{\mathcal{M}}
\newcommand{\mF}{\mathcal{F}}
\newcommand{\mT}{\mathcal{T}}
\newcommand{\mC}{\mathcal{C}}
\newcommand{\mQ}{\mathcal{Q}}
\newcommand{\bT}{\mathbb{T}}
\newcommand{\bbT}{\bar{\mathbb{T}}}
\newcommand{\wtP}{\widetilde{P}}
\newcommand{\bH}{\bar{H}}
\newcommand{\tT}{{\tilde{T}}}
\newcommand{\reals}{\mathbb{R}}
\newcommand{\nats}{\mathbb{N}}
\newcommand{\pt}{\partial \theta}
\newcommand{\pbt}{\partial_b \theta}
\newcommand{\pbpt}{\partial_{b'} \theta}
\newcommand{\rf}{\rho}
\newcommand{\rft}[2]{\rf_{T_{#1 \rightarrow #2}}}
\newcommand{\gn}{\gamma^{(n)}}
\newcommand{\bi}{\mathbf{i}}
\newcommand{\vmu}{\vec{\mu}} 
\newcommand{\gwt}{\text{GWT}}
\newcommand{\ugwt}{\text{UGWT}}
\newcommand{\ER}{Erd\H{o}s--R\'{e}nyi }
\newcommand{\natsz}{\mathbb{N}_0} 
\newcommand{\natszf}{\Lambda} 
\newcommand{\typee}{\mathbbm{e}} 
\newcommand{\nvertex}{\mathbb{N}_\text{vertex}}
\newcommand{\nedge}{\mathbb{N}_\text{edge}}
\newcommand{\norminf}[1]{h(#1)}
\newcommand{\evpair}{\Psi} 
\let\oldmarginpar\marginpar
\renewcommand{\marginpar}[2][rectangle,draw,rounded corners,text width = 3cm, scale=0.7]{%
        \oldmarginpar{%
          \tikz \node at (0,0) [#1]{#2};}%
        }
\patchcmd{\@addmarginpar}{\ifodd\c@page}{\ifodd\c@page\@tempcnta\m@ne}{}{}
\newcommand{\edit}{}
\colorlet{Cyan}{cyan}
\colorlet{Orange}{orange}
\tikzstyle{Node} = [circle,fill,inner sep=1pt]
\tikzstyle{Root} = [circle,fill=magenta,inner sep=1.7pt]
\begin{document}
\maketitle

\begin{abstract}
Consider a simple locally finite hypergraph on a countable vertex set, where each edge represents one unit of load which should be distributed among the
vertices defining the edge.
An allocation of load is called balanced if load cannot be moved from a 
vertex to another that is carrying less load.
We analyze the properties of balanced allocations of load.
We extend the concept of balancedness from finite hypergraphs to their local weak limits in the sense of Benjamini and Schramm (2001)
and Aldous and Steele (2004).
To do this, we define a notion of unimodularity for hypergraphs which could be considered an extension of unimodularity in graphs.
We give a variational formula for the balanced load distribution and,
in particular, we characterize it 
in the special case of unimodular hypergraph Galton~Watson processes.
 Moreover, we prove the convergence of the maximum load under some conditions.
Our work is an extension to hypergraphs of Anantharam and Salez (2016),
which considered load balancing in graphs, and is aimed at more 
comprehensively
resolving conjectures of Hajek (1990).
\\[2mm]
\textbf{Keywords:} Load balancing, sparse graph limits, hypergraphs, local weak convergence, objective method, unimodularity, configuration model
\end{abstract}

\section{Introduction}
\label{sec:introduction}


The problem of load balancing is ubiquitous in networks.
As examples, consider the problem of routing
traffic through a communication network or 
of assigning tasks among the servers in a cloud computing framework. 
What is common in these scenarios is a number of servers and a number of tasks whose load should be distributed among the servers.
Examples of servers are 
paths through the network from a given source to a given destination in the routing scenario, or processors in a cloud computing framework. Examples of tasks are 
the  amount of traffic to be routed from the source to the destination or the computational work to be done at the servers, respectively. Further, there are typically restrictions as to which resources are available
to a given task. Performance considerations require that
the allocation of the load of a task among the resources available to it
should be done in a way that optimizes a measure of performance, such as 
delay or queue length. When the problem size is large, it may be expensive to compute the 
detailed characteristics of an optimal or sufficiently good allocation of the load. Instead, it is
interesting to focus  on the statistical characteristics of the allocation, such as the 
empirical distribution of the load faced by a typical resource in the network.  This paper is 
concerned with developing such a viewpoint in the context of a specific kind of load balancing
problem which has broad applicability.



\subsection{Model and Prior Work}
\label{sec:model-prior-work}

We model the load balancing problem by a bipartite graph in which every node on the right represents a task and every node on the left represents a server. Each server is accessible to a certain subset of the tasks. Equivalently each task has access to only a certain subset of the servers.
We view the bipartite graph as a hypergraph, with each vertex of the hypergraph representing a server 
and each hyperedge representing a task. The vertices of a hyperedge are then the servers that are accessible to it. 
Let $\{v_1, \dots, v_n\}$ and $\{e_1, \dots, e_m\}$ denote the set of servers and tasks, or equivalently vertices and hyperedges, respectively. Therefore, $v_i \in e_j$ means that server $v_i$ can be used to contribute to the performance of task $e_j$.  See Figure~\ref{fig:bipartite-hypergraph-LB} for an example.  In general, we might want to consider the scenario where
 task $e_j$ has an amount of load equal to $l_j$, which could be arbitrarily allocated  among the servers $v_i \in e_j$. For simplicity, we will consider in this paper 
 only the case where all the $l_j$ equal $1$, but we leave the discussion general for the moment. Let $\theta$ be an allocation of the load of tasks among the servers, i.e. $\theta(e_j, v_i)$ is the amount of load coming from task $e_j$ assigned to server $v_i$. Hence, $\theta(e_j, v_i) \geq 0$ and $\sum_{v_i \in e_j} \theta(e_j, v_i) = l_j$. For a server $v_i$, let $\partial \theta(v_i)$ be the total amount of load assigned to $v_i$, i.e. $\partial \theta(v_i) = \sum_{e_j: v_i \in e_j} \theta(e_j, v_i)$. 

This formulation of load balancing was 
studied by Hajek \cite{hajek1990performance} who, in particular, formulated
the notion of a {\em balanced allocation}. 
It is natural to expect that a task would be happier to use servers that
are currently handling less load, if available, as opposed to those
handling more load.
An allocation $\theta$ is said to be balanced if
no task desires to change the allocation of its load. For finite load balancing problems, this turns out to be equivalent to the statement that the allocation minimizes $\sum_i f(\partial \theta(v_i))$ for any given fixed strictly convex function $f$. One can think of $\sum_i f(\partial \theta(v_i))$ as the aggregate cost we need to pay to process all the tasks. 
Hajek showed the existence of balanced allocations and uniqueness of the total load at nodes 
under any balanced allocation, 
and suggested algorithms to find a balanced allocation. It is particularly remarkable that
the notion of a balanced allocation does not depend on the specific choice of the 
strictly convex cost function $f$.

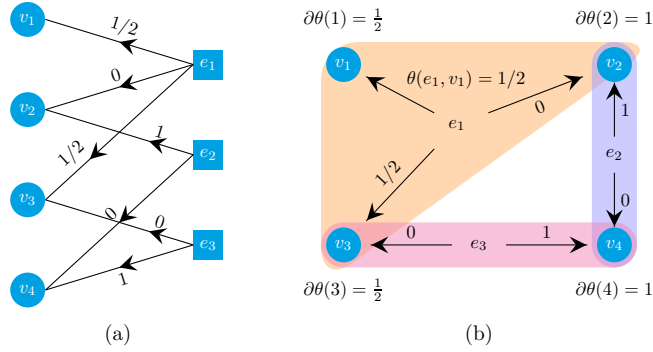
\begin{figure}
  \centering
    \begin{tikzpicture}[scale=0.6]
  \tikzstyle{task} = [fill=Cyan!90!blue,text=white,regular polygon,regular polygon sides=4, inner sep=1pt,scale=0.7]
  \tikzstyle{server} = [fill=Cyan!90!blue,text=white,regular polygon,circle,inner sep=1mm,scale=0.7]
  \tikzstyle{myarrow} = [postaction={decorate}, decoration={markings,
    mark=at position 0.5 with {\arrow[scale=2]{stealth}}}]
    \tikzstyle{myarrowbegin} = [postaction={decorate}, decoration={markings,
      mark=at position 0.3 with {\arrow[scale=2]{stealth}}}]
        \tikzstyle{myarrowend} = [postaction={decorate}, decoration={markings,
          mark=at position 0.7 with {\arrow[scale=2]{stealth}}}]
        \tikzstyle{bigarrow} = [postaction={decorate}, decoration={markings,
          mark=at position 0.99 with {\arrow[scale=2]{stealth}}}]
  \begin{scope}[xshift=-5cm]
    \node[server] (v1) at (-2,3) {$v_1$};
    \node[server] (v2) at (-2,1) {$v_2$};
    \node[server] (v3) at (-2,-1) {$v_3$};
    \node[server] (v4) at (-2,-3) {$v_4$};
    
    \node[task] (e1) at (2,2) {$e_1$};
    \node[task] (e2) at (2,0) {$e_2$};
    \node[task] (e3) at (2,-2) {$e_3$};
    
    \draw[myarrow] (e1.west) -- node[above,sloped,scale=0.7] {$1/2$} (v1.east);
    \draw[myarrow] (e1.west) -- node[above,sloped,scale=0.7] {$0$} (v2.east);
    \draw[myarrowend] (e1.west) -- node[near end,sloped,above,scale=0.7] {$1/2$} (v3.east);
    
    \draw[myarrowbegin] (e2.west) -- node[near start, sloped,above,scale=0.7] {$1$} (v2.east);
    \draw[myarrow] (e2.west) -- node[above,sloped,scale=0.7] {$0$} (v4.east);
    
    \draw[myarrowbegin] (e3.west) -- node[near start, sloped,above,scale=0.7] {$0$} (v3.east);
    \draw[myarrow] (e3.west) -- node[below,sloped,scale=0.7] {$1$} (v4.east);
\node[scale=0.8] at (0,-4) {(a)};
  \end{scope}
  \begin{scope}[xshift=3cm]
    \coordinate (v1h) at (-3,2);
    \coordinate (v2h) at (3,2);
    \coordinate (v3h) at (-3,-2);
    \coordinate (v4h) at (3,-2);
    \coordinate (e1h) at ($0.33*(v1h) + 0.33*(v2h) + 0.33*(v3h)+(0.5,0)$);
    \coordinate (e2h) at ($0.5*(v2h) + 0.5*(v4h)$);
    \coordinate (e3h) at ($0.5*(v3h) + 0.5*(v4h)$);
    
    \fill [rounded corners=0.5cm, inner sep=1cm,fill=Orange!60,opacity=0.5] ($(v1h)+(-0.5,0.5)$) -- ($(v2h)+(1,0.5)$) -- ($(v3h)+(-0.5,-0.9)$) -- cycle;
    \fill [rounded corners=0.3cm, inner sep=3mm, fill = blue!40,opacity=0.5] ($(v4h)+(-0.5,-0.5)$) rectangle ($(v2h)+(0.5,0.5)$);
    \fill [rounded corners=0.3cm, inner sep=3mm, fill = magenta!60,opacity=0.5] ($(v3h)+(-0.5,-0.5)$) rectangle ($(v4h)+(0.5,0.5)$);
    
    \node[server] at (v1h) {$v_1$};
    \node[server] at (v2h) {$v_2$};
    \node[server] at (v3h) {$v_3$};
    \node[server] at (v4h) {$v_4$};
    \node[scale=0.7] at (e1h) {$e_1$};
    \node[scale=0.7] at (e2h) {$e_2$};
    \node[scale=0.7] at (e3h) {$e_3$};
    
    \draw[bigarrow] ($(e1h)!0.2!(v1h)$) -- node[above right,scale=0.7] {$\theta(e_1, v_1) = 1/2$} ($(e1h)!0.8!(v1h)$);
    \draw[bigarrow] ($(e1h)!0.2!(v2h)$) -- node[above,scale=0.7,sloped,below] {$0$} ($(e1h)!0.8!(v2h)$);
    \draw[bigarrow] ($(e1h)!0.2!(v3h)$) -- node[above,scale=0.7,sloped] {$1/2$} ($(e1h)!0.8!(v3h)$);
    
    \draw[bigarrow] ($(e2h)!0.2!(v2h)$) -- node[right,scale=0.7] {$1$} ($(e2h)!0.8!(v2h)$);
    \draw[bigarrow] ($(e2h)!0.2!(v4h)$) -- node[right,scale=0.7] {$0$} ($(e2h)!0.8!(v4h)$);
    
    \draw[bigarrow] ($(e3h)!0.2!(v3h)$) -- node[above,scale=0.7] {$0$} ($(e3h)!0.8!(v3h)$);
    \draw[bigarrow] ($(e3h)!0.2!(v4h)$) -- node[above,scale=0.7] {$1$} ($(e3h)!0.8!(v4h)$);
    
    \node[scale=0.7] at ($(v1h) + (0,1)$) {$\partial \theta(1) = \frac{1}{2}$};
    \node[scale=0.7] at ($(v3h) + (0,-1)$) {$\partial \theta(3) = \frac{1}{2}$};
    \node[scale=0.7] at ($(v2h) + (0,1)$) {$\partial \theta(2) = 1$};
    \node[scale=0.7] at ($(v4h) + (0,-1)$) {$\partial \theta(4) = 1$};
\node[scale=0.8] at (0,-4) {(b)};
  \end{scope}
\end{tikzpicture}
\caption{\label{fig:bipartite-hypergraph-LB} Load balancing with 3 tasks and 4 servers. (a) illustrates the bipartite representation together with an allocation. While the load of $e_1$ could be served by nodes in $\{v_1, v_2, v_3\}$, half of its load is being assigned to $v_1$ and the other half to $v_3$, i.e. $\theta(e_1, v_1) = 1/2$, $\theta(e_1, v_2) = 0$ and $\theta(e_1, v_3) = 1/2$. (b) shows the hypergraph representation. The total load at a node $i$ is denoted by $\partial \theta(i)$. For instance, $\partial \theta(v_3) = \theta(e_1, v_3) + \theta(e_3, v_3) = 1/2$.}
\end{figure}

With the aim of understanding the statistical characteristics of balanced allocations in 
large load balancing problems, 
Hajek assumed that each task could be performed by only two servers -- 
hence the underlying hypergraph reduces to a graph -- and he assumed that
each edge in this graph carries one unit of load. 
He then studied 
such a load balancing problem  in large
random graphs \cite{hajek1990performance}. 
In particular, Hajek considered the sparse \ER  model to generate these graphs, where $\alpha n$ edges are distributed among $n$ vertices uniformly at random, with $\alpha$ being a fixed parameter. It is well known that the asymptotic structure of the local neighborhood of a typical vertex in a sparse \ER model is given by a Poisson Galton--Watson tree (see, for instance, \cite[Proposition~2.6]{dembo2010gibbs}
for a precise formulation of this statement). 
This suggests that  the behavior of balanced allocations in Galton--Watson trees might be a good proxy for  the load distribution in large \ER graphs. Hajek conjectured that the recursive nature of a Galton--Watson process helps one analyze the distribution of balanced allocations by studying fixed point equations. He was even able to suggest the form of the fixed point equation for the Poisson Galton--Watson tree. However, it turns out that this approach is more subtle than it looks. For one thing, Hajek realized that the notion of balanced allocation in an infinite graph as a proxy for large graphs is not well defined \cite{hajek1996balanced}. See Figure~\ref{fig:hajek-counter} for an example. 

\begin{figure}
  \centering
  \begin{tikzpicture}[scale=0.6]
\begin{scope}[xshift=-4cm, scale=0.7]
    \draw[Cyan,thick] (0,0) -- node[sloped,above, near start,scale=0.9] {$\longrightarrow$} (-3,-2);
    \draw[Cyan,thick] (0,0) -- node[sloped,above,scale=0.7] {$\longleftarrow$} (0,-2);
    \draw[Cyan,thick] (0,0) -- node[sloped,above, near start,Orange,scale=0.9] {$\longrightarrow$} (3,-2);
    \node[Root] at (0,0) {};
    \node[Node] at (-3,-2) {};
    \node[Node] at (0,-2) {};
    \node[Node] at (3,-2) {};

    \begin{scope}[xshift=-3cm, yshift=-2cm]
      \node[Node] at (-1,-2) {};
      \node[Node] at(1,-2) {};
      \draw[Cyan,thick] (0,0) -- node[sloped, above, near start,scale=0.9] {$\longrightarrow$} (-1,-2);
      \draw[Cyan,thick] (0,0) -- node[sloped, above, near start,scale=0.9] {$\longleftarrow$}  (1,-2);
    \end{scope}

    \begin{scope}[xshift=0cm, yshift=-2cm]
      \node[Node] at (-1,-2) {};
      \node[Node] at(1,-2) {};
      \draw[Cyan,thick] (0,0) -- node[sloped, above, near start,scale=0.9] {$\longrightarrow$} (-1,-2);
      \draw[Cyan,thick] (0,0) -- node[sloped, above, near start,scale=0.9] {$\longleftarrow$}  (1,-2);
    \end{scope}

    \begin{scope}[xshift=3cm, yshift=-2cm]
      \node[Node] at (-1,-2) {};
      \node[Node] at(1,-2) {};
      \draw[Cyan,thick] (0,0) -- node[sloped, above, near start,scale=0.9] {$\longrightarrow$} (-1,-2);
      \draw[Cyan,thick] (0,0) -- node[sloped, above, near start,Orange,scale=0.9] {$\longrightarrow$}  (1,-2);
    \end{scope}
    
    \node[rotate=90] at (-3,-5) {$\dots$};
    \node[rotate=90] at (0,-5) {$\dots$};
    \node[rotate=90] at (3,-5) {$\dots$};

    \node[scale=0.85] at (0,-6.5) {(a)};
\end{scope}

\begin{scope}[xshift=4cm, scale=0.7]
    \draw[Cyan,thick] (0,0) -- node[sloped,above, near start,scale=0.9] {$\longleftarrow$} (-3,-2);
    \draw[Cyan,thick] (0,0) -- node[sloped,above,scale=0.9] {$\longrightarrow$} (0,-2);
    \draw[Cyan,thick] (0,0) -- node[sloped,above, near start,Orange,scale=0.9] {$\longleftarrow$} (3,-2);
    \node[Root] at (0,0) {};
    \node[Node] at (-3,-2) {};
    \node[Node] at (0,-2) {};
    \node[Node] at (3,-2) {};

    \begin{scope}[xshift=-3cm, yshift=-2cm]
      \node[Node] at (-1,-2) {};
      \node[Node] at(1,-2) {};
      \draw[Cyan,thick] (0,0) -- node[sloped, above, near start,scale=0.9] {$\longleftarrow$} (-1,-2);
      \draw[Cyan,thick] (0,0) -- node[sloped, above, near start,scale=0.9] {$\longrightarrow$}  (1,-2);
    \end{scope}

    \begin{scope}[xshift=0cm, yshift=-2cm]
      \node[Node] at (-1,-2) {};
      \node[Node] at(1,-2) {};
      \draw[Cyan,thick] (0,0) -- node[sloped, above, near start,scale=0.9] {$\longleftarrow$} (-1,-2);
      \draw[Cyan,thick] (0,0) -- node[sloped, above, near start,scale=0.9] {$\longrightarrow$}  (1,-2);
    \end{scope}

    \begin{scope}[xshift=3cm, yshift=-2cm]
      \node[Node] at (-1,-2) {};
      \node[Node] at(1,-2) {};
      \draw[Cyan,thick] (0,0) -- node[sloped, above, near start,scale=0.9] {$\longleftarrow$} (-1,-2);
      \draw[Cyan,thick] (0,0) -- node[sloped, above, near start,Orange,scale=0.9] {$\longleftarrow$}  (1,-2);

    \end{scope}
    
    \node[rotate=90] at (-3,-5) {$\dots$};
    \node[rotate=90] at (0,-5) {$\dots$};
    \node[rotate=90] at (3,-5) {$\dots$};
    \node[scale=0.85] at (0,-6.5) {(b)};
\end{scope}
  \end{tikzpicture}

\caption{\label{fig:hajek-counter} Hajek's example to show non--uniqueness of the load for infinite graphs. Consider the rooted 3--regular graph with infinite depth as shown. We send all of the unit load corresponding to each edge in the direction of the shown arrows. The red path goes to infinity in both pictures. The  allocation in (a) makes the total load at every vertex equal to 2 while that in (b) makes the total load at all vertices equal to 1. Therefore, both are balanced.}
\end{figure}
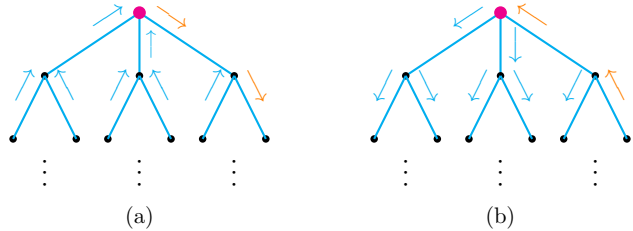

Hajek's conjecture for the graph regime (i.e. when each task could only be distributed among two servers) was settled by Anantharam and Salez \cite{anantharam2016densest}. They employed the framework of {\em local weak convergence}, also known as the
{\em objective method} \cite{benjamini2001schramm}, \cite{aldous2004objective}, \cite{aldous2007processes}. This framework introduces a notion of convergence for a sequence of finite graphs by representing each finite graph as a probability distribution on
rooted graphs and then discussing convergence in probabilistic terms. Roughly speaking, 
the operational meaning of this technique is that convergence holds when the distribution of the local neighborhood of a vertex chosen uniformly at random in the graph converges to that of the local neighborhood around the root in the limit. For instance,
a sequence of sparse \ER graphs converges  in this sense  to a Poisson Galton-Watson tree, consistent with 
our established intuition in this case.

A random rooted graph that can appear as the limit of a sequence of finite graphs is called {\em sofic}.
Not all random rooted graphs are sofic. The reason is that all the vertices in a finite graph have the same chance of being chosen as the root. This should manifest itself as some form of stationarity in the limit, i.e.\ the limiting object should be invariant under changing the root. This property is called {\em unimodularity}, which is a necessary condition for being sofic. Whether the converse is true is an open problem \cite{aldous2007processes}.

Anantharam and Salez settled Hajek's conjecture by first defining a notion of balancedness for unimodular random rooted graphs. Moreover, they showed that if a sequence of finite graphs $G_n$ converges to a random rooted graph in the above local weak sense, the total load associated to a balanced allocation  at a vertex chosen uniformly at random in $G_n$ converges in distribution to  the total load associated to the balanced allocation at the root of the limit. Additionally, they managed to express a certain functional of the  distribution of the load at the root of the Galton--Watson tree in terms of a fixed point distributional equation, settling Hajek's conjecture in the graph regime. Beyond this, they also  proved the convergence of the maximum load for a sequence of finite graphs resulting from a certain configuration model to that of their local weak limit, under some additional conditions.

\subsection{Our Contributions}
\label{sec:our-contributions}

We study the above load balancing problem in the more general regime where each task could have access to more than two servers, i.e.\ the underlying network is a hypergraph instead of a graph. 

Our machinery for deriving results analogous to those in the graph regime will be a generalized method of local weak convergence on hypergraphs. One novelty of our development  is to introduce a notion analogous to unimodularity for processes on random rooted hypergraphs. We believe that this generalized framework could be of independent interest in a variety of problems in which the underlying model is best expressed in terms of hypergraphs rather than graphs. 

In particular, we prove that for any unimodular probability distribution on the set of rooted hypergraphs with finite expected degree, there exists a balanced allocation which is consistent with the local weak limit theory, i.e.\ the load distribution of a sequence of hypergraphs converges to that of the limit. For a special class of branching process on rooted hypertrees which is a generalization of Galton--Watson processes, we show that the distribution of the load at the root can be specified via a fixed point distributional equation. Finally, we study the convergence of the maximum load for a sequence of random hypergraphs generated from a configuration model to that of the limit, under some additional conditions.


\section{Prerequisites and Notations}
\label{sec:prerequisite}

Throughout this paper $\reals$ and $\reals_{\geq 0}$ denote the set of real numbers and nonnegative real numbers, respectively. Moreover, $\mathbb{Q}$ denotes the set of rational numbers. 
$\nats$ denotes the set of positive integers and $\natsz := \nats \cup
\{0\}$.
For a Polish space $X$, let $\mP(X)$ and $\mM(X)$ respectively denote the set of probability measures and nonnegative finite measures on $X$, with respect to the Borel $\sigma$--field of $X$.  
We use the abbreviations ``a.s.'' and ``a.e.'' for the phrases ``almost surely'' and ``almost everywhere'', respectively.
For a real number $x \in \reals$, we denote $\max \{ x, 0 \}$ by $x^+$, and we denote
$\min \{ x^+,1 \}$ by $\left[ x \right]_0^1$.

For a Polish space $X$, we say that a sequence of probability measures $\mu_n$ converges weakly to a probability measure $\mu \in \mP(X)$, and write $\mu_n \Rightarrow \mu$, if for any bounded continuous function $f: X \rightarrow \reals$ we have 
\begin{equation*}
  \lim_{n \rightarrow \infty} \int f d \mu_n = \int f d \mu.
\end{equation*}
See \cite{billingsley1971weak} and \cite{billingsley2013convergence} for more details on weak convergence of probability measures. 

Given two measurable spaces $(X_1, \mF_1)$ and $(X_2, \mF_2)$, a measurable mapping $f: X_1 \rightarrow X_2$, and a nonnegative measure $\mu_1$ on $\mF_1$, the pushforward measure $f_*(\mu_1)$ on $\mF_2$ is defined by
\begin{equation*}
  f_*(\mu_1)(A) = \mu_1(f^{-1}(A)),
\end{equation*}
for $A \in \mF_2$.

We write $:=$ for equality by definition. We use the terms ``node" and ``vertex" interchangeably.


\subsection{Hypergraphs}
\label{sec:hypergraphs}

We work with simple hypergraphs defined on a countable vertex set, where each edge is a finite subset of the vertex set. For a hypergraph $H$, the sets of vertices and edges are denoted by $V(H)$ and $E(H)$, respectively. We write $H$ as $\langle V, E \rangle$, where $V = V(H)$ and $E = E(H)$. 
We say a hypergraph is simple if
$E(H) \subset 2^{V(H)}$. This means that in any edge each vertex
can show up at most once, and that each subset of vertices can show up at most once as an 
edge. All hypergraphs appearing in this paper will be simple, unless 
otherwise stated.
For a vertex $i \in V(H)$, denote its degree by $\deg_H(i) := |\{ e \in E(H): e \ni i \}|$. 
For a given hypergraph $H$, let
  \begin{equation}
    \label{eq:evpair-define}
    \evpair(H) := \{ (e, i) : e \in E(H), i \in e \}
  \end{equation}
denote the set of all edge-vertex pairs in the hypergraph.


\begin{definition}
A hypergraph $H$ is said to be locally finite if $\deg_H(i)$ is finite for all $i \in V(H)$ and  $e$ is finite for all $e \in E(H)$.
\end{definition}

Note that the above definition does not imply that there is a uniform bound on edge sizes or vertex degrees. Hence, a locally finite hypergraph can have arbitrarily large edges or vertex degrees. Throughout this paper, we assume that all the hypergraphs are locally finite, unless otherwise stated.
Thus, by default, the term hypergraph in this paper means a 
simple, locally finite hypergraph on a countable vertex set.

For technical reasons, it is sometimes easier to work with bounded 
hypergraphs in proofs and then relax the boundedness condition.
\begin{definition}
  \label{def:bounded-hypergraph}
  A hypergraph $H$ is said to be bounded if $\deg_H(i) \leq \Delta$ for all $i \in V(H)$ and also $|e| \leq L$ for all $e \in E(H)$, for finite constants $\Delta$ and $L$.
\end{definition}

A path from node $i$ to node $j$ is an alternating vertex--edge sequence $i_0, e_1, i_1, e_2, i_2, \dots, e_n, i_n$ 
with $i_0 = i$ and $i_n = j$, and $i_k \in e_{k+1}$ for $0 \leq k < n$.
The length of such a path is defined to be $n$. The distance between vertices $i$ and $j$, denoted by $d_H(i,j)$, is defined to be the length of the shortest path between $i$ and $j$ if $i \neq j$, and $0$ when $i = j$.
A path is called closed if $i_0 = i_n$.


\begin{definition}
  \label{def:hypertree}
A hypergraph $H$ is called a hypertree if there is no closed path $i_0, e_1, i_1, \dots, e_{n-1},i_{n-1}, e_n, i_n$ with $n \ge 2$ 
such that $i_j \neq i_l$ and $e_j \neq e_l$ for 
$1 \le j \neq l \le n$.
\end{definition}


\begin{rem}
\label{rem:hyperforest}
Note that
a hypertree
need not be connected.
It is straightforward to prove that if there is a 
path between vertices $i$ and $j$ in a hypertree, then the shortest
path between these vertices is unique.
\end{rem}


\begin{definition}
\label{def:E-H_VHid_DHid}
For a hypergraph $H$ and a subset $W \subset V(H)$, define $E_H(W) := \{ e \in E(H): e \subset W \}$. $E_H(W)$ is comprised of the edges of $H$ with all endpoints in the set $W$. For $i \in V(H)$ and $d \geq 0$, define $V^H_{i, d} := \{j \in V(H): d_H(i, j) \leq d \}$ and $D^H_{i, d} := \{ j \in V(H): d_H(i, j) = d \}$. In particular, $V^H_{i,0} = D^H_{i,0} = \{i\}$.  
\end{definition}

\begin{definition}
\label{def:rooted}
A vertex rooted hypergraph is a hypergraph $H$ with a distinguished vertex $i \in V(H)$. We denote this by $(H, i)$. 
An edge-vertex rooted hypergraph is a hypergraph with a 
distinguished edge $e \in E(H)$ and a distinguished vertex $i \in V(H)$ such that $i \in e$. This is denoted by $(H, e, i)$ .
\end{definition}


\begin{definition}
  \label{def:isomorphism}
  We say that two hypergraphs $H$ and $H'$ are isomorphic and write $H \equiv H'$ when there is a bijection $\phi: V(H) \rightarrow V(H')$ such that $e \in E(H)$ if and only if $\phi(e) := \{ \phi(j) : j \in e \} \in E(H')$. Also we say two vertex rooted hypergraphs $(H, i)$ and $(H', i')$ are isomorphic and write $(H, i) \equiv (H', i')$ if the above bijection $\phi$ exists and we have $\phi(i) = i'$ as well. Furthermore, we say two edge-vertex rooted hypergraphs $(H, e, i)$ and $(H', e', i')$ are isomorphic and write $(H, e, i) \equiv (H', e', i')$ if the above bijection exists, and we have
  $\phi(i) = i'$ and $\phi(e) = e'$.
\end{definition}


Instead of working with global isomorphisms as above, we can  consider local isomorphisms, i.e. comparing two rooted hypergraphs up to some given depth.

\begin{definition}
  \label{def:isomorphism-depth-d}
  We say two vertex rooted hypergraphs $(H, i)$ and $(H', i')$  are isomorphic up to depth $d$ and write $(H, i) \equiv_d (H', i')$ if their truncations up to depth $d$ are isomorphic, i.e. $\langle V^H_{i,d} , E_H(V^H_{i,d}) \rangle \equiv \langle V^{H'}_{i',d} , E_{H'}(V^{H'}_{i',d}) \rangle$ and also $\phi(i) = i'$, where $\phi:V^H_{i, d} \rightarrow V^{H'}_{i',d}$ is the vertex bijection establishing this isomorphism. Also, for $d \geq 1$, we say two  edge-vertex rooted hypergraphs $(H, e, i)$ and $(H', e', i')$ are isomorphic up to depth $d$ and write $(H, e, i) \equiv_d (H', e', i')$ if $\langle V^H_{i,d} , E_H(V^H_{i,d}) \rangle \equiv \langle V^{H'}_{i',d} , E_{H'}(V^{H'}_{i',d}) \rangle$, $\phi(i) = i'$, and $\phi(e) = e'$, where $\phi(e) := \{ \phi(j): j\in e\}$. Here $\phi:V^H_{i, d} \rightarrow V^{H'}_{i',d}$ is the vertex bijection
establishing this isomorphism.
\end{definition}


\begin{definition}
  \label{def:hypergraph-local-embedding}
  Given two hypergraphs $H$ and $H'$, for $i \in V(H)$ and $i' \in V(H')$ we say that $(H, i)$ has a local embedding up to depth $d\geq 1$ into $(H', i')$ and write $(H, i) \hookrightarrow_d (H', i')$ if there is an injective mapping $\phi: V^H_{i, d} \hookrightarrow V^{H'}_{i', d}$ such that:
  \begin{enumerate}
  \item $\phi(i) = i'$, and
  \item for all $e \in E_H(V^H_{i,d})$, we have $\phi(e) \in E(H')$ where  $\phi(e) := \{ \phi(j) : j \in e \}$.
  \end{enumerate}
\end{definition}

  \begin{definition}
    \label{def:H-i_d}
    Given a hypergraph $H$, a node $i \in V(H)$ and $d \geq 1$, let $(H, i)_d$ denote the vertex rooted hypergraph $(\langle V^H_{i, d}, E_H(V^H_{i,d}) \rangle, i)$. In fact, $(H, i)_d$ is  the $d$--neighborhood of vertex $i$.
  \end{definition}




\subsection{Balanced allocations on a hypergraph}
\label{sec:balanc-alloc-single}

\begin{definition}
\label{def:allocation}
An allocation on  hypergraph $H=\langle V,E \rangle$ is a mapping $\theta: \evpair(H) \rightarrow [0,1]$ such that $\theta(e ,i)$ with $i \in e \in E$ tells us how much load from resource $e$ is being given to node $i$. 
More formally, it is characterized by the properties:
\begin{equation*}
  \begin{gathered}
    \theta(e ,i) \geq 0~, \qquad \forall e \in E(H),  i \in e, \mbox{ and }\\
    \sum_{j \in e} \theta(e ,j) = 1~, \qquad \forall e \in E(H)~.
  \end{gathered}
\end{equation*}
\end{definition}

In any allocation, a given vertex $i \in V(H)$ receives a portion $\theta(e ,i)$ of the total unit load of resource $e$. The total load at the vertex is then the sum of portions it receives from resources $e \ni i$. The following
definition establishes the notation to discuss this load.

\begin{definition}
\label{def:partial-single-hypergraph}
Given an allocation $\theta$ on a hypergraph $H = \langle V, E \rangle$, define the function $\partial \theta: V(H) \rightarrow \reals_{\geq 0}$ by
\begin{equation*}
  \partial \theta(i) := \sum_{e: i \in e} \theta(e ,i)~, \qquad \text{for all } i \in V(H).
\end{equation*}
\end{definition}

\begin{definition}
  \label{def:balanced-allocation}
  For a hypergraph $H = \langle V, E \rangle$, an allocation $\theta$ is called balanced if for all $e \in E$ and $i,j \in e$ we have 
  \begin{equation*}
    \partial \theta(i) > \partial \theta(j) \quad \Rightarrow \quad \theta(e, i) = 0.
  \end{equation*}
\end{definition}

Much of the paper is concerned with understanding the structure 
of balanced allocations on hypergraphs, and of the load resulting from such allocations. 
As we will soon see via examples, balanced allocations can exhibit
phenomena analogous to phase transitions in statistical mechanics
models. This is because the hard constraint defining balancedness can 
be thought of as analogous to a zero temperature limit. 
Following this analogy further, it is therefore convenient to deal with 
what might be called a positive temperature notion of balancedness, and
then to send the temperature to zero. This is captured in the concept of $\epsilon$--balance.

\begin{definition}
  \label{def:epsilon-balanced-specific-hypergraph}
  For a hypergraph $H = \langle V, E \rangle$, an allocation $\theta$ is called $\epsilon$--balanced, if for all $e \in E$ and $i \in e$ we have 
  \begin{equation*}
    \theta(e, i) = \frac{\exp (- \partial \theta(i)/\epsilon)}{\sum_{j \in e} \exp (- \partial \theta(j)/ \epsilon)}.
  \end{equation*}
\end{definition}

\begin{rem}
  \label{rem:motivation-of-epsilon-balanced allocation}
  Let $\theta$ be an $\epsilon$-balanced allocation on a hypergraph $H$.
  Note that if $e \in E$ and $i, j \in e$ are such that $\partial \theta(i) > \partial \theta(j)$ then 
 \begin{equation*}
   \theta(e, i) = \frac{\exp(- \partial \theta(i) / \epsilon)}{\sum_j \exp ( - \partial \theta(j) / \epsilon)} \leq \frac{1}{1+ \exp \left ( \frac{\partial \theta(i) - \partial \theta(j)}{\epsilon} \right )}.
 \end{equation*}
 Roughly speaking, if $\partial \theta(i) > \partial \theta (j)$ and $\epsilon$ is small, then $\theta(e, i) \approx 0$ and hence $\theta$ is approximately balanced. 
 Also, roughly speaking, the smaller $\epsilon$ is, the more balanced an $\epsilon$-balanced allocation is.
\end{rem}

In the above, we defined balancedness when all the loads come from the edges of the hypergraph. We can generalize this to the case where, in addition to the internal load imposed by the edges, we have external load as well. External load is modeled by a function $b: V(H) \rightarrow \reals$, called the {\em baseload function}. For a vertex $i \in V(H)$, $b(i)$ denotes the external load applied to node $i$. Throughout this paper, we assume that each baseload function is bounded, but we do not assume a uniform bound on all baseload functions. 
More precisely, for each baseload function $b$ on a given hypergraph $H$, we assume that there exists $M < \infty$ such that $|b(i)| < M$ for all $i \in V(H)$, where the constant $M$ may depend on $H$ and $b$.
The concept of balancedness can be extended to the scenario with baseloads as follows.

\begin{definition}
  \label{def:balanced-allocation-with-baseload}
  For a hypergraph $H = \langle V, E \rangle$, together with a baseload function $b: V(H) \rightarrow \reals$, an allocation $\theta:\evpair(H) \rightarrow [0,1]$ is called balanced with respect to the baseload $b$, if for all $e \in E$ and $i,j \in e$ we have 
  \begin{equation*}
    \partial \theta(i) + b(i) > \partial \theta(j) +b(j) \quad \Rightarrow \quad \theta(e, i) = 0.
  \end{equation*}
\end{definition}

Note that $\partial \theta(i) + b(i)$ is the total load at node $i$ where $\partial \theta(i)$ is the internal load and $b(i)$ is the contribution from the external load. We use the notation $\partial_b\theta$ as a shorthand for $\partial \theta + b$.

The concept of an $\epsilon$--balanced allocations can be similarly 
extended to the scenario with baseloads.

\begin{definition}
\label{def:epsilon-balanced-with-baseload}
For a given hypergraph $H = \langle V, E \rangle$, together with a baseload function $b: V(H) \rightarrow \reals$, we say an allocation $\theta:\evpair(H) \rightarrow [0,1]$ is $\epsilon$--balanced with respect to the baseload $b$, if for all $e \in E(H)$ and $i \in e$ we have 
\begin{equation}
  \label{eq:epsilon-baseload-definition}
  \theta(e, i) = \frac{\exp \left ( - \pbt(i)/\epsilon \right )}{\sum_{j \in e} \exp \left ( - \pbt(j)/\epsilon \right ) }.
\end{equation}
\end{definition}

It is known that if the hypergraph is finite, then balanced allocations exist 
with respect to any baseload and the resulting load vector is the same for all balanced allocations for the given baseload (see Theorem~2 and Corollary~5 in \cite{hajek1990performance}). This result is stated in the following
proposition.


\begin{prop}
  \label{prop:total-load-unique-finite-hypergraphs}
  If $H = \langle V, E \rangle$ is a finite hypergraph, and $b: V(H) \rightarrow \reals$ is a given baseload function, then there exists at least one balanced allocation $\theta$ on $H$ with respect to the baseload $b$. Moreover, if $\theta$ and $\theta'$ are two balanced allocations on $H$ with respect to $b$, then $\partial_b \theta(i) = \partial_b \theta'(i)$ for all $i \in V(H)$. 
\end{prop}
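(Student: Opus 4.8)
The plan is to derive both assertions from a single finite-dimensional convex optimization problem, adapting Hajek's graph argument~\cite{hajek1990performance} to hypergraphs. I would fix once and for all the smooth strictly convex function $f(x) = x^2$ and consider the cost functional $F(\theta) := \sum_{i \in V(H)} f\bigl(\partial_b \theta(i)\bigr)$ on the set $\mA$ of allocations on $H$ with respect to $b$. Because $H$ is finite, $\mA$ is nonempty (it contains the uniform allocation $\theta(e,i) = 1/|e|$) and is a compact convex subset of a finite-dimensional Euclidean space, being the product over $e \in E(H)$ of the probability simplices on the coordinates $(\theta(e,i))_{i \in e}$; and $F$ is continuous and convex on $\mA$ since each map $\theta \mapsto \partial_b \theta(i)$ is affine and $f$ is convex.

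\textbf{Existence.} First I would obtain a balanced allocation as a minimizer. By compactness and continuity, $F$ attains its minimum on $\mA$ at some $\theta^\star$. If $\theta^\star$ were not balanced with respect to $b$, there would be an edge $e$ and vertices $i, j \in e$ with $\partial_b\theta^\star(i) > \partial_b\theta^\star(j)$ and $\theta^\star(e,i) > 0$. Transferring an amount $\delta \in (0, \theta^\star(e,i)]$ of the load of $e$ from $i$ to $j$ produces another allocation whose load vector differs from that of $\theta^\star$ only by $-\delta$ at $i$ and $+\delta$ at $j$, and the resulting change in $F$ is $(f(\partial_b\theta^\star(i) - \delta) - f(\partial_b\theta^\star(i))) + (f(\partial_b\theta^\star(j) + \delta) - f(\partial_b\theta^\star(j)))$, whose right derivative at $\delta = 0$ equals $f'(\partial_b\theta^\star(j)) - f'(\partial_b\theta^\star(i)) < 0$. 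Thus $F$ would strictly decrease for small $\delta$, contradicting minimality; hence $\theta^\star$ is balanced.

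\textbf{Uniqueness of the load vector.} I would prove this in two steps. Step one: every balanced allocation is a minimizer of $F$. Given a balanced $\theta$ and any $\theta' \in \mA$, since $F$ is convex it suffices to check that the one-sided directional derivative of $F$ at $\theta$ toward $\theta'$ is nonnegative; this derivative equals $\sum_{i \in V(H)} f'(\partial_b\theta(i))(\partial\theta'(i) - \partial\theta(i))$, which upon rearranging the finite double sum by edges becomes $\sum_{e \in E(H)} \sum_{i \in e} f'(\partial_b\theta(i))(\theta'(e,i) - \theta(e,i))$. Fixing $e$ and letting $m_e := \min_{k \in e}\partial_b\theta(k)$, balancedness forces $\theta(e,i) > 0 \Rightarrow \partial_b\theta(i) = m_e$, so $\sum_{i \in e} f'(\partial_b\theta(i))\theta(e,i) = f'(m_e)$, while $\sum_{i \in e} f'(\partial_b\theta(i))\theta'(e,i) \ge f'(m_e)$ because $f'$ is increasing, $\theta'(e,i) \ge 0$, and $\sum_{i \in e}\theta'(e,i) = 1$; hence each edge contributes a nonnegative term and $\theta$ minimizes $F$. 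Step two: the load vector at a minimizer is unique, because if $\theta$ and $\theta'$ both minimize $F$ then so does $\tfrac12(\theta + \theta')$, and comparing $\sum_i f(\tfrac12\partial_b\theta(i) + \tfrac12\partial_b\theta'(i))$ with the common minimum value $\tfrac12 F(\theta) + \tfrac12 F(\theta')$ forces equality in the strict convexity inequality for $f$ at every $i$, i.e.\ $\partial_b\theta(i) = \partial_b\theta'(i)$ for all $i$. Combining the two steps, any two balanced allocations with respect to $b$ induce the same load vector $\partial_b\theta$.

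\textbf{Main obstacle.} The only step requiring genuine care is the edgewise rearrangement in step one of the uniqueness argument: balancedness is a constraint that lives on each individual edge, whereas optimality of $F$ is a global statement, and the bridge between them is the additive decomposition of the directional derivative over edges together with the fact that a balanced allocation routes all of an edge's load onto that edge's load-minimizing endpoints. Everything else is routine finite-dimensional convex analysis; finiteness of $H$ is used precisely to make $\mA$ compact and to justify interchanging the order of summation, and the explicit choice $f(x) = x^2$ removes any differentiability worries. As an alternative route to existence one could, for each $\epsilon > 0$, obtain an $\epsilon$-balanced allocation as a Brouwer fixed point of the softmax map of Definition~\ref{def:epsilon-balanced-with-baseload} on the compact convex set $\mA$, and pass to a subsequential limit as $\epsilon \to 0$, which is balanced by the estimate recorded in Remark~\ref{rem:motivation-of-epsilon-balanced allocation}; but since the variational approach also yields uniqueness, I would prefer it.
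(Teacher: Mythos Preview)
Your proof is correct. The paper does not give its own proof of this proposition; it simply cites Theorem~2 and Corollary~5 of Hajek~\cite{hajek1990performance}. Your argument is precisely the convex-optimization proof underlying those results, cleanly adapted to hypergraphs with baseloads, so there is nothing to compare beyond noting that you have supplied the details the paper omits.
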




Later, in Section~\ref{sec:epsilon-balancing}, we will study $\epsilon$--balanced allocations with baseload for hypergraphs that are not necessarily finite. In particular, we will show in Corollary~\ref{sec:epsilon-balancing} therein that for bounded hypergraphs, for any baseload function, the total load at any vertex corresponding to any $\epsilon$--balanced allocation is uniquely defined.

Note that for finite hypergraphs, although the balanced allocations with respect to a given baseload might not be uniquely defined, the total loads at the vertices resulting from any two balanced allocations necessarily have to be the same. See Figure~\ref{fig:trianlge-graph-two-balanced} for an example. The case for infinite hypergraphs is more complicated though; in this case, the loads at the vertices also may not be unique, see Figure~\ref{fig:hajek-counter}. See \cite{hajek1996balanced} for more discussion on this. 
However, we can state a weak uniqueness result in this case. The proof is given in Appendix~\ref{sec:weak-uniq-balan}.


\begin{figure}
  \centering
     \begin{tikzpicture}
      \begin{scope}
        \draw[Cyan,thick] (90:1) -- (-30:1) -- (210:1) -- cycle;
        \node[Node] at (90:1) {};
        \node[Node] at (-30:1) {};
        \node[Node] at (210:1) {};

        \draw[->,Orange,thick] (40:0.7) -- node[above right,scale=0.8] {1} +(120:0.5);
        \draw[->,Orange,thick] (160:0.7) -- node[above left,scale=0.8] {1} +(240:0.5);
        \draw[->,Orange,thick] (280:0.7) -- node[below right,scale=0.8] {1} +(0:0.5);

      \end{scope}

      \begin{scope}[xshift=5cm]
        \draw[Cyan,thick] (90:1) -- (-30:1) -- (210:1) -- cycle;
        \node[Node] at (90:1) {};
        \node[Node] at (-30:1) {};
        \node[Node] at (210:1) {};
        
        \draw[->,Orange,thick] (40:0.7) -- node[above right,scale=0.8] {$1/2$} +(120:0.5);
        \draw[->,Orange,thick] (20:0.7) -- node[above right,scale=0.8] {$1/2$} +(300:0.5);
        
        \draw[->,Orange,thick] (160:0.7) -- node[above left,scale=0.8] {$1/2$} +(240:0.5);
        \draw[->,Orange,thick] (140:0.7) -- node[above left,scale=0.8] {$1/2$} +(60:0.5);
        
        \draw[->,Orange,thick] (280:0.7) -- node[below right,scale=0.8] {$1/2$} +(0:0.5);
        \draw[->,Orange,thick] (260:0.7) -- node[below left,scale=0.8] {$1/2$} +(180:0.5);

      \end{scope}
      
    \end{tikzpicture}


\caption{\label{fig:trianlge-graph-two-balanced} A graph with 3 vertices and three edges and two different balanced allocations with zero baseload. Note that the total load at each vertex is the same for the two allocations and is equal to 1 at each vertex.}
\end{figure}
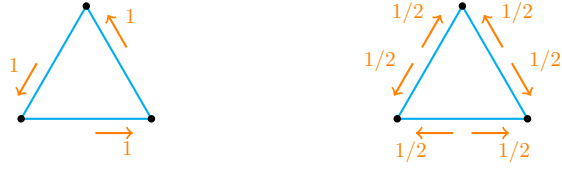



\begin{prop}
  \label{prop:weak-uniqueness}
  Given the hypergraph $H = \langle V, E \rangle$ with the baseload function $b: V(H) \rightarrow \reals$, suppose $\theta$ and $\theta'$ are two balanced allocations on $H$ with respect to the baseload $b$. If $\sum_{i \in V(H)} |\pbt(i) - \pbt'(i) | < \infty$ then $\pbt(i) = \pbt'(i)$ for all $i \in V(H)$.
\end{prop}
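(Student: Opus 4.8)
The plan is a summation‑by‑parts comparison of the two balanced allocations through a bounded, strictly increasing weight. Fix $\chi:\reals\to\reals$ strictly increasing, bounded, and Lipschitz (e.g.\ $\chi(x)=\arctan x$). Write $\phi:=\pbt$ and $\phi':=\pbt'$, and for $(e,i)\in\evpair(H)$ set $\delta(e,i):=\theta'(e,i)-\theta(e,i)$ and $\Delta(i):=\phi'(i)-\phi(i)$. Since the two allocations share the baseload $b$, we have $\Delta(i)=\partial\theta'(i)-\partial\theta(i)=\sum_{e\ni i}\delta(e,i)$ for every $i$, so the hypothesis reads $\sum_{i\in V(H)}|\Delta(i)|<\infty$ and the goal is $\Delta\equiv 0$.

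Fix an edge $e$. By Definition~\ref{def:balanced-allocation-with-baseload} applied to $\theta$, if $\theta(e,i)>0$ then $\phi(i)\le\phi(j)$ for all $j\in e$; hence $\theta(e,\cdot)$ is a probability vector supported on $\arg\min_{j\in e}\phi(j)$, and since $\chi$ is increasing,
\begin{equation*}
  \sum_{j\in e}\theta(e,j)\,\chi(\phi(j))=\min_{j\in e}\chi(\phi(j))\le\sum_{j\in e}\theta'(e,j)\,\chi(\phi(j)),
\end{equation*}
so that $\sum_{j\in e}\delta(e,j)\chi(\phi(j))\ge 0$. Applying the same reasoning to $\theta'$ and $\phi'$ gives $\sum_{j\in e}\delta(e,j)\chi(\phi'(j))\le 0$, and subtracting,
\begin{equation*}
  c_e\ :=\ \sum_{j\in e}\delta(e,j)\bigl(\chi(\phi(j))-\chi(\phi'(j))\bigr)\ \ge\ 0\qquad\text{for every }e\in E(H).
\end{equation*}
On the other hand, summing the same quantity over the finitely many edges through a fixed vertex $i$,
\begin{equation*}
  \sum_{e\ni i}\delta(e,i)\bigl(\chi(\phi(i))-\chi(\phi'(i))\bigr)=\Delta(i)\bigl(\chi(\phi(i))-\chi(\phi'(i))\bigr)=-\bigl(\phi'(i)-\phi(i)\bigr)\bigl(\chi(\phi'(i))-\chi(\phi(i))\bigr)\ \le\ 0,
\end{equation*}
with equality if and only if $\phi(i)=\phi'(i)$, since $\chi$ is strictly increasing; in particular the summand vanishes whenever $i\notin S:=\{i\in V(H):\phi(i)\neq\phi'(i)\}$.

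The last two displays are the same doubly‑indexed family summed first over edges and first over vertices, so if the interchange of summation is legitimate we get $0\le\sum_e c_e=\sum_i\Delta(i)(\chi(\phi(i))-\chi(\phi'(i)))\le 0$, forcing every vertex term to vanish, i.e.\ $\phi\equiv\phi'$, and adding back $b$ gives $\pbt\equiv\pbt'$. When $H$ is bounded the interchange is immediate: only indices with $i\in S$ contribute, $\sum_{e\ni i}|\delta(e,i)|\le\partial\theta(i)+\partial\theta'(i)\le 2\deg_H(i)$, and $|\chi(\phi(i))-\chi(\phi'(i))|\le\operatorname{Lip}(\chi)|\Delta(i)|$, so by Tonelli $\sum_{(e,i)}|\delta(e,i)|\,|\chi(\phi(i))-\chi(\phi'(i))|\le 2\operatorname{Lip}(\chi)\,(\sup_i\deg_H(i))\sum_i|\Delta(i)|<\infty$ and Fubini applies. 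I expect the main obstacle to be the general locally finite case, in which degrees are unbounded and this double series need not converge absolutely. The way I would handle it is to exhaust $V(H)$ by finite sets $W_n\uparrow V(H)$ and write $\sum_{i\in W_n}\Delta(i)(\chi(\phi(i))-\chi(\phi'(i)))$ as $\sum_{e\subset W_n}c_e$ (a nonnegative quantity, by the edge inequality) plus a boundary term collecting the contributions of edges straddling $W_n$: the left‑hand side converges to $\sum_i\Delta(i)(\chi(\phi(i))-\chi(\phi'(i)))\le 0$ by dominated convergence (using that $\chi$ is bounded and $\sum_i|\Delta(i)|<\infty$), so if the boundary term tends to $0$ along the exhaustion we again conclude $\phi\equiv\phi'$. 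The delicate point is exactly this boundary estimate: one wants to choose the $W_n$ — for instance balls $V^H_{o,n}$ around a fixed vertex (so that the vertices of a straddling edge that lie in $W_n$ sit at distance exactly $n$ from $o$), or finite sets swallowing $\{i:|\Delta(i)|\ge\varepsilon_n\}$ for $\varepsilon_n\downarrow 0$, which are finite for each $\varepsilon_n>0$ — so that the straddling edges only meet vertices on which $|\Delta|$ is already summably small, and then bound the boundary contribution by $\operatorname{Lip}(\chi)$ times the mass of $|\delta(e,\cdot)|$ carried near the boundary weighted by those small $|\Delta(\cdot)|$, which must vanish.
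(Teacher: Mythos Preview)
Your edge and vertex inequalities are correct, and the overall structure---edge-indexed terms $c_e\ge 0$, vertex-indexed terms $a_i:=\Delta(i)\bigl(\chi(\phi(i))-\chi(\phi'(i))\bigr)\le 0$, formally equal as two orderings of the same double series---is sound; the bounded case is complete. The gap is exactly where you place it, and your exhaustion sketch does not close it. Writing $\sum_{i\in W_n}a_i=\sum_{e\subset W_n}c_e+B_n$, you have $\sum_{i\in W_n}a_i\to L\le 0$ (since $\chi$ is bounded and $\sum|\Delta|<\infty$) and $\sum_{e\subset W_n}c_e\uparrow C\in[0,\infty]$, so $B_n\to L-C$; proving $B_n\to 0$ is therefore equivalent to $L=C$, the very identity you need. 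Neither proposed choice of $W_n$ helps: for balls, boundary vertices lie at distance exactly $n$ but nothing controls their $|\Delta|$ or their degree; for $W_n\supset\{|\Delta|\ge\varepsilon_n\}$, it is the vertices \emph{outside} $W_n$ that have small $|\Delta|$, whereas $B_n$ is a sum over $i\in e\cap W_n$, where $|\Delta(i)|$ can be of order $1$ and $\sum_{e\ni i}|\delta(e,i)|$ can be as large as $\partial\theta(i)+\partial\theta'(i)$, which is unbounded.

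The paper sidesteps the interchange altogether by choosing a weight with \emph{finite support}. For $\delta>0$ set $A_\delta:=\{i:\phi(i)-\phi'(i)>\delta\}$; since $\sum_i|\Delta(i)|<\infty$, the set $A_\delta$ is finite, so $\sum_{i\in A_\delta}(\phi(i)-\phi'(i))=\sum_{i\in A_\delta}\sum_{e\ni i}(\theta(e,i)-\theta'(e,i))$ is a genuinely finite double sum and can be regrouped by edges at no cost. Edges $e\subset A_\delta$ contribute $0$; for each straddling edge a short case analysis (in the same spirit as your $c_e\ge 0$: the support of $\theta(e,\cdot)$ lies in $\arg\min\phi|_e$, and for $i\in A_\delta$, $j\notin A_\delta$ one has $\phi(j)-\phi(i)<\phi'(j)-\phi'(i)$) yields $\sum_{i\in e\cap A_\delta}(\theta(e,i)-\theta'(e,i))\le 0$. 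Hence $\delta|A_\delta|\le\sum_{i\in A_\delta}(\phi(i)-\phi'(i))\le 0$, so $A_\delta=\emptyset$; symmetry and $\delta\downarrow 0$ finish. In effect this is your argument with the smooth weight $\chi(\phi)-\chi(\phi')$ replaced by the indicator $\oneu{A_\delta}$, and that replacement is precisely what makes the Fubini step trivial.
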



\subsection{$\mH_*$ and $\mH_{**}$}
\label{sec:mh_-mh}

It is easy to check that the isomorphism between vertex rooted 
hypergraphs defined in Definition~\ref{def:isomorphism} is an equivalence relation. For a vertex rooted hypergraph $(H, i)$, let $[H, i]$ denote the equivalence class corresponding to $(H, i)$. Also, the isomorphism between 
edge-vertex rooted hypergraphs defined in Definition~\ref{def:isomorphism} is an equivalence relation. Let $[H, e, i]$ be the equivalence class corresponding to $(H, e, i)$. 


\begin{definition}
\label{def:H*-H**}
Let $\mH_*$ be the set of all equivalence classes of connected vertex rooted hypergraphs and $\mH_{**}$ the set of all 
equivalence classes of connected edge-vertex rooted hypergraphs. Hence, each element of $\mH_*$ is of the form $[H, i]$, where $[H,i]$ denotes the equivalence class of $(H, i)$, where $i\in V(H)$ for some
connected hypergraph $H = \langle V, E \rangle$. Similarly, each element of $\mH_{**}$ is of the form $[H, e, i]$, where $e \in E(H)$ and $i\in V(H)$ such that $i \in e$ for some connected hypergraph $H = \langle V, E \rangle$.
\end{definition}


\begin{definition}
 For two vertex rooted hypergraphs $(H, i)$ and $(H', i')$, define
  \begin{equation*}
    d_{*}((H, i) , (H', i')) := \frac{1}{1 +  m^*}~,
  \end{equation*}
where $m^* := \sup \{ m \geq 1: (H, i) \equiv_m (H', i') \}$, and $m^* := 0$ if there is no $m \geq 1$ satisfying this.
  For two equivalence classes $[H, i] \in \mH_*$ and $[H', i'] \in \mH_*$, define $d_{\mH_*}([H, i], [H', i'])$ to be $d_*((H, i), (H', i'))$ where $(H, i)$ and $(H', i')$ are arbitrary members of $[H, i]$ and $[H', i']$, respectively.
  For two edge-vertex rooted hypergraphs $(H, e, i)$ and $(H', e', i')$, define 
  \begin{equation*}
    d_{**}((H, e, i) , (H', e', i')) := \frac{1}{1 +  m^* },
  \end{equation*}
 where $m^* := \sup \{ m \geq 1: (H, e, i) \equiv_m (H', e', i') \}$, and $m^* := 0$ if there is no $m \geq 1$ satisfying this.
  For two equivalence classes $[H, e, i] \in \mH_{**}$ and $[H', e', i'] \in \mH_{**}$, define $d_{\mH_{**}}([H, e, i], [H', e', i'])$ to be $d_{**}((H, e, i), (H', e', i'))$ where $(H, e, i)$ and $(H', e', i')$ are arbitrary members of $[H, e, i]$ and $[H', e', i']$, respectively.
\end{definition}

Since all members of $[H, i]$ are isomorphic, it is not difficult to see that 
$d_{\mH_*}$ is well--defined. 
Note that $(H, i) \equiv_m (H', i')$ and $(H', i') \equiv_{m'} (H'', i'')$ implies $(H, i) \equiv_{\min \{ m, m' \}} (H'', i'')$. Hence $d_{\mH_*}$ satisfies the triangle inequality. Moreover, $d_{\mH_*}([H, i], [H', i']) = 0$ iff $(H, i) \equiv_m (H', i')$ for all $m$, i.e. $[H, i] = [H', i']$. Hence $d_{\mH_*}$ 
defines a metric on $\mH_*$.
We will show in Appendix~\ref{sec:mh_-mh_-are} that $\mH_*$ with 
the metric $d_{\mH_*}$ is a Polish space (see Corollary~\ref{cor:H*-H**-Polish}).
Similarly, $d_{\mH_{**}}$ is well defined and gives a metric on $\mH_{**}$. In Appendix~\ref{sec:mh_-mh_-are} we will also show that $\mH_{**}$ with $d_{\mH_{**}}$ is a Polish space. 


\begin{rem}
\label{rem:H*-function-equivalence-class-relaxation}
One can think of a function $f$ on $\mH_*$ as a function on vertex rooted hypergraphs which is loyal to the isomorphism relation, i.e. $f((H, i)) = f((H', i'))$ whenever $(H, i) \equiv (H', i')$. This
allows us to abuse notation and write $f(H, i)$ instead of $f([H, i])$.
We will follow a similar convention for functions on $\mH_{**}$.
\end{rem}





\begin{definition}
$\mT_*$ and $\mT_{**}$ denote the set of equivalence classes of connected vertex rooted hypertrees and connected edge--vertex rooted hypertrees, respectively. It can be checked that 
$\mT_*$ (respectively $\mT_{**}$) is a closed subset of 
$\mH_*$ (respectively $\mH_{**}$).
\end{definition}


\subsection{Operators for total load and average load: $\partial$ and $\nabla$}
\label{sec:partial-nabla}

\begin{definition}
  \label{def:partial}
  For a function $f: \mH_{**} \rightarrow \reals$, define $\partial f: \mH_* \rightarrow \reals$ as follows. For an equivalence class $[H, i] \in \mH_*$, pick an arbitrary $(H', i') \in [H, i]$ and define
  \begin{equation*}
    \partial f([H, i]) = \sum_{e' \in E(H'), e' \ni i'} f([H', e', i']) 
  \end{equation*}
\end{definition}

\begin{rem}
\label{rem:partial-well-defined}
Note that in the above definition, $[H', e', i']$ denotes the equivalence class of edge--vertex rooted hypergraph $(H', e', i')$. Also, since all the representatives of $[H, i]$ are isomorphic, it is easy to check that the above expression is not dependent on the specific choice of $(H', i')$. More precisely, if $(H_1, i_1)$ and $(H_2, i_2)$ are both members of $[H, i]$, then if $\phi:V(H_1) \rightarrow V(H_2)$ is the function establishing the isomorphism, $\phi$ gives a one to one mapping between the set $\{e \in E(H_1), e \ni i_1\}$ and $\{e \in E(H_2), e \ni i_2\}$ and also $[H_1, e, i_1] \equiv [H_2, \phi(e), i_2]$. Hence
\begin{equation*}
  \sum_{e \in E(H_1), e \ni i_1} f([H_1, e, i_1]) = \sum_{e \in E(H_2), e \ni i_2} f([H_2, e, i_2]),
\end{equation*}
which shows that $\partial f$ is well--defined.
\end{rem}

\begin{rem}
  \label{rem:partial-simplified-notation}
  By the above discussion, we may write $\partial f(H, i) = \sum_{e \ni i} f(H, e, i)$, where by $(H, i)$ we mean any arbitrary member of $[H, i]$.
\end{rem}

\begin{rem}
  \label{rem:partial-theta-H**-unify}
By abuse of notation, we can think of 
$\partial f$
as a function on $\mH_{**}$ by identifying 
$\partial f(H, e, i):= \partial f(H, i)$. 
This will be helpful when we have functions both on $\mH_*$ and $\mH_{**}$ and want to unify the domain.
\end{rem}


\begin{definition}
  \label{def:nabla}
  For a function $f: \mH_{**} \rightarrow \reals$, define the function $\nabla f: \mH_{**} \rightarrow \reals$ as follows. Given $[H, e, i] \in \mH_{**}$, take an arbitrary representative $(H', e', i')\in [H, e, i]$ and define 
  \begin{equation*}
    \nabla f([H, e, i]) := \frac{1}{|e'|} \sum_{j' \in e'} f([H', e', j']).
  \end{equation*}
\end{definition}


\begin{rem}
  \label{rem:cap-del-well-defined}
  As in our discussion in Remark~\ref{rem:partial-well-defined}, it can be easily checked that the above expression does not depend on the specific choice $(H', e', i')$. We can therefore abuse notation and write $\nabla f(H, e, i) = \frac{1}{|e|} \sum_{j \in e} f(H, e, j)$.
\end{rem}


\begin{definition}
  \label{def:deg-var}
  For a distribution $\mu \in \mP(\mH_*)$, define
  \begin{equation*}
    \deg(\mu) := \int_{\mH_*} \deg_H(i) d \mu.
  \end{equation*}
\end{definition}



\subsection[Directed Measure]{ From \texorpdfstring{$\mu \in \mP(\mH_*)$}{mu in P(H*)} to its directed version \texorpdfstring{$\vmu \in \mM(\mH_{**})$}{}}
\label{sec:vmu-}


\begin{definition}
  Given $\mu \in \mP(\mH_*)$ 
  with $\deg(\mu) < \infty$,  define the measure $\vmu \in \mM(\mH_{**})$ as the the one with the property that
  for any Borel function $f: \mH_{**} \rightarrow [0,\infty)$, we have 
  \begin{equation*}
    \int_{\mH_{**}} f d \vmu = \int_{\mH_*} \partial f d \mu.
  \end{equation*}
\end{definition}



Note that 
  $\deg(\mu) = \int_{\mH_{**}} 1 d \vmu = \vmu(\mH_{**})$
is the total mass of $\vmu$. Hence the assumption $\deg(\mu) < \infty$ guarantees $\vmu(\mH_{**}) < \infty$ and so $\vmu \in \mM(\mH_{**})$.

This following useful lemma is proved in Appendix~\ref{sec:prop-unim-meas}.


\begin{lem}
  \label{lem:A-as-Atilde-ae}
  $(i)$ Assume $A \subseteq \mH_*$ happens $\mu$--almost surely\ Then $\tilde{A} \subseteq \mH_{**}$ defined as 
  \begin{equation*}
    \tilde{A} := \{ [H, e, i] \in \mH_{**}: [H, i] \in A \},
  \end{equation*}
  happens $\vmu$--almost everywhere, i.e.\ $\vmu(\tilde{A}) = \vmu(\mH_{**})$. 

$(ii)$ Assume $B \subseteq \mH_{**}$ happens $\vmu$--almost everywhere. Then, $\tilde{B}:= \{[H,i] \in \mH_*: [H, e, i] \in B\, \, \forall \, e \ni i\}$ happens $\mu$--almost surely.
\end{lem}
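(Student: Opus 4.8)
The plan is to unwind the definition of $\vmu$ via the identity $\int_{\mH_{**}} f \, d\vmu = \int_{\mH_*} \partial f \, d\mu$ applied to carefully chosen indicator functions, keeping in mind that $\deg(\mu)<\infty$ is what makes $\vmu$ a finite measure so that statements like $\vmu(\tilde A)=\vmu(\mH_{**})$ are meaningful. Before doing this I would want to check that the sets $\tilde A$ and $\tilde B$ are actually Borel, so that $\vmu(\tilde A)$ and $\mu(\tilde B)$ make sense: $\tilde A$ is the preimage of $A$ under the continuous (indeed $1$-Lipschitz) forgetful map $\pi: \mH_{**}\to\mH_*$ sending $[H,e,i]\mapsto[H,i]$, hence Borel when $A$ is; and $\tilde B$ can be written using the same map together with the countable/measurable structure of the edge set around the root, which I would make precise by expressing it as in part (ii) via a countable intersection over edges.

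For part (i), let $A\subseteq\mH_*$ with $\mu(A)=1$, and set $f := \mathbbm{1}[\pi(\cdot)\notin A] = \mathbbm{1}_{\mH_{**}\setminus\tilde A}$, a nonnegative Borel function on $\mH_{**}$. Then for $[H,i]\in\mH_*$,
\[
\partial f(H,i) \;=\; \sum_{e\ni i} f(H,e,i) \;=\; \sum_{e\ni i}\mathbbm{1}[[H,i]\notin A] \;=\; \deg_H(i)\,\mathbbm{1}[[H,i]\notin A],
\]
so $\partial f = 0$ $\mu$-a.s.\ because $\mu(A)=1$. Hence
\[
\vmu(\mH_{**}\setminus\tilde A) \;=\; \int_{\mH_{**}} f\, d\vmu \;=\; \int_{\mH_*}\partial f\, d\mu \;=\; 0,
\]
which combined with $\vmu(\mH_{**})=\deg(\mu)<\infty$ gives $\vmu(\tilde A)=\vmu(\mH_{**})$, as claimed.

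For part (ii), let $B\subseteq\mH_{**}$ with $\vmu(\mH_{**}\setminus B)=0$, and set $g := \mathbbm{1}_{\mH_{**}\setminus B}$, so that $\int_{\mH_*}\partial g\, d\mu = \int_{\mH_{**}} g\, d\vmu = 0$. Since $\partial g(H,i) = \sum_{e\ni i}\mathbbm{1}[[H,e,i]\notin B]\ge 0$ is an integer-valued nonnegative function with zero $\mu$-integral, it must vanish $\mu$-a.s.; that is, for $\mu$-a.e.\ $[H,i]$ we have $[H,e,i]\in B$ for every $e\ni i$, which is precisely the statement that $\tilde B$ happens $\mu$-a.s. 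The one point requiring a line of care is the interchange of the (countable) sum over $e\ni i$ with the integral over $\mu$, which is justified by Tonelli since everything is nonnegative; this is already implicit in Definition~\ref{def:partial} and Remark~\ref{rem:partial-well-defined}.

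The main obstacle is not the computation, which is essentially bookkeeping, but the measurability verifications: confirming that $\pi$ is Borel measurable between the metric spaces $\mH_{**}$ and $\mH_*$ (it is continuous, since $(H,e,i)\equiv_m(H',e',i')$ implies $(H,i)\equiv_m(H',i')$), and that $\tilde A$, $\tilde B$ and the functions $f,g,\partial f,\partial g$ are Borel — the last because $\partial$ of a bounded Borel function on $\mH_{**}$ is a Borel function on $\mH_*$, which follows from approximating by functions depending only on finite depth and using local finiteness. Once these are in hand, both parts are immediate from the displayed identities above.
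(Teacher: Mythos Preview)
Your proof is correct and follows essentially the same approach as the paper's: both unwind the definition of $\vmu$ via $\int f\,d\vmu=\int\partial f\,d\mu$ applied to indicator functions. The only cosmetic difference is that you work with the complements (integrating $\mathbbm{1}_{\tilde A^c}$ and $\mathbbm{1}_{B^c}$ to get zero), while the paper integrates $\mathbbm{1}_{\tilde A}$ and $\mathbbm{1}_B$ directly and compares against $\int\deg_H(i)\,d\mu$; your attention to the measurability of $\tilde A$, $\tilde B$ and the map $\pi$ is more than the paper provides and is a welcome addition.
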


The following fact relating the convergence of a sequence of functions on $\mH_*$ and that of their counterparts on $\mH_{**}$ will be useful later. This is proved in Appendix~\ref{sec:prop-unim-meas}.

\begin{lem}
  \label{lem:a.s.in-mu--a.e.in-vmu}
  Let $f_k: \mH_* \rightarrow \reals, k \geq 0,$ be 
  measurable functions such that we have $\lim_{k \rightarrow \infty} f_k = f_0$, $\mu$--almost surely. Then, if we define their $\mH_{**}$ counterparts $\tilde{f}_k : \mH_{**} \rightarrow \reals$ via 
  \begin{equation*}
    \tilde{f}_k(H, e, i) := f_k(H, i),
  \end{equation*}
  then we have $\tilde{f}_k \rightarrow \tilde{f}_0$, $\vmu$--almost everywhere.
\end{lem}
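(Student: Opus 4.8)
The plan is to reduce the statement about functions on $\mH_{**}$ to the corresponding statement about subsets proved in Lemma~\ref{lem:A-as-Atilde-ae}(i), exploiting the fact that almost sure convergence of a sequence of measurable functions is the same thing as a certain set having full measure. Concretely, let
\begin{equation*}
  A := \{ [H, i] \in \mH_* : \lim_{k \to \infty} f_k(H, i) = f_0(H, i) \}.
\end{equation*}
First I would note that $A$ is a measurable subset of $\mH_*$: it is the set where the sequence of real‑valued measurable functions $f_k$ converges to the measurable function $f_0$, which can be written in the usual way as a countable intersection over rationals $\varepsilon > 0$ and countable union over $N$ of the measurable sets $\bigcap_{k \geq N} \{ |f_k - f_0| \leq \varepsilon \}$; hence $A \in \mathcal{B}(\mH_*)$. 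By hypothesis $\mu(A) = 1$, i.e.\ $A$ happens $\mu$--almost surely.

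Next I would apply Lemma~\ref{lem:A-as-Atilde-ae}(i) to this set $A$, obtaining that
\begin{equation*}
  \tilde{A} = \{ [H, e, i] \in \mH_{**} : [H, i] \in A \}
\end{equation*}
satisfies $\vmu(\tilde{A}) = \vmu(\mH_{**})$, i.e.\ $\tilde{A}$ happens $\vmu$--almost everywhere. The remaining step is to identify $\tilde{A}$ with the set on which $\tilde{f}_k \to \tilde{f}_0$. By the definition $\tilde{f}_k(H, e, i) = f_k(H, i)$, for a fixed $[H, e, i] \in \mH_{**}$ we have $\lim_k \tilde{f}_k(H, e, i) = \lim_k f_k(H, i)$, and this limit exists and equals $\tilde{f}_0(H, e, i) = f_0(H, i)$ precisely when $[H, i] \in A$, i.e.\ precisely when $[H, e, i] \in \tilde{A}$. (One should check that $\tilde{f}_k$ is itself measurable on $\mH_{**}$, which follows since $[H,e,i] \mapsto [H,i]$ is a continuous, hence measurable, map $\mH_{**} \to \mH_*$ — the projection forgetting the distinguished edge — so each $\tilde{f}_k$ is a composition of measurable maps.) Therefore the set $\{ [H,e,i] : \tilde f_k(H,e,i) \to \tilde f_0(H,e,i) \}$ equals $\tilde A$, which has full $\vmu$--measure, and this is exactly the assertion $\tilde{f}_k \to \tilde{f}_0$, $\vmu$--almost everywhere.

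I do not anticipate a serious obstacle here; the lemma is essentially a bookkeeping corollary of Lemma~\ref{lem:A-as-Atilde-ae}. The only point requiring a little care is the measurability bookkeeping — confirming that $A$ is Borel in $\mH_*$ and that the $\tilde f_k$ are Borel in $\mH_{**}$ — together with the clean observation that ``convergence of $\tilde f_k$ at $[H,e,i]$'' depends only on the underlying $[H,i]$, so the convergence set upstairs is exactly the preimage $\tilde A$ of the convergence set $A$ downstairs. Everything then follows by invoking part (i) of the preceding lemma.
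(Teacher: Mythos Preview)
Your proposal is correct and follows essentially the same approach as the paper: define $A$ as the convergence set in $\mH_*$, observe that the convergence set for the $\tilde f_k$ in $\mH_{**}$ is exactly $\tilde A = \{[H,e,i]:[H,i]\in A\}$, and invoke Lemma~\ref{lem:A-as-Atilde-ae}(i). The paper's version is terser and omits the measurability bookkeeping you spell out, but the argument is identical.
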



Another useful lemma is the following one, which relates the convergence of
a sequence of functions on $\mH_{**}$ to that of their $\partial$ on $\mH_*$.
This lemma is also proved in Appendix~\ref{sec:prop-unim-meas}.

\begin{lem}
  \label{lem:vmu-a.e.-del-mu-a.s.}
  Given $\mu \in \mP(\mH_*)$ and a sequence of functions $f_k :\mH_{**} \rightarrow \reals$, $k \ge 0$, assume that we have $f_k \rightarrow f_0$ $\vmu$--almost everywhere. Then we have $\partial f_k \rightarrow \partial f_0$ $\mu$--almost surely.
\end{lem}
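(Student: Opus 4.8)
}

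The plan is to transfer the $\vmu$-almost-everywhere convergence statement on $\mH_{**}$ back to $\mu$ on $\mH_*$ by exploiting the way $\vmu$ is built from $\mu$ via the operator $\partial$. First I would record the hypothesis as: there exists a Borel set $B \subseteq \mH_{**}$ with $\vmu(B) = \vmu(\mH_{**})$ (equivalently $\vmu(B^c) = 0$) such that $f_k(H,e,i) \to f_0(H,e,i)$ for every $[H,e,i] \in B$. I then want to produce a set $A \subseteq \mH_*$ with $\mu(A) = 1$ on which $\partial f_k \to \partial f_0$. The natural candidate is exactly the set furnished by part $(ii)$ of Lemma~\ref{lem:A-as-Atilde-ae}: since $B$ happens $\vmu$-almost everywhere, the set $\tilde B := \{[H,i] \in \mH_* : [H,e,i] \in B \text{ for all } e \ni i\}$ happens $\mu$-almost surely.

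The second step is the pointwise argument on $\tilde B$. Fix $[H,i] \in \tilde B$ and pick a representative $(H',i')$. By definition of $\tilde B$, every edge-vertex rooted hypergraph $[H', e', i']$ with $e' \ni i'$ lies in $B$, so $f_k(H', e', i') \to f_0(H', e', i')$ for each such $e'$. Since $[H,i] \in \mH_*$ corresponds to a connected, locally finite hypergraph, the vertex $i'$ has finite degree $\deg_{H'}(i') < \infty$, so the sum defining $\partial f_k(H,i) = \sum_{e' \ni i'} f_k(H', e', i')$ is a \emph{finite} sum. A finite sum of convergent sequences converges to the sum of the limits, hence
\begin{equation*}
  \partial f_k(H, i) = \sum_{e' \ni i'} f_k(H', e', i') \longrightarrow \sum_{e' \ni i'} f_0(H', e', i') = \partial f_0(H, i).
\end{equation*}
This holds for every $[H,i] \in \tilde B$, and $\mu(\tilde B) = 1$, so $\partial f_k \to \partial f_0$ $\mu$-almost surely, as desired.

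I do not expect a serious obstacle here; the only point requiring a little care is making sure the relevant set really has full $\mu$-measure, and this is precisely what part $(ii)$ of Lemma~\ref{lem:A-as-Atilde-ae} delivers — without it, one would have to worry that the "bad" $\vmu$-null set in $\mH_{**}$, when pushed down to $\mH_*$, could fail to be $\mu$-null (a priori a single root in $\mH_*$ accounts for several fibers in $\mH_{**}$). The finiteness of $\deg_{H'}(i')$, which is guaranteed because all hypergraphs in this paper are locally finite, is what legitimizes interchanging limit and sum; this is the other ingredient one must not forget to invoke. Everything else is the routine "finite sum of limits is the limit of the finite sum" observation, together with the well-definedness of $\partial f_k$ on equivalence classes already established in Remark~\ref{rem:partial-well-defined}.
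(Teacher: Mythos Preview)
Your proposal is correct and follows essentially the same approach as the paper's proof: define the convergence set $B \subseteq \mH_{**}$, invoke part~$(ii)$ of Lemma~\ref{lem:A-as-Atilde-ae} to obtain a $\mu$-full set on which convergence holds at every edge through the root, and then use local finiteness to pass the limit through the finite sum defining $\partial$. Your write-up is in fact more explicit than the paper's about why the finite-degree hypothesis is needed, but the argument is the same.
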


\subsection{Local Weak Convergence}
\label{sec:local-weak-conv}

For a finite hypergraph $H$, define 
$u_H \in \mP(\mH_*)$
by choosing a vertex uniformly at random in $H$ as the root. More precisely, if $i$ is a vertex in $H$ and $H(i)$ denotes the connected component of $i$, define
\begin{equation*}
  u_H := 
  \frac{1}{|V(H)|} \sum_{i \in V(H)} \delta_{[H(i), i]}.
\end{equation*}
The reason why we take the connected component of $H$ is that $\mH_*$ is the space of equivalence classes of connected vertex rooted hypergraphs.

If, for a sequence of finite hypergraphs $\{ H_n \}$, 
$u_{H_n}$ 
converges weakly to some measure $\mu \in \mP(\mH_*)$, we say that $\mu$ is the ``local weak limit'' of the sequence $H_n$. 
The following lemma is useful in checking when local weak convergence occurs. See Appendix \ref{sec:proof-lemma-equivalent-condition-for-lwc} for a proof.


\begin{lem}
  \label{lem:eq-condition-local-weak-convergence}
  Given a sequence $\{ \mu_n \}_{n \geq 1}$ in $\mP(\mH_*)$, and $\mu \in \mP(\mH_*)$ such that $\text{supp}( \mu) \subseteq \mT_*$,  $\mu_n \Rightarrow \mu$ iff the following condition is satisfied: for all $d \geq 1$ and for all rooted hypertrees $(T, i)$ with depth at most $d$, if
  \begin{equation*}
    A_{(T, i)}:= \{ [H, j] \in \mH_*: (H, j)_d \equiv (T, i) \},
  \end{equation*}
then $\mu_n(A_{(T_, i)}) \rightarrow \mu(A_{(T,i)})$.
\end{lem}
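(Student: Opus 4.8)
The plan is to prove both directions by relating weak convergence on the Polish space $\mH_*$ (with metric $d_{\mH_*}$) to convergence of the measures of the cylinder-type sets $A_{(T,i)}$. The key structural fact I would establish first is that, when $\text{supp}(\mu)\subseteq\mT_*$, the relevant $d$-neighborhoods are honest hypertrees, so the sets $\{A_{(T,i)} : (T,i) \text{ a rooted hypertree of depth} \le d\}$ form a countable measurable partition of a $\mu$-full subset of $\mH_*$ into sets that are \emph{both open and closed} in $\mH_*$: indeed, $(H,j)_d \equiv (T,i)$ is exactly the condition $d_{\mH_*}([H,j],[T,i]) \le 1/(1+d)$ restricted to depth $d$, and since two rooted hypergraphs agreeing up to depth $d$ is a clopen condition (the metric only sees finite depth), each $A_{(T,i)}$ is clopen. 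Here I would use that a rooted hypergraph with depth at most $d$, once we truncate, is finite by local finiteness, so there are only countably many isomorphism classes $(T,i)$ of depth $\le d$ hypertrees, and these partition $\mT_*$ (up to the measure-zero issue of non-tree neighborhoods, which vanishes because $\mT_*$ is closed and carries all the mass of $\mu$).

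For the forward direction ($\mu_n \Rightarrow \mu \implies$ the condition), I would invoke the portmanteau theorem: since $A_{(T,i)}$ is clopen, its topological boundary is empty, hence $\mu(\partial A_{(T,i)}) = 0$, and weak convergence gives $\mu_n(A_{(T,i)}) \to \mu(A_{(T,i)})$ directly. One small subtlety: a priori $\mu_n$ is not supported on $\mT_*$, so $A_{(T,i)}$ should be understood as the subset of \emph{all} of $\mH_*$ whose depth-$d$ truncation is isomorphic to $(T,i)$; this set is still clopen in $\mH_*$ (the truncation and isomorphism-check both depend only on the depth-$d$ ball), so portmanteau still applies verbatim.

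For the reverse direction (the condition $\implies \mu_n \Rightarrow \mu$), I would again use portmanteau, in the form: it suffices to show $\liminf_n \mu_n(U) \ge \mu(U)$ for every open $U \subseteq \mH_*$, or equivalently $\mu_n(A) \to \mu(A)$ for every $A$ in a convergence-determining class. The natural choice is the algebra generated by the clopen sets $\{A_{(T,i)}\}_{d,(T,i)}$: every open ball in $\mH_*$ of radius $1/(1+d)$ around a point of $\mT_*$ is a (countable, but in fact finite after truncation) union of sets $A_{(T',i')}$ of depth-$d$ type, and every open set is a countable union of such balls. Concretely, fix an open $U$ and $\epsilon>0$; choose finitely many depth-$d$ types $(T_1,i_1),\dots,(T_r,i_r)$ with $\bigcup_\ell A_{(T_\ell,i_\ell)} \subseteq U$ and $\mu(U \setminus \bigcup_\ell A_{(T_\ell,i_\ell)}) < \epsilon$ — possible because $\mu(\mH_* \setminus \mT_*)=0$ and the depth-$d$ types exhaust $\mT_*$ as $d \to \infty$, while $U$ open means each point of $U \cap \mT_*$ has a whole ball, i.e.\ a whole set $A_{(T,i)}$, inside $U$. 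Then $\liminf_n \mu_n(U) \ge \liminf_n \sum_\ell \mu_n(A_{(T_\ell,i_\ell)}) = \sum_\ell \mu(A_{(T_\ell,i_\ell)}) \ge \mu(U) - \epsilon$, and letting $\epsilon \downarrow 0$ finishes it.

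The main obstacle I anticipate is the bookkeeping in the reverse direction: carefully showing that the clopen sets $A_{(T,i)}$ (over all depths $d$ and all depth-$d$ rooted hypertree types) generate the Borel $\sigma$-field and are a convergence-determining class, and in particular that an arbitrary open set can be approximated from inside, up to $\mu$-measure $\epsilon$, by a \emph{finite} disjoint union of such sets. This rests on (a) the hypothesis $\text{supp}(\mu)\subseteq\mT_*$, which makes the tree-type partition cover $\mu$-almost everything, and (b) the fact that $\mH_*$ is Polish (Corollary~\ref{cor:H*-H**-Polish}) so that finite-measure approximation of open sets is available; the tree hypothesis is what rules out having to deal with infinitely many non-isomorphic neighborhood types that are "close" in a way the clopen partition cannot separate. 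Everything else is a routine application of the portmanteau theorem.
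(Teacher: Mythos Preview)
Your proposal is correct and follows essentially the same approach as the paper. Both directions rest on the fact that each $A_{(T,i)}$ is clopen (equivalently, its indicator is bounded continuous); for the harder implication the paper approximates uniformly continuous bounded test functions by simple functions on finitely many depth-$d$ cells, whereas you approximate open sets from inside by finite unions of such cells via the $\liminf$ form of portmanteau --- a cosmetic variation on the same idea.
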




The way to understand $u_{H_n} \Rightarrow \mu$ is that the local structure around a uniformly chosen vertex in $H_n$, where local means up to a fixed depth, looks more and more similar to that corresponding to $\mu$, hence the term ``local'' weak convergence.
In particular, Lemma \ref{lem:eq-condition-local-weak-convergence} says that if we have a sequence of finite hypergraphs $H_n$, 
then $u_{H_n} \Rightarrow \mu$,
where $\mu \in \mP(\mT_*)$, 
if and only if 
for each $d \geq 1$ and all rooted hypertrees $(T, i)$, if we choose a vertex $v$ in $H_n$ uniformly at random, the probability that  the local structure of $H_n$ rooted at $v$, i.e. $(H_n, v)_d$, is isomorphic to $(T, i)$
of depth at most $d$ converges to the probability that the rooted tree with law $\mu$ up to depth $d$ is isomorphic to $T$.  
See \cite{benjamini2001schramm}, \cite{aldous2004objective}, \cite{aldous2007processes}, \cite{Bordenave14randomgraphs} 
for  a review of the notion of local weak convergence in the graph regime.


\subsection{Balanced allocations with respect to a distribution on $\mH_*$}
\label{sec:balanc-alloc-with}

\begin{definition}
\label{def:Borel-allocation}
A function $\Theta: \mH_{**} \rightarrow [0,1]$ is called a Borel  allocation,
or just an allocation, if $\Theta$ is a Borel function and also 
\begin{equation*}
  \nabla \Theta(H, e, i) = \frac{1}{|e|}.
\end{equation*}
\end{definition}


From Definition \ref{def:nabla} and using our simplified notational
conventions, we can equivalently say that $\Theta$ is
an allocation precisely when it is a Borel function and $\sum_{i \in e} \Theta(H, e, i) = 1$ for the edge--vertex rooted hypergraph 
$(H,e,i)$. It may seem strange that the condition is required only at the
root edge. This will become more clear when we discuss unimodular 
measures in the next subsection, see especially Proposition~\ref{prop:everyting-shows-at-the-root}.

\begin{definition}
\label{def:borel-balanced-allocation}
  Assume $\mu \in \mP(\mH_*)$. A Borel allocation $\Theta: \mH_{**} \rightarrow [0,1]$ is called balanced with respect to $\mu$ if for $\vmu$ almost all $[H, e, i] \in \mH_{**}$ and any $(H', e', i')
  \in [H, e, i]$ we have:
  \begin{equation*}
    \forall j_1, j_2 \in e', \quad \partial \Theta([H', e', j_1]) > \partial \Theta([H', e', j_2]) \qquad \Rightarrow \qquad \Theta([H', e', j_1]) = 0.
  \end{equation*}
\end{definition}

\begin{rem}
  \label{rem:balanced-H*-equivalence-class}
  As in our discussion in Remarks~\ref{rem:partial-well-defined} and \ref{rem:cap-del-well-defined}, the above predicate does not depend on the specific choice of $(H', e', i')
  \in [H, e, i]$. Hence, by abuse of notation, we may write the above predicate simply as ``for $\vmu$ almost all $(H, e, i)$ and $j_1, j_2 \in e$,  $\partial \Theta(H, j_1) > \partial \Theta(H, j_2)$ implies $\Theta(H, e, j_1) = 0$''.
\end{rem}

Similar to our notion of $\epsilon$--balanced allocation for a specific hypergraph, we can define $\epsilon$--balanced allocations with respect to a measure $\mu \in \mP(\mH_*)$. As in the discussion in Remark~\ref{rem:balanced-H*-equivalence-class}, we use a simplified language in describing the definition.
\begin{definition}
  \label{def:epsilon-balanced-allocation-H*}
  Assume $\mu \in \mP(\mH_*)$. A Borel allocation $\Theta_\epsilon: \mH_{**} \rightarrow [0,1]$ is called $\epsilon$--balanced with respect to $\mu$ if for $\vmu$ almost all $[H, e, i] \in \mH_{**}$ we have 
  \begin{equation*}
    \Theta_\epsilon(H, e, i) = \frac{\exp \left ( - \partial \Theta_\epsilon(H, i)/ \epsilon \right )}{ \sum_{j \in e} \exp \left ( - \partial \Theta_\epsilon(H, j) / \epsilon \right )}.
  \end{equation*}
\end{definition}




\subsection{Unimodularity}
\label{sec:unimodularity}

\begin{definition}
  \label{def:unimodularity}
  A probability measure $\mu \in \mP(\mH_*)$ is called unimodular if for every Borel function $f: \mH_{**} \rightarrow [0,\infty)$ we have 
  \begin{equation*}
    \int f d \vmu = \int \nabla f d \vmu.
  \end{equation*}
We denote the set of unimodular measures on $\mH_*$ by $\mU$.
\end{definition}

See \cite{aldous2007processes} for a definition of unimodularity for graphs. It can be easily checked that our definition of unimodularity reduces to the definition in \cite{aldous2007processes} when we restrict to graphs, i.e. when we restrict $\mu$ to have support on hypergraphs with all edges having size two. 


It can be shown that for a finite hypergraph, $u_H$ defined in Section~\ref{sec:local-weak-conv} is unimodular. Moreover, if a sequence of finite hypergraphs has a local weak limit $\mu$, then $\mu$ is unimodular\footnote{Whether the converse is true is an open question, even in the graph regime}. See Appendix~\ref{sec:everything-shows-at} for a proof. Roughly speaking, unimodular measures are extensions of the kinds of measures on equivalence classes of vertex rooted hypergraphs that arise from choosing the root uniformly at random in finite hypergraphs.


The following property of unimodular measures is crucial in our analysis. It essentially says that ``everything shows at the root'' of a unimodular measure. See Appendix~\ref{sec:everything-shows-at} for its proof.

\begin{prop}
  \label{prop:everyting-shows-at-the-root}
  Assume $\tau : \mH_{**} \rightarrow \reals$ and $\mu \in \mU$ is a unimodular probability measure such that $\tau = 1$ $\vmu$--almost everywhere. Then there exists some $A \subset \mH_{**}$ such that $\vmu(A^c) = 0$ and
  \begin{equation*}
    \forall [H, e, i] \in A \quad \tau( [H, e', i']) = 1, \quad \forall e' \in E(H), i' \in e', \forall (H,e,i) \in [H,e,i]~.
  \end{equation*}
\end{prop}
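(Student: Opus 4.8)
The statement "everything shows at the root" is really a statement that a certain $\vmu$-null set, when "spread out" over all edge-vertex pairs reachable in the hypergraph, remains $\vmu$-null. The plan is to argue by contradiction using the defining identity of unimodularity applied to a cleverly chosen nonnegative Borel function. Suppose $\tau = 1$ holds $\vmu$-a.e., but the conclusion fails. Let $B := \{[H,e,i] \in \mH_{**} : \tau([H,e,i]) \neq 1\}$, so $\vmu(B) = 0$. The failure of the conclusion means that the set
\[
  \tilde{B} := \{ [H,e,i] \in \mH_{**} : \exists\, e' \in E(H),\ i' \in e' \text{ with } [H,e',i'] \in B \}
\]
has $\vmu(\tilde{B}) > 0$; indeed $A$ in the statement can be taken to be the complement of $\tilde B$ (intersected with the full-measure set where $\tau = 1$), and if $\vmu(\tilde B^c) < \vmu(\mH_{**})$ the conclusion fails. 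So it suffices to show $\vmu(\tilde B) = 0$.

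The key step is to realize $\tilde B$ as reachable from $B$ by a "transport of mass" that unimodularity controls. First I would reduce to the case of edges containing a $B$-pair at bounded distance: write $\tilde B = \bigcup_{d \geq 0} \tilde B_d$ where $\tilde B_d$ consists of those $[H,e,i]$ such that some $[H,e',i']\in B$ has $e'$ within distance $d$ of $e$ in $H$; by countable subadditivity it is enough to show $\vmu(\tilde B_d) = 0$ for each $d$, and by induction it suffices to handle $d=1$, i.e.\ the case where $e'$ shares a vertex with $e$ (or even just $e' = e$ first, then edges adjacent through a common vertex). For the base case $e' = e$: apply the unimodularity identity to $f([H,e,i]) := \one{\exists j \in e : [H,e,j] \in B}$. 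Then $f$ is nonnegative and Borel, $\nabla f([H,e,i]) = \frac{1}{|e|}\sum_{j\in e} f([H,e,j]) = f([H,e,i])$ since $f$ depends only on the edge, not on the root vertex in it; but also $\int f \, d\vmu \geq \int \one{[H,e,i]\in B}\, d\vmu$... that gives the wrong direction. Instead use: $f \leq \sum_{j \in e}\one{[H,e,j]\in B}$ pointwise is false too. The right move is $\nabla\!\left(\one{[\cdot]\in B}\right)([H,e,i]) = \frac{1}{|e|}\sum_{j\in e}\one{[H,e,j]\in B} = \frac{1}{|e|}$ on the set where $e$ contains a $B$-pair and every rooting of $e$ lands in $B$; more carefully, let $g([H,e,i]) := \one{[H,e,i]\in B}$, so $\int g\, d\vmu = \vmu(B) = 0$, and by unimodularity $\int \nabla g\, d\vmu = 0$, hence $\nabla g = 0$ $\vmu$-a.e., which forces $\sum_{j\in e}\one{[H,e,j]\in B} = 0$ for $\vmu$-a.e.\ $[H,e,i]$, i.e.\ $\vmu(\tilde B_0) = 0$ where $\tilde B_0 = \{[H,e,i] : \exists j \in e, [H,e,j]\in B\}$. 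This handles same-edge.

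For the inductive/adjacency step, I would define, for a fixed target set $C$ with $\vmu(C)=0$, the function $h([H,e,i]) := \#\{ (e'', j) : j \in e \cap e'',\ [H,e'',j] \in C \}$ (or an indicator version), which is Borel and nonnegative, and relate $\int h\, d\vmu$ to $\vmu(C)$ via unimodularity applied after a suitable symmetrization over the pair of edges meeting at a common vertex — concretely, apply Definition~\ref{def:unimodularity} to the function $f([H,e,i]) := \one{i \in e'' \in E(H),\ [H,e'',i]\in C \text{ for some } e'' \neq e}$ and use $\nabla f$ together with the already-established fact that rooting inside an edge costs a factor $|e|$ but preserves $\vmu$-nullity. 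Iterating $d$ times (or, cleaner, applying the identity once to the "distance-$\leq d$" version $h_d$) shows $\vmu(\tilde B_d) = 0$ for all $d$, hence $\vmu(\tilde B) = 0$. Then $A := \{\tau = 1\}^{\text{full-measure}} \setminus \tilde B$ works: for $[H,e,i]\in A$, no edge-vertex rooting of $H$ lies in $B$, so $\tau = 1$ at every $[H,e',i']$ with $e'\in E(H)$, $i'\in e'$.

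The main obstacle I expect is the adjacency/transport step: correctly packaging "reach an edge adjacent to a null set through a shared vertex" as $\int f\, d\vmu = \int \nabla f\, d\vmu$ for an honest Borel $f$, and making sure the mass-transport bookkeeping (a common vertex may lie in many edges, and degrees are only assumed finite, not bounded) does not introduce an infinite factor — this is exactly where finiteness of $\deg_H(i)$ and Borel-measurability of the relevant counting functions must be used carefully, and where one must check $\vmu$-integrability before invoking the unimodularity identity. Reducing everything to indicator functions (so the integrals are just measures of sets) rather than counts, and peeling one layer of adjacency at a time, is the way I would keep this under control.
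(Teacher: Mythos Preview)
Your overall architecture matches the paper's proof: both decompose by distance from the root and reduce the inductive step to two atomic moves --- (a) change the root vertex within a fixed edge, and (b) change the edge while keeping the vertex fixed --- then iterate. Your treatment of move~(a) via unimodularity applied to $g=\oneu{B}$ is correct and is exactly what the paper does (its Lemma~\ref{lem:tau-tau1-tau2}, part~2).

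The gap is in move~(b), your ``adjacency step''. You propose to handle it by another application of unimodularity, but this cannot work: the operator $\nabla$ in Definition~\ref{def:unimodularity} averages over the vertices of a \emph{fixed} edge; it never relates $[H,e,i]$ to $[H,e'',i]$ for a different edge $e''\ni i$. No choice of nonnegative $f$ on $\mH_{**}$ will make the identity $\int f\,d\vmu=\int\nabla f\,d\vmu$ produce that relation --- your candidate $h$ in fact does not depend on the root vertex $i$ at all, so $\nabla h=h$ and unimodularity is vacuous for it. The paper instead uses the \emph{definition} of $\vmu$ in terms of $\mu$ (packaged as Lemma~\ref{lem:A-as-Atilde-ae}): from $\vmu(C)=0$ one gets $\int_{\mH_*}\partial\oneu{C}\,d\mu=0$, hence $\partial\oneu{C}=0$ $\mu$--a.s., i.e.\ for $\mu$--a.e.\ $[H,i]$ one has $[H,e'',i]\notin C$ for \emph{every} $e''\ni i$; lifting this statement back to $\mH_{**}$ by the same identity yields move~(b). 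No unimodularity is invoked here, and since all functions involved are indicators there is no integrability issue --- the obstacle you anticipated disappears once you use the right tool. Alternating (b) and (a) then gives the induction on distance exactly as in the paper.
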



Note that this statement is consistent with our intuition regarding unimodular measures: when some property holds at the root, since the root is chosen ``uniformly'' and so all vertices have the same ``weight'', that property should hold everywhere. 
See Lemma~2.3 in \cite{aldous2007processes} for a version of the above statement for graphs. 

\subsection{Unimodular Galton--Watson hypertrees}
\label{sec:unim-galt-wats}

In this section, we introduce an analogue of Galton Watson processes on graphs for hypertrees. In the graph regime, a Galton--Watson process is defined by generating the degree of the root at random from a given distribution and then, iteratively, the degree of each child is generated at random and so forth. In order to generalize the notion of a Galton--Watson process to hypertrees, since there might exist edges of different sizes, one needs to make sense of the ``degree'' of edges of each possible size at each node. 

To this end, we introduce the notion of {\em type} as a generalization of the notion of degree. The type of each node is a vector of integers specifying how many edges of each size a node is connected to. More precisely, since we want all the hypergraphs to be locally finite, we define the set of types, denoted by $\natszf$, as 
\begin{equation}
\label{eq:def-natsz}
  \natszf := \{ \gamma \in \natsz^{\{2, 3, \dots\}}: \gamma(k) = 0, k> k_0 \, \, \text{ for some } k_0 \geq 2 \},
\end{equation}
where $\natsz := \nats \cup \{ 0 \}$. For a type $\gamma \in \natszf$, $\gamma(k)$ determines the number of edges of size $k$ a node is connected to. For instance $(2, 1,0, 1)$ means a node is connected to 2 edges of size 2, 1 edge of size 3 and 1 edge of size 5 (we haven't shown the rest of the sequence, which is zero). 

For $\gamma \in \natszf$ define 
\begin{equation}
  \label{eq:type-norm-one}
  \norm{\gamma}_1 := \sum_{k \geq 2} \gamma(k),
\end{equation}
and 
\begin{equation}
  \label{eq:type-norm-infty}
  \norminf{\gamma} := \max\{k \geq 2: \gamma(k) > 0 \}.
\end{equation}
For $k \geq 2$, define $\typee_k \in \natszf$ to be the vector with value $1$ at coordinate $k$ and zero elsewhere. 


Assume $P \in \mP(\natszf)$ is a probability distribution over the set of 
types, such that $\ev{\Gamma(m)} < \infty$ for $m\geq 2$, where $\Gamma$ is a random variable with law $P$. For $m \geq 2$ such that $\ev{\Gamma(m)} > 0$, we define the size biased distributions $\hat{P}_m$  as
\begin{equation}
  \label{eq:size-biased-distirbution}
  \hat{P}_m(\gamma) = \frac{(\gamma(m) + 1)P(\gamma + \typee_m)}{\ev{\Gamma(m)}},
\end{equation}
where it is easy to check that the normalizing term makes $\hat{P}_m$ a probability distribution. In case $\ev{\Gamma(m)} = 0$, or equivalently $\Gamma(m) = 0$ with probability one, we define $\hat{P}_m$ to be an arbitrary distribution, e.g. $\hat{P}_m(\gamma) = 1$ when $\gamma$ is the type with all coordinates being zero.

Let $\emptyset$ denote the root of the Galton--Watson hypertree and let $\nvertex$ denote the set of vertices, so $\emptyset \in \nvertex$.
Let $\nedge$ denote the set of hyperedges of the Galton--Watson hypertree.
Each non-root element of $\nvertex$ will be of the form
$(s_1, e_1, i_1, \dots, s_k, e_k, i_k)$
where $s_j \geq 2$, $e_j \ge 1$, and $1 \leq i_j \leq s_j - 1$ for all $1 \leq j \leq k$. The semantics of $(s_1, e_1, i_1)$ is that it is the vertex 
numbered $i_1$ of the $e_1$-th copy of a hyperedge of size $s_1$
attached to the root, and so on. Thus, for example $(3,5,2,5,8,3)$
represents the vertex numbered $3$ of the eight hyperedge of size $5$
that attaches to the vertex labeled $2$ of the fifth hyperedge of size $3$
that attaches to the root. 
The elements of $\nedge$ are thus of the form
$(s_1, e_1, i_1, \dots, s_k, e_k)$,
where $s_j \geq 2$, $e_j \geq 1$, and $1 \leq i_j \leq s_j - 1$
for all $1 \leq j \leq k-1$.

 Given a sequence of types $\{ \gamma_a \}_{a \in \nvertex}$, we can construct a hypertree with vertex set and edge set $\nvertex$ and $\nedge$, respectively, where for $ a\in \nvertex$, $\gamma_a$ determines the type of the node $a$ in the subtree below node $a$. See Figure~\ref{fig:gamma-GWT} for an example. 


\begin{figure}
  \centering
    \begin{tikzpicture}[scale=1.2]
      \node (0) [Root,label={right:\color{magenta}$\emptyset$}] at (0,0) {};
      \node (212) [Node] at (-0.5,-1) {};
      \node (211) [Node] at (-1.5,-1) {};
      
      \node (311) [Node] at (0.5,-1) {};
      \node (312) [Node] at (1.5,-1) {};
      

      \node (312211) [Node] at (1.5,-1.7) {};
      

      \draw[thick, Cyan] (0,0) -- (-1.5,-1) {};
      \draw[thick, Cyan] (0,0) -- (-0.5,-1) {};
      
      \draw[thick, Cyan,rounded corners=10pt] ($(0,0)+(-0.3,0.4)$) -- ($(0.5,-1)+(-0.1,-0.2)$) -- ($(1.5,-1)+(0.4,-0.2)$) -- cycle;
      

      \draw[thick, Cyan] (1.5,-1) -- (1.5,-1.7);
      
      \node [below =1mm of 211,scale=0.7, magenta] {$(2,1,1)$};
      \node [below =1mm of 212,scale=0.7, magenta] {$(2,2,1)$};
      \node [below =1mm of 311,scale=0.7, magenta] {$(3,1,1)$};
      \node [right =2mm of 312,scale=0.7, magenta] {$(3,1,2)$};
      \node [below =1mm of 312211,scale=0.7, magenta] {$(3,1,2,2,1,1)$};
      
    \end{tikzpicture}
    \caption{\label{fig:gamma-GWT}The tree rooted at $\emptyset$ generated by the sequence $\{\gamma_a\}_{a \in \nvertex}$ where $\gamma_{\emptyset} = (2,1)$, $\gamma_{(3,1,2)}$, which determines the type of $(3,1,2)$ in the subtree below $(3,1,2)$, is equal to $(2)$ (which results in the single edge of size 2 below $(3,1,2)$), and $\gamma_a$ is the zero vector for all other nodes $a$.}
\end{figure}
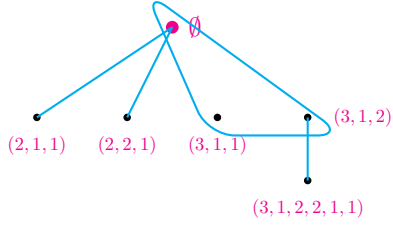


\begin{definition}
\label{def:ugwt(P)}
Let $P \in \mP(\natszf)$ such that $\ev{\Gamma(m)} < \infty$ for $m \geq 2$, where $\Gamma$ has the distribution $P$. Construct a random rooted tree $(T,\emptyset)$ by generating $(\Gamma_a, a \in \nvertex)$ independently such that $\Gamma_\emptyset$ has law $P$ and for any non--root node $a = (s_1, e_1, i_1, \dots, s_k, e_k, i_k)$, $\Gamma_a$ has law $\hat{P}_{s_k}$. Then $\ugwt(P)$ is the law of $[((T,\emptyset),\emptyset)]$ where $[((T,\emptyset),\emptyset)]$ denotes the equivalence class of $((T,\emptyset),\emptyset)$ in $\mH_*$,  with $(T,\emptyset)$ being the connected component of $\emptyset$ in $T$.
\end{definition}

One important observation is that $\ugwt(P)$, as in
Definition \ref{def:ugwt(P)}, is unimodular. See Appendix~\ref{sec:unimodularity-ugwtp} for the proof.
\begin{prop}
  \label{prop:ugwt-unimodular}
  Assume $P \in \mP(\natszf)$ is a distribution over types such that $\ev{\Gamma(k)} < \infty$ for $k \geq 2$, then $\ugwt(P)$ is unimodular. 
\end{prop}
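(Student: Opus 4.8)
The plan is to verify the defining identity of Definition~\ref{def:unimodularity}, namely $\int f\,d\vmu = \int \nabla f\,d\vmu$ for every Borel $f:\mH_{**}\to[0,\infty)$, for $\mu = \ugwt(P)$, by stratifying according to the size of the root edge and exploiting the precise form of the size-biased laws $\hat P_m$. For $m\ge 2$ write $\mH_{**}^{(m)} := \{[H,e,i]\in\mH_{**}: |e|=m\}$; since $\nabla f(H,e,i)$ only averages $f$ over the $|e|$ vertices of the root edge, the operator $\nabla$ preserves each $\mH_{**}^{(m)}$, so by Tonelli it is enough to prove, for every fixed $m$, that $\int f\,d\vmu = \int \nabla f\,d\vmu$ for $f$ supported on $\mH_{**}^{(m)}$. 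If $\ev{\Gamma(m)} = 0$ then \eqref{eq:size-biased-distirbution} forces every type appearing with positive probability in $\ugwt(P)$ to have a vanishing $m$-th coordinate, so $\vmu(\mH_{**}^{(m)}) = 0$ and both sides are $0$; hence we may assume $\ev{\Gamma(m)}>0$.

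Next I would obtain a generative description of $\vmu$ restricted to $\mH_{**}^{(m)}$. By definition of $\vmu$ and of $\ugwt(P)$, $\int f\,d\vmu = \ev{\sum_{e\ni\emptyset,\,|e|=m} f(T,e,\emptyset)}$ with $(T,\emptyset)\sim\ugwt(P)$. Conditioning on $\Gamma_\emptyset$ and using that the $\Gamma_\emptyset(m)$ size-$m$ edges at the root carry i.i.d.\ subtrees, so that all of them induce the same conditional law for $[T,e,\emptyset]$, this equals $\sum_\gamma P(\gamma)\,\gamma(m)\,\ev{f(T,e_1,\emptyset)\mid\Gamma_\emptyset=\gamma}$, where $e_1$ is, say, the first size-$m$ edge at the root. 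The identity $P(\gamma)\,\gamma(m) = \ev{\Gamma(m)}\,\hat P_m(\gamma - \typee_m)$, which is immediate from \eqref{eq:size-biased-distirbution}, then rewrites this as $\ev{\Gamma(m)}$ times the expectation of $f$ evaluated on the random edge-vertex rooted tree produced by the following process: create an edge $e$ of size $m$ on vertices $v_0,\dots,v_{m-1}$, designate $v_0$ as the root, and to each $v_k$ attach independently a copy of the subtree that hangs below a non-root vertex of $\ugwt(P)$ --- that is, $v_k$ receives an additional type with law $\hat P_m$ and its descendants grow by the usual size-biased rule.

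The crux is the observation that in this process the $m$ vertices $v_0,\dots,v_{m-1}$ of the root edge $e$ are \emph{exchangeable}: each is fed an i.i.d.\ $\hat P_m$-type and an i.i.d.\ subtree. Hence, if one forgets which of them is the designated root, i.e.\ passes to the edge-marked object $[H,e]$, the root is conditionally uniform among the $m$ vertices of $e$. Since $\nabla f(H,e,i) = \frac1{|e|}\sum_{j\in e} f(H,e,j)$ is a function of $[H,e]$ alone and replaces $f$ by its average over the $m$ possible root vertices, this exchangeability gives $\ev{f(\cdot,e,v_0)} = \ev{\nabla f(\cdot,e,v_0)}$; multiplying by $\ev{\Gamma(m)}$ yields $\int f\,d\vmu = \int\nabla f\,d\vmu$ on $\mH_{**}^{(m)}$, and summing over $m$ finishes the proof.

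The step I expect to be the main obstacle is making this exchangeability rigorous at the level of isomorphism classes in $\mH_{**}$: the generative process yields trees with canonical labels in $\nvertex\cup\nedge$, and one must check that the resulting law on $\mH_{**}$ is genuinely invariant under the swap $(H,e,v_0)\mapsto(H,e,v_k)$, accounting for any automorphisms of the realized tree. I would handle this by exhibiting, for each $k$, an explicit measure-preserving relabelling of $\nvertex\cup\nedge$ that interchanges the subtrees rooted at $v_0$ and $v_k$ and descends to a bijection of the relevant event in $\mH_{**}$ respecting the distinguished edge --- or, equivalently, by recasting the whole computation as a mass-transport identity $\int\sum_{e\ni i}g(H,e,i,j)\,d\mu = \int\sum_{e\ni i}g(H,e,j,i)\,d\mu$, the form in which the automorphism bookkeeping of unimodularity is cleanest. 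The remaining ingredients --- the conditioning, exchangeability of i.i.d.\ subtrees, and the arithmetic with $\hat P_m$ --- are routine.
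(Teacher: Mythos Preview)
Your proposal is correct and follows essentially the same route as the paper: the paper packages your ``generative description'' into an auxiliary tree $\tilde{T}_k$ (see Lemma~\ref{lem:int-f-dvmu-gen-UGWT}), obtains the formula $\int f\,d\vmu = \sum_{k\ge 2}\ev{\Gamma(k)}\,\ev{f(\tilde{T}_k,(k,1),\emptyset)}$ via exactly your size-biasing identity $P(\gamma)\gamma(k)=\ev{\Gamma(k)}\hat P_k(\gamma-\typee_k)$, and then disposes of the exchangeability step you flagged simply by observing that the construction of $\tilde{T}_k$ is symmetric in the vertices of the root edge, so $(\tilde{T}_k,(k,1),(k,1,i))$ has the same law as $(\tilde{T}_k,(k,1),\emptyset)$. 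Your anticipated obstacle is thus not really an obstacle: the explicit relabelling you propose is precisely what the symmetric construction provides, and no separate automorphism bookkeeping is needed.
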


Note that if $(T,\emptyset)$ is generated as in Definition \ref{def:ugwt(P)}
and $(s_1, e_1, i_1)$ is a child of the root present in $T$, 
the subtree rooted at $i_1$
 has a similar distribution to $(T,\emptyset)$ except that the type of its root has law $\hat{P}_{s_1}$. It is useful for our subsequent discussion to define a notation for this distribution.



\begin{definition}
  \label{def:gwt_k(P)}
Let $P \in \mP(\natszf)$ such that $\ev{\Gamma(l)} < \infty$ for $l \geq 2$ and fix some $k \geq 2$.  Construct a random rooted tree $(T,\emptyset)$ by generating $(\Gamma_a, a \in \nvertex)$ independently such that $\Gamma_\emptyset$ has law $\hat{P}_k$ and for any non--root node $a = (s_1, e_1, i_1, \dots, s_r, e_r, i_r)$, $\Gamma_a$ has law $\hat{P}_{s_r}$. Then $\gwt_k(P)$ denotes the law of $[{\color{red}(T,\emptyset)},\emptyset]$ where $[{\color{red}(T,\emptyset)},\emptyset]$ denotes the equivalence class of 
$({\color{red}(T,\emptyset)},\emptyset)$ in $\mH_*$.  
\end{definition}

\subsection[Networks]{Equivalence classes of marked hypergraphs: \texorpdfstring{$\mbH_*(\Xi)$}{H*} and \texorpdfstring{$\mbH_{**}(\Xi)$}{H**}}
\label{sec:mbh-mbh}


Recall that $\mH_*$ and $\mH_{**}$ are Polish spaces of isomorphism classes of vertex rooted hypergraphs and edge--vertex rooted hypergraphs respectively. We can extend the procedure by which these spaces were created to hypergraphs with marks on their edges. Hypergraphs with marks on their edges would be called ``hypernetworks",
following the terminology in Aldous and Lyons \cite{aldous2007processes}.
However, we prefer to call them {\em marked hypergraphs}.



\begin{definition}
  \label{def:makred-rooted-hypergraph}
  Assume $H$ is a locally finite simple hypergraph on a countable vertex set and $\Xi$ is a complete separable metric space. A $\Xi$--valued edge mark on $H$ is a function 
  \begin{equation*}
    \xi : \evpair(H) \rightarrow \Xi,
  \end{equation*}
  where we recall that $\evpair(H)$ denotes the set of 
  edge-vertex pairs of $H$.
 A hypergraph carrying such a mark is called a marked hypergraph, and
 is denoted $(H, \zeta)$.

  A vertex rooted marked hypergraph is a marked hypergraph $(H,{\color{red} \xi})$ together with a distinguished vertex $i \in V(H)$, and is denoted
  $((H,{\color{red} \xi}), i)$. An edge-vertex rooted marked hypergraph is a marked hypergraph $(H, {\color{red} \xi})$ together with a distinguished
  edge $e \in E(H)$, and a distinguished vertex $i \in e$. It is denoted
  $((H,{\color{red} \xi}), e, i)$.
\end{definition}

\begin{rem}
\label{rem:Hypergraph--notation}
To simplify the notation, we employ the notation $\bH$ to denote a marked hypergraph, where its mark function is denoted by $\xi_{\bH}$, and its underlying unmarked hypergraph is denoted by $H$. Moreover, we may use $V(\bH)$, $E(\bH)$ and $\evpair(\bH)$ instead of $V(H)$, $E(H)$ and $\evpair(H)$.
\end{rem}



\begin{definition}
  \label{def:rooted-marked-isomorphism}
  We call two vertex rooted  marked hypergraphs $(\bH_1, i_1)$ and $(\bH_2, i_2)$ isomorphic, and write $(\bH_1, i_1) \equiv (\bH_2, i_2)$  if there is a bijection $\phi: V(\bH_1) \rightarrow V(\bH_2)$, such that $\phi(i_1) = i_2$, and $e \in E(\bH_1)$ iff $\phi(e) \in E(\bH_2)$, where $\phi(e) := \{ \phi(i): i \in e \}$. Moreover, for $(e,i) \in \evpair(\bH_1)$, we require that $\xi_{\bH_1}(e, i) = \xi_{\bH_2}(\phi(e), \phi(i))$. 

  We say two edge-vertex rooted marked hypergraphs $(\bH_1, e_1, i_1)$ and $(\bH_2, e_2, i_2)$ are isomorphic, and write $(\bH_1, e_1, i_1) \equiv (\bH_2, e_2, i_2)$, if there is a bijection $\phi: V(\bH_1) \rightarrow V(\bH_2)$, such that 
$\phi(i_1) = i_2$, $\phi(e_1) = e_2$, and such that $e \in E(\bH_1)$ iff $\phi(e) \in E(\bH_2)$. Moreover, we must have $\xi_{\bH_1}(e, i) = \xi_{\bH_2}(\phi(e), \phi(i))$ for $(e, i) \in \evpair(\bH_1)$.
\end{definition}

\begin{definition}
  \label{def:bH*}
  Let $\mbH_*(\Xi)$ be the space of equivalence classes of  connected vertex rooted marked hypergraphs with marks taking values in $\Xi$, with the 
  equivalence class of $(\bH,i)$ being denoted $[\bH,i]$.
  Similarly, let $\mbH_{**}(\Xi)$ be the space of equivalence classes of edge-vertex rooted marked  hypergraphs with marks taking values in $\Xi$, with the 
  equivalence class of $(\bH,e, i)$ being denoted $[\bH,e,i]$.
\end{definition}


We endow $\mbH_*(\Xi)$ with the metric  $\bar{d}_*$ where the distance between $[\bH_1, i_1]$ and $[\bH_2, i_2]$ is defined in the following way: take  arbitrary representatives $(\bH'_1, i'_1) \in [\bH_1, i_1]$ and $(\bH'_2, i'_2) \in [\bH_2, i_2]$, then let $m^*$ be the supremum over all $m$ such that $(H'_1, i'_1) \equiv_m (H'_2, i'_2)$, and the $\Xi$--distance between the corresponding marks up to level $m$ is at most $1/m$, i.e. if $\phi$ is the level $m$ isomorphism, then
\begin{equation*}
  d_\Xi(\xi_{\bH_1}(\tilde{e}, \tilde{i}), \xi_{\bH_2}(\phi(\tilde{e}), \phi(\tilde{i}))) \leq \frac{1}{m}~, \qquad \forall \tilde{e} \in E_{H'_1}\left (V^{H'_1}_{i'_1, m} \right ),
\end{equation*}
where $d_\Xi$ denotes the metric on $\Xi$.
If there is no $m$ satisfying the above conditions, 
we set $m^*$ to be 0.
Then, $\bar{d}_*([\bH_1, i_1], [\bH_2, i_2])$ is defined to be $1/(1+m^*)$. 
Since all the members in $[\bH, i]$ are isomorphic as vertex rooted marked hypergraphs, $\bar{d}_*$ can be easily checked to be well defined. One can also check that it is a metric; in particular it satisfies the triangle inequality.


Similarly, we endow $\mbH_{**}(\Xi)$ with the metric $\bar{d}_{**}$ where  the distance between $[\bH_1, e_1, i_1]$ and $[\bH_2, e_2, i_2]$ is defined in the following way: take  arbitrary representatives $(\bH'_1, e'_1, i'_1) \in [\bH_1, e_1, i_1]$ and $(\bH'_2, e'_2, i'_2) \in [\bH_2, e_2, i_2]$, then let $m^{**}$ be the supremum over all $m$ such that $(H'_1, e'_1, i'_1) \equiv_m (H'_2, i'_2, e'_2)$ and the $\Xi$--distance between the corresponding marks up to level $m$ is at most $1/m$. 
If there is no $m$ satisfying these conditions, we set $m^*$ to be 0.
Finally, define $\bar{d}_{**}([\bH_1, e_1, i_1], [\bH_2, e_2, i_2])$ to be $1/(1+m^{**})$.


In Appendix~\ref{sec:mh_-mh_-are} we prove that $\mbH_*(\Xi)$ and $\mbH_{**}(\Xi)$ with their respective metrics are Polish spaces, see Proposition~\ref{prop:H*-H**-Polish}.

Similar to what we did for $\mH_*$ and $\mH_{**}$, we can define $\partial$ and $\nabla$ operators as follows, where
we  use a simplified notation, whose validity can be justified as in Remarks~\ref{rem:partial-simplified-notation} and \ref{rem:cap-del-well-defined}:
\begin{equation*}
  \partial f(\bH, i) := \sum_{e \in E(\bH), e \ni i} f(\bH, e, i),
\end{equation*}
and 
\begin{equation*}
  \nabla f(\bH, e, i) := \frac{1}{|e|} \sum_{j \in e} f(\bH, e, j).
\end{equation*}

\begin{rem}
\label{rem:notation-abuse}
The preceding notation, strictly speaking, applies only to real valued 
functions on $\mH_{**}$. However, if we
consider the function $f: \mbH_{**}(\Xi) \rightarrow \Xi$ defined as $f(\bH, e, i) = \xi_{\bH}(e, i)$, then, when $\Xi$ has an additive structure, we may define $\partial f : \mbH_*(\Xi) \rightarrow \Xi$ as 
\begin{equation*}
  \partial f (\bH, i) := \sum_{e\in E(H), e \ni i} f(\bH, e, i) = \sum_{e\in E(\bH), e \ni i} \xi_{\bH}(e, i).
\end{equation*}
By abuse of notation, we may use the notation $\partial \xi_{\bH}(i)$ instead of $\partial f(\bH, i)$ with $f$ defined above, and similarly for
$\nabla \xi_{\bH}(i)$. We will use such 
notation in this document because the marks we are interested in will
be real valued.
\end{rem}

For a probability measure $\mu \in \mP(\mbH_*(\Xi))$, we define $\vmu \in \mM(\mbH_{**}(\Xi))$ in a manner similar to what was done in Section~\ref{sec:vmu-}. Namely, $\vmu \in \mM(\mbH_{**}(\Xi))$ is defined by requiring that for every nonnegative Borel function $f: \mbH_{**}(\Xi) \rightarrow [0,\infty)$ we have
\begin{equation*}
  \int f d \vmu = \int \partial f d \mu.
\end{equation*}

A probability measure $\mu \in \mP(\mbH_*(\Xi))$ is called unimodular if for every nonnegative Borel function $f: \mbH_{**}(\Xi) \rightarrow [0,\infty)$ we have 
\begin{equation*}
  \int f d \vmu = \int \nabla f d \vmu.
\end{equation*}

By removing marks, we get a natural projection from $\Pi: \mbH_*(\Xi) \rightarrow \mH_*$ defined as 
\begin{equation}
  \label{eq:projection-mbH*-mH*}
  \Pi([\bH, i])  = [H, i]~.
\end{equation}
This can easily be checked to be continuous.




As was done in Section~\ref{sec:unimodularity}, choosing a vertex uniformly at random from a finite marked hypergraph results in a unimodular measure. More precisely, if $H$ is a finite hypergraph together with a mark $\xi$ taking values in $\Xi$, 
\begin{equation*}
  u_{\bH} := \frac{1}{|V(\bH)|} \sum_{i \in V(\bH)} \delta_{[\bH(i), i]},
\end{equation*}
is unimodular.
Here, $\bH( i)$ denotes the connected component of $i$ in $\bH$.
 Moreover, the local weak limit of finite marked hypergraphs, i.e. the weak limit of the measures $u_{H_n}$, is unimodular, if it exists. 
See Appendix~\ref{sec:everything-shows-at} for a proof.






\section{Main Results}
\label{sec:main-results}


We now summarize the main results of the paper.
We first prove some properties of balanced Borel allocations, as defined in Definition~\ref{def:borel-balanced-allocation}.

\begin{thm}
  \label{thm:balanced--properties}
  Let $\mu \in \mP(\mH_*)$ be a unimodular probability measure such that $\deg(\mu) < \infty$. The following are true.
  \begin{enumerate}
  \item \label{thm:prop-exitence}
    There exists a Borel allocation $\Theta: \mH_{**} \rightarrow [0,1]$ which is balanced with respect to $\mu$.
  \item \label{thm:prop-var-char}
    Let $\Theta$ be a balanced  Borel allocation with respect to $\mu$. Then  we have the following variational characterization of the mean excess load under $\Theta$ above the load level $t$. For any $t \in \reals$:
    \begin{equation*}
      \int (\partial \Theta - t)^+ d \mu = \max_{\stackrel{f: \mH_* \rightarrow [0,1]}{\text{Borel}}} \int \tilde{f}_\text{min} d \vmu - t \int f d\mu,
    \end{equation*}
    where $\tilde{f}_\text{min}$ is defined as 
  \begin{equation*}
    \tilde{f}_\text{min} (H, e, i) := \frac{1}{|e|} \min_{j \in e} f(H, j).
  \end{equation*}
\item \label{thm:prop-optimality}
  The following are equivalent for a Borel allocation $\Theta: \mH_{**} \rightarrow [0,1]$:
  \begin{enumerate}
    \item $\Theta$ is balanced with respect to $\mu$.
    \item $\Theta$ minimizes $\int f \circ \partial \Theta d \mu$ among all Borel allocations, for some strictly convex function $f: [0,\infty) \rightarrow [0,\infty)$.
    \item $\Theta$ minimizes $\int f \circ \partial \Theta d \mu$ among all Borel allocations for every convex function $f: [0,\infty) \rightarrow [0,\infty)$.
  \end{enumerate}
  \item \label{thm:prop-uniqueness}
    Assume $\Theta_0$ is a balanced allocation with respect to $\mu$  and $\Theta$ is any other allocation on $\mH_{**}$. Then $\Theta$ is balanced if and only if $\partial \Theta_0 = \partial \Theta$, $\mu$--almost surely.
  \item   Let $\{H_n\}_{n \geq 1}$ be a sequence of finite hypergraphs with local weak 
limit $\mu$. Let $\mL_{H_n}$ denote the distribution of the total load at a vertex in $H_n$ chosen uniformly at random. Namely,
 $\mL_{H_n} = \frac{1}{|V(H_n)|} \sum_{i \in V(H_n)} \delta_{\partial \theta_n(i)}$, where $(\partial \theta_n(i), i \in V(H_n))$ denotes the load vector corresponding to any balanced allocation $\theta_n$, 
which we recall exists and is unique, due to Proposition~\ref{prop:total-load-unique-finite-hypergraphs}.
Let $\mL$ denote the law of the total load at the root of the balanced allocation on $\mu$, i.e. the pushforward of $\mu$ under the mapping $\partial \Theta$, which we have just shown is well defined and unique, due to parts 1 and 4 of this theorem. Then $\mL_{H_{n}}$ converges weakly to $\mL$. 
  \end{enumerate}
\end{thm}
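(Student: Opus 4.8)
The plan is to deduce the weak convergence $\mL_{H_n} \Rightarrow \mL$ from the local weak convergence $u_{H_n} \Rightarrow \mu$ by showing that the map sending a finite hypergraph to its (unique, by Proposition~\ref{prop:total-load-unique-finite-hypergraphs}) balanced load vector is ``almost continuous'' with respect to the local weak topology. The essential point is a \emph{local stability} statement: the load $\partial\theta_n(v)$ at a vertex $v$ depends, up to a small error, only on the isomorphism class of a bounded-depth neighborhood of $v$ in $H_n$. To prove weak convergence it suffices, by the portmanteau theorem, to show $\int g \, d\mL_{H_n} \to \int g \, d\mL$ for every bounded Lipschitz $g:\reals \to \reals$; equivalently, writing $g \circ \partial\theta_n$ as a function of the rooted hypergraph $(H_n, v)$, one wants $\int (g \circ \partial\theta_n) \, d u_{H_n} \to \int (g \circ \partial\Theta) \, d\mu$, where $\partial\Theta$ is the load of the balanced Borel allocation on $\mu$ whose existence and (a.s.) uniqueness come from parts 1 and 4.

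The key steps, in order: (i) \textbf{Reduction to bounded hypergraphs.} Using $\deg(\mu) < \infty$ and a truncation argument — delete vertices of degree exceeding $\Delta$ and edges of size exceeding $L$, and control the measure of the affected region uniformly in $n$ via uniform integrability of the degree under $u_{H_n}$ — one reduces to the case where all $H_n$ and the limit are bounded, at the cost of an error that vanishes as $\Delta, L \to \infty$. (ii) \textbf{$\epsilon$-balancing as a continuous proxy.} Introduce the $\epsilon$-balanced allocation (Definition~\ref{def:epsilon-balanced-specific-hypergraph} and Definition~\ref{def:epsilon-balanced-allocation-H*}); invoke the results promised in Section~\ref{sec:epsilon-balancing} (the cited Corollary) that on bounded hypergraphs the $\epsilon$-balanced load is unique, and that it is a \emph{continuous} function of the rooted marked hypergraph in the local topology, because it solves a contraction-type fixed-point equation whose dependence on far-away structure decays. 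Then show that as $\epsilon \downarrow 0$ the $\epsilon$-balanced load converges to the balanced load, uniformly over bounded hypergraphs with given $\Delta, L$ — this is where one exploits that balancedness is the zero-temperature limit, together with the finite-hypergraph uniqueness of Proposition~\ref{prop:total-load-unique-finite-hypergraphs}. (iii) \textbf{Passing to the limit.} For fixed $\epsilon$, continuity of the $\epsilon$-balanced load and $u_{H_n} \Rightarrow \mu$ give $\int (g \circ \partial\theta^\epsilon_n)\, d u_{H_n} \to \int (g \circ \partial\Theta^\epsilon)\, d\mu$; letting $\epsilon \downarrow 0$ and using the uniform approximation from step (ii), together with the identification (via part 4 and Proposition~\ref{prop:weak-uniqueness} / the weak-uniqueness machinery) of $\lim_{\epsilon \downarrow 0}\partial\Theta^\epsilon$ as $\partial\Theta$, $\mu$-a.s., completes the proof; finally remove the boundedness restriction by sending $\Delta, L \to \infty$.

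The main obstacle I expect is step (ii): establishing that the $\epsilon$-balanced load on a bounded hypergraph is genuinely a continuous (indeed, locally determined up to exponentially small error) function of the rooted neighborhood, and that the $\epsilon \downarrow 0$ limit is attained \emph{uniformly} in the hypergraph. The continuity/locality for fixed $\epsilon$ should follow from a correlation-decay or iteration argument for the fixed-point equation in Definition~\ref{def:epsilon-balanced-specific-hypergraph} — the softmax weights make the recursion a contraction in a suitable sense on bounded-degree, bounded-edge-size hypergraphs — but one must be careful that the contraction constant does not depend on $n$. The zero-temperature limit is more delicate: one needs a quantitative bound on $\sup_v |\partial\theta^\epsilon_n(v) - \partial\theta_n(v)|$ in terms of $\epsilon, \Delta, L$ that is uniform in $n$, which presumably comes from comparing the convex potentials $\sum_i f_\epsilon(\partial\theta(i))$ (with $f_\epsilon$ the entropy-regularized cost) to the hard-constraint optimum and using strong convexity; the finite-hypergraph uniqueness result of Hajek (Proposition~\ref{prop:total-load-unique-finite-hypergraphs}) is what guarantees the comparison is to a well-defined target. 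I would also need to verify tightness of $\{\mL_{H_n}\}$, which is immediate since $\partial\theta_n(v) \le \deg_{H_n}(v)$ and the degree is uniformly integrable under $u_{H_n}$ by local weak convergence together with $\deg(\mu) < \infty$.
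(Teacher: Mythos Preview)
Your approach is genuinely different from the paper's, and the step you yourself flag as the ``main obstacle'' is a real gap, not just a technicality.

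The paper does \emph{not} use $\epsilon$-balancing at all in the proof of Part~5. Instead it works in the space $\mbH_*([0,1])$ of hypergraphs marked by allocations. One attaches to each $H_n$ its balanced allocation $\theta_n$ as edge marks, obtaining measures $\bar\mu_n\in\mP(\mbH_*([0,1]))$. Tightness of $\{\bar\mu_n\}$ is almost free (the mark space $[0,1]$ is compact and the underlying $\{u_{H_n}\}$ is tight), so a subsequential limit $\bar\mu$ exists; it is unimodular, and the set of ``balanced'' marked hypergraphs is \emph{closed} in $\mbH_*([0,1])$, so $\bar\mu$ is itself balanced. Finally, a short unimodularity computation using a pointwise algebraic inequality (comparing two balanced allocations edge by edge) forces $\partial\xi_{\bar H}(i)=\partial\Theta_0([H,i])$ $\bar\mu$-a.s., which identifies the load law of any subsequential limit with $\mL$. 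No quantitative estimates and no interchange of limits are needed.

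By contrast, your plan hinges on the interchange of $n\to\infty$ and $\epsilon\downarrow 0$, which you propose to justify by the claim that $\partial\theta^\epsilon_n\to\partial\theta_n$ \emph{uniformly over all bounded finite hypergraphs}. You do not prove this, and the ``strong convexity'' sketch does not deliver it: the rate at which the $\epsilon$-balanced load approaches the balanced one on a given finite hypergraph depends on how close the balanced loads at neighboring vertices are to each other (the softmax resolves near-ties only when $\epsilon$ is small relative to the gap), and nothing prevents these gaps from shrinking to zero along the sequence $H_n$ even with degrees and edge sizes bounded. There is also a conceptual obstruction: on the limit side, for infinite bounded hypergraphs the balanced load is \emph{not} pointwise unique (Hajek's $3$-regular tree), so the phrase ``converges to the balanced load'' is not even well-posed hypergraph-by-hypergraph; the selection of the ``right'' balanced load is exactly what unimodularity (Part~4) provides, and the paper's argument uses this directly rather than trying to realize it as a uniform $\epsilon\to 0$ limit. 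Your step~(i) is likewise not free: deleting high-degree vertices or large edges can change the balanced load at far-away vertices, so controlling the effect of truncation on $\partial\theta_n$ uniformly in $n$ is another stability statement you have not supplied.
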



The proof of the above theorem is given in Section~\ref{sec:prop-balanc-alloc-1}.
Before that, we first investigate,
in Section~\ref{sec:epsilon-balancing},
the properties of $\epsilon$--balanced allocations for a specific hypergraph, as introduced in Definition~\ref{def:epsilon-balanced-specific-hypergraph}.
We then investigate,
in Section~\ref{sec:epsil-balanc-alloc-1},
the properties of $\epsilon$--balanced allocations
on $\mH_{**}$ for a given $\mu \in \mP(\mH_*)$, as defined in Definition~\ref{def:epsilon-balanced-allocation-H*}.
Then, by sending $\epsilon$ to zero, we prove part 1 of the above theorem in Section~\ref{sec:existence}. The proofs of other parts of the theorem are given in Sections~\ref{sec:vari-char} through \ref{sec:cont-with-resp} respectively. 

Note that, as a result of part 4 of the theorem, for a given $t \in \reals$, the value of the integral $\int (\partial \Theta - t)^+ d \mu$ for a balanced allocation $\Theta$ does not depend on the particular choice of $\Theta$.
It only depends on $\mu$ and $t$, so it can be written as
\begin{equation}
  \label{eq:mean-excess-definition}
  \Phi_\mu(t) := \int (\partial \Theta- t)^+ d \mu.
\end{equation}
The function $\Phi_\mu$ is called the mean--excess function. 
Knowledge of $\Phi_\mu$ is equivalent to determining the 
distribution $\mL$ of the load at the root associated to a balanced allocation
$\Theta$. 
We now describe $\Phi_\mu$ for the class of unimodular Galton--Watson process defined in Section~\ref{sec:unim-galt-wats}.


Recall the notation $\natszf$ defined in \eqref{eq:def-natsz}.
Assume $P \in \mP(\natszf)$ and $t \in \reals$ are fixed. For a sequence of Borel probability measures $(Q_l, l \geq 2)$ on real numbers, let $F_{P, t}^{(k)}(\{ Q_l \}_{l \geq 2})$ be the distribution of the random variable 
\begin{equation}
\label{eq:generalized-fixed-point}
  t - \sum_{k'\ge 2} \sum_{i=1}^{\Gamma(k')} \left [ 1 - X_{k', i, 1}^+ - \dots - X_{k', i, k'-1}^+ \right ]_0^1,
\end{equation}
where $\Gamma$ has law $\hat{P}_k$, and $X_{k', i, j}$ are random variables which are mutually independent and independent of $\Gamma$, with $X_{k', i, j}$ having law $Q_{k'}$. Note that $\hat{P}_k$ is the size biased version  of $P$ defined in \eqref{eq:size-biased-distirbution}. 
Also note that the first sum on the right hand side of 
\eqref{eq:generalized-fixed-point} is a finite sum, because
$\Gamma$ has finite support, pointwise. 

Let $\mQ$ be the set of sequences $\{ Q_l \}_{l \geq 2}$ such that, for all $k \geq 2$, we have:
\begin{equation}
  \label{eq:general-fixed-points}
  Q_k = F_{P,t}^{(k)} (\{ Q_{l} \}_{l \geq 2}).
\end{equation}

Now, we are ready to provide a characterization of the mean excess function. We give the proof of the following result in Section~\ref{sec:char-mean-excess}.

\begin{thm}
\label{thm:fixed-point-generalized-geeneralization-v1}
  Let $P$ be a distribution on $\natszf$ such that 
 $\ev{\norm{\Gamma}_1}<\infty$
where $\Gamma$ has law $P$. Then, with $\mu := \ugwt(P)$, for any $t \in \reals$, we have 
  \begin{equation}
\label{eq:mean-excecss-characterization-general-main-tatement}
    \Phi_\mu(t) = \max_{\{ Q_k \}_{k \geq 2} \in \mQ} \left ( \sum_{k=2}^\infty \frac{\ev{\Gamma(k)}}{k} \pr{ \sum_{i=1}^k X_{k,i}^+ < 1}\right ) - t \pr{\sum_{k=2}^{h(\Gamma)} \sum_{i=1}^{\Gamma(k)} Y_{k,i} > t},
  \end{equation}
  where, in the first expression, $\Gamma$ is a random variable on $\natszf$ with law $P$ and $\{X_{k,i}\}_{k,i}$ are i.i.d. such that $X_{k,i}$ has law $Q_k$. Also, in the second expression, $\Gamma$ has law $P$ and $\{ Y_{k,i} \}_{k,i}$ are independent from each other and from $\Gamma$, with $Y_{k,i}$ having the  law of the random variable $ [ 1 - (Z_{1}^+ + \dots + Z_{k-1}^+)]_0^1$, where $Z_j$ are i.i.d. with law $Q_k$.
\end{thm}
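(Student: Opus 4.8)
The plan is to start from the variational characterization in part~\ref{thm:prop-var-char} of Theorem~\ref{thm:balanced--properties}, which applies here since $\mu := \ugwt(P)$ is unimodular (Proposition~\ref{prop:ugwt-unimodular}) and $\deg(\mu) = \ev{\norm{\Gamma}_1} < \infty$, and to reduce the maximization over all Borel $f : \mH_* \to [0,1]$ to a maximization over the ``recursive'' choices of $f$, which turn out to be parametrized precisely by the sequences $\{Q_k\}_{k\ge2} \in \mQ$. Recall from Definition~\ref{def:ugwt(P)} that under $\mu$ the root $\emptyset$ has type $\Gamma \sim P$ and, conditioned on $\Gamma$, for every $k \ge 2$ there are $\Gamma(k)$ edges of size $k$ containing $\emptyset$, the $k-1$ other vertices of each such edge independently rooting a subtree distributed as $\gwt_k(P)$ (so with root type $\hat{P}_k$, see \eqref{eq:size-biased-distirbution}). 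Given a Borel $f : \mT_* \to [0,1]$, I would expand both terms of the objective $\int \tilde{f}_\text{min}\, d\vmu - t\int f\, d\mu$ using this branching structure together with the definitions of $\partial$, $\nabla$ and $\vmu$ on $\mbH_{**}$: writing $\int \tilde{f}_\text{min}\, d\vmu = \int \partial \tilde{f}_\text{min}\, d\mu$ turns the first term into a sum over the root edges, each size-$k$ root edge contributing $\frac1k \min_{j\in e} f(H,j)$, which after identifying the law of $f$ along a size-$k$ edge with $Q_k$ takes the form $\sum_{k\ge2} \frac{\ev{\Gamma(k)}}{k}\pr{\sum_{i=1}^k X_{k,i}^+ < 1}$.

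\textbf{Lower bound.} Fix $\{Q_k\}_{k\ge2} \in \mQ$. I would construct a Borel $f$ on $\mT_*$ whose pushforward under $\gwt_k(P)$ equals $Q_k$ for every $k$, via the depth-by-depth recursion read off from \eqref{eq:generalized-fixed-point}: set $f \equiv t$ at the leaves of the depth-$d$ truncation and, moving up, assign a vertex of type $\gamma$ the value $t - \sum_{k'\ge2}\sum_{i=1}^{\gamma(k')}\left[1 - \sum_{j=1}^{k'-1} X_{k',i,j}^+\right]_0^1$, where the $X_{k',i,j}$ are the already-assigned values at the corresponding children. Monotonicity of this update and the boundedness enforced by the $\left[\cdot\right]_0^1$ truncations give convergence as $d\to\infty$ to a limit $f$ on $\mT_*$, and the fixed-point relations \eqref{eq:general-fixed-points} guarantee that $f$ is a consistent (endogenous) function with root-law $Q_k$ under $\gwt_k(P)$. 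Plugging this $f$ into the variational formula and carrying out the expansion of the previous paragraph then yields exactly $\sum_{k\ge2}\frac{\ev{\Gamma(k)}}{k}\pr{\sum_{i=1}^k X_{k,i}^+ < 1} - t\,\pr{\sum_{k=2}^{h(\Gamma)}\sum_{i=1}^{\Gamma(k)} Y_{k,i} > t}$, with $Y_{k,i}$ distributed as $\left[1 - (Z_1^+ + \dots + Z_{k-1}^+)\right]_0^1$ (the ``upward load'' through a size-$k$ edge); hence the right-hand side of \eqref{eq:mean-excecss-characterization-general-main-tatement} is $\le \Phi_\mu(t)$.

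\textbf{Upper bound and main obstacle.} For the reverse inequality, begin with an arbitrary Borel $f : \mH_* \to [0,1]$. Exploiting that under $\mu = \ugwt(P)$ the subtrees hanging off a vertex are conditionally independent, one replaces $f$ by its conditional expectation given the subtree below the relevant vertex and symmetrizes over the branching; since the objective $f \mapsto \int \tilde{f}_\text{min}\, d\vmu - t\int f\, d\mu$ is concave and built from a minimum, this step does not decrease its value, which reduces the problem to recursive $f$. Truncating at depth $d$ and letting $d\to\infty$, the value of any recursive $f$ is the limit of the depth-$d$ iterates of the maps $F^{(k)}_{P,t}$, hence is bounded by the supremum of the objective over ``near fixed points''; a tightness/Prokhorov argument (the iterates live in laws essentially supported on bounded sets, again because of the $\left[\cdot\right]_0^1$ truncations) then upgrades a maximizing near-fixed-point to a genuine element of $\mQ$, which in particular shows $\mQ \neq \emptyset$ and that the $\max$ in the statement is attained. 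The step I expect to be hardest is precisely this one: ruling out that the optimum over all Borel $f$ is improved by non-recursive functions or by mass ``escaping to infinity'' in the tree, and proving existence of a maximizing fixed point. A clean way to handle it is to first establish the analogous identity for $\epsilon$-balanced allocations, where the local update is a smooth soft-min contraction with a unique endogenous solution and an explicit optimal $f_\epsilon$, and then pass to the limit $\epsilon \downarrow 0$ using the results of Section~\ref{sec:epsilon-balancing} together with continuity of $\Phi_\mu$; the remaining work is bookkeeping with the Galton--Watson branching weights and the definitions of $\vmu$, $\partial$ and $\nabla$.
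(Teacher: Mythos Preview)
Your overall architecture (variational formula $\Rightarrow$ lower bound for every $\{Q_k\}\in\mQ$ $+$ existence of a maximizer) matches the paper, but the lower bound step has a real gap, and the upper bound route you sketch is not the one the paper takes.

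\textbf{Lower bound: the endogeny problem.} You propose to attach to an \emph{arbitrary} fixed point $\{Q_k\}\in\mQ$ a Borel function $f:\mT_*\to[0,1]$ by iterating the recursion from depth $d$ with boundary value $t$ and sending $d\to\infty$. Even granting monotonicity and convergence, the limit you get is a function of the rooted tree only, and its law under $\gwt_k(P)$ is the fixed point picked out by that particular iteration, not the $\{Q_k\}$ you started with. The sentence ``the fixed-point relations \eqref{eq:general-fixed-points} guarantee that $f$ \dots\ has root law $Q_k$'' is circular: those relations propagate the law $Q_k$ from depth $d$ to depth $d-1$, but you initialized at depth $d$ with a degenerate law, not with $Q_k$. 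If you instead seed the boundary with i.i.d.\ samples from $Q_k$, then $f$ depends on extra randomness and is no longer a function on $\mT_*$. The paper confronts exactly this issue: in Lemma~\ref{lem:kolmogorov} it uses Kolmogorov's extension theorem to build, for each $\{Q_k\}\in\mQ$, a unimodular law $\nu$ on the \emph{marked} space $\mbH_*(\reals)$ whose marks satisfy the recursion \eqref{eq:TW-marks-relation} and whose size-$k$ edge marks have law $Q_k$; it then proves an extended variational inequality on $\mbH_*(\reals)$ (Lemma~\ref{lem:extended-variational-formula}) and plugs in $f([\bH,i])=\mathbbm{1}\{\partial F([\bH,i])>t\}$ for a suitable $F$ built from the marks. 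Your plan needs either this passage to marked trees, or a separate argument restricting to endogenous fixed points --- and for the theorem as stated (max over all of $\mQ$) the latter is not enough.

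\textbf{Upper bound: different mechanism.} The paper does not use concavity/symmetrization or a Prokhorov argument on near fixed points. Instead it produces the optimal $\{Q_k\}$ \emph{constructively} from the balanced allocation. Along the sequence $\epsilon_n\downarrow 0$ of Proposition~\ref{prop:epsilon-to-zero}, Section~\ref{sec:response-functions} shows (Proposition~\ref{prop:f-epsilon-converges-f-recursive-hypertree}) that on $\mu$-a.e.\ tree the response functions $\rho^{\epsilon_n}_{T_{e\to i}}$ converge and their inverses satisfy the recursion \eqref{eq:f-inverse-recursive}. Setting $G(T,e,i):=\rho_{T_{e\to i}}^{-1}(t)$ and letting $Q_k$ be the law of $G$ along a size-$k$ edge in the doubly size-biased tree, one checks $\{Q_k\}\in\mQ$ directly from \eqref{eq:f-inverse-recursive}, and (via \eqref{eq:l_i>t-iff-sum-f}) the indicator $\mathbbm{1}\{\partial\Theta_0>t\}$ coincides $\mu$-a.s.\ with the $f$ built from $G$. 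Since $f=\mathbbm{1}\{\partial\Theta_0>t\}$ is already the maximizer in Proposition~\ref{prop:variational}, equality in \eqref{eq:mean-excecss-characterization-general-main-tatement} follows after the same edge-by-edge computation you outline (Lemma~\ref{lem:int-f-dvmu-gen-UGWT} and Lemma~\ref{lem:x_i<x_j+==>sum-x_i+<1}). Your instinct to go through $\epsilon$-balanced allocations is right; what you are missing is the response-function layer that turns $\Theta_0$ into an explicit element of $\mQ$.
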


For a finite hypergraph $H$, we define $\varrho(H)$ to be the maximum load corresponding to a balanced allocation on $H$, i.e. if $\theta$ is a balanced allocation on $H$, 
\begin{equation}
  \label{eq:max-load-def}
  \varrho(H) := \max_{v \in V(H)} \partial \theta(v),
\end{equation}
which is well defined due to Proposition~\ref{prop:total-load-unique-finite-hypergraphs}.
From \cite[Corollary 7]{hajek1990performance} we know that there is a duality between this parameter and the subgraph of maximum edge density, i.e. 
\begin{equation}
  \label{eq:max-load-duality}
  \varrho(H) = \max_{S \subseteq V(H), S \neq \emptyset} \frac{|E_H(S)|}{|S|},
\end{equation}
where $E_H(S)$ denotes the set of edges of $H$ with all endpoints in $S$.

For a unimodular probability distribution $\mu$ on $\mH_*$ with finite $\deg (\mu)$, 
we define 
\begin{equation}
  \label{eq:max-load--mu}
  \varrho(\mu) := \sup \{ t \in \reals: \Phi_\mu(t) > 0 \},
\end{equation}
where $\Phi_\mu(.)$ is the mean excess function defined above. In other words, if $\Theta$ is the balanced allocation corresponding to $\mu$ introduced in Theorem~\ref{thm:balanced--properties} and $\mL_\mu$ is the law of $\partial \Theta$ under $\mu$, then
\begin{equation*}
  \varrho(\mu) = \sup \{ t \in \reals: \mL_\mu([t, \infty)) > 0 \}.
\end{equation*}
One question is whether local weak convergence implies convergence of maximum load, i.e., if $H_n$ is a sequence of graphs with local weak limit $\mu$, does $\varrho(H_n)$ converge to $\varrho(\mu)$? Similar to the graph case, this is not true in general, since we can always add an arbitrary but bounded clique to boost $\varrho(H_n)$ without changing the local weak limit. 
We prove convergence, under some conditions, for the special case where the limit $\mu$ is the UGWT model defined in Section~\ref{sec:unim-galt-wats} and, for each $n$, $H_n$ is a random hypergraph obtained from a generalized hypergraph configuration model defined in Section \ref{sec:conf-model-hypergr}.

\edit
\begin{thm}
  \label{thm:max-load}
  Let $P$ be a probability distribution on $\natszf$ such that, if $\Gamma$ is a random variable with law $P$, $P(\Gamma(k) > 0) > 0$ for finitely many $k$ and $\ev{\Gamma(k)} < \infty$ for all $k  \geq 2$. Moreover, let $\mu := \ugwt(P)$. Then, if $\{H_n\}_{n=1}^\infty$ is a sequence of random hypergraphs obtained from a configuration model, under some conditions stated in Proposition~\ref{prop:max-load-detail}, $\varrho(H_n)$ converges in probability to $\varrho(\mu)$. 
\end{thm}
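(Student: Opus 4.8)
The plan is to prove, for each fixed $\epsilon > 0$, the two one--sided bounds $\pr{\varrho(H_n) \le \varrho(\mu) - \epsilon} \to 0$ and $\pr{\varrho(H_n) \ge \varrho(\mu) + \epsilon} \to 0$; together these give $\varrho(H_n) \to \varrho(\mu)$ in probability. I would rely on three inputs. First, the configuration model of Section~\ref{sec:conf-model-hypergr}, under the hypotheses collected in Proposition~\ref{prop:max-load-detail}, converges in the local weak sense to $\mu := \ugwt(P)$ in probability. Second, since $P$ charges only finitely many edge sizes and $\ev{\Gamma(k)} < \infty$ for each of them, $\mu$ is unimodular (Proposition~\ref{prop:ugwt-unimodular}) with $\deg(\mu) < \infty$, so part~5 of Theorem~\ref{thm:balanced--properties}, combined with the local weak convergence of $H_n$ and a routine subsequence argument, yields $\mL_{H_n} \Rightarrow \mL_\mu$ in probability, where $\mL_{H_n} = \frac{1}{|V(H_n)|}\sum_{i} \delta_{\partial\theta_n(i)}$ for a balanced allocation $\theta_n$ on $H_n$, and $\mL_\mu$ is the law of $\partial\Theta$ under $\mu$. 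Third, the duality \eqref{eq:max-load-duality} and the description \eqref{eq:max-load--mu} of $\varrho(\mu)$ as the essential supremum of $\mL_\mu$.

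\emph{Lower bound.} By \eqref{eq:max-load--mu} there is $t_0 \in (\varrho(\mu)-\epsilon, \varrho(\mu)]$ with $\mL_\mu([t_0,\infty)) > 0$, hence $p := \mL_\mu\big((\varrho(\mu)-\epsilon,\infty)\big) > 0$. Since $(\varrho(\mu)-\epsilon,\infty)$ is open and $\mL_{H_n} \Rightarrow \mL_\mu$ in probability, $\pr{\mL_{H_n}\big((\varrho(\mu)-\epsilon,\infty)\big) < p/2} \to 0$; but $\varrho(H_n)$ is the maximum load of $\theta_n$, so $\{\varrho(H_n) \le \varrho(\mu)-\epsilon\} \subseteq \{\mL_{H_n}((\varrho(\mu)-\epsilon,\infty)) = 0\}$, and the bound follows.

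\emph{Upper bound.} By \eqref{eq:max-load-duality} pick a nonempty $S_n \subseteq V(H_n)$ with $\varrho(H_n) = |E_{H_n}(S_n)|/|S_n|$. Every edge contained in $S_n$ places its whole unit of load on vertices of $S_n$, so for a balanced allocation $\theta_n$ we have $\sum_{v \in S_n}\partial\theta_n(v) \ge |E_{H_n}(S_n)| = |S_n|\,\varrho(H_n)$, while $\partial\theta_n(v) \le \varrho(H_n)$ for all $v$; hence $\partial\theta_n(v) = \varrho(H_n)$ for every $v \in S_n$, so $\mL_{H_n}(\{\varrho(H_n)\}) \ge \sigma_n := |S_n|/|V(H_n)|$. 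Now fix $\delta = \delta(\epsilon)>0$. On the event $\{\sigma_n \ge \delta\}\cap\{\varrho(H_n)\ge\varrho(\mu)+\epsilon\}$ we get $\mL_{H_n}\big([\varrho(\mu)+\epsilon,\infty)\big) \ge \delta$; since $[\varrho(\mu)+\epsilon,\infty)$ is closed and $\mL_\mu([\varrho(\mu)+\epsilon,\infty)) = 0$ by \eqref{eq:max-load--mu}, convergence $\mL_{H_n}\Rightarrow\mL_\mu$ in probability makes this event have vanishing probability. On the complementary event $\{\sigma_n < \delta\}\cap\{\varrho(H_n)\ge\varrho(\mu)+\epsilon\}$ we have a nonempty $S_n$ with $|S_n| \le \delta|V(H_n)|$ and $|E_{H_n}(S_n)| \ge (\varrho(\mu)+\epsilon)|S_n|$, so it suffices to show $\pr{\exists\, S,\ 0<|S|\le\delta|V(H_n)|,\ |E_{H_n}(S)| \ge (\varrho(\mu)+\epsilon)|S|} \to 0$ for $\delta$ small enough. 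I would do this by a first--moment computation over the configuration-model pairing: for each $k \le \delta|V(H_n)|$ bound $\ev{\#\{S:|S|=k,\ |E_{H_n}(S)|\ge(\varrho(\mu)+\epsilon)k\}}$ by $\binom{|V(H_n)|}{k}$ times a bound on the probability that a fixed $k$--set receives that many internal edges; using the bounded edge sizes and the degree moment conditions of Proposition~\ref{prop:max-load-detail}, the product takes the form $(C_\epsilon\, k/|V(H_n)|)^{c_\epsilon k}$, which is summable over $1\le k \le \delta|V(H_n)|$ and tends to $0$ once $\delta < 1/C_\epsilon$. Combining the two regimes gives $\pr{\varrho(H_n)\ge\varrho(\mu)+\epsilon}\to 0$.

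\emph{Main obstacle.} Everything except the small--subgraph estimate is a mechanical combination of weak convergence with the duality \eqref{eq:max-load-duality} and with parts~1,~4,~5 of Theorem~\ref{thm:balanced--properties}. The delicate point is the first--moment bound: one must make the combinatorial factor $\binom{n}{k}$, the number of edge slots, and the pairing probabilities cancel so that the per--$k$ bound decays geometrically uniformly for $k$ up to a linear scale, and — crucially — one must verify that $\varrho(\mu)$, which enters here only through the mean--excess function and hence through the fixed--point description of Theorem~\ref{thm:fixed-point-generalized-geeneralization-v1}, is exactly the critical density at which small dense sub--hypergraphs start to appear in this configuration model. Reconciling these constants is precisely what the technical hypotheses gathered in Proposition~\ref{prop:max-load-detail} are tailored to ensure.
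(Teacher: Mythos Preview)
Your overall scheme matches the paper's: local weak convergence plus part~5 of Theorem~\ref{thm:balanced--properties} gives $\mL_{H_n}\Rightarrow\mL_\mu$, the lower bound is the one--line portmanteau argument on the open half--line $(\varrho(\mu)-\epsilon,\infty)$, and the upper bound splits into a macroscopic regime (handled by portmanteau on a closed half--line) and a small--dense--set regime requiring a first--moment estimate in the configuration model. Your device of taking the duality--maximizing set $S_n$ and noting that every vertex in it carries load exactly $\varrho(H_n)$ is a touch cleaner than the paper's use of the level set $\{i:\partial\theta_n(i)\ge t+\epsilon\}$, but both routes land on the same estimate.

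The gap is in your diagnosis of the obstacle. The first--moment bound (the paper's Lemmas~\ref{lem:maxload-binom} and~\ref{lem:maxload-tozero}) is a purely combinatorial statement about the pairing model, having nothing to do with $\varrho(\mu)$ or the fixed--point description: it says that for any $t>1/(k_{\min}-1)$ there exists $\delta>0$ with $\pr{\exists\,S:0<|S|\le\delta n,\ |E_{H_n}(S)|\ge t|S|}\to 0$. Your exponent $c_\epsilon$ is concretely $(\varrho(\mu)+\epsilon)(k_{\min}-1)-1$, and it is positive only if $\varrho(\mu)\ge 1/(k_{\min}-1)$. That inequality is a \emph{separate} fact about the limit, and it does \emph{not} come from Theorem~\ref{thm:fixed-point-generalized-geeneralization-v1}. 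The paper proves it directly: under the hypothesis $\pr{\Gamma(k_{\min})\le 1}<1$ of Proposition~\ref{prop:max-load-detail}, the $\ugwt(P)$ tree with positive probability contains, at the root, a finite $k_{\min}$--uniform subtree with $M$ edges for arbitrarily large $M$; the balanced load on such a subtree is $M/(1+M(k_{\min}-1))$, and monotonicity of canonical $\epsilon$--balanced allocations (Proposition~\ref{prop:monotonicity-canonical-eba}, via Remark~\ref{rem:Theta-epsilon-H**-canonical} and Proposition~\ref{prop:epsilon-to-zero}) pushes that up to $\partial\Theta_0$ at the root, giving $\varrho(\mu)\ge M/(1+M(k_{\min}-1))\to 1/(k_{\min}-1)$. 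No ``reconciling of constants'' is needed: the combinatorial threshold is determined by $k_{\min}$ alone, and the extra hypothesis in Proposition~\ref{prop:max-load-detail} is there precisely to place $\varrho(\mu)$ at or above it.
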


This theorem is proved in Section~\ref{sec:conv-maxim-load}.


\section{\texorpdfstring{$\epsilon$}{e}--balancing with baseloads}
\label{sec:epsilon-balancing}

In this section, we analyze the properties of $\epsilon$--balanced allocations with respect to a baseload, which were introduced in Definition~\ref{def:epsilon-balanced-with-baseload}. 
Note that throughout this section, we are dealing with a given hypergraph, not a distribution on $\mH_*$. By setting the baseload function $b$ to zero, our results here reduce to those for $\epsilon$--balanced allocations as introduced in Definition~\ref{def:epsilon-balanced-specific-hypergraph}.

\subsection{Existence}
\label{sec:epsil-balanc-with}

The existence of $\epsilon$--balanced allocations with respect to a baseload $b$ on hypergraphs 
is a consequence of the Schauder--Tychonoff fixed point theorem (see, for instance, \cite{agarwal2009fixed}).
Here we give the details. 
Fix a 
hypergraph $H = \langle V, E \rangle$ 
and define the topological vector space $W$ to be:
\begin{equation*}
  W := \{ \theta: \Psi(H) \rightarrow \reals \} = \reals^{\Psi(H)},
\end{equation*}
with the product topology of $\reals$. 
Here, we recall that $\Psi(H)$ is the set of all edge--vertex pairs of the hypergraph
\eqref{eq:evpair-define}, and that this is a countable set. Define the following convex subset of functions with values in $[0,1]$:
\begin{equation*}
  A := \{ \theta : \Psi(H) \rightarrow [0,1] \}.
\end{equation*}
Since we have employed the product topology, 
Tychonoff's theorem tells us that $A$ is a compact set (see, for instance, \cite{munkres2000topology}).
Define the mapping $T: A\rightarrow A$ via:
\begin{equation*}
  (T\theta) (e ,i) := \frac{\exp \left ( - \frac{\partial_b \theta(i)}{\epsilon}\right ) }{\sum_{j \in e}  \exp \left ( - \frac{\partial_b \theta(j)}{\epsilon}\right ) }.
\end{equation*}
We want to show that $T$ has a fixed point. In order to do so, we need to show that $T$ is continuous. Since we have employed the product topology, we need to show that, for all $(e, i) \in \Psi(H)$, the projected version $T_{e,i}$ defined as:
\begin{equation*}
  T_{e,i} (\theta) := (T\theta)(e,i),
\end{equation*}
which is a mapping from $A$ to $[0,1]$, is continuous. In order to show this, note that $T_{e, i}$ is the concatenation of a projection $\Pi_e: A \rightarrow \reals^{|U_e|}$, where 
\begin{equation*}
  U_e := \{ (e',j) : e' \in E, e' \cap e \neq \emptyset, j \in e \cap e'\},
\end{equation*}
and an addition function from $\reals^{|U_e|}$ to $\reals^{|e|}$, which gives us the vector $[\partial \theta(j) ]_{j \in e}$, and then a function $f: \reals^{|e|} \rightarrow \reals$ defined as:
\begin{equation*}
  f([x_j]_{j \in e}):= \frac{e^{-(x_i+b(i)) / \epsilon}}{\sum e^{-(x_j+b(j))/\epsilon}}.
\end{equation*}
Since all these three functions are continuous (note that $U_e$ is a finite set since we have assumed that the graph is locally finite and all edges have finite size), $T$ is also continuous. Therefore, since $W$ is Hausdorff and locally convex, $A$ is compact, and $T$ is continuous, the Schauder--Tychonoff fixed point theorem implies that $T$ has a fixed point  (see, for instance, \cite[Theorem 8.2]{agarwal2009fixed}).
Note that $\theta' := T(\theta)$  satisfies $\sum_{i \in e} \theta'(e ,i) = 1$ for any $\theta \in W$. Therefore this fixed point is an allocation 
in the sense of Definition \ref{def:allocation},
and is also $\epsilon$--balanced.

\subsection{Monotony and Uniqueness}
\label{sec:monotony-uniqueness}


Intuitively, we expect that when we add more edges to a hypergraph and increase baseloads, the total load for an $\epsilon$--balanced allocation would increase. 
We also expect that the effect of an increase in baseload at any vertex
tends to dissipate as one moves away from the vertex, when comparing 
the respective balanced allocations. Lemmas~\ref{lem:contraction-depth-1-baseload} and \ref{lem:contraction-base-d-baseload} below 
quantify these phenomena. They are formulated in the language of 
vertex rooted hypergraph embedding from Definition~\ref{def:hypergraph-local-embedding}.

\begin{lem}[depth $1$ local contraction]
  \label{lem:contraction-depth-1-baseload}
  Assume the vertex rooted hypergraph $(H, i)$ can be embedded up to depth $1$ into the vertex rooted hypergraph $(H', i')$, i.e. $(H, i) \hookrightarrow_1 (H', i')$, with embedding $\phi:V^H_{i, 1} \hookrightarrow V^{H'}_{i', 1}$. Let $\theta_\epsilon$ and $\theta'_\epsilon$ be $\epsilon$--balanced allocations on $H$ and $H'$ respectively, with respective baseload functions $b$ and $b'$, with $b(i) \leq b'(i')$. If 
  \begin{equation*}
    M := \max_{j: d_H(i, j)=1} \partial_b \theta_\epsilon(j) - \partial_{b'} \theta'_\epsilon(\phi(j)),
  \end{equation*}
then we have 
\begin{equation*}
  \partial_{b'} \theta'_\epsilon(i')  \geq \partial_b \theta_\epsilon(i)  - \frac{|D^H_{i, 1}|}{|D^H_{i, 1} | + 4 \epsilon} M^+,
\end{equation*}
where $D^H_{i,1}$ is the set of nodes at distance one from node $i$ as was defined in Definition~\ref{def:E-H_VHid_DHid}.
\end{lem}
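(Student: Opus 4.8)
The plan is to analyze what the $\epsilon$-balance fixed point equation says at the root vertices $i$ and $i'$ and compare them directly. Write $x_j := \partial_b \theta_\epsilon(j)$ for $j \in V^H_{i,1}$ and $x'_{j'} := \partial_{b'}\theta'_\epsilon(j')$ for $j' \in V^{H'}_{i',1}$. Let $e_1, \dots, e_r$ be the edges of $H$ containing $i$; by the embedding hypothesis, each $\phi(e_k)$ is an edge of $H'$ containing $i'$, and these are distinct, so $i'$ has at least $r$ edges in $H'$. The contribution to $\partial_{b'}\theta'_\epsilon(i')$ from the edges $\phi(e_1), \dots, \phi(e_r)$ alone is a lower bound, because all summands in $\partial_{b'}\theta'_\epsilon(i') = \sum_{e' \ni i'} \theta'_\epsilon(e', i')$ are nonnegative. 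So the first step is to reduce to showing, for each $k$,
\begin{equation*}
  \theta'_\epsilon(\phi(e_k), i') \geq \theta_\epsilon(e_k, i) - (\text{a per-edge error term}),
\end{equation*}
and then sum over $k$; I expect the $|D^H_{i,1}|$ and the factor $4\epsilon$ to emerge from how these per-edge bounds aggregate.

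For a single edge $e = e_k$, using the $\epsilon$-balance formula,
\begin{equation*}
  \theta_\epsilon(e, i) = \frac{e^{-x_i/\epsilon}}{\sum_{j \in e} e^{-x_j/\epsilon}}, \qquad
  \theta'_\epsilon(\phi(e), i') = \frac{e^{-x'_{i'}/\epsilon}}{\sum_{j \in e} e^{-x'_{\phi(j)}/\epsilon}} .
\end{equation*}
The denominator of $\theta'_\epsilon(\phi(e),i')$ may also contain extra vertices of $\phi(e)$ not in the image of $\phi$, but those only help (they decrease $\theta'_\epsilon(\phi(e),i')$? no — they increase the denominator, decreasing it), so I must be careful: extra vertices in $\phi(e)$ push $\theta'_\epsilon(\phi(e),i')$ \emph{down}, so I cannot simply drop them. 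Instead the right move is to use $b(i) \leq b'(i')$ together with the definition of $M$, which controls $x_j - x'_{\phi(j)} \leq M$ for the neighbors $j$, and the fact that $|\phi(e)| \ge |e|$. Writing $\delta := x_i - x'_{i'}$ and $\delta_j := x_j - x'_{\phi(j)}$, one gets a clean comparison of the two ratios in terms of $\delta$ and the $\delta_j$; since $x'_{i'} = x'_{i'}$ appears on both the root and there is a conservation-type identity $\sum_{j\in e}\theta_\epsilon(e,j) = 1 = \sum_{j\in\phi(e)}\theta'_\epsilon(\phi(e),j)$, I plan to track $\partial_b\theta_\epsilon(i) - \partial_{b'}\theta'_\epsilon(i')$ as a whole rather than edge by edge, using the logistic bound from Remark~\ref{rem:motivation-of-epsilon-balanced allocation} and the elementary inequality that $1/(1+e^{s/\epsilon})$ is $1/(4\epsilon)$-Lipschitz in $s$.

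Concretely, the cleanest route: let $\delta := \partial_b\theta_\epsilon(i) - \partial_{b'}\theta'_\epsilon(i')$. If $\delta \le 0$ there is nothing to prove (the claimed bound holds trivially since $M^+ \ge 0$). If $\delta > 0$, for each edge $e \ni i$ compare $\theta_\epsilon(e,i)$ with $\theta'_\epsilon(\phi(e),i')$: the ratio of the two (after matching numerators via $\delta$) is governed by $\sum_{j\in e} e^{(x_i - x_j)/\epsilon}$ versus $\sum_{j\in\phi(e)} e^{(x'_{i'} - x'_{\phi(j)})/\epsilon}$, and here $x_i - x_j$ vs $x'_{i'} - x'_{\phi(j)}$ differ by $\delta - \delta_j \ge \delta - M$. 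Summing over all $e \ni i$ and using that the total is $\partial$, one derives $\delta \le \frac{|D^H_{i,1}|}{|D^H_{i,1}| + 4\epsilon}\,M^+$ by a self-bounding argument: $\delta$ appears on both sides, and solving the resulting scalar inequality gives exactly the stated contraction factor. The main obstacle is the bookkeeping around the extra vertices of $\phi(e)$ beyond the image of $\phi$, and getting the constant $4\epsilon$ (rather than something weaker) right — this requires using the sharp Lipschitz constant of the logistic function $s \mapsto (1 + e^{s/\epsilon})^{-1}$ at its inflection point and carefully charging the per-neighbor error only once, across all edges, to the $|D^H_{i,1}|$ neighbors.
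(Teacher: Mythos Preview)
Your overall plan---drop the extra edges at $i'$ (they can only help), compare $\theta_\epsilon(e,i)$ with $\theta'_\epsilon(\phi(e),i')$ edge by edge using the $1/(4\epsilon)$-Lipschitz property of logistic-type functions, sum, and solve the resulting self-bounding scalar inequality for $\delta := \partial_b\theta_\epsilon(i)-\partial_{b'}\theta'_\epsilon(i')$---is exactly the paper's approach. The paper packages the Lipschitz step as a separate lemma: for $f_\epsilon(x_1,\dots,x_k)=\bigl(1+\sum_j e^{-x_j/\epsilon}\bigr)^{-1}$ one has $f_\epsilon(x)-f_\epsilon(x')\le\frac{1}{4\epsilon}\sum_j[x_j-x'_j]^+$.

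The ``main obstacle'' you identify is based on a misreading of Definition~\ref{def:hypergraph-local-embedding}. There, $\phi(e):=\{\phi(j):j\in e\}$, and since $\phi$ is injective this has exactly $|e|$ elements; the definition requires \emph{this set itself} to be an edge of $H'$. So there are no ``extra vertices of $\phi(e)$ beyond the image of $\phi$'', and your denominator-matching worry disappears. The only extra structure at $i'$ is possible additional edges of $H'$ containing $i'$, and those you already handle correctly. Once this is cleared up, writing $\theta_\epsilon(e,i)=\bigl(1+\sum_{j\in e,\,j\neq i}e^{-(x_j-x_i)/\epsilon}\bigr)^{-1}$ and likewise for $\theta'_\epsilon(\phi(e),i')$, the lemma above gives
\[
\theta_\epsilon(e,i)-\theta'_\epsilon(\phi(e),i')\;\le\;\frac{1}{4\epsilon}\sum_{j\in e,\,j\neq i}\bigl[(x_j-x'_{\phi(j)})-(x_i-x'_{i'})\bigr]^+.
\]
Summing over $e\ni i$, using $b(i)\le b'(i')$ for the constant terms, letting $I$ be the set of pairs $(e,j)$ with positive bracket, and bounding $x_j-x'_{\phi(j)}\le M$, one obtains $\delta\le\frac{|I|}{4\epsilon}(M-\delta)$, hence $\delta\le\frac{|I|}{|I|+4\epsilon}M$, and then the paper bounds $|I|$ by $|D^H_{i,1}|$ and $M$ by $M^+$. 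No ``charge each neighbor only once across all edges'' device is invoked; the argument is content to count $(e,j)$ pairs.
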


Note that, in this lemma, $\theta_\epsilon$ and $\theta'_\epsilon$ are two arbitrary $\epsilon$--balanced allocations on $H$ and $H'$ respectively,
with respective baseload functions $b$ and $b'$.
We know from Section~\ref{sec:epsil-balanc-with} that such allocations
exist, but they might a priori not be unique. We will later prove uniqueness for the special case of bounded hypergraphs, which were introduced in Definition
\ref{def:bounded-hypergraph}.

Before proving this result, we need the following tool, whose proof is given after the proof of Lemma~\ref{lem:contraction-depth-1-baseload}.


\begin{lem}
  \label{lem:fx-fx'}
  Assume that for $\epsilon>0$, the function $f_\epsilon: \reals^k \rightarrow \reals$ is defined in the following way:
  \begin{equation*}
    f_\epsilon(x_1, \dots, x_k) = \frac{1}{1 + \sum_{i = 1}^k e^{-\frac{x_i}{\epsilon}}}.
  \end{equation*}
  Then, for arbitrary real valued sequences $(x_1, \dots, x_k)$ and $(x'_1, \dots, x'_k)$,
  we have,
  \begin{equation*}
    f_\epsilon(x_1, \dots, x_k) - f_\epsilon(x'_1, \dots, x'_k) \leq \frac{1}{4\epsilon} \sum_{i = 1}^k [x_i - x'_i]^+.
  \end{equation*}
\end{lem}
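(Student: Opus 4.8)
The plan is to prove the claimed inequality pointwise by reducing it, via a one-coordinate-at-a-time argument, to a scalar estimate on the derivative of $f_\epsilon$. First I would observe that it suffices to handle the case where $x_i \geq x_i'$ for every $i$: if some coordinates have $x_i < x_i'$, then raising $x_i'$ down to $x_i$ only decreases $f_\epsilon(x_1',\dots,x_k')$ (since $f_\epsilon$ is coordinatewise nondecreasing — each $e^{-x_i'/\epsilon}$ decreases, hence the denominator decreases), so replacing $x_i'$ by $\min\{x_i,x_i'\}$ can only increase the left-hand side, while $[x_i - x_i']^+$ is unchanged for those coordinates and the others are unaffected. Hence we may and do assume $x_i' = \min\{x_i,x_i'\}$, i.e. $x_i \geq x_i'$ for all $i$, so that $[x_i - x_i']^+ = x_i - x_i'$.

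Next I would telescope: write the difference $f_\epsilon(x_1,\dots,x_k) - f_\epsilon(x_1',\dots,x_k')$ as a sum of $k$ terms, the $i$-th term changing only the $i$-th coordinate from $x_i'$ to $x_i$ while the earlier ones are already at $x$ and the later ones still at $x'$. Each such term is $g(x_i) - g(x_i')$ for a function $g$ of one real variable of the form $g(s) = 1/(C + e^{-s/\epsilon})$ with $C \geq 1$ a constant (collecting the $1$ and the other exponentials). By the mean value theorem this equals $g'(\xi)(x_i - x_i')$ for some $\xi$, and $g'(s) = \frac{(1/\epsilon)e^{-s/\epsilon}}{(C + e^{-s/\epsilon})^2}$. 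The elementary bound $\frac{ab}{(a+b)^2} \leq \frac14$ for $a,b > 0$ (here $a = C$, $b = e^{-s/\epsilon}$) gives $g'(s) \leq \frac{1}{\epsilon}\cdot\frac{e^{-s/\epsilon}}{(C+e^{-s/\epsilon})^2} \cdot C/C \leq \frac{1}{4\epsilon C} \leq \frac{1}{4\epsilon}$ since $C \geq 1$. Summing the $k$ telescoped terms yields
\begin{equation*}
  f_\epsilon(x_1,\dots,x_k) - f_\epsilon(x_1',\dots,x_k') \leq \frac{1}{4\epsilon}\sum_{i=1}^k (x_i - x_i') = \frac{1}{4\epsilon}\sum_{i=1}^k [x_i - x_i']^+,
\end{equation*}
which is the claim.

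I do not expect a genuine obstacle here; the only mild subtlety is being careful that the telescoping and the reduction step are valid simultaneously — one must perform the reduction to $x_i \geq x_i'$ first, and only then telescope, because the mean-value-theorem step uses $x_i - x_i' \geq 0$ to turn $g'(\xi)(x_i-x_i')$ into an upper bound after bounding $g'$. Alternatively, to avoid invoking the mean value theorem one can bound $g(x_i) - g(x_i') = \int_{x_i'}^{x_i} g'(s)\,ds \leq \frac{1}{4\epsilon}(x_i - x_i')$ directly from the pointwise bound on $g'$, which is perhaps cleanest. The scalar inequality $\frac{ab}{(a+b)^2}\le \frac14$ is just AM–GM, $(a+b)^2 \geq 4ab$.
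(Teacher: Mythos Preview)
Your proof is correct. The paper takes a somewhat different route: instead of first reducing to the case $x_i \ge x_i'$ and then telescoping one coordinate at a time, it writes out the exact algebraic identity
\[
f_\epsilon(x)-f_\epsilon(x')=\sum_{i=1}^k \frac{e^{-x_i'/\epsilon}-e^{-x_i/\epsilon}}{\bigl(1+\sum_r e^{-x_r/\epsilon}\bigr)\bigl(1+\sum_s e^{-x_s'/\epsilon}\bigr)},
\]
and then bounds each summand separately: for $x_i'\ge x_i$ the term is nonpositive, while for $x_i'<x_i$ one drops all but the $i$th exponential from each factor in the denominator and recognizes the resulting expression as $\frac{1}{1+e^{-x_i/\epsilon}}-\frac{1}{1+e^{-x_i'/\epsilon}}$, which is handled by the $k=1$ Lipschitz bound. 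So both arguments ultimately rest on the same scalar fact (the logistic function is $\tfrac{1}{4\epsilon}$-Lipschitz, equivalently your AM--GM bound $ab/(a+b)^2\le\tfrac14$), but the paper's decomposition avoids the preliminary monotone reduction and handles the sign cases inside a single sum, while your telescoping-plus-derivative argument is more mechanical and would generalize to any function with a coordinatewise derivative bound. One small wording slip: in your reduction step you say ``each $e^{-x_i'/\epsilon}$ decreases'' when lowering $x_i'$, but of course it increases; the conclusion that $f_\epsilon(x')$ decreases is nonetheless correct since $f_\epsilon$ is coordinatewise nondecreasing.
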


\begin{proof}[Proof of Lemma~\ref{lem:contraction-depth-1-baseload}]
  Since $\phi(e) \in E(H')$ for $e \ni i$, and $\theta'_\epsilon$ is a nonnegative function, we have 
\begin{equation}
  \label{eq:forget-extra-edges-depth-1}
  \sum_{e' \ni i'} \theta'_\epsilon (e', i') \geq \sum_{e \ni i} \theta'_\epsilon (\phi(e), i').
\end{equation}
On the other hand, we have 
\begin{equation}
\label{eq:monotony}
  \begin{split}
    \partial_b \theta_\epsilon(i)  - \partial_{b'} \theta'_\epsilon(i') &
    \stackrel{(a)}{\leq} 
    \sum_{e \ni i} \theta_\epsilon(e, i)
    - \sum_{e' \ni i'} \theta'_\epsilon (e', i') \\
    &\stackrel{(b)}{\leq} \sum_{e \ni i} \theta_\epsilon(e, i) - \theta'_\epsilon(\phi(e), i') \\
    &\stackrel{(c)}{=} \sum_{e \ni i} \Bigg ( \frac{1}{1+ \sum_{\stackrel{j\in e}{j \neq i}} \exp \left ( - \frac{\partial_b \theta_\epsilon(j) - \partial_b \theta_\epsilon(i) }{\epsilon} \right )} \\
    & \qquad - \frac{1}{1 + \sum_{\stackrel{j \in e}{j \neq i}} \exp \left ( - \frac{\partial_{b'} \theta'_\epsilon(\phi(j))  - \partial_{b'} \theta'_\epsilon(i')}{\epsilon}\right ) } \Bigg ) \\
    &\stackrel{(d)}{\leq} \frac{1}{4\epsilon} \sum_{e \ni i} \sum_{\stackrel{j \in e}{j \neq i}} \left [ \left (  \partial_b \theta_\epsilon(j)  - \partial_b \theta_\epsilon(i) \right ) - \left ( \partial_{b'} \theta'_\epsilon(\phi(j))  - \partial_{b'} \theta'_\epsilon(i')  \right ) \right ]^+,
  \end{split}
\end{equation}
where $(a)$ results from $b(i) \leq b'(i')$, $(b)$ uses \eqref{eq:forget-extra-edges-depth-1}, $(c)$ is a substitution from Definition~\ref{def:balanced-allocation-with-baseload}, and $(d)$ uses Lemma~\ref{lem:fx-fx'}.

Now, let
\begin{equation*}
  I := \{(e, j) : e \ni i, j \in e, j \neq i, \partial_b \theta_\epsilon(j) - \partial_{b'} \theta'_\epsilon(\phi(j)) \geq \partial_b \theta_\epsilon(i) - \partial_{b'}\theta'_\epsilon(i') \}.
\end{equation*}
Then the inequality in \eqref{eq:monotony} together with the definition of $M$ implies 
\begin{equation*}
  \partial_b \theta_\epsilon(i) - \partial_{b'} \theta'_\epsilon(i') \leq \frac{1}{4 \epsilon} |I| ( M - (\partial_b \theta_\epsilon(i) - \partial_{b'} \theta'_\epsilon(i') ) ).
\end{equation*}
Rearranging the terms, we get
\begin{equation*}
  \partial_b \theta_\epsilon(i) - \partial_{b'} \theta'_\epsilon(i') \leq \frac{|I|}{|I| + 4\epsilon} M \leq \frac{|D^H_{i,1}|}{|D^H_{i,1}| + 4 \epsilon} M^+,
\end{equation*}
which is exactly what we wanted to prove.
\end{proof}

\begin{proof}[Proof of Lemma~\ref{lem:fx-fx'}]
  First we prove the statement for $k=1$. In this case, the function $f_\epsilon(x) = \frac{1}{1 + e^{-\frac{x}{\epsilon}}}$ is $\frac{1}{4\epsilon}$--Lipschitz and increasing in $x$, hence the statement holds for $k=1$. 

Now assume $k > 1$ is arbitrary. We have
\begin{equation}
  \label{eq:fx-fx'-expand}
  f_\epsilon(x_1, \dots, x_k) - f_\epsilon(x'_1, \dots, x'_k) = \sum_{i=1}^k \frac{e^{-\frac{x'_i}{\epsilon}} - e^{-\frac{x_i}{\epsilon}}}{\left ( 1 + \sum_{r =1}^k e^{-\frac{x_r}{\epsilon}} \right )\left ( 1 + \sum_{s =1}^k e^{-\frac{x'_s}{\epsilon}} \right ) }.
\end{equation}
Now, for each $1 \leq i \leq k$, if $x'_i \geq x_i$ then
\begin{equation*}
  \frac{e^{-\frac{x'_i}{\epsilon}} - e^{-\frac{x_i}{\epsilon}}}{\left ( 1 + \sum_{r =1}^k e^{-\frac{x_r}{\epsilon}} \right )\left ( 1 + \sum_{s =1}^k e^{-\frac{x'_s}{\epsilon}} \right ) } \leq 0 = \frac{1}{4\epsilon} [x_i - x'_i]^+.
\end{equation*}
On the other hand, if $x'_i < x_i$, then we have
\begin{equation*}
  \begin{split}
  \frac{e^{-\frac{x'_i}{\epsilon}} - e^{-\frac{x_i}{\epsilon}}}{\left ( 1 + \sum_{r =1}^k e^{-\frac{x_r}{\epsilon}} \right )\left ( 1 + \sum_{s =1}^k e^{-\frac{x'_s}{\epsilon}} \right ) } &\leq \frac{e^{-\frac{x'_i}{\epsilon}} - e^{-\frac{x_i}{\epsilon}}}{\left ( 1 + e^{-\frac{x_i}{\epsilon}} \right ) \left ( 1 + e^{-\frac{x'_i}{\epsilon}} \right )} \\
  &= \frac{1}{1 + e^{-\frac{x_i}{\epsilon}}} - \frac{1}{1 + e^{-\frac{x'_i}{\epsilon}}} \\
  &\leq \frac{1}{4\epsilon}[x_i - x'_i]^+,
  \end{split}
\end{equation*}
where the last step uses the statement for $k=1$. Therefore, in either case, we have proved that for all $1 \leq i \leq k$ we have:
\begin{equation*}
  \frac{e^{-\frac{x'_i}{\epsilon}} - e^{-\frac{x_i}{\epsilon}}}{\left ( 1 + \sum_{r =1}^k e^{-\frac{x_r}{\epsilon}} \right )\left ( 1 + \sum_{s =1}^k e^{-\frac{x'_s}{\epsilon}} \right ) } \leq \frac{1}{4\epsilon}[x_i - x'_i]^+.
\end{equation*}
Substituting this into (\ref{eq:fx-fx'-expand}) we get the desired result.
\end{proof}

Now we generalize Lemma~\ref{lem:contraction-depth-1-baseload} to depth $d$ local embeddings. 

\begin{lem}[depth $d$ local contraction]
  \label{lem:contraction-base-d-baseload}
  Assume $(H, i) \hookrightarrow_d (H', i')$ with embedding $\phi:V^H_{i,d} \hookrightarrow V^{H'}_{i',d}$. Also let $\theta_\epsilon$ and $\theta'_\epsilon$ be $\epsilon$--balanced allocations on $H$ and $H'$ respectively,
  with respective baseload functions $b$ and $b'$, where
  $b(j) \leq b'(\phi(j))$ for all $j$ such that $d_H(i, j) \leq d-1$. Then we have:
  \begin{equation*}
    \partial_{b'} \theta'_\epsilon(i') \geq \partial_b \theta_\epsilon(i) - \left ( \frac{L \Delta}{4 \epsilon + L \Delta} \right )^d M_d^+,
  \end{equation*}
where
\begin{equation*}
  M_d := \max_{j \in D^H_{i,d}} \partial_b \theta_\epsilon(j) - \partial_{b'} \theta'_\epsilon(\phi(j)),
\end{equation*}
and
\begin{equation*}
  \begin{split}
    L &:= \max_{e \in E_H(V^H_{i, d})} |e|~, \\
    \Delta &:= \max_{j \in V^H_{i, d-1}} \deg_H(j)~.
  \end{split}
\end{equation*}
\end{lem}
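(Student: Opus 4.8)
The plan is to prove this by induction on $d$, using Lemma~\ref{lem:contraction-depth-1-baseload} as the base case ($d=1$), where we observe that $|D^H_{i,1}| \le L\Delta$ (the root $i$ has degree at most $\Delta$, and each edge through $i$ has at most $L-1 \le L$ other vertices, so at most $L\Delta$ vertices at distance one), and the function $x \mapsto \frac{x}{x+4\epsilon}$ is increasing, so $\frac{|D^H_{i,1}|}{|D^H_{i,1}|+4\epsilon} \le \frac{L\Delta}{L\Delta+4\epsilon}$. This gives exactly the $d=1$ statement.

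For the inductive step, suppose the claim holds for $d-1$ (for all embeddings and all compatible baseloads). Given $(H,i) \hookrightarrow_d (H',i')$ with embedding $\phi:V^H_{i,d} \hookrightarrow V^{H'}_{i',d}$, first apply Lemma~\ref{lem:contraction-depth-1-baseload} at the root: since $(H,i)\hookrightarrow_1 (H',i')$ via the restriction of $\phi$, and $b(i) \le b'(\phi(i))$, we get
\begin{equation*}
  \partial_{b'}\theta'_\epsilon(i') \ge \partial_b \theta_\epsilon(i) - \frac{L\Delta}{L\Delta + 4\epsilon} M_1^+,
\end{equation*}
where $M_1 = \max_{j \in D^H_{i,1}} \partial_b\theta_\epsilon(j) - \partial_{b'}\theta'_\epsilon(\phi(j))$. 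Now for each fixed $j \in D^H_{i,1}$, I would observe that $(H,j) \hookrightarrow_{d-1} (H',\phi(j))$: the embedding $\phi$ restricted to $V^H_{j,d-1} \subseteq V^H_{i,d}$ works because any edge of $H$ contained in $V^H_{j,d-1}$ is contained in $V^H_{i,d}$ and hence maps into $E(H')$ under $\phi$; and $\phi(j) = \phi(j)$ is the new root. The baseload compatibility $b(j') \le b'(\phi(j'))$ is required only for $j'$ with $d_H(j,j') \le d-2$, and such $j'$ satisfy $d_H(i,j') \le d-1$, so the hypothesis of the current lemma supplies it. Applying the inductive hypothesis to $(H,j)\hookrightarrow_{d-1}(H',\phi(j))$ gives
\begin{equation*}
  \partial_{b'}\theta'_\epsilon(\phi(j)) \ge \partial_b\theta_\epsilon(j) - \left(\frac{L'\Delta'}{4\epsilon + L'\Delta'}\right)^{d-1} (M'_{d-1})^+,
\end{equation*}
where $L',\Delta'$ are the analogous quantities for the ball of radius $d-1$ around $j$, and $M'_{d-1} = \max_{l \in D^H_{j,d-1}} \partial_b\theta_\epsilon(l) - \partial_{b'}\theta'_\epsilon(\phi(l))$. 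Since $V^H_{j,d-1} \subseteq V^H_{i,d}$ and $V^H_{j,d-2} \subseteq V^H_{i,d-1}$, we have $L' \le L$ and $\Delta' \le \Delta$, and since $D^H_{j,d-1} \subseteq V^H_{i,d} $ — more precisely any $l$ with $d_H(j,l) = d-1$ has $d_H(i,l) \le d$, and we only need the bound $M'_{d-1} \le \max_{l \in D^H_{i,d}} (\cdots)^+ \vee 0$; actually the cleaner route is to note $D^H_{j,d-1} \cap D^H_{i,\le d}$, so $M'_{d-1} \le \max(M_d, 0)$ in the sense that every $l \in D^H_{j,d-1}$ lies in $\bigcup_{d'\le d} D^H_{i,d'}$, and for $d' < d$ the quantity $\partial_b\theta_\epsilon(l) - \partial_{b'}\theta'_\epsilon(\phi(l))$ needs separate control. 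This is the delicate point; I would handle it by instead running a single clean induction where the statement carries a max over the \emph{whole ball} rather than the sphere, or by noting that vertices strictly inside contribute nonpositively to the relevant comparison — see below.

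Rather than fight that, the cleaner plan: prove by induction the slightly reformulated bound with $\widetilde M_d := \max_{j : d_H(i,j) = d}(\partial_b\theta_\epsilon(j) - \partial_{b'}\theta'_\epsilon(\phi(j)))^+$ appearing, and chain the two displayed inequalities: substituting the inductive bound into $M_1$ gives, for each $j \in D^H_{1}$, that $\partial_b\theta_\epsilon(j) - \partial_{b'}\theta'_\epsilon(\phi(j)) \le (\frac{L\Delta}{4\epsilon+L\Delta})^{d-1}\widetilde M_d$ provided we know $D^H_{j,d-1} \subseteq D^H_{i,d} \cup V^H_{i,d-1}$ and that the contribution from $V^H_{i,d-1}$ vertices is controlled — and here we can iterate: the sphere at distance $d-1$ from $j$ that lies at distance $<d$ from $i$ is itself handled by a shallower instance of the same lemma applied to $(H,j)$, so a nested induction closes it. Then $M_1 \le (\frac{L\Delta}{4\epsilon+L\Delta})^{d-1}\widetilde M_d$, and plugging into the root inequality yields the factor $(\frac{L\Delta}{4\epsilon+L\Delta})^d$.

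\textbf{Main obstacle.} The genuine difficulty is bookkeeping: the sphere $D^H_{j,d-1}$ around a neighbor $j$ of $i$ is not contained in the sphere $D^H_{i,d}$ around $i$ — it also contains vertices closer to $i$ — so a naive induction on the sphere-based quantity $M_d$ does not literally close. The fix is to either (i) induct on a ball-based quantity and exploit that deeper vertices dominate the comparison, or (ii) set up a two-parameter induction tracking both the embedding depth and the radius, observing that $L,\Delta$ for sub-balls only decrease. Either way, the core analytic input is entirely contained in Lemmas~\ref{lem:fx-fx'} and \ref{lem:contraction-depth-1-baseload}; the rest is careful but routine propagation of the contraction factor $\frac{L\Delta}{4\epsilon+L\Delta} < 1$ through $d$ layers.
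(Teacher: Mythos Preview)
Your approach — induction on $d$, applying Lemma~\ref{lem:contraction-depth-1-baseload} at the root and then the inductive hypothesis at each neighbor — is different from the paper's, and you correctly identify the genuine obstacle: for $j\in D^H_{i,1}$ the sphere $D^H_{j,d-1}$ spills over $D^H_{i,d-2}\cup D^H_{i,d-1}\cup D^H_{i,d}$, so the inductive bound at $j$ involves $M_{d-2},M_{d-1},M_d$ rather than $M_d$ alone. Your proposed fixes are not convincing as stated. Fix (i), replacing the sphere maximum by a ball maximum $\widetilde M_d=\max_{d_H(i,l)\le d}(\cdots)$, makes the statement circular: the root itself lies in the ball, so the desired inequality $M_0\le\alpha^d\widetilde M_d^+$ is weaker than the trivial $M_0\le\widetilde M_d^+$ and carries no content. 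Fix (ii) is too vague to evaluate; a nested induction that ``handles the shallower vertices by a shallower instance of the same lemma'' is exactly what needs to be made precise, and you have not done so.

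The paper avoids induction on $d$ entirely. Instead it defines, for each $0\le k\le d$, the sphere maximum
\[
M_k:=\max_{j\in D^H_{i,k}}\bigl(\partial_b\theta_\epsilon(j)-\partial_{b'}\theta'_\epsilon(\phi(j))\bigr),
\]
and applies Lemma~\ref{lem:contraction-depth-1-baseload} at \emph{every} vertex $j$ with $d_H(i,j)\le d-1$ (each such $j$ has $(H,j)\hookrightarrow_1(H',\phi(j))$ via the same $\phi$). Since $D^H_{j,1}\subset D^H_{i,k-1}\cup D^H_{i,k}\cup D^H_{i,k+1}$ when $d_H(i,j)=k$, and $|D^H_{j,1}|\le L\Delta$, this yields the system
\[
M_0\le\alpha M_1^+,\qquad M_k\le\alpha\bigl(M_{k-1}^+\vee M_k^+\vee M_{k+1}^+\bigr)\quad(1\le k\le d-1),
\]
with $\alpha=\frac{L\Delta}{4\epsilon+L\Delta}<1$. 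One then proves $M_k^+\le\alpha M_{k+1}^+$ by \emph{forward} induction on $k$: the case $k=0$ is the first display, and for $k\ge1$ the inductive hypothesis $M_{k-1}^+\le\alpha M_k^+$ collapses the three-way max to $M_k^+\le\alpha(M_k^+\vee M_{k+1}^+)$; if the max is $M_k^+$ then $M_k^+\le\alpha M_k^+$ forces $M_k^+=0$, otherwise $M_k^+\le\alpha M_{k+1}^+$ directly. Chaining gives $M_0\le\alpha^d M_d^+$, which is the claim. The point is that by working with \emph{all} the $M_k$ simultaneously one never has to untangle which original sphere a vertex of $D^H_{j,d-1}$ belongs to.
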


\begin{proof}
  Take some $j \in V(H)$ with $d_H(i, j) = k$. If $j' \in V(H)$ is such that $d_H(j, j') = 1$, the triangle inequality implies that
  \begin{equation*}
    |d_H(i, j') - d_H(i, j)| \leq d_H(j, j') = 1.
  \end{equation*}
Thus
  \begin{equation}
    \label{eq:D-1-k-1-k-k+1}
    D^H_{j, 1} \subset D^H_{i, k-1} \cup D^H_{i,k} \cup D^H_{i,k+1}.
  \end{equation}
Hence, if $d_H(i, j) \leq d-1$, using the same embedding map $\phi$ we have $(H,j) \hookrightarrow_1 (H', \phi(j))$. Hence, if we define
  \begin{equation*}
    M_k := \max_{j: d_H(i,j) =k} \partial_b \theta_\epsilon(j) - \partial_{b'} \theta'_\epsilon(\phi(j)) \qquad 0 \leq k \leq d,
  \end{equation*}
  and
  \begin{equation*}
    \alpha := \frac{L \Delta}{4 \epsilon + L \Delta},
  \end{equation*}
  then using Lemma~\ref{lem:contraction-depth-1-baseload} and \eqref{eq:D-1-k-1-k-k+1} we have
  \begin{equation}
    \label{eq:Mk-k-1-k+1}
    M_k \leq \alpha ( M_{k-1}^+ \vee M_k^+ \vee M_{k+1}^+) \qquad 1 \leq k \leq d-1,
  \end{equation}
  and
  \begin{equation}
    \label{eq:M0-1}
    M_0 \leq \alpha M_1^+.
  \end{equation}
We show by induction that $M_k^+ \leq \alpha M_{k+1}^+$ for $0 \leq k \leq d-1$. For $k=0$, this follows from \eqref{eq:M0-1} and  the fact that $x \mapsto x^+$ is increasing. For $k \geq 1$ we have from \eqref{eq:Mk-k-1-k+1} that 
  \begin{equation*}
    M_k^+ \leq \alpha ( M_{k-1}^+ \vee M_k^+ \vee M_{k+1}^+) \leq \alpha ((\alpha M_k^+ \vee M_k^+) \vee M_{k+1}^+) = \alpha (M_k^+ \vee M_{k+1}^+).
  \end{equation*}
We claim that $M_k^+ \leq \alpha M_{k+1}^+$. 
The above inequality means that either $M_k^+ \leq \alpha M_k^+$ or $M_k^+ \leq \alpha M_{k+1}^+$. The latter case is precisely what we have claimed, while in the former case, as $\alpha < 1$, we have $M_k= 0$ and the inequality $M_k^+ \leq \alpha M_{k+1}^+$ is automatic. 
This implies that $M_0 \leq \alpha^d M_d^+$ and completes the proof. 
\end{proof}



Using the above local results, we
now show that if we add edges to a hypergraph and/or increase the
baseload, the total load should increase.

\begin{prop}
  \label{prop:bounded-partial-b-increasing}
  Let $H$ and $H'$ be two hypergraphs defined on the same vertex set $V = V(H) = V(H')$, such that $E(H) \subset E(H')$, and $H$ is bounded (but $H'$ is not necessarily bounded). Suppose two bounded baseload functions $b, b': V \rightarrow \reals$ are given, such that $b(i) \leq b'(i)$ for all $i \in V$.
If $\theta_\epsilon$ and $\theta'_\epsilon$ are two $\epsilon$--balanced allocations on $H$ and $H'$, respectively, for the respective baseload functions $b$ and $b'$, then we have $\partial_b \theta_\epsilon(i) \leq \partial_{b'} \theta'_\epsilon(i)$ for all $i \in V$. 
\end{prop}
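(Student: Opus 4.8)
The plan is to apply the depth-$d$ local contraction estimate of Lemma~\ref{lem:contraction-base-d-baseload} with the identity map as the embedding, and then send $d\to\infty$, using the boundedness of $H$ to control the error term uniformly.

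First I would fix a vertex $i\in V$. If $\deg_H(i)=0$ the claim is immediate, since then $\partial\theta_\epsilon(i)=0$, so $\partial_b\theta_\epsilon(i)=b(i)\le b'(i)\le b'(i)+\partial\theta'_\epsilon(i)=\partial_{b'}\theta'_\epsilon(i)$; so assume $\deg_H(i)\ge 1$. Because $E(H)\subseteq E(H')$, distances in $H'$ do not exceed distances in $H$, hence $V^H_{i,d}\subseteq V^{H'}_{i,d}$ for every $d$, and the identity map $\phi\colon V^H_{i,d}\hookrightarrow V^{H'}_{i,d}$ witnesses $(H,i)\hookrightarrow_d(H',i)$: it is injective, fixes $i$, and sends every $e\in E_H(V^H_{i,d})$ to $e\in E(H')$. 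Moreover $b(j)\le b'(j)=b'(\phi(j))$ for all $j$, so the hypotheses of Lemma~\ref{lem:contraction-base-d-baseload} hold for every $d\ge1$.

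Next I would bound, uniformly in $d$, the two quantities appearing in that lemma. Since $H$ is bounded, say with $\deg_H\le\Delta_0$ and all edges of size at most $L_0$, the constants $L$ and $\Delta$ in the lemma satisfy $L\le L_0$ and $\Delta\le\Delta_0$, so $\alpha:=L\Delta/(4\epsilon+L\Delta)\le\alpha_0:=L_0\Delta_0/(4\epsilon+L_0\Delta_0)<1$. For the error term $M_d=\max_{j\in D^H_{i,d}}\bigl(\partial_b\theta_\epsilon(j)-\partial_{b'}\theta'_\epsilon(j)\bigr)$, I would use that $\partial\theta_\epsilon(j)=\sum_{e\ni j}\theta_\epsilon(e,j)\le\deg_H(j)\le\Delta_0$ because $\theta_\epsilon$ takes values in $[0,1]$, that $|b|$ and $|b'|$ are each bounded by a common finite constant $M$, and that $\partial\theta'_\epsilon(j)\ge0$; hence $\partial_b\theta_\epsilon(j)\le\Delta_0+M$ and $\partial_{b'}\theta'_\epsilon(j)\ge-M$, so $M_d\le\Delta_0+2M=:C$ for all $d$. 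Importantly, the (possible) unboundedness of $H'$ never intervenes, since $\theta'_\epsilon$ enters only through the trivial lower bound $\partial\theta'_\epsilon\ge0$.

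Then Lemma~\ref{lem:contraction-base-d-baseload} gives, for every $d\ge1$,
\begin{equation*}
  \partial_{b'}\theta'_\epsilon(i)\ \ge\ \partial_b\theta_\epsilon(i)-\alpha^d M_d^+\ \ge\ \partial_b\theta_\epsilon(i)-\alpha_0^d\,C,
\end{equation*}
and letting $d\to\infty$, using $\alpha_0<1$, yields $\partial_{b'}\theta'_\epsilon(i)\ge\partial_b\theta_\epsilon(i)$, as desired. There is no serious obstacle in this argument; the only point requiring care is to observe that the boundedness hypothesis on $H$ (and not on $H'$) is exactly what makes both $\alpha$ bounded away from $1$ and $M_d$ bounded independently of $d$, so that the geometric factor $\alpha_0^d$ drives the error to zero.
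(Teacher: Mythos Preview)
Your proof is correct. The paper takes a slightly different, more compact route: rather than invoking the depth-$d$ contraction (Lemma~\ref{lem:contraction-base-d-baseload}) and sending $d\to\infty$, it applies the depth-$1$ contraction (Lemma~\ref{lem:contraction-depth-1-baseload}) just once, sets $M:=\sup_{j\in V}\bigl(\partial_b\theta_\epsilon(j)-\partial_{b'}\theta'_\epsilon(j)\bigr)$ (finite because $H$ and $b$ are bounded), and observes that the depth-$1$ estimate at every vertex $i$ gives $\partial_b\theta_\epsilon(i)-\partial_{b'}\theta'_\epsilon(i)\le \frac{\Delta L}{4\epsilon+\Delta L}M^+$; taking the supremum over $i$ yields $M\le\frac{\Delta L}{4\epsilon+\Delta L}M^+$, which forces $M\le 0$ since the factor is strictly less than $1$. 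Your argument is essentially the unrolled version of this fixed-point trick: Lemma~\ref{lem:contraction-base-d-baseload} is itself proved by iterating the depth-$1$ bound $d$ times, and you then pass to the limit. The paper's supremum argument is a touch slicker (no limit needed), while yours makes the iterative mechanism explicit; both exploit the boundedness of $H$ in exactly the same way, to keep the contraction factor uniformly below $1$ and the error term uniformly bounded.
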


\begin{proof}
  Since $H$ is bounded, there are constants $\Delta$ and $L$ such that, for all $i \in V$, $\deg_H(i) \leq \Delta$ and $|e| \leq L$ for all $e \in E(H)$. Also, as $b$ is bounded,  for some $K>0$ and all $i \in V$, we have $|b(i)| \leq K$. 

Now, fix some $i \in V$. Since $E(H) \subset E(H')$, we have $(H, i) \hookrightarrow_1 (H', i)$ with the identity map as the embedding function. With this, define
\begin{equation*}
  M := \sup_{j \in V} \partial_b \theta_\epsilon(j) - \partial_{b'} \theta'_\epsilon(j).
\end{equation*}
Note that, since $\partial_b \theta_\epsilon^b$ is  bounded to $\Delta + K$, the above quantity is finite and well defined. Now, using Lemma~\ref{lem:contraction-depth-1-baseload}, we have 
\begin{equation*}
  \partial_b \theta_\epsilon^b(i) - \partial_{b'} \theta_\epsilon^{b'}(i)
  \leq \frac{\Delta L}{4 \epsilon + \Delta L} M^+.
\end{equation*}
Taking the supremum over $i$ on the left hand side, we get
\begin{equation*}
  M \leq  \frac{\Delta L}{4 \epsilon + \Delta L} M^+,
\end{equation*}
which, since $ \frac{\Delta L}{4 \epsilon + \Delta L} < 1$, implies $M \leq 0$ and completes the proof.
\end{proof}

If we have a fixed bounded hypergraph $H$ with a baseload function, and $\theta_\epsilon, \theta_\epsilon'$ are two $\epsilon$--balanced allocations on $H$, repeating the above proposition twice, we get $\partial \theta_\epsilon \leq \partial \theta'_\epsilon$ and $\partial \theta'_\epsilon \leq \partial \theta_\epsilon$, which implies uniqueness. To sum up, we have:
\begin{cor}
  \label{cor:bounded-epsilon-balanced-unique}
  If a hypergraph $H$ is bounded,
  there is a unique $\epsilon$--balanced allocation with respect to any given baseload on it.
\end{cor}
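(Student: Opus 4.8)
The plan is to obtain this corollary as an immediate consequence of Proposition~\ref{prop:bounded-partial-b-increasing}, applied in the degenerate situation where the two hypergraphs and the two baseloads coincide. Existence of at least one $\epsilon$--balanced allocation with respect to the given baseload has already been established in Section~\ref{sec:epsil-balanc-with} via the Schauder--Tychonoff fixed point theorem, so only uniqueness needs to be argued.

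First I would fix a bounded hypergraph $H$ together with a baseload function $b$, and let $\theta_\epsilon$ and $\theta'_\epsilon$ be any two $\epsilon$--balanced allocations on $H$ with respect to $b$. Applying Proposition~\ref{prop:bounded-partial-b-increasing} with $H' := H$ and $b' := b$ --- whose hypotheses $E(H) \subseteq E(H')$, $b \leq b'$ pointwise, and boundedness of $H$ and $b$ all hold trivially --- gives $\partial_b \theta_\epsilon(i) \leq \partial_b \theta'_\epsilon(i)$ for all $i \in V(H)$. Exchanging the roles of $\theta_\epsilon$ and $\theta'_\epsilon$ and invoking the proposition a second time yields the opposite inequality, so in fact $\partial_b \theta_\epsilon(i) = \partial_b \theta'_\epsilon(i)$ for every $i$; subtracting the common baseload, $\partial \theta_\epsilon = \partial \theta'_\epsilon$.

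It then remains to upgrade equality of the load vectors to equality of the allocations themselves. This is forced by the defining identity of Definition~\ref{def:epsilon-balanced-with-baseload}, since for each edge $e$ and each $i \in e$ one has
\begin{equation*}
  \theta_\epsilon(e, i) = \frac{\exp(-\partial_b \theta_\epsilon(i)/\epsilon)}{\sum_{j \in e} \exp(-\partial_b \theta_\epsilon(j)/\epsilon)} = \frac{\exp(-\partial_b \theta'_\epsilon(i)/\epsilon)}{\sum_{j \in e} \exp(-\partial_b \theta'_\epsilon(j)/\epsilon)} = \theta'_\epsilon(e, i),
\end{equation*}
so that $\theta_\epsilon$ and $\theta'_\epsilon$ agree on all of $\evpair(H)$.

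There is no genuine obstacle in this argument; the one point that deserves attention is that Proposition~\ref{prop:bounded-partial-b-increasing} is stated asymmetrically --- one hypergraph must be bounded, the other need not be --- so to derive uniqueness I must apply it twice with the two allocations in opposite roles, which is legitimate precisely because $H$ is itself bounded. All of the analytic substance (the contraction estimate of Lemma~\ref{lem:fx-fx'}, the depth-$1$ and depth-$d$ local contraction lemmas, and the supremum argument inside Proposition~\ref{prop:bounded-partial-b-increasing}) has already been carried out upstream.
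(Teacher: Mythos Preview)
Your proof is correct and follows exactly the approach taken in the paper: apply Proposition~\ref{prop:bounded-partial-b-increasing} twice with the roles of the two allocations swapped to conclude $\partial_b\theta_\epsilon = \partial_b\theta'_\epsilon$, then observe that the defining formula of an $\epsilon$--balanced allocation forces the allocations themselves to agree. Your write-up is in fact slightly more explicit than the paper's, which compresses the last step into the phrase ``which implies uniqueness.''
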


\subsection{\texorpdfstring{$\epsilon$}{e}--balanced allocations for unbounded hypergraphs with respect to a baseload: canonical allocations}
\label{sec:epsil-balanc-alloc}

We now produce $\epsilon$--balanced allocations on a hypergraph $H$ 
with respect to a given baseload even when the hypergraph is not necessarily bounded. Note that we do not make any claims about uniqueness.

For a given hypergraph $H$ and $\Delta \in \nats$, define $H^\Delta$ to be the hypergraph with vertex set $V(H)$ and edge set $E^\Delta$, where
\begin{equation}
  \label{eq:E-delta}
  E^\Delta := \{ e \in E(H): |e| \leq \Delta, \deg_H(i) \leq \Delta \quad \forall i \in e \}.
\end{equation}
Given the baseload function $b$,
define $\theta_\epsilon^{\Delta}$ to be the unique $\epsilon$--balanced allocation on $H^\Delta$ with respect to the baseload $b$, where the existence and uniqueness of $\theta_\epsilon^{\Delta}$ is a consequence of
Corollary~\ref{cor:bounded-epsilon-balanced-unique} above. Since $E^\Delta$
increases to $E(H)$ as $\Delta$ increases, Proposition~\ref{prop:bounded-partial-b-increasing} implies that $\pbt_\epsilon^{\Delta}$ is pointwise increasing in $\Delta$.
Since it is also pointwise bounded, it is convergent. For a node $i \in V(H)$, define 
\begin{equation*}
  l_i := \lim_{\Delta \rightarrow \infty} \pbt_\epsilon^{\Delta}(i),
\end{equation*}
Now, for  $e \in E(H)$ and $i\in e$, define 
\begin{equation}
  \label{eq:def-epsilon-ba-canonical}
  \theta_\epsilon(e, i) := \frac{ \exp \left ( - \frac{l_i}{\epsilon} \right ) }{\sum_{j \in e} \exp \left ( - \frac{l_j}{\epsilon} \right )}.
\end{equation}

Now we prove that $\theta_\epsilon$ is an $\epsilon$--balanced allocation with respect to the baseload $b$. First, observe that, because of the normalizing term in the denominator, $\sum_{i \in e} \theta_\epsilon(e, i) = 1$ for all $e \in E(H)$. We now show that $l_i = \partial_b \theta_\epsilon(i)$. Note that 
\begin{equation*}
  \begin{split}
  \sum_{e \ni i}\theta_\epsilon(e, i) 
    &= \sum_{e \ni i} \frac{\exp \left ( - \frac{l_i}{\epsilon} \right )}{\sum_{j \in e} \exp \left ( - \frac{l_j}{\epsilon} \right ) } \\
    &= \lim_{\Delta \rightarrow \infty} \sum_{e \ni i} \frac{ \exp \left ( - \frac{\pbt_\epsilon^{\Delta}(i)}{\epsilon} \right ) }{\sum_{j \in e} \exp \left ( - \frac{\pbt_\epsilon^{\Delta}(j)}{\epsilon} \right )}. \\
  \end{split}
\end{equation*}
Observe that, for $\Delta > \max_{e \ni i} \left ( |e| \vee \max_{j \in e} \deg_H(j) \right )$, we have $e \in E^\Delta$ for all $e \ni i$. Hence,  the term inside the summation is 
$\theta_\epsilon^\Delta(e, i)$, because $\theta_\epsilon^\Delta$ is the unique
$\epsilon$-balanced allocation on $H^\Delta$ with respect to the baseload $b$.
Since we are taking $\Delta \rightarrow \infty$, we can assume it is big enough to get 
\begin{equation*}
  \partial_b \theta_\epsilon(i) = b(i) + \lim_{\Delta \rightarrow \infty} \sum_{e \ni i} \theta_\epsilon^{\Delta}(e, i) = \lim_{\Delta \rightarrow \infty} \partial_b \theta_\epsilon^{\Delta}(i) = l_i.
\end{equation*}
Substituting this into \eqref{eq:def-epsilon-ba-canonical}, we conclude that $\theta_\epsilon$ is $\epsilon$--balanced, with respect to the baseload $b$.

\begin{rem}
  \label{rem:non-uniqueness-of-epsilon--canonical}
  Note that the above procedure gives rise to an $\epsilon$--balanced allocation with respect to any given baseload, but we do not know if it is the only possible $\epsilon$--balanced allocation or not. In fact, we have proved uniqueness for bounded hypergraphs only. To emphasize this and avoid confusion, we call the $\epsilon$--balanced allocation resulting from the  procedure above the ``canonical'' allocation with respect to the given baseload. A special case of the procedure yields the canonical $\epsilon$--balanced allocation when there is no baseload.
\end{rem}

Now, we generalize the monotonicity property of Proposition~\ref{prop:bounded-partial-b-increasing} to the case of
not necessarily bounded hypergraphs,
for these canonical allocations.

\begin{prop}
  \label{prop:monotonicity-canonical-eba}
  Given hypergraphs $H$ and $H'$ on the same vertex set $V$, with $E(H) \subseteq E(H')$, and baseload functions $b, b': V \rightarrow \reals$ such that $b(i) \leq b'(i)$ for all $i \in V$, if $\theta_\epsilon$ and $\theta'_\epsilon$ are the canonical $\epsilon$--balanced allocations on $H$ and $H'$, with respect to baseloads $b$ and $b'$, respectively, then $\pbt_\epsilon(i) \leq \pbpt'_\epsilon(i)$ for all $i \in V$.
\end{prop}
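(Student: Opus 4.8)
The plan is to follow the proof of Proposition~\ref{prop:bounded-partial-b-increasing}, but to compare the \emph{bounded} truncations of $H$ directly against the already-constructed canonical allocation on $H'$, rather than against truncations of $H'$. Recall from the construction of canonical allocations that $\theta_\epsilon$ arises as the pointwise limit, as $\Delta\to\infty$, of the unique $\epsilon$--balanced allocations $\theta_\epsilon^{\Delta}$ on the bounded hypergraphs $H^\Delta$ with baseload $b$, and that $\pbt_\epsilon(i)=\lim_{\Delta\to\infty}\pbt_\epsilon^{\Delta}(i)$ for every $i\in V$. Thus it suffices to prove, for each fixed $\Delta\in\nats$, that $\pbt_\epsilon^{\Delta}(i)\le\pbpt'_\epsilon(i)$ for all $i\in V$, and then to let $\Delta\to\infty$.

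So I would fix $\Delta$ and argue as follows. Since $E(H^\Delta)\subseteq E(H)\subseteq E(H')$, adding edges only shrinks distances, so $V^{H^\Delta}_{i,1}\subseteq V^{H'}_{i,1}$ and, for every vertex $i$, the identity map exhibits a depth--$1$ embedding $(H^\Delta,i)\hookrightarrow_1(H',i)$; moreover $b(i)\le b'(i)$. Both $\theta_\epsilon^{\Delta}$ and the canonical allocation $\theta'_\epsilon$ are $\epsilon$--balanced in the sense of Definition~\ref{def:epsilon-balanced-with-baseload}, so Lemma~\ref{lem:contraction-depth-1-baseload} applies. Set
\begin{equation*}
  M:=\sup_{j\in V}\bigl(\pbt_\epsilon^{\Delta}(j)-\pbpt'_\epsilon(j)\bigr),
\end{equation*}
which is finite because $\pbt_\epsilon^{\Delta}$ is bounded above (degrees in $H^\Delta$ are at most $\Delta$ and $b$ is bounded) while $\pbpt'_\epsilon\ge b'$ is bounded below. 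The number of vertices at distance $1$ from $i$ in $H^\Delta$ is at most $\Delta^2$, so since $x\mapsto x/(x+4\epsilon)$ is increasing, Lemma~\ref{lem:contraction-depth-1-baseload} gives, for every $i$,
\begin{equation*}
  \pbpt'_\epsilon(i)\ \ge\ \pbt_\epsilon^{\Delta}(i)-\frac{\Delta^2}{\Delta^2+4\epsilon}\,M^+ .
\end{equation*}
Rearranging and taking the supremum over $i\in V$ gives $M\le\frac{\Delta^2}{\Delta^2+4\epsilon}M^+$; since $\frac{\Delta^2}{\Delta^2+4\epsilon}<1$, this forces $M\le 0$, i.e.\ $\pbt_\epsilon^{\Delta}(i)\le\pbpt'_\epsilon(i)$ for all $i\in V$. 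Letting $\Delta\to\infty$ then finishes the proof.

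The one point that makes this more than a verbatim copy of Proposition~\ref{prop:bounded-partial-b-increasing} — and which I expect to be the only genuine obstacle — is that one cannot compare the truncations $H^\Delta$ and $(H')^\Delta$ to each other: a vertex may have bounded degree in $H$ yet unbounded degree in $H'$, so an edge kept in $H^\Delta$ may be deleted when forming $(H')^\Delta$, and hence $E(H^\Delta)\subseteq E((H')^\Delta)$ can fail. Comparing instead against the full canonical allocation $\theta'_\epsilon$ on $H'$ circumvents this, because $\theta'_\epsilon$ is a bona fide $\epsilon$--balanced allocation even when $H'$ is not bounded, and Lemma~\ref{lem:contraction-depth-1-baseload} requires only local finiteness of the smaller hypergraph $H^\Delta$ (so that $D^{H^\Delta}_{i,1}$ is finite), not its boundedness. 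Beyond this, the only things to check carefully are the finiteness of $M$ and the uniform bound $\Delta^2/(\Delta^2+4\epsilon)<1$, both of which are immediate from the fact that $H^\Delta$ is bounded.
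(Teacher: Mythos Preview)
Your proof is correct and takes essentially the same approach as the paper: truncate only $H$, compare $H^\Delta$ against the full canonical allocation $\theta'_\epsilon$ on $H'$, and let $\Delta\to\infty$. The only difference is that the paper cites Proposition~\ref{prop:bounded-partial-b-increasing} directly (observing that it only requires the \emph{smaller} hypergraph to be bounded), whereas you unroll that proposition's proof inline via Lemma~\ref{lem:contraction-depth-1-baseload}; your discussion of why one cannot compare $H^\Delta$ to $(H')^\Delta$ is exactly the point the paper's parenthetical remark is making.
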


\begin{proof}
Set $E := E(H)$ and $E' := E(H')$.
Note that since $E \subseteq E'$ and the vertex sets are the same for $H$ and $H'$, $E^\Delta \subseteq E'$ for any $\Delta$. Thus, if $\theta_\epsilon^{\Delta}$ is the unique  $\epsilon$--balanced allocation on $\langle V, E^\Delta \rangle$ with respect to the baseload $b$ and $\theta'_\epsilon$ is the canonical $\epsilon$--balanced allocation on $\langle V, E' \rangle$ with respect to the baseload $b'$,  Proposition~\ref{prop:bounded-partial-b-increasing} implies that $\pbt_\epsilon^{\Delta}(i) \leq \pbpt'_\epsilon(i)$ for all $i \in V$ (note that in Proposition~\ref{prop:bounded-partial-b-increasing} only the smaller hypergraph needs to be bounded, hence we only need to truncate $E$). By sending $\Delta$ to infinity, we get the desired result.  
\end{proof}

\subsection{Nonexpansivity}
\label{sec:non-expansivity}

\begin{prop}
  \label{prop:non-expansivitiy}
  Let $H$ be a given
   hypergraph and $b, b'$ be two baseload functions. Let $\theta$ and $\theta'$ be the canonical $\epsilon$--balanced allocations on $H$ with respect to $b$ and $b'$, respectively. Then, we have
  \begin{equation*}
    \norm{\pbt_\epsilon - \pbpt'_\epsilon}_{l^1(V(H))} \leq \norm{b - b'}_{l^1(V(H))}.
  \end{equation*}
\end{prop}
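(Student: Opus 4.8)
I would reduce the statement in two stages to the case of a \emph{finite} hypergraph, where it follows from elementary load conservation, and then transport it back by monotone limits. We may assume $\norm{b - b'}_{l^1(V(H))} < \infty$, as otherwise there is nothing to prove. The first reduction is to the case $b \le b'$: set $\bar b := b \vee b'$ and $\underline b := b \wedge b'$, the pointwise maximum and minimum, which are again bounded baseload functions with $\bar b(i) - \underline b(i) = |b(i) - b'(i)|$, so $\norm{\bar b - \underline b}_{l^1} = \norm{b - b'}_{l^1}$. Let $\bar\theta_\epsilon$ and $\underline\theta_\epsilon$ be the canonical $\epsilon$--balanced allocations on $H$ with respect to $\bar b$ and $\underline b$. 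Since $\underline b \le b \le \bar b$ and $\underline b \le b' \le \bar b$, Proposition~\ref{prop:monotonicity-canonical-eba} gives $\partial_{\underline b}\underline\theta_\epsilon \le \pbt_\epsilon, \pbpt'_\epsilon \le \partial_{\bar b}\bar\theta_\epsilon$ pointwise, hence $|\pbt_\epsilon(i) - \pbpt'_\epsilon(i)| \le \partial_{\bar b}\bar\theta_\epsilon(i) - \partial_{\underline b}\underline\theta_\epsilon(i)$ for every $i$. Thus it suffices to prove the bound for the ordered pair $(\underline b, \bar b)$, and so from now on I assume $b \le b'$.

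\textbf{Reduction to bounded hypergraphs.} Recall from the canonical construction in Section~\ref{sec:epsil-balanc-alloc} that $\pbt_\epsilon(i) = \lim_{\Delta \to \infty} \partial_b \theta^{H^\Delta}_\epsilon(i)$ and $\pbpt'_\epsilon(i) = \lim_{\Delta \to \infty}\partial_{b'}\theta'^{H^\Delta}_\epsilon(i)$, where $\theta^{H^\Delta}_\epsilon$, $\theta'^{H^\Delta}_\epsilon$ are the unique $\epsilon$--balanced allocations on the bounded hypergraph $H^\Delta$ (with the same vertex set $V(H)$) with respect to $b$ and $b'$, both limits being increasing in $\Delta$. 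Since $H^\Delta$ is bounded and $b \le b'$, Proposition~\ref{prop:bounded-partial-b-increasing} gives $\partial_b\theta^{H^\Delta}_\epsilon \le \partial_{b'}\theta'^{H^\Delta}_\epsilon$, so $g_\Delta := \partial_{b'}\theta'^{H^\Delta}_\epsilon - \partial_b\theta^{H^\Delta}_\epsilon \ge 0$ and $g_\Delta \to \pbpt'_\epsilon - \pbt_\epsilon =: g \ge 0$ pointwise. Granting the bounded case, $\sum_i g_\Delta(i) \le \norm{b' - b}_{l^1}$ for every $\Delta$, and Fatou's lemma yields $\norm{\pbpt'_\epsilon - \pbt_\epsilon}_{l^1} = \sum_i g(i) \le \liminf_\Delta \sum_i g_\Delta(i) \le \norm{b' - b}_{l^1}$. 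So it remains to treat a bounded $H$ with $b \le b'$.

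\textbf{The bounded case via finite truncation.} Let $H$ be bounded and $b \le b'$; write $u := \pbt_\epsilon$ and $u' := \pbpt'_\epsilon$, which are now the unique $\epsilon$--balanced loads (Corollary~\ref{cor:bounded-epsilon-balanced-unique}), and also the canonical ones. For a finite $W \subseteq V(H)$ let $H_W := \langle V(H), E_H(W) \rangle$, a bounded sub-hypergraph of $H$, and let $v_W, v'_W$ be its unique $\epsilon$--balanced loads for $b$ and $b'$. Exactly as in the canonical construction, $v_W$ is nondecreasing in $W$ (Proposition~\ref{prop:bounded-partial-b-increasing}) and bounded, and its pointwise limit $\tilde u$ defines an $\epsilon$--balanced allocation on $H$ through $(e,i) \mapsto e^{-\tilde u(i)/\epsilon}/\sum_{j \in e} e^{-\tilde u(j)/\epsilon}$ (the verification that $\partial_b$ of this allocation equals $\tilde u$ uses $\deg_H(i) < \infty$ and that each $e \ni i$ lies in $E_H(W)$ once $V^H_{i,1} \subseteq W$); by uniqueness this allocation is $\theta_\epsilon$, so $v_W(i) \uparrow u(i)$ and likewise $v'_W(i) \uparrow u'(i)$. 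Now $H_W$ decomposes into finitely many finite connected components $C$ (all inside $W$) together with isolated vertices, and summing $\sum_{i \in C}\partial_b(\cdot)(i) = \sum_{i \in C} b(i) + |\{e \in E_H(W): e \subseteq C\}|$ (and the analogue for $b'$) over all components and isolated vertices gives, using $v'_W \ge v_W$ termwise (Proposition~\ref{prop:bounded-partial-b-increasing}), $0 \le \sum_{i \in V(H)}(v'_W(i) - v_W(i)) = \sum_{i \in V(H)}(b'(i) - b(i)) = \norm{b' - b}_{l^1}$. Fixing a finite $W_0$ and letting $W \uparrow V(H)$ through sets containing $W_0$, we get $\sum_{i \in W_0}(u'(i) - u(i)) = \lim_W \sum_{i \in W_0}(v'_W(i) - v_W(i)) \le \norm{b' - b}_{l^1}$; letting $W_0 \uparrow V(H)$ and using $u' \ge u$ yields $\norm{u' - u}_{l^1} = \sum_i (u'(i) - u(i)) \le \norm{b' - b}_{l^1}$, which is the claim.

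\textbf{Main obstacle.} The only delicate point is the bounded case: one must verify that the finite-vertex truncations $H_W$ reproduce the canonical (unique) allocation on $H$ in the limit. This is handled by rerunning the fixed-point/uniqueness argument of Section~\ref{sec:epsil-balanc-alloc} with vertex truncation in place of edge truncation. Beyond this, the proof is bookkeeping: load conservation on finite hypergraphs (total load equals total baseload plus number of edges), the two monotonicity results already established, and Fatou's lemma.
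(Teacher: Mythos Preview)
Your proof is correct and follows essentially the same approach as the paper: reduce to ordered baseloads, then to bounded $H$, then to finite vertex truncations where load conservation gives the bound exactly, and pass back by monotone limits together with uniqueness on bounded hypergraphs. The only cosmetic differences are that the paper uses $b'' := b \wedge b'$ with the triangle inequality (instead of your squeeze between $b \wedge b'$ and $b \vee b'$) and lifts from bounded to unbounded $H$ via a direct finite-subset argument rather than Fatou's lemma.
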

\begin{proof}
  To start with, assume that $H$ is bounded. Later, we will relax this assumption.
  
  Consider first the special case $b(i) \geq b'(i)$ for all $i \in V(H)$.
  Then, using the monotonicity  property of Proposition~\ref{prop:bounded-partial-b-increasing}, we have $\pbt_\epsilon(i) \geq \pbpt'_\epsilon(i)$ for all $i \in V(H)$. In particular,
  \begin{equation*}
    \norm{\pbt_\epsilon - \pbpt'_\epsilon}_{l^1(V(H))} = \sum_{i \in V(H)} \pbt_\epsilon(i) - \pbpt'_\epsilon(i).
  \end{equation*}
  Now, let $\{V_n\}$ be a nested sequence of finite subsets of $V(H)$ converging to $V(H)$, i.e. $V_n \uparrow V(H)$. Let $\theta_{n,\epsilon}$ and $\theta'_{n,\epsilon}$ be the $\epsilon$--balanced allocations 
  on $\langle V_n, E_H(V_n) \rangle$ with respect to the restrictions of $b$ and $b'$ to $V_n$, respectively.
  The monotonicity property of Proposition~\ref{prop:bounded-partial-b-increasing} and an argument similar to the one given in Section~\ref{sec:epsil-balanc-alloc} above, implies that $\pbt_{n,\epsilon}$ and $\pbpt'_{n,\epsilon}$ converge to $\epsilon$--balanced allocations on $H$ and $H'$ with respect to $b$ and $b'$, respectively. Since $H$ is bounded, there is a unique $\epsilon$--balanced allocations with respect to any baseload function. Therefore,  we have $\pbt_{n,\epsilon} \uparrow \pbt_\epsilon$ and $\pbpt'_{n,\epsilon} \uparrow \pbpt'_\epsilon$ as $n \to \infty$. Using the conservation of mass, we have 
  \begin{equation*}
    \sum_{i \in V_n} \pbt_{n,\epsilon}(i) - \pbpt'_{n,\epsilon}(i) = \sum_{i \in V_n} b(i) - b'(i) \leq \sum_{i \in V(H)} b(i) - b'(i) = \norm{b - b'}_{l^1(V(H))},
  \end{equation*}
  where we have used the assumption $b \geq b'$. In fact, for any finite subset of vertices $K \subset V(H)$, we have $K \subset V_n$ for $n$ large enough and so using $\pbt_{n,\epsilon} \uparrow \pbt_\epsilon$ and $\pbpt'_{n,\epsilon} \uparrow \pbpt'_\epsilon$ and also the monotonicity property of Proposition~\ref{prop:bounded-partial-b-increasing}, we have
  \begin{equation*}
    \begin{split}
    \sum_{i \in K} \pbt_\epsilon(i) - \pbpt'_\epsilon(i) &= \lim_{n \rightarrow \infty} \sum_{i \in K} \pbt_{n,\epsilon}(i) - \pbpt'_{n,\epsilon}(i) \\
    &\leq \lim_{n \rightarrow \infty} 
    \sum_{i \in V_n} \pbt_{n,\epsilon}(i) - \pbpt'_{n,\epsilon}(i) \\
    &\le \norm{b - b'}_{l^1(V(H))}.
    \end{split}
\end{equation*}
  Since this holds for every finite subset $K \subset V(H)$,
  we conclude that 
  \begin{equation*}
\norm{\pbt_\epsilon - \pbpt'_\epsilon}_{l^1(V(H))} \leq \norm{b - b'}_{l^1(V(H))}.
  \end{equation*}

  Now, continuing to assume that $H$ is bounded, suppose the condition $b \geq b'$ does not necessarily hold. Define $b'' := b \wedge b'$ with $\theta''_\epsilon$ being the unique $\epsilon$--balanced allocation on $H$ with respect to $b''$. Due to monotonicity property of Proposition~\ref{prop:bounded-partial-b-increasing}, we have $\pbt_\epsilon \geq \partial_{b''} \theta''_\epsilon$ and $\pbpt'_\epsilon \geq \partial_{b''} \theta''_\epsilon$. Thus, using the above argument and the triangle inequality, we have 
  \begin{equation*}
    \begin{split}
    \norm{\pbt_\epsilon - \pbpt'_\epsilon}_{l^!(V(H))} &\leq \norm{\pbt_\epsilon - \partial_{b''} \theta''_\epsilon}_{l^1(V(H))} + \norm{\pbpt'_\epsilon - \partial_{b''} \theta''_\epsilon}_{l^1(V(H))} \\
    & \leq \norm{b - b''}_{l^1(V(H))} + \norm{b' - b''}_{l^1(V(H))} \\
    &= \sum_{i \in V(H)} b(i) - b''(i) + b'(i) - b''(i) \\
    &= \sum_{i \in V(H)} |b(i) - b'(i)| = \norm{b - b'}_{l^1(V(H))}.
    \end{split}
  \end{equation*}

  Finally, we relax the boundedness assumption on $H$. 

We  take a not necessarily bounded hypergraph $H$ and let $H^\Delta$ be the truncation of $H$, as defined in Section~\ref{sec:epsil-balanc-alloc}. Let
$\theta_\epsilon^\Delta$ and 
$\theta_\epsilon^{\prime,\Delta}$ 
be the unique $\epsilon$--balanced allocations on $H^\Delta$, with respect to the baseloads $b$ and $b'$, respectively. Since $H^\Delta$ is bounded, using the above argument, we have
\begin{equation*}
  \norm{\pbt_\epsilon^{\Delta} - \partial_{b'} 
  \theta_\epsilon^{\prime, \Delta}}_{l^1(V(H))} \leq \norm{b - b'}_{l^1(V(H))}.
\end{equation*}
Hence, for any finite subset $K \subseteq V(H)$, we have 
\begin{equation*}
  \sum_{i \in K} | \partial_b \theta_\epsilon^{\Delta}(i) - \partial_{b'} \theta_\epsilon^{\prime,\Delta}(i)| \leq \norm{\partial_b \theta_\epsilon^{\Delta} - \partial_{b'} \theta_\epsilon^{\prime,\Delta}}_{l^1(V(H))} \leq \norm{b - b'}_{l^1(V(H))}.
\end{equation*}
Sending $\Delta$ to infinity and using the facts that $\partial \theta_\epsilon^\Delta$ and $\partial \theta_\epsilon^{\prime,\Delta}$ converge to $\partial \theta_\epsilon$ and $\partial \theta'_\epsilon$, respectively, and also the fact that $K$ is finite, we have 
\begin{equation*}
  \sum_{i \in K} | \partial_b \theta_\epsilon(i) - \partial_{b'} \theta'_\epsilon(i) | \leq \norm{b - b'}_{l^1(V(H))}.
\end{equation*}
Since the above is true for all finite $K$, we have 
\begin{equation*}
  \norm{\partial_b \theta_\epsilon - \partial_{b'} \theta'_\epsilon}_{l^1(V(H))} \leq \norm{b - b'}_{l^1(V(H))},
\end{equation*}
and the proof is complete. 
\end{proof}

\subsection{Regularity property for canonical 
$\epsilon$-balanced allocations 
with respect to a baseload}
\label{sec:regul-perop-stand}


In this section, we give a regularity property of canonical allocations which is crucial in our analysis.

Let $T$ be a hypertree. For a node $i \in V(T)$ and $e \in E(T)$, 
$e \ni i$, define $T_{e \rightarrow i}$ to the connected subtree with root $i$ that does not contain the part of $T$ directed from $e$. 
To be more precise, the vertex set of $T_{e \rightarrow i}$ is the set of vertices $j \in V(T)$ such that the shortest path from $i$ to $j$ does not contain $e$. 
The edge set of $T_{e \rightarrow i}$ contains all the edges with all their end points in this subset, i.e.  $E_T(V(T_{e \rightarrow i}))$ in the notation of Section~\ref{sec:hypergraphs}. 

\begin{prop}
  \label{prop:regularity-canonical-e-ba}
  Let $T$ be a hypertree, $b$ a baseload function on $T$, and $\theta$ the canonical $\epsilon$--balanced allocation on $T$ with respect to the baseload $b$. Let $e \in E(T)$ and $i \in e$, and let $\theta_{T_{e \rightarrow i}}$ denote the restriction of $\theta$ to $T_{e \rightarrow i}$, i.e. 
  \begin{equation*}
    \theta_{T_{e \rightarrow i}} (e' ,i') = \theta(e' ,i')~, \qquad e' \in E(T_{e \rightarrow i}), i' \in e'.
  \end{equation*}
  Then, $\theta_{T_{e \rightarrow i}}$ is the canonical $\epsilon$--balanced allocation on $T_{e \rightarrow i}$ with respect to the baseload function $\tilde{b}$ defined as 
  \begin{equation*}
    \tilde{b}(i) = b(i) + \theta(e ,i), 
  \end{equation*}
  and $\tilde{b}(j) = b(j)$ for $j \in V(T_{e \rightarrow i}) \setminus \{ i \}$.
\end{prop}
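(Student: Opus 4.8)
Write $S := T_{e \rightarrow i}$, let $\theta$ be the canonical $\epsilon$--balanced allocation on $T$ with respect to $b$, with loads $l_k = \partial_b\theta(k)$ as constructed in Section~\ref{sec:epsil-balanc-alloc}, and let $\tilde\theta$ be the canonical $\epsilon$--balanced allocation on $S$ with respect to $\tilde b$, with loads $\tilde l_k = \partial_{\tilde b}\tilde\theta(k)$ for $k \in V(S)$. The plan is to reduce everything to the equality $l_k = \tilde l_k$ for all $k \in V(S)$. First I would record the structural facts, all of which follow from the uniqueness of shortest paths in a hypertree (Remark~\ref{rem:hyperforest}) and the fact that a length--two closed path with distinct vertices and distinct edges is forbidden: $(a)$ $e \notin E(S)$; $(b)$ every edge $e' \in E(T)$ with $i \in e'$, $e' \neq e$, lies in $E(S)$; $(c)$ for $k \in V(S)$ with $k \neq i$, every edge $e' \in E(T)$ with $k \in e'$ lies in $E(S)$. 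In particular $\deg_T(k) = \deg_S(k)$ for $k \in V(S)\setminus\{i\}$ and $\deg_T(i) = \deg_S(i)+1$. Combining $(a)$--$(c)$ with the definition of $\tilde b$, for every $k \in V(S)$ one gets $\partial_{\tilde b}(\theta|_S)(k) = \partial_b\theta(k) = l_k$, where $\theta|_S$ denotes the restriction of $\theta$ to edges in $E(S)$. Since by \eqref{eq:def-epsilon-ba-canonical} the value $\theta(e',k)$ depends only on $(l_j)_{j\in e'}$, similarly $\tilde\theta(e',k)$ on $(\tilde l_j)_{j\in e'}$, and every $e'\in E(S)$ satisfies $e'\subseteq V(S)$, once we know $l = \tilde l$ on $V(S)$ we may conclude $\theta|_S = \tilde\theta$ by comparing the two closed--form expressions.

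The inequality $\tilde l \le l$ on $V(S)$ is the easy half. Substituting the identity $\partial_{\tilde b}(\theta|_S) = l|_{V(S)}$ into \eqref{eq:def-epsilon-ba-canonical} shows that $\theta|_S$ is $\epsilon$--balanced on $S$ with respect to $\tilde b$. Since $S^\Delta \subseteq S$ on the common vertex set $V(S)$ and $S^\Delta$ is bounded, Proposition~\ref{prop:bounded-partial-b-increasing} gives $\partial_{\tilde b}\tilde\theta^\Delta \le \partial_{\tilde b}(\theta|_S)$ pointwise, where $\tilde\theta^\Delta$ is the unique $\epsilon$--balanced allocation on $S^\Delta$ with respect to $\tilde b$ (Corollary~\ref{cor:bounded-epsilon-balanced-unique}); letting $\Delta\to\infty$ yields $\tilde l \le l$ on $V(S)$.

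For the reverse inequality I would descend to the bounded truncations of $T$. Let $\theta^\Delta$ be the $\epsilon$--balanced allocation on $T^\Delta$ with respect to $b$, so $\partial_b\theta^\Delta(k)\uparrow l_k$, and let $\sigma^\Delta$ be the restriction of $\theta^\Delta$ to the bounded hypergraph $\langle V(S), E^\Delta\cap E(S)\rangle$. Using $(a)$--$(c)$ one checks that $\sigma^\Delta$ is the unique $\epsilon$--balanced allocation on $\langle V(S), E^\Delta\cap E(S)\rangle$ with respect to the baseload $\tilde b^\Delta$ which agrees with $b$ off $i$ and has $\tilde b^\Delta(i) = b(i) + \theta^\Delta(e,i)\one{e\in E^\Delta}$, and moreover $\partial_{\tilde b^\Delta}\sigma^\Delta = \partial_b\theta^\Delta$ on $V(S)$. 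Letting $\rho^\Delta$ be the $\epsilon$--balanced allocation on the same hypergraph $\langle V(S), E^\Delta\cap E(S)\rangle$ but with baseload $\tilde b$, Proposition~\ref{prop:non-expansivitiy} gives
\begin{equation*}
  \norm{\partial_b\theta^\Delta - \partial_{\tilde b}\rho^\Delta}_{l^1(V(S))} \le \norm{\tilde b^\Delta - \tilde b}_{l^1(V(S))} = \left| \theta^\Delta(e,i)\one{e\in E^\Delta} - \theta(e,i) \right|,
\end{equation*}
and the right side tends to $0$ because $e\in E^\Delta$ for large $\Delta$ and $\partial_b\theta^\Delta(j)\to l_j$ for $j\in e$, so $\theta^\Delta(e,i)\to\theta(e,i)$ by continuity of \eqref{eq:def-epsilon-ba-canonical}. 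Finally, the inclusions $E^\Delta\cap E(S)\subseteq E(S^\Delta)\subseteq E^{\Delta+1}\cap E(S)$ — valid since degrees in $S$ and $T$ differ only at $i$ and only by one — together with Proposition~\ref{prop:bounded-partial-b-increasing} sandwich $\partial_{\tilde b}\rho^\Delta$ between $\partial_{\tilde b}\tilde\theta^{\Delta-1}$ and $\partial_{\tilde b}\tilde\theta^\Delta$, hence $\partial_{\tilde b}\rho^\Delta\to\tilde l$ on $V(S)$. Passing to the limit in the displayed bound, pointwise on $V(S)$, gives $l_k = \tilde l_k$ for all $k\in V(S)$, and hence $\theta|_S = \tilde\theta$ as explained above.

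The main obstacle is precisely this reverse inequality: uniqueness of $\epsilon$--balanced allocations is available only for bounded hypergraphs, whereas $T_{e\rightarrow i}$ need not be bounded, so one cannot merely invoke uniqueness to pass from ``$\theta|_S$ is $\epsilon$--balanced with respect to $\tilde b$'' to ``$\theta|_S$ is the canonical allocation''. Going down to the bounded truncations $T^\Delta$ on both sides and transporting the comparison through the non--expansivity estimate circumvents this; the only delicate bookkeeping is matching the truncated edge sets $E^\Delta\cap E(S)$ with $E(S^\Delta)$ up to the unit degree shift at $i$, and verifying the structural facts $(a)$--$(c)$, which are routine but do genuinely use the hypertree property.
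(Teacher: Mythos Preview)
Your argument is correct and follows the same overall strategy as the paper: descend to bounded truncations and use the non--expansivity estimate (Proposition~\ref{prop:non-expansivitiy}) to transport the comparison between the canonical allocation on $T^\Delta$ and that on the truncated subtree.

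The paper's proof is shorter because it notes one fact you missed: for $\Delta$ larger than $\Delta_0 := |e|\vee\max_{j\in e}\deg_T(j)$, the two truncations actually coincide, i.e.\ $(T_{e\rightarrow i})^\Delta = (T^\Delta)_{e\rightarrow i}$. Indeed the only vertex whose degree differs between $T$ and $S$ is $i$, and once $\Delta>\Delta_0\ge\deg_T(i)$ the edge--membership criteria agree. With this observation the paper works directly with $\tilde\theta^\Delta$ and the restriction of $\theta^\Delta$ to $T^\Delta_{e\rightarrow i}$ on the \emph{same} bounded hypergraph, and a single application of Proposition~\ref{prop:non-expansivitiy} gives $|\partial_{\tilde b}\tilde\theta^\Delta(k)-\partial_b\theta^\Delta(k)|\le|\theta^\Delta(e,i)-\theta(e,i)|\to 0$, finishing the proof. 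Your sandwich $E^\Delta\cap E(S)\subseteq E(S^\Delta)\subseteq E^{\Delta+1}\cap E(S)$ is valid, but it is doing extra work to reconcile two objects that are in fact equal for large $\Delta$. Likewise, your ``easy half'' $\tilde l\le l$ is superfluous, since your second argument already yields $l_k=\tilde l_k$ directly.
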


\begin{proof}

It is straightforward to check that 
$\theta_{T_{e \rightarrow i}}$ is an $\epsilon$--balanced allocation on $T_{e \rightarrow i}$ with baseload function $\tilde{b}$. 
Thus, the content of the theorem is the statement about this 
$\epsilon$--balanced allocation being the canonical 
$\epsilon$--balanced allocation on $T_{e \rightarrow i}$ with the baseload function $\tilde{b}$. 

Let $\tilde{\theta}$ denote the canonical $\epsilon$-balanced allocation on $T_{e \rightarrow i}$ with respect to the baseload $\tilde{b}$. Moreover, let $\tilde{\theta}^\Delta$ denote the unique $\epsilon$-balanced allocation on the bounded tree $(T_{e\rightarrow i})^\Delta$ with respect to the baseload $\tilde{b}$. Throughout this proof, we assume that $\Delta > \Delta_0$ where
\begin{equation*}
  \Delta_0 := |e| \vee \max \{ \deg_H(j): j \in e \}.
\end{equation*}
If $\Delta$ satisfies this property, then $e \in E(T^\Delta)$, and one can also check that 
\begin{equation}
  \label{eq:T-i-j-Delta-T-Delta-i-j}
  (T_{e\rightarrow i})^\Delta \equiv (T^\Delta)_{e \rightarrow i},
\end{equation}
so we can unambiguously write $T^\Delta_{e \rightarrow i}$ for this tree.
(Note that $T^\Delta$ need not be connected, but this is not
relevant.)
If $\theta^\Delta$ denotes the unique $\epsilon$--balanced allocation on $T^\Delta$ with respect to the baseload $b$ and $\theta^\Delta_{T_{e \rightarrow i}}$ is its restriction to $T^\Delta_{e \rightarrow i}$, then, since $\Delta \geq \Delta_0$, $\theta^\Delta_{T_{e \rightarrow i}}$ is the unique $\epsilon$-balanced allocation on $T^\Delta_{e \rightarrow i}$ with respect to the baseload $b^\Delta$ defined as $b^\Delta(i) = b(i) + \theta^\Delta(e ,i)$ and $b^\Delta(j) = b(j)$ for $j \neq i$. Now, 
$\tilde{\theta}^\Delta$ and $\theta^\Delta_{T_{e \rightarrow i}}$ are canonical $\epsilon$-balanced allocations on $T_{e \rightarrow i}^\Delta$ with respect to the baseloads $\tilde{b}$ and $b^\Delta$, respectively. Using Proposition~\ref{prop:non-expansivitiy}, we conclude that for an arbitrary vertex $k \in V(T_{e \rightarrow i})$ we have
\begin{equation*}
  | \partial_{b^\Delta} \theta^\Delta_{T_{e \rightarrow i}}(k) - \partial_{\tilde{b}} \tilde{\theta}^\Delta(k) | \leq \norm{ \partial_{b^\Delta} \theta^\Delta_{T_{e \rightarrow i}} - \partial_{\tilde{b}} \tilde{\theta}^\Delta}_{l^1(V(T_{e \rightarrow i}))} \leq | \theta^\Delta(e ,i) - \theta(e,i)|.
\end{equation*}
Now, sending $\Delta$ to infinity and noting the fact that $\theta^\Delta(e ,i) \rightarrow \theta(e ,i)$, we have $| \partial_{b^\Delta} \theta^\Delta_{T_{e \rightarrow i}}(k) - \partial_{\tilde{b}} \tilde{\theta}^\Delta(k)| \rightarrow 0$. 
Since $\theta^\Delta_{T_{ e \rightarrow i}}$ is the restriction to
$T^\Delta_{ e \rightarrow i}$ of the $\epsilon$-balanced allocation
$\theta^\Delta$ on $T^\Delta$ with respect to the baseload $b$, 
we have $\partial_{b^\Delta}\theta^\Delta_{T_{e \rightarrow i}}(k) = \partial_b \theta^\Delta(k)$ for all $k \in V(T^\Delta_{e \rightarrow i})$.
Since $\partial_b \theta^\Delta(k) \rightarrow \partial_b \theta(k)$ 
and $\partial_{\tilde{b}} \tilde{\theta}^\Delta(k) \rightarrow \partial_{\tilde{b}} \tilde{\theta}(k)$ for all $k \in V(T)$ as $\Delta \rightarrow \infty$, we conclude that 
\begin{equation*}
  \partial_{\tilde{b}} \tilde{\theta}(k) = \partial_b \theta(k)~, \qquad \forall k \in V(T_{e\rightarrow i}).
\end{equation*}
But it is straightforward to check that $\partial_b \theta(k) = 
\partial_{\tilde{b}} \theta_{T_{e \rightarrow i}} (k)$ for all
$k \in V(T_{e\rightarrow i})$. From the definition of $\epsilon$-balanced
allocations, we conclude that $\theta_{T_{e \rightarrow i}} (e',i')$
equals $\tilde{\theta}(e',i')$ for all $e' \in E(T_{e \rightarrow i})$
and all $i' \in e'$, i.e. that $\theta_{T_{e \rightarrow i}}$ is the 
canonical $\epsilon$-balanced allocation on $T_{e \rightarrow i}$
with respect to the baseload $\tilde{b}$, which was what was to be shown.
\end{proof}


\section{\texorpdfstring{$\epsilon$}{e}--balanced allocations on \texorpdfstring{$\mH_{**}$}{H**}}
\label{sec:epsil-balanc-alloc-1}



In this section, we discuss how to find an $\epsilon$--balanced allocation on $\mH_{**}$,
in the sense of Definition~\ref{def:epsilon-balanced-allocation-H*}, 
with respect to a unimodular measure $\mu \in \mP(\mH_*)$.
Recall that this is a Borel allocation $\Theta_\epsilon: \mH_{**} \rightarrow [0,1]$ such that 
\begin{equation*}
  \Theta_\epsilon(H,e, i) = \frac{\exp (- \partial \Theta_\epsilon(H, i) / \epsilon)}{\sum_{j \in e} \exp (- \partial \Theta_\epsilon(H, j) / \epsilon)}, \quad \vmu\text{--a.e.}.
\end{equation*}
What we do here is in fact stronger, in the sense that we introduce a Borel allocation $\Theta_\epsilon$ such that the above condition is satisfied pointwise, i.e.
\begin{equation}
  \label{eq:Theta-epsilon-pointwise-epsilon-balanced}
  \Theta_\epsilon(H,e, i) = \frac{\exp (- \partial \Theta_\epsilon(H, i) / \epsilon)}{\sum_{j \in e} \exp (- \partial \Theta_\epsilon(H, j) / \epsilon)} \quad \forall [H, e, i] \in \mH_{**}.
\end{equation}
In fact, we can define $\Theta_\epsilon(H, e, i)$ to be $\theta^H_\epsilon(e, i)$ where $\theta^H_\epsilon$ is the canonical $\epsilon$-balanced allocation for $H$, as was  introduced in Section~\ref{sec:epsil-balanc-alloc}. Defining $\Theta_\epsilon$ in this way guarantees that \eqref{eq:Theta-epsilon-pointwise-epsilon-balanced} is satisfied. Therefore, it remains to show that $\Theta_\epsilon$ is a Borel allocation. 
In the following, we construct $\Theta_\epsilon$ differently, but as we will see later, the $\Theta_\epsilon(H, e, i)$ we construct will be equal to $\theta_\epsilon^H(e, i)$ (see Remark~\ref{rem:Theta-epsilon-H**-canonical} below).

For the construction, given $\Delta \in \nats$,  define $F^\Delta_\epsilon : \mH_* \rightarrow \reals$ via $F_\epsilon^\Delta(H, i) := \partial \theta^{H^\Delta}_\epsilon(i)$, where $H^\Delta$ is the truncated hypergraph introduced in Section~\ref{sec:epsil-balanc-alloc}. The uniqueness property of $\epsilon$--balanced allocations for bounded hypergraphs (Corollary~\ref{cor:bounded-epsilon-balanced-unique}) implies that the above definition does not depend on the specific choice of $(H,i)$ in the equivalence class. 
Therefore, $F_\epsilon^\Delta$ is well defined. We claim that $F_\epsilon^\Delta$ is a continuous function on $\mH_*$. 
Indeed, if $(H_1, i_1) \equiv_d (H_2, i_2)$, then $(H_1^\Delta, i_1) \equiv_{d-1} (H_2^\Delta, i_2)$, and using Lemma~\ref{lem:contraction-base-d-baseload} we have
\begin{equation*}
  \left | \partial \theta^{H_1^\Delta}_\epsilon(i_1) - \partial \theta^{H_2^\Delta}_\epsilon(i_2) \right | \leq \left ( \frac{\Delta^2}{4\epsilon + \Delta^2} \right)^{d-1} \Delta~.
\end{equation*}
This implies that  $F_\epsilon^\Delta$ is (uniformly) continuous. Moreover, 
Proposition~\ref{prop:bounded-partial-b-increasing} implies that $F_\epsilon^\Delta$ is pointwise increasing in $\Delta$.
On the other hand, $F_\epsilon^\Delta(H, i) \leq \deg_H(i)$. Hence, 
there is a pointwise limit
\begin{equation}
  \label{eq:F-epsilon-lim-F-epsilon-Delta}
  F_\epsilon(H, i) := \lim_{\Delta \rightarrow \infty} F_\epsilon^\Delta(H, i).
\end{equation}
We now define $\Theta_\epsilon: \mH_{**} \rightarrow [0,1]$ in the following way:
\begin{equation}
  \label{eq:Theta-epsilon-definition-F}
  \Theta_\epsilon(H, e, i) := \frac{\exp ( - F_\epsilon(H, i) / \epsilon)}{\sum_{j \in e} \exp (- F_\epsilon(H, j) / \epsilon)}.
\end{equation}
We want to show that $\Theta_\epsilon$ is an $\epsilon$--balanced allocation on $\mH_{**}$.
Since $F_\epsilon^\Delta$ is continuous, $F_\epsilon$ is Borel, and hence $\Theta_\epsilon$ is a Borel map. Also, the normalization factor in the denominator guarantees that $\sum_{i \in e} \Theta_\epsilon(H, e, i) = 1$, and hence $\Theta_\epsilon$ is a Borel allocation. Comparing \eqref{eq:Theta-epsilon-pointwise-epsilon-balanced} 
with \eqref{eq:Theta-epsilon-definition-F}, it suffices to show that
\begin{equation}
  \label{eq:partial-Theta-epsilon=F-epsilon}
  \partial \Theta_\epsilon(H, i) = F_\epsilon(H, i).
\end{equation}
In order to show this, note that 
\begin{equation*}
  \begin{split}
    \partial \Theta_\epsilon(H, i) &= \sum_{e \ni i} \Theta_\epsilon(H, e, i) \\
    &= \sum_{e \ni i} \frac{\exp(- F_\epsilon(H,i) / \epsilon)}{\sum_{j \in e} \exp ( - F_\epsilon(H, j) / \epsilon)} \\
    &= \lim_{\Delta \rightarrow \infty} \sum_{e \ni i} \frac{\exp(- F_\epsilon^\Delta(H,i) / \epsilon)}{\sum_{j \in e} \exp ( - F_\epsilon^\Delta(H, j) / \epsilon)}.
  \end{split}
\end{equation*}
Now, when $\Delta \geq \max_{e \ni i} |e|$ and also $\Delta \geq \max_{j \in e, e \ni i} \deg_H(j))$,  we have  $e \in E(H^\Delta)$, and we have
\begin{equation*}
  \frac{\exp(- F_\epsilon^\Delta(H,i) / \epsilon)}{\sum_{j \in e} \exp ( - F_\epsilon^\Delta(H, j) / \epsilon)}  =   \frac{\exp(- \partial \theta_\epsilon^{H^\Delta}(H,i) / \epsilon)}{\sum_{j \in e} \exp ( - \partial \theta_\epsilon^{H^\Delta}(H, j) / \epsilon)}= \theta_\epsilon^{H^\Delta}(H,e , i).
\end{equation*}
Consequently,
\begin{equation*}
  \begin{split}
    \partial \Theta_\epsilon(H, i) &= \lim_{\Delta \rightarrow \infty} \sum_{e \ni i} \theta_\epsilon^{H^\Delta}(H, e, i) \\
    &= \lim_{\Delta \rightarrow \infty} \partial \theta_\epsilon^{H^\Delta}(H, i) \\
    &= \lim_{\Delta \rightarrow \infty} F_\epsilon^\Delta(H, i) \\
    &= F_\epsilon(H, i).
  \end{split}
\end{equation*}
Therefore, we have shown \eqref{eq:partial-Theta-epsilon=F-epsilon}, which shows that $\Theta_\epsilon$ satisfies \eqref{eq:Theta-epsilon-pointwise-epsilon-balanced} and hence is an $\epsilon$--balanced allocation (both pointwise and $\vmu$--almost everywhere).

\begin{rem}
  \label{rem:Theta-epsilon-H**-canonical}
  Note that \eqref{eq:F-epsilon-lim-F-epsilon-Delta} means that $F_\epsilon(H, i) = \partial \theta^H_\epsilon(i)$, where $\theta^H_\epsilon$ is the canonical $\epsilon$-balanced allocation in $H$. Hence, \eqref{eq:Theta-epsilon-definition-F} implies that $\Theta_\epsilon(H, e, i)$ is pointwise equal to  $\theta_\epsilon^H(e, i)$. 
\end{rem}

The following Proposition proves an almost sure uniqueness property for Borel $\epsilon$--balanced allocations which is similar in flavor to part 4 of Theorem~\ref{thm:balanced--properties}.
The proof of this statement is given in Appendix~\ref{sec:proof-proposition-eba-uniqueness}.

\begin{prop}
  \label{prop:epsilon-balanced-uniqueness}
  Assume $\mu$ is a unimodular measure on $\mH_*$  such that $\deg(\mu) < \infty$. Given $\epsilon > 0$, let $\Theta_\epsilon$ be the $\epsilon$--balanced allocation defined in this section, and let $\Theta'_\epsilon$ be any other $\epsilon$--balanced allocation, both with respect to $\mu$. Then, we  have $\partial \Theta_\epsilon = \partial \Theta'_\epsilon$, $\mu$--a.s.\ and $\Theta_\epsilon = \Theta'_\epsilon$, $\vmu$--a.e..
\end{prop}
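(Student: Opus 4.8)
The plan is to first establish $\partial\Theta_\epsilon = \partial\Theta'_\epsilon$, $\mu$--a.s., by combining a one-sided monotonicity comparison with a conservation-of-mass identity, and then to upgrade this to $\Theta_\epsilon = \Theta'_\epsilon$, $\vmu$--a.e., using the identity defining $\epsilon$--balancedness. For the one inequality: since $\Theta'_\epsilon$ is $\epsilon$--balanced with respect to $\mu$, the identity of Definition~\ref{def:epsilon-balanced-allocation-H*} holds for $\vmu$--almost every $[H,e,i]$; the set where it holds is Borel, so applying Proposition~\ref{prop:everyting-shows-at-the-root} to its indicator and then Lemma~\ref{lem:A-as-Atilde-ae}$(ii)$ produces a $\mu$--almost sure event $A^\circ \subseteq \mH_*$ such that for every $[H,i] \in A^\circ$ the restriction $(e',j') \mapsto \Theta'_\epsilon(H,e',j')$ is an $\epsilon$--balanced allocation on the whole connected hypergraph $H$ in the sense of Definition~\ref{def:epsilon-balanced-specific-hypergraph}. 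Fix such an $[H,i]$ and $\Delta \in \nats$. The truncation $H^\Delta$ is bounded, has the same vertex set as $H$, and $E(H^\Delta) \subseteq E(H)$, so Proposition~\ref{prop:bounded-partial-b-increasing} applied with zero baseloads (the smaller hypergraph $H^\Delta$ being bounded, the larger one $H$ not necessarily so) gives $F_\epsilon^\Delta(H,i) = \partial\theta_\epsilon^{H^\Delta}(i) \le \partial\Theta'_\epsilon(H,i)$. Letting $\Delta \to \infty$ and invoking \eqref{eq:F-epsilon-lim-F-epsilon-Delta} and \eqref{eq:partial-Theta-epsilon=F-epsilon} yields $\partial\Theta_\epsilon(H,i) = F_\epsilon(H,i) \le \partial\Theta'_\epsilon(H,i)$, hence $\partial\Theta_\epsilon \le \partial\Theta'_\epsilon$, $\mu$--a.s.

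For the reverse inequality we use mass conservation. For any Borel allocation $\Theta$ we have $\nabla\Theta(H,e,i) = 1/|e|$ by Definition~\ref{def:Borel-allocation}, so the definition of $\vmu$ together with unimodularity give
\begin{equation*}
  \int_{\mH_*} \partial\Theta \, d\mu = \int_{\mH_{**}} \Theta \, d\vmu = \int_{\mH_{**}} \nabla\Theta \, d\vmu = \int_{\mH_{**}} \frac{1}{|e|} \, d\vmu \le \vmu(\mH_{**}) = \deg(\mu) < \infty .
\end{equation*}
Applying this to both $\Theta_\epsilon$ and $\Theta'_\epsilon$ shows $\int \partial\Theta_\epsilon \, d\mu = \int \partial\Theta'_\epsilon \, d\mu < \infty$. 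Since $\partial\Theta'_\epsilon - \partial\Theta_\epsilon \ge 0$, $\mu$--a.s., and integrates to zero, we conclude $\partial\Theta_\epsilon = \partial\Theta'_\epsilon$, $\mu$--a.s.

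It remains to pass to the allocations themselves. By \eqref{eq:Theta-epsilon-pointwise-epsilon-balanced}, $\Theta_\epsilon$ satisfies the $\epsilon$--balanced identity pointwise. Applying Proposition~\ref{prop:everyting-shows-at-the-root} once more — now to the indicator of $\{[H,e,i] : \partial\Theta_\epsilon(H,i) = \partial\Theta'_\epsilon(H,i)\}$, which is $\vmu$--a.e.\ equal to $1$ by the previous paragraph and Lemma~\ref{lem:A-as-Atilde-ae}$(i)$ — we obtain a $\vmu$--full set on which $\partial\Theta_\epsilon(H,j) = \partial\Theta'_\epsilon(H,j)$ holds simultaneously for all $j$ in the root edge $e$. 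On the intersection of this set with the ($\vmu$--full) set where the identity defining $\Theta'_\epsilon$ holds, we may substitute $\partial\Theta_\epsilon$ for $\partial\Theta'_\epsilon$ in that identity, giving $\Theta'_\epsilon(H,e,i) = \Theta_\epsilon(H,e,i)$ for $\vmu$--almost every $[H,e,i]$, which completes the argument. The analytic input here is minimal; the step to be carried out with care — and the only real obstacle — is the measure-theoretic bookkeeping that moves between $\mu$--a.s.\ statements on $\mH_*$ and $\vmu$--a.e.\ statements on $\mH_{**}$: using Proposition~\ref{prop:everyting-shows-at-the-root} and Lemma~\ref{lem:A-as-Atilde-ae} to legitimately treat the restriction of $\Theta'_\epsilon$ to a $\mu$--typical hypergraph as a genuine everywhere-defined $\epsilon$--balanced allocation on that hypergraph, and checking that Proposition~\ref{prop:bounded-partial-b-increasing} is invoked with the bounded and unbounded hypergraphs (and the zero baseloads) in the correct roles.
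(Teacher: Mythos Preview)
Your proposal is correct and follows essentially the same route as the paper: use Proposition~\ref{prop:everyting-shows-at-the-root} to upgrade the $\vmu$--a.e.\ $\epsilon$--balanced identity for $\Theta'_\epsilon$ to an honest $\epsilon$--balanced allocation on a $\mu$--typical hypergraph, compare with the truncated canonical allocation via Proposition~\ref{prop:bounded-partial-b-increasing} and pass to the limit in $\Delta$, then use the mass identity $\int \partial\Theta\,d\mu = \int |e|^{-1}\,d\vmu$ to convert the inequality to equality, and finally feed the equality of total loads back into the defining identity. Your bookkeeping between $\mu$--a.s.\ and $\vmu$--a.e.\ statements via Lemma~\ref{lem:A-as-Atilde-ae} and Proposition~\ref{prop:everyting-shows-at-the-root} is if anything slightly more explicit than the paper's, but the argument is the same.
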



\section{Properties of balanced allocations}
\label{sec:prop-balanc-alloc-1}

\subsection{Existence}
\label{sec:existence}


In this section we prove the existence of balanced allocations (part 1 of Theorem~\ref{thm:balanced--properties}):

\begin{prop}
  \label{prop:epsilon-to-zero}
  Assume $\mu \in \mP(\mH_*)$ is unimodular with $\deg(\mu) < \infty$. Then,  there is a sequence $\epsilon_k \downarrow 0$ such that  $\Theta_{\epsilon_k}$ converges to a balanced allocation $\Theta_0$, with the convergence being both in $L^2(\vmu)$ and $\vmu$--almost everywhere.
\end{prop}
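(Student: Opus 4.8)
The plan is to treat $\{\Theta_\epsilon\}_{\epsilon>0}$ as a norm--bounded family in the Hilbert space $L^2(\vmu)$ — note that $\vmu$ is a \emph{finite} measure, of total mass $\deg(\mu)<\infty$, and that $0\le\Theta_\epsilon\le 1$ — and to extract a convergent subsequence. First I would fix any sequence $\epsilon_k\downarrow 0$ along which $\Theta_{\epsilon_k}$ converges weakly in $L^2(\vmu)$ to some $\Theta_0\in L^2(\vmu)$; this is just weak sequential compactness of balls in a Hilbert space. I would then check that $\Theta_0$ is (after modification on a $\vmu$--null set) a genuine Borel allocation: by Jensen's inequality and unimodularity, $\int(\nabla f)^2\,d\vmu\le\int\nabla(f^2)\,d\vmu=\int f^2\,d\vmu$, so $\nabla$ is a contraction on $L^2(\vmu)$ and hence weakly continuous; since $\nabla\Theta_{\epsilon_k}$ is identically the function $(H,e,i)\mapsto 1/|e|$, passing to the weak limit gives $\nabla\Theta_0=1/|e|$ $\vmu$--a.e., i.e.\ $\sum_{i\in e}\Theta_0(H,e,i)=1$, and one can truncate $\Theta_0$ to take values in $[0,1]$.

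The crux is to pass from weak to strong ($L^2(\vmu)$) and $\vmu$--a.e.\ convergence and to identify $\Theta_0$ as balanced. I would first handle the \emph{loads}: recalling $\partial\Theta_\epsilon=F_\epsilon$ from \eqref{eq:partial-Theta-epsilon=F-epsilon} and that $0\le F_\epsilon\le \deg_H(i)$ with $\deg\in L^1(\mu)$, I would show that along a further subsequence $F_{\epsilon_k}\to F_0$ $\mu$--a.e. This is where the variational structure must be used: one argues that $\Theta_\epsilon$ asymptotically minimises, over Borel allocations, the convex functional $\Theta\mapsto\int_{\mH_*} g(\partial\Theta)\,d\mu$ for a fixed strictly convex $g:[0,\infty)\to[0,\infty)$ of linear growth (the integral is finite precisely because $\deg(\mu)<\infty$), the correction being the entropy term behind the Gibbs form \eqref{eq:Theta-epsilon-definition-F}, which is $O(\epsilon)$ since $x\log x$ is bounded on $[0,1]$; weak lower semicontinuity makes $\Theta_0$ a minimiser and strict convexity of $g$ forces $\partial\Theta_{\epsilon_k}\to\partial\Theta_0$ strongly in $L^1(\mu)$, hence $\mu$--a.e.\ after a further subsequence. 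Throughout, since only $\deg(\mu)<\infty$ is assumed and not a degree bound, all finiteness/integrability claims must be checked by Cauchy--Schwarz against the finite measures $\mu,\vmu$ and by the linear growth of $g$; I expect this $L^1$ convergence of the loads, and the subsequent strong convergence of the $\Theta_{\epsilon_k}$ themselves, to be the main obstacle.

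Granting $F_{\epsilon_k}\to F_0=\partial\Theta_0$ $\mu$--a.e., balancedness of $\Theta_0$ follows cleanly. Using Lemma~\ref{lem:a.s.in-mu--a.e.in-vmu} together with unimodularity (``everything shows at the root'', Proposition~\ref{prop:everyting-shows-at-the-root}), for $\vmu$--a.e.\ $[H,e,i]$ we have $F_{\epsilon_k}(H,j)\to F_0(H,j)$ for \emph{every} $j\in e$. Set
\begin{equation*}
  A:=\{[H,e,i]\in\mH_{**}:\ \exists\, j\in e \text{ with } \partial\Theta_0(H,i)>\partial\Theta_0(H,j)\}.
\end{equation*}
On $A$, for large $k$ the relevant load gap is bounded away from $0$ while $\epsilon_k\downarrow 0$, so the bound from Remark~\ref{rem:motivation-of-epsilon-balanced allocation}, $\Theta_{\epsilon_k}(H,e,i)\le\bigl(1+\exp((\partial\Theta_{\epsilon_k}(H,i)-\partial\Theta_{\epsilon_k}(H,j))/\epsilon_k)\bigr)^{-1}$, gives $\Theta_{\epsilon_k}\to 0$ pointwise on $A$. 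Since $0\le\Theta_{\epsilon_k}\le 1$ and $\vmu$ is finite, dominated convergence gives $\int_A\Theta_{\epsilon_k}\,d\vmu\to 0$, while weak convergence (testing against $\mathbf 1_A\in L^2(\vmu)$) gives $\int_A\Theta_{\epsilon_k}\,d\vmu\to\int_A\Theta_0\,d\vmu$. Hence $\Theta_0=0$ $\vmu$--a.e.\ on $A$, which (re-rooting within each edge, again via Proposition~\ref{prop:everyting-shows-at-the-root}) is exactly the defining property of a balanced Borel allocation in Definition~\ref{def:borel-balanced-allocation}.

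Finally, for the $\vmu$--a.e.\ and $L^2(\vmu)$ convergence of the $\Theta_{\epsilon_k}$ themselves: on $A$ the previous step already gives $\Theta_{\epsilon_k}\to 0=\Theta_0$ pointwise and, by dominated convergence, in $L^2(A,\vmu)$; on $A^c$ — where all the loads within the root edge coincide with their minimum — one uses the Gibbs formula \eqref{eq:Theta-epsilon-definition-F} and the $\mu$--a.e.\ convergence of the loads, together with the uniqueness of $\epsilon$--balanced allocations (Proposition~\ref{prop:epsilon-balanced-uniqueness}), to show that $\Theta_{\epsilon_k}$ converges (after a further subsequence, and with the usual care for unbounded edge sizes) to a limit that must agree $\vmu$--a.e.\ with the weak limit $\Theta_0$, and boundedness plus finiteness of $\vmu$ promotes this a.e.\ convergence to $L^2(\vmu)$. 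Relabelling the last subsequence as $(\epsilon_k)$ yields the proposition.
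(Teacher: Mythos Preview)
There is a genuine gap in your ``crux'' step. You assert that $\Theta_\epsilon$ asymptotically minimises $\int g(\partial\Theta)\,d\mu$ for a strictly convex $g$ of \emph{linear} growth, the correction being the $O(\epsilon)$ entropy term behind the Gibbs form. But the Gibbs expression \eqref{eq:Theta-epsilon-definition-F} is the Euler--Lagrange condition for the specific functional $\Theta\mapsto \tfrac12\int(\partial\Theta)^2\,d\mu+\epsilon\int\Theta\log\Theta\,d\vmu$, i.e.\ for $g(x)=x^2/2$, which has \emph{quadratic} growth; with only $\deg(\mu)<\infty$ this functional may be $+\infty$, and nothing in the paper up to this point establishes that the canonical $\Theta_\epsilon$ is even a near-minimiser of any such functional in the measure-theoretic setting. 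For a linear-growth $g$ there is no reason the Gibbs allocation should be close to optimal up to $O(\epsilon)$. Even granting an approximate minimisation, the implication ``strict convexity forces $\partial\Theta_{\epsilon_k}\to\partial\Theta_0$ strongly in $L^1$'' needs a uniform-convexity type argument, which is incompatible with linear growth.

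Your final step is also incomplete. On $A^c$ all limit loads within the root edge coincide, but the Gibbs ratio $\exp(-\partial\Theta_{\epsilon_k}(i)/\epsilon_k)\big/\sum_j\exp(-\partial\Theta_{\epsilon_k}(j)/\epsilon_k)$ depends on the differences $\partial\Theta_{\epsilon_k}(i)-\partial\Theta_{\epsilon_k}(j)$ \emph{relative to} $\epsilon_k$; knowing these differences tend to zero says nothing about the limit of the ratio, so you cannot identify the pointwise limit with the weak limit $\Theta_0$ there. The paper sidesteps both issues entirely: it proves directly that $\{\Theta_\epsilon\}$ is Cauchy in $L^2(\vmu)$ by deriving, from the $\epsilon$-balance equations, the identity
\[
\|\partial\Theta^\Delta_\epsilon-\partial\Theta^\Delta_{\epsilon'}\|_2^2+\epsilon D(\Theta^\Delta_{\epsilon'}\Vert\Theta^\Delta_\epsilon)+\epsilon' D(\Theta^\Delta_\epsilon\Vert\Theta^\Delta_{\epsilon'})=(\epsilon-\epsilon')\bigl(H(\Theta^\Delta_\epsilon)-H(\Theta^\Delta_{\epsilon'})\bigr),
\]
concluding monotonicity of the entropy $\epsilon\mapsto H(\Theta_\epsilon)$ and then $\|\Theta_\epsilon-\Theta_{\epsilon'}\|_2^2\le 2|H(\Theta_\epsilon)-H(\Theta_{\epsilon'})|$ via Pinsker's inequality. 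Strong $L^2$ convergence comes first, then a.e.\ convergence on a subsequence, and only afterwards is balancedness checked; no variational characterisation is used (indeed, in the paper's logical order that characterisation is proved \emph{after} this proposition).
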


\begin{proof}

First, we show that $\Theta_{\epsilon}$ is Cauchy in $L^2(\vmu)$.  To do so, we take $\epsilon, \epsilon' > 0$ and try to bound $\norm{\Theta_\epsilon - \Theta_{\epsilon'}}_{L^2(\vmu)}$. 
For an integer $\Delta > 0$, define the function $\Theta_\epsilon^\Delta$ on $\mH_{**}$ as follows:
\begin{equation}
\label{eq:Theta-epsilon-Delta-proof-existence}
  \Theta_\epsilon^\Delta(H, e, i) =
  \begin{cases}
    \theta^{H^\Delta}_\epsilon(e, i) & \mbox{ if $e \in E(H^\Delta)$}, \\
    0 & \text{otherwise},
  \end{cases}
\end{equation}
where $H^\Delta$ is the truncated hypergraph introduced in Section~\ref{sec:epsil-balanc-alloc} and $\theta_\epsilon^{H^\Delta}$ is the unique $\epsilon$--balanced allocation associated to it.  

Take a locally finite hypergraph $H$ and  an edge $e$ in $E(H^\Delta)$. As $\theta_\epsilon^{H^\Delta}$ is $\epsilon$--balanced on $H^\Delta$ and $e \in E(H^\Delta)$, for $i, j \in e$ we have 
\begin{equation*}
  \frac{\theta_\epsilon^{H^\Delta}(e, i)}{\theta_\epsilon^{H^\Delta}(e, j)} = \frac{\exp (-\partial \theta_\epsilon^{H^\Delta}(i) / \epsilon)}{\exp (-\partial \theta_\epsilon^{H^\Delta}(j) / \epsilon)}.
\end{equation*}
By the definition of $\Theta_\epsilon^\Delta$, if $e \in E(H^\Delta)$, then for all $j' \in e$, $\Theta_\epsilon^\Delta(H,e, j') = \theta_\epsilon^{H^\Delta}(e, j')$ and $\partial \Theta_\epsilon^\Delta(H, j') = \partial \theta_\epsilon^{H^\Delta}(j')$. Taking logarithms on both sides of the above equation, we have 
\begin{equation*}
  \partial \Theta_{\epsilon}^\Delta(H, i)  + \epsilon \log \Theta_{\epsilon}^\Delta(H, e, i) = \partial \Theta^\Delta_{\epsilon}(H, j) + \epsilon \log \Theta^\Delta_{\epsilon}(H, e, j), \qquad \forall i, j \in e,
\end{equation*}
with this equation holding pointwise whenever $e \in E(H^\Delta)$. 
As this equality holds for all $i, j \in e$, any two convex combinations of the values of $\partial \Theta^\Delta_\epsilon + \epsilon \log \Theta^\Delta_\epsilon$ evaluated at nodes in $e$ are equal, 
whenever $e \in E(H^\Delta)$. In particular, if $\Theta^\Delta_{\epsilon'}$ denotes the $\epsilon'$--balanced allocation on $\mH_{**}$ defined similarly to the above, then,
whenever $e \in E(H^\Delta)$, 
we have $\sum_{i \in e} \Theta_\epsilon^\Delta(H, e, i) = \sum_{i \in e} \Theta_{\epsilon'}^{\Delta}(H, e, i) = 1$. Hence we have:
\begin{equation}
\label{eq:proof-existence-epsilon-epsilon'}
\begin{split}
\sum_{i \in e} \Theta^\Delta_\epsilon(H, e, i) &\left (   \partial \Theta^\Delta_{\epsilon}(H, i)  + \epsilon \log \Theta^\Delta_{\epsilon}(H, e, i) \right ) \\
&= \sum_{i \in e} \Theta^\Delta_{\epsilon'}(H, e, i) \left (   \partial \Theta^\Delta_{\epsilon}(H, i)  + \epsilon \log \Theta^\Delta_{\epsilon}(H, e, i) \right ),
\end{split}
\end{equation}
which holds pointwise, whenever $e \in E(H^\Delta)$. On the other hand, if $e \notin E(H^\Delta)$, then, by definition, $\Theta^\Delta_\epsilon(H, e, j)$ as well as $\Theta^\Delta_{\epsilon'}(H, e, j)$ are zero for all $j \in e$. In this case the above equality again holds, with  $0 \log 0$ being interpreted as 0. 
In other words, pointwise on $\mH_{**}$, we have
\begin{equation*}
  \nabla ( \Theta_\epsilon^\Delta(\partial \Theta^\Delta_\epsilon + \epsilon \log \Theta^\Delta_\epsilon)) = \nabla (\Theta^\Delta_{\epsilon'}(\partial \Theta^\Delta_\epsilon + \epsilon \log \Theta^\Delta_\epsilon)),
\end{equation*}
where, by abuse of notation, we have treated $\partial \Theta_\epsilon^\Delta$ as a function on $\mH_{**}$ rather than on $\mH_*$, via $\partial \Theta_\epsilon^\Delta(H, e, i) := \partial \Theta_\epsilon^\Delta(H, i)$, and likewise for $\partial \Theta_{\epsilon'}^\Delta$.
Rewriting the above identity, we have:
\begin{equation}
  \label{eq:epsilon-zero-nabla-3-1}
  \nabla \left ( \Theta^\Delta_\epsilon \partial \Theta^\Delta_\epsilon + \epsilon \Theta^\Delta_\epsilon \log \Theta^\Delta_\epsilon - \Theta^\Delta_{\epsilon'} \partial \Theta^\Delta_{\epsilon} \right ) = \epsilon \nabla (\Theta^\Delta_{\epsilon'} \log \Theta^\Delta_\epsilon).
\end{equation}
Note that $\partial \Theta_\epsilon^\Delta$ is pointwise bounded by $\Delta$ by definition. Moreover, $\deg (\mu) < \infty$, which implies that $\vmu$ has finite total measure. Hence, all terms in the above equation have finite integral. 
On the other hand, from the definition of $\vmu$, we have: 
\begin{equation*}
  \int (\Theta^\Delta_\epsilon - \Theta^\Delta_{\epsilon'}) \partial \Theta^\Delta_\epsilon d \vmu = \int \partial ( (\Theta^\Delta_\epsilon - \Theta^\Delta_{\epsilon'}) \partial \Theta^\Delta_\epsilon) d \mu = \int (\partial \Theta^\Delta_\epsilon - \partial \Theta^\Delta_{\epsilon'}) \partial \Theta^\Delta_\epsilon d \mu.
\end{equation*}
Substituting this into \eqref{eq:epsilon-zero-nabla-3-1} and using unimodularity, we have 
\begin{equation}
  \label{eq:entropy-kl-epsilon-epsilon'}
  \langle \partial \Theta^\Delta_\epsilon - \partial \Theta^\Delta_{\epsilon'} , \partial \Theta^\Delta_\epsilon \rangle + \epsilon \int  \Theta^\Delta_\epsilon \log \Theta^\Delta_\epsilon d \vmu = \epsilon \int \Theta^\Delta_{\epsilon'} \log \Theta^\Delta_{\epsilon} d \vmu,
\end{equation}
where $\langle .,. \rangle$ denotes the inner product of two functions in $L^2(\mu)$. 
Now, changing the order of $\epsilon$ and $\epsilon'$, we have
\begin{equation}
  \label{eq:entropy-kl-epsilon'-epsilon}
  \langle \partial \Theta^\Delta_{\epsilon'} - \partial \Theta^\Delta_{\epsilon} , \partial \Theta^\Delta_{\epsilon'} \rangle + \epsilon' \int  \Theta^\Delta_{\epsilon'} \log \Theta^\Delta_{\epsilon'} d \vmu = \epsilon' \int \Theta^\Delta_{\epsilon} \log \Theta^\Delta_{\epsilon'} d \vmu.
\end{equation}
Summing up these two equalities, we have:
\begin{equation}
\label{eq:epsilon-epsilon'-norm2-4}
\begin{split}
  \norm{\partial \Theta^\Delta_\epsilon - \partial \Theta^\Delta_{\epsilon'} }_2^2 &= \epsilon \int \Theta^\Delta_{\epsilon'} \log \Theta^\Delta_\epsilon d\vmu- \epsilon \int \Theta^\Delta_\epsilon \log \Theta^\Delta_\epsilon d \vmu \\
&+ \epsilon' \int \Theta^\Delta_\epsilon \log \Theta^\Delta_{\epsilon'} d\vmu - \epsilon' \int \Theta^\Delta_{\epsilon'} \log \Theta^\Delta_{\epsilon'} d \vmu.
\end{split}
\end{equation}
We now use the following information theoretic notation. For functions $\Theta, \Theta': \mH_{**} \rightarrow [0,1]$, we define
\begin{equation*}
  \begin{split}
  H(\Theta) & := - \int \Theta \log \Theta d \vmu, \mbox{ and }\\
  D(\Theta \Vert \Theta') & := \int \Theta \log \frac{\Theta}{\Theta'} 
  d \vmu,
  \end{split}
\end{equation*}
where $0 \log 0$ is interpreted as 0, and $\Theta(H, e, i)$ is assumed to be zero whenever $\Theta'(H, e, i)$ is zero (by definition, $\Theta_\epsilon^\Delta$ and $\Theta_{\epsilon'}^\Delta$ have this property). Rearranging the terms in \eqref{eq:epsilon-epsilon'-norm2-4},
\begin{equation}
  \label{eq:epsilon-epsilon'-D-H}
  \norm{\partial \Theta^\Delta_\epsilon - \partial \Theta^\Delta_{\epsilon'}}_2^2 + \epsilon D(\Theta^\Delta_{\epsilon'} \Vert \Theta^\Delta_{\epsilon}) + \epsilon' D(\Theta^\Delta_\epsilon \Vert \Theta^\Delta_{\epsilon'}) = (\epsilon - \epsilon') (H(\Theta^\Delta_\epsilon) - H(\Theta^\Delta_{\epsilon'}) ).
\end{equation}
Note that, since $\deg(\mu) < \infty$ and $\partial \Theta_\epsilon^\Delta$ and $\partial \Theta_{\epsilon'}^\Delta$ are pointwise bounded to $\Delta$, all the terms are finite. 
With, $\mA := \{ [H, e, i] \in \mH_{**}: e \in E(H^\Delta) \}$, we have 
\begin{equation}
\label{eq:D-theta-epsilon'-epsilon-nonnegative} 
  \begin{split}
    D(\Theta_{\epsilon'}^\Delta \Vert \Theta_\epsilon^\Delta) &= \int_\mA \Theta^\Delta_{\epsilon'} \log \frac{\Theta_{\epsilon'}^\Delta}{\Theta_\epsilon^\Delta} d \vmu \\
    &= \int_\mA \nabla \left ( \Theta_{\epsilon'}^\Delta \log \frac{\Theta_{\epsilon'}^\Delta}{\Theta_{\epsilon}^\Delta} \right ) d \vmu \\
    &= \int_\mA \frac{1}{|e|} D_{\text{KL}}((\Theta_{\epsilon'}^\Delta(H, e, j))_{j \in e} \Vert (\Theta_\epsilon^\Delta(H, e, j))_{j \in e}) d \vmu (H, e, i)\\
    &\geq 0,
  \end{split}
\end{equation}
where $D_{\text{KL}}$ denotes the standard KL divergence, and for $[H, e, i] \in \mA$, by definition, $(\Theta_{\epsilon'}^\Delta(H, e, j))_{j \in e}$ and $(\Theta_{\epsilon'}^\Delta(H, e, j))_{j \in e}$ are vectors of nonnegative values summing up to one. Similarly, one can show that  $D(\Theta^\Delta_\epsilon \Vert \Theta^\Delta_{\epsilon'}) \geq 0$. Therefore, all the terms on the left hand side of \eqref{eq:epsilon-epsilon'-D-H} are nonnegative. As a result
\begin{equation}
  \label{eq:entropy-decreasing-Delta}
  \epsilon > \epsilon' \qquad \Rightarrow \qquad H(\Theta^\Delta_\epsilon) \geq H(\Theta^\Delta_{\epsilon'}).
\end{equation}
As $H(\Theta_\epsilon^\Delta) \geq 0$ for all $\epsilon>0$, the above inequality implies that $H(\Theta_\epsilon^\Delta)$ converges to some value as $\epsilon \downarrow 0$.
Another result of \eqref{eq:epsilon-epsilon'-D-H} is that  for $\epsilon > \epsilon'$,
\begin{equation*}
  \epsilon D(\Theta^\Delta_{\epsilon'} \Vert \Theta^\Delta_{\epsilon}) \leq (\epsilon - \epsilon') (H(\Theta^\Delta_\epsilon) - H(\Theta^\Delta_{\epsilon'})).
\end{equation*}
Dividing by $\epsilon$ and using $\epsilon > \epsilon'$ and $H(\Theta^\Delta_\epsilon) \geq H(\Theta^\Delta_{\epsilon'})$, we get 
\begin{equation}
\label{eq:D-theta-epsilon'-epsilon-H}
  D(\Theta^\Delta_{\epsilon'} \Vert \Theta^\Delta_{\epsilon}) \leq H(\Theta^\Delta_\epsilon) - H(\Theta^\Delta_{\epsilon'}).
\end{equation}
Using \eqref{eq:D-theta-epsilon'-epsilon-nonnegative} and Pinsker's inequality (see, for instance, \cite{csiszar2011information}), 
\begin{equation*}
  \begin{split}
    D(\Theta^\Delta_{\epsilon'} \Vert \Theta^\Delta_\epsilon) 
    &= \int_\mA \frac{1}{|e|} D_{\text{KL}} ( (\Theta^\Delta_{\epsilon'}(H, e,j))_{j \in e} \Vert (\Theta^\Delta_\epsilon(H, e,j))_{j \in e}) d \vmu \\
    &\stackrel{(a)}{\geq} \int_\mA \frac{1}{|e|} \frac{1}{2} \sum_{j \in e} | \Theta^\Delta_{\epsilon'}(H, e, j) - \Theta^\Delta_\epsilon(H, e, j) |^2 d \vmu \\
    &\stackrel{(b)}{=} \frac{1}{2} \int_\mA | \Theta^\Delta_{\epsilon'} - \Theta^\Delta_\epsilon |^2 d \vmu \\
    &= \frac{1}{2} \norm{\Theta^\Delta_\epsilon - \Theta^\Delta_{\epsilon'}}_2^2,
  \end{split}
\end{equation*}
where $(a)$ uses the Pinsker's inequality, and $(b)$ uses unimodularity of $\mu$.
 Combining this with \eqref{eq:D-theta-epsilon'-epsilon-H}, for $\epsilon, \epsilon' > 0$ we have 
\begin{equation*}
  \norm{\Theta^\Delta_{\epsilon} - \Theta^\Delta_{\epsilon'}}_2^2 \leq 2 | H(\Theta^\Delta_\epsilon) - H(\Theta^\Delta_{\epsilon'}) |.
\end{equation*}
Now, as we send $\Delta$ to infinity, $\Theta_\epsilon^\Delta$ converges pointwise to the function $\Theta_\epsilon$ defined in Section~\ref{sec:epsil-balanc-alloc-1}. Moreover, $\deg (\mu) < \infty$ and the function $x \log x$ is bounded for $x \in [0,1]$. Hence, using dominated convergence theorem, we have 
\begin{equation}
\label{eq:theta-epsilon-theta-epsilon-prime-H}
  \norm{\Theta_{\epsilon} - \Theta_{\epsilon'}}_2^2 \leq 2 | H(\Theta_\epsilon) - H(\Theta_{\epsilon'}) |.
\end{equation}
Similarly, sending $\Delta \rightarrow \infty$, \eqref{eq:entropy-decreasing-Delta} implies that
\begin{equation}
  \label{eq:entropy-decreasing}
  \epsilon > \epsilon'>0 \qquad \Rightarrow \qquad H(\Theta_\epsilon) \geq H(\Theta_{\epsilon'}).
\end{equation}
This means that, as $H(\Theta_\epsilon) \geq 0$ for all $\epsilon>0$,  $H(\Theta_\epsilon)$ is convergent as $\epsilon \downarrow 0$. In particular, this together with \eqref{eq:theta-epsilon-theta-epsilon-prime-H} implies that there is a sequence of positive values $\epsilon_k$ converging to zero such that $\Theta_{\epsilon_k}$ converges to some $\Theta_0: \mH_{**} \rightarrow [0,1]$ in $L^2(\vmu)$. Therefore, there is a subsequence of this sequence converging to $\Theta_0$ $\vmu$--almost everywhere. Without loss of generality, we may assume this subsequence is the whole sequence. With this, $\Theta_{\epsilon_k}$ converges to $\Theta_0$ both in $L^2(\vmu)$ and $\vmu$--almost everywhere. Using Lemma~\ref{lem:vmu-a.e.-del-mu-a.s.}, we have $\partial \Theta_{\epsilon_k} \rightarrow \partial \Theta_0$ $\mu$--almost surely.  
Lemma~\ref{prop:everyting-shows-at-the-root} then implies that for $\mu$--almost all $[H, i] \in \mH_*$, $\partial \Theta_{\epsilon_k}(H, j) \rightarrow \partial \Theta_0(H, j)$  for all $j \in e$. Following Remark~\ref{rem:partial-theta-H**-unify}, we can treat $\partial \Theta_0$ and the $\partial \Theta_{\epsilon_k}$ as functions on $\mH_{**}$ instead of $\mH_*$. Then, Lemma~\ref{lem:a.s.in-mu--a.e.in-vmu} implies that 
\begin{equation}
  \label{eq:for-all-j-in-e-vmu-a.e}
  \partial \Theta_{\epsilon_k}(H, e, j) \rightarrow \partial \Theta_0 (H, e, j) \,\, \forall j \in e, \qquad \qquad \vmu\text{--a.e.}.
\end{equation}
Now, we are ready to show that $\Theta_0$ is actually $\vmu$--balanced. To do so, assume that $\partial \Theta_0(H, i) >  \partial \Theta_0(H, j)$ for some $i, j \in e$. Equivalently, $\partial \Theta_0(H, e, i) > \partial \Theta_0(H, e, j)$. Then \eqref{eq:for-all-j-in-e-vmu-a.e} implies that, outside a measure zero set, for some fixed $\delta$, and for $k$ large enough,
\begin{equation*}
  \partial \Theta_{\epsilon_k}(H, e, i) - \partial \Theta_{\epsilon_k}(H, e, j) > \delta \qquad \vmu\text{--a.e.}.
\end{equation*}
On the other hand, using the definition of an $\epsilon$--balanced allocation, we have
\begin{equation*}
  \Theta_{\epsilon_k}(H, e, i) \leq \frac{1}{1 + \exp \left ( - \frac{\partial \Theta_{\epsilon_k}(H,e, j) - \partial \Theta_{\epsilon_k}(H, e, i)}{\epsilon_k} \right )} \leq \frac{1}{1+ \exp (\delta/ \epsilon_k)}.
\end{equation*}
Sending $k$ to infinity, since $\delta$ is fixed, the above inequality implies that  $\Theta_{\epsilon_k}(H, e, i)$ converges to zero. Also, we know that $\Theta_{\epsilon_k} \rightarrow \Theta_0$ $\vmu$--almost everywhere. Thus, we have shown that
\begin{equation*}
  \partial \Theta_0(H,i) > \partial \Theta_0(H, j) \,\,\text{for}\,\, i, j \in e \qquad \Rightarrow \qquad \Theta_0(H, e, i) =0, \qquad \qquad \vmu\text{--a.e.},
\end{equation*}
which shows that $\Theta_0$ is balanced and the proof is complete.
\end{proof}

\subsection{Variational characterization}
\label{sec:vari-char}


In this section, we prove the variational characterization (part 2) of Theorem~\ref{thm:balanced--properties}.

\begin{prop}
  \label{prop:variational}
  Assume $\Theta$ is a Borel allocation on $\mH_{**}$ and $\mu \in \mP(\mH_*)$ is unimodular with $\deg (\mu) < \infty$. Then, for all $t \in \reals$ we have
  \begin{equation}
    \label{eq:variational-inequality}
    \int (\partial \Theta - t)^+ d \mu \geq \sup_{\stackrel{f: \mH_* \rightarrow [0,1]}{\text{Borel}}} \int \tilde{f}_\text{min} d \vmu - t \int f d\mu,
  \end{equation}
  where $\tilde{f}_\text{min}$ is defined as 
  \begin{equation*}
    \tilde{f}_\text{min} (H, e, i) = \frac{1}{|e|} \min_{j \in e} f(H, j).
  \end{equation*}
  Furthermore, equality happens for all $t \in \reals$ if and only if $\Theta$ is balanced. Moreover, when $\Theta$ is balanced, the function $f = \oneu{\partial \Theta > t}$ achieves the supremum.
\end{prop}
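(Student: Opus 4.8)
The plan is to prove Proposition~\ref{prop:variational} in three movements: first establish the inequality \eqref{eq:variational-inequality} for an arbitrary Borel allocation $\Theta$; then show that when $\Theta$ is balanced the choice $f = \oneu{\partial\Theta > t}$ turns the inequality into an equality; and finally argue the converse, that if equality holds for all $t$ then $\Theta$ must be balanced.

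For the inequality, fix any Borel $f:\mH_* \to [0,1]$ and treat it as a function on $\mH_{**}$ via $f(H,e,i):=f(H,i)$ (following Remark~\ref{rem:partial-theta-H**-unify}). The starting point is the pointwise estimate, valid for any $(H,e,i)$,
\begin{equation*}
  \sum_{j\in e}\Theta(H,e,j)\bigl(\partial\Theta(H,j)-t\bigr)^+ \;\geq\; \min_{j\in e}f(H,j)\cdot\Bigl(\min_{j\in e}\bigl[\partial\Theta(H,j)-t\bigr]\Bigr) \;\ldots
\end{equation*}
—more precisely, I would use that for each edge $e$, since $\sum_{j\in e}\Theta(H,e,j)=1$ and $\Theta\geq 0$, we have $\sum_{j\in e}\Theta(H,e,j)(\partial\Theta(H,j)-t) \ge \sum_{j\in e}\Theta(H,e,j) f(H,j)(\partial\Theta(H,j)-t) - \ldots$; the clean way is to write $(\partial\Theta(H,j)-t)^+ \ge f(H,j)(\partial\Theta(H,j)-t)$ for every $j$ (because $f\in[0,1]$ and $x^+\ge cx$ for $c\in[0,1]$), then take the $\Theta(H,e,\cdot)$-weighted average over $j\in e$, apply $\nabla$, and integrate against $\vmu$. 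Unimodularity lets me replace $\int \nabla(\cdot)\,d\vmu$ by $\int(\cdot)\,d\vmu$, and the definition of $\vmu$ converts $\int \Theta\,\partial\Theta\,d\vmu$-type terms back to integrals over $\mH_*$. The term $\int f\cdot\partial\Theta\,d\mu$ appears with a $-t$ and the term $\int \tilde f_{\min}\,d\vmu$ emerges from bounding $\sum_{j\in e}\Theta(H,e,j)f(H,j) \ge \min_{j\in e}f(H,j) = |e|\,\tilde f_{\min}(H,e,i)$ and using $\nabla$ to absorb the $1/|e|$. I expect the bookkeeping here — getting the $\nabla$, the $\partial$, and the $\vmu$-versus-$\mu$ swaps to line up — to be the main technical obstacle, though it is routine in spirit.

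For the equality case, suppose $\Theta$ is balanced and take $f=\oneu{\partial\Theta>t}$. The point is that for $\vmu$-almost every $(H,e,i)$ and every $j\in e$ with $\partial\Theta(H,j)>t$, balancedness forces $\Theta(H,e,j')=0$ for every $j'\in e$ with $\partial\Theta(H,j')<\partial\Theta(H,j)$; in particular the load $\partial\Theta$ is constant, say equal to some value $\ge$ or $<t$, on the support of $\Theta(H,e,\cdot)$ within $e$, so all the inequalities used in the first part become equalities. Concretely, $(\partial\Theta(H,j)-t)^+ = \oneu{\partial\Theta(H,j)>t}(\partial\Theta(H,j)-t)$ fails only when $\partial\Theta(H,j)=t$, which contributes zero anyway, and $\sum_{j\in e}\Theta(H,e,j)\oneu{\partial\Theta(H,j)>t} = \min_{j\in e}\oneu{\partial\Theta(H,j)>t}$ holds because balancedness makes $\oneu{\partial\Theta(H,\cdot)>t}$ effectively constant on the relevant part of $e$ — if some vertex of $e$ has load $\le t$ then every vertex carrying positive $\Theta$-mass also has load $\le t$ (else load could be shifted), so both sides are $0$; if all vertices exceed $t$ both sides are $1$. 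Chasing these identities through the same unimodularity manipulation yields equality.

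For the converse, I would show the contrapositive: if $\Theta$ is not balanced, then on a set of positive $\vmu$-measure there exist $e$ and $j_1,j_2\in e$ with $\partial\Theta(H,j_1)>\partial\Theta(H,j_2)$ but $\Theta(H,e,j_1)>0$. On this set the inequality $(\partial\Theta(H,j_1)-t)^+ \ge f(H,j_1)(\partial\Theta(H,j_1)-t)$ is strict for $f=\oneu{\partial\Theta>t}$ at a value of $t$ chosen in the gap $(\partial\Theta(H,j_2),\partial\Theta(H,j_1))$ — or more robustly, for \emph{some} $t$ the weighted-average step is strict — so the supremum on the right is strictly below $\int(\partial\Theta-t)^+\,d\mu$ for that $t$. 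Since the claim is that equality holds for all $t$, failure at one $t$ suffices. One subtlety is that I must verify the gap set has positive $\vmu$-measure and use Proposition~\ref{prop:everyting-shows-at-the-root} / Lemma~\ref{lem:A-as-Atilde-ae} to ensure the bad configuration is "seen" consistently; I also need to pick $t$ measurably, which I can do by integrating over $t$ or by a countable-rational-cover argument. I anticipate this converse direction, together with the need to justify that $f=\oneu{\partial\Theta>t}$ is Borel (which follows since $\partial\Theta$ is Borel), to be where most of the care is required.
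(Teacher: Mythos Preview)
Your treatment of the inequality \eqref{eq:variational-inequality} and of the direction ``balanced $\Rightarrow$ equality with $f=\oneu{\partial\Theta>t}$'' matches the paper's argument: the pointwise bound $x^+\ge cx$ for $c\in[0,1]$, followed by $\int f\,\partial\Theta\,d\mu=\int\tilde f\Theta\,d\vmu=\int\nabla(\tilde f\Theta)\,d\vmu$ via unimodularity, and then $\nabla(\tilde f\Theta)\ge\tilde f_{\min}$, is exactly what the paper does.

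The converse, however, has a real gap. Your contrapositive strategy shows only that the particular choice $f=\oneu{\partial\Theta>t}$ fails to attain the left-hand side for some $t$; it does not show that the \emph{supremum} over all Borel $f$ is strictly below $\int(\partial\Theta-t)^+d\mu$. Another $f$ could still achieve equality. (Incidentally, the specific inequality you singled out, $(\partial\Theta(H,j_1)-t)^+\ge f(H,j_1)(\partial\Theta(H,j_1)-t)$, is actually an equality for $t$ in the gap, since $f(H,j_1)=1$ there; the slack is in the $\nabla(\tilde f\Theta)\ge\tilde f_{\min}$ step, as you note in your hedge --- but this slack is again only for that one $f$.)

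The paper closes this gap by a different route: the right-hand side of \eqref{eq:variational-inequality} is independent of $\Theta$, and the existence of a balanced allocation $\Theta_0$ (established earlier in Section~\ref{sec:existence}) means the supremum is actually a maximum, attained at $f_0=\oneu{\partial\Theta_0>t}$. If now $\Theta$ achieves equality at some $t$, then plugging in this maximizer $f_0$ forces both intermediate inequalities to be equalities almost everywhere: $(\partial\Theta-t)f_0=(\partial\Theta-t)^+$ $\mu$-a.s.\ and $\nabla(\tilde f_0\Theta)=(\tilde f_0)_{\min}$ $\vmu$-a.e. From the first, one reads off $f_0(H,i)=1$ whenever $\partial\Theta(H,i)>t$ and $f_0(H,j)=0$ whenever $\partial\Theta(H,j)<t$; combining with the second equality then forces $\Theta(H,e,i)=0$ whenever $\partial\Theta(H,i)>t>\partial\Theta(H,j)$ for some $j\in e$. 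Running this over rational $t$ and invoking Proposition~\ref{prop:everyting-shows-at-the-root} gives balancedness. The key idea you are missing is that knowledge of a maximizer (coming from the already-constructed balanced $\Theta_0$) is what lets one reverse-engineer pointwise information about $\Theta$ from the integral equality.
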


\begin{proof}
  Note that for any real number $x$, we have $x^+ \geq x y$ for $y \in [0,1]$. Therefore, for any Borel function $f: \mH_* \rightarrow [0,1]$, we have
  \begin{equation}
    \label{eq:variational-1}
    \int (\partial \Theta - t)^+ d \mu \geq \int (\partial \Theta - t) f d\mu = \int f \partial \Theta d\mu - t \int f d\mu.
  \end{equation}
  The assumption that $\deg(\mu) < \infty$ guarantees that both integrals on the RHS  are finite, and hence $\int (\partial \theta - t) f d \mu$ exists.
  It is easy to see that  $f \partial \Theta = \partial (\tilde f \Theta)$, where $\tilde f: \mH_{**} \rightarrow \reals$ is defined as $\tilde{f}(H,e ,i) := f(H, i)$. 
  Therefore, using the unimodularity of $\mu$, we have
  \begin{equation}
    \label{eq:f-del--Del-tilde-f}
    \int f \partial \Theta d \mu = \int \tilde{f} \Theta d \vmu = \int \nabla (\tilde{f} \Theta) d \vmu.
  \end{equation}
Now, we have
  \begin{equation}
    \label{eq:Del-tilde-f-tilde-f-min}
    \begin{split}
      \nabla (\tilde{f} \Theta) (H, e, i) &= \frac{1}{|e|} \sum_{j \in e} \tilde{f}(H, e, j) \Theta(H, e, j) \\
      &= \frac{1}{|e|} \sum_{j \in e} f(H, j) \Theta(H, e, j) \\
      &\stackrel{(a)}{\geq} \frac{1}{|e|} \min_{j \in e} f(H, j) \\
      &= \tilde{f}_\text{min} (H, e, i),
    \end{split}
  \end{equation}
  where $(a)$ holds since $\sum_{j \in e} \Theta(H, e, j) = 1$ and $\Theta(H, e, j) \geq 0$ for all $j \in e$. 
  This, together with \eqref{eq:variational-1} and \eqref{eq:f-del--Del-tilde-f}, proves \eqref{eq:variational-inequality}. 

Now, we show that equality holds if and only if $\Theta$ is balanced.
   First, assume $\Theta$ is balanced. We show that equality holds in \eqref{eq:variational-inequality} for all $t \in \reals$. Take $f = \oneu{\partial \Theta > t}$. Then, \eqref{eq:variational-1} becomes an equality. Therefore, it remains to show that \eqref{eq:Del-tilde-f-tilde-f-min} also becomes an equality, $\vmu$--almost everywhere. Note that if $f(H, j) = 0$ for all $j \in e$ or $f(H, j) = 1$ for all $j \in e$, equality holds in \eqref{eq:Del-tilde-f-tilde-f-min}. Thereby, it suffices to  consider the case that for some $j \in e$, $f(H, j) = 0$ and for some other $j' \in e$, $f(H, j') = 1$. This implies $\partial \Theta(H, j') \geq t > \Theta(H, j)$. As $\Theta$ is balanced, outside a measure zero set, we can conclude from the above that $\Theta(H, e, j') = 0$, and so its contribution in \eqref{eq:Del-tilde-f-tilde-f-min} vanishes. Since this is true for any $j'$ with $f(H, j') = 1$, both sides of the inequality $(a)$ in \eqref{eq:Del-tilde-f-tilde-f-min} become equal to zero. As this argument holds outside a measure zero set, the above discussion shows that  \eqref{eq:variational-inequality} becomes an equality when $\Theta$ is balanced. Moreover, the function $f = \oneu{\partial \Theta > t}$ achieves the supremum. 

Now, we show that if \eqref{eq:variational-inequality} is an equality for all $t \in \reals$, then $\Theta$ is balanced. 
Fix some $t \in \reals$. First, we claim that the supremum on the RHS of \eqref{eq:variational-inequality} is a maximum. To see this, note that we have already shown in Section~\ref{sec:existence} that a balanced allocation $\Theta_0$ with respect to  $\mu$ exists, and the above discussion then implies that $f = \oneu{\partial \Theta_0 > t}$ achieves the equality in \eqref{eq:variational-inequality} with $\Theta$ being replaced with $\Theta_0$. But the RHS of \eqref{eq:variational-inequality} has no dependence on $\Theta$. Thereby, the RHS of \eqref{eq:variational-inequality}  always has 
$f := \oneu{\partial \Theta_0 > t}$ as a maximizer,
whenever $\mu$ is unimodular with bounded mean.
Since $f \in [0,1]$, we have $(\partial \Theta - t)^+ \geq (\partial \Theta - t) f$ pointwise. However, equality holds in \eqref{eq:variational-inequality}, so by \eqref{eq:Del-tilde-f-tilde-f-min}, we must have
\begin{equation}
  \label{eq:f=+-as}
  (\partial \Theta - t) f = (\partial \Theta - t)^+ \qquad \mu \text{--a.s.}.
\end{equation}
Using Lemma~\ref{lem:A-as-Atilde-ae}, this means that
\begin{equation}
  \label{eq:f=+-ae}
  (\partial \Theta - t) \tilde{f} = (\partial \Theta - t)^+ \qquad \vmu \text{--a.e.},
\end{equation}
where, by abuse of notation,  we have treated 
$\partial \Theta$ as being defined on $\mH_{**}$ via 
$\partial \Theta(H, e, i) = \partial \Theta(H, i)$. 
On the other hand, \eqref{eq:Del-tilde-f-tilde-f-min} must be an equality and 
\begin{equation}
  \label{eq:tilde-f-min-Del-tilde-f}
  \nabla (\tilde{f} \Theta) = \tilde{f}_\text{min} \qquad \vmu \text{--a.e.}.
\end{equation}
Now, we show that $\Theta$ must be balanced by checking the conditions in Defintion~\ref{def:borel-balanced-allocation}.
For the above fixed $t$, if for some $[H, e, i] \in \mH_{**}$ and some $j \in e$ we have
\begin{equation*}
  \partial \Theta(H, e, i) > t > \partial \Theta(H, e, j),
\end{equation*}
then \eqref{eq:f=+-ae} implies that outside a measure zero set, we can conclude that $f(H, i) = 1$ and $f(H, j) = 0$. This can be seen by comparing the 
left hand side and right hand side of \eqref{eq:f=+-ae} in each of the
two cases, and then recalling that $\tilde{f}(H,e,i) = f(H,i)$.

Hence, by definition, $\tilde{f}_\text{min}(H,e , i) = 0$. Therefore, \eqref{eq:tilde-f-min-Del-tilde-f} implies that outside a measure zero set, we have 
\begin{equation*}
  \begin{split}
    0 = \nabla(\tilde{f} \Theta)(H, e, i) &= \frac{1}{|e|} \sum_{k \in e} f(H, k) \Theta(H, e, k) \\
    &\geq \frac{1}{|e|} f(H, i) \Theta(H, e, i) \\
    &= \frac{1}{|e|} \Theta(H, e, i),
  \end{split}
\end{equation*}
which implies that $\Theta(H, e, i) = 0$.

So far we have shown that, for a fixed $t$, for almost all $[H, e, i] \in \mH_{**}$, if for some $j \in e$ we have $\Theta(H, e, i) > t > \Theta(H, e, j)$, then $\Theta(H, e, i) =0$. Since this holds for all $t \in \mathbb{Q}$, and $\mathbb{Q}$ is countable and dense in $\reals$, we can conclude that for $\vmu$--almost every $[H, e, i] \in \mH_{**}$, $\partial \Theta(H, i) > \partial \Theta(H, j)$ for some $j \in e$ implies that $\Theta(H, e, i) = 0$. Using Proposition~\ref{prop:everyting-shows-at-the-root}, we can conclude that for $\vmu$--almost all $[H,e, i] \in \mH_{**}$, $\partial \Theta(H, e, j_1) \geq \partial \Theta(H, e, j_2)$ for some $j_1, j_2 \in e$ implies that $\partial \Theta(H, j_1) = 0$. Thus, $\Theta$ is balanced. 
\end{proof}

\subsection{Optimality}
\label{sec:optimality}


In this section, we prove part 3 of Theorem~\ref{thm:balanced--properties}. We do this in three steps:

\underline{$(a) \Rightarrow (c)$} Let $\Theta$ be a  balanced allocation with respect to $\mu$. Let $\Theta'$ be any other allocation, and $f:[0,\infty) \rightarrow [0,\infty)$ be convex. We will show that
\begin{equation*}
  \int f \circ \partial \Theta d \mu \leq \int f \circ \partial \Theta' d \mu.
\end{equation*}
Using Proposition~\ref{prop:variational}, since $\Theta$ is balanced and $\Theta'$ is a Borel allocation we have, for any $t \in \reals$,
\begin{equation}
  \label{eq:theta-theta'-t+<=}
  \int (\partial \Theta - t)^+ d \mu = \sup_{\stackrel{f: \mH_* \rightarrow [0,1]}{\text{Borel}}} \int \tilde{f}_\text{min} d \vmu - t \int f d \mu \leq \int (\partial \Theta' - t)^+ d \mu.
\end{equation}
On the other hand, 
\begin{equation*}
  \int \partial \Theta d \mu = \int \Theta d \vmu = \int \nabla \Theta d \vmu = \int \frac{1}{|e|} d \vmu.
\end{equation*}
The same chain of equalities holds for $\Theta'$. Hence, $\int \partial \Theta d \vmu = \int \partial \Theta' d \vmu$. Standard results in the convex ordering of random variables (see Theorem~3.A.1 of \cite{shaked2007stochastic} for instance) show that  \eqref{eq:theta-theta'-t+<=} implies that $\Theta \preceq_{\text{cx}} \Theta'$, where $\preceq_{\text{cx}} $ denotes the partial order defined by the property that $\int f \circ \Theta d \mu\leq \int f \circ \Theta' d \mu$ for any convex function $f$. 

\underline{$(c) \Rightarrow (b)$} Just restrict to the given strictly convex function.

\underline{$(b) \Rightarrow (a)$} Assume $\Theta_0$ is a balanced allocation w.r.t. $\mu$ (from Section~\ref{sec:existence} we know it exists). As we showed above, $\int f \circ \partial \Theta_0 d \mu \leq \int f \circ \partial \Theta d \mu$ for the given strictly convex function $f:[0,\infty) \rightarrow [0,\infty)$. But $\Theta$ is a minimizer. Therefore, 
\begin{equation*}
  \int f \circ \partial \Theta d \mu = \int f \circ \partial \Theta_0 d \mu.
\end{equation*}
Now, define $\Theta': \mH_{**} \rightarrow [0,1]$ as $\Theta' = (\Theta + \Theta_0) /2$. Note that $\Theta'$ is a Borel allocation. Also, using the convexity of $f$, we have 
\begin{equation}
  \label{eq:f-Theta'<f-Theta0-f-Theta}
  \int f \circ \partial \Theta' d \mu \leq \frac{1}{2} \left ( \int f \circ \partial \Theta d \mu + \int f \circ \partial \Theta_0 d \mu \right ) = \int f \circ \partial \Theta_0 d \mu.
\end{equation}
On the other hand, since $\Theta'$ is a Borel allocation and $\Theta_0$ is balanced, the $(a) \Rightarrow (c)$ part implies that 
\begin{equation*}
  \int f \circ \partial \Theta_0 d \mu \leq \int f \circ \partial \Theta' d \mu.
\end{equation*}
Hence, we have equality in \eqref{eq:f-Theta'<f-Theta0-f-Theta}. As $f$ is nonnegative, it must be the case that 
\begin{equation*}
  f \circ \partial \Theta' = \frac{f \circ \partial \Theta_0 + f \circ \partial \Theta}{2} \qquad \mu\text{--a.s.}.
\end{equation*}
Now, since $f$ is strictly convex, this implies that $\partial \Theta = \partial \Theta_0$, $\mu$--almost surely. Therefore, for any value of $t$, we have 
\begin{equation*}
  \int (\partial \Theta - t )^+ d \mu = \int (\partial \Theta_0 - t)^+ d \mu.
\end{equation*}
Comparing this with \eqref{eq:variational-inequality}, we realize that $\Theta$ achieves the equality therein, which means, from Proposition~\ref{prop:variational}, that $\Theta$ is balanced.

\subsection{Uniqueness of $\partial \Theta$}
\label{sec:uniq-part-theta}


In this section we prove part~4 of Theorem~\ref{thm:balanced--properties}. 
If $\Theta$ is balanced, part~3 of the theorem, which was proved in the preceding section, implies that for a strictly convex function $f$, $\int f \circ \partial \Theta d \mu$ is minimized. Then, in the proof of the $(b) \Rightarrow (a)$ part of Section~\ref{sec:optimality} it was shown that $\partial \Theta = \partial \Theta_0$ $\mu$--almost surely. 
This is precisely what we want to show. 

For the converse, assume that $\partial \Theta = \partial \Theta_0$, $\mu$-almost surely. Then, for all $t$, we have
\begin{equation*}
  \int (\partial \Theta - t )^+ d \mu = \int (\partial \Theta_0 - t)^+ d \mu, 
\end{equation*}
so Proposition~\ref{prop:variational} implies that $\Theta$ is balanced,
by the same logic that was used at the end of the preceding section.

\subsection{Continuity with respect to the local weak limit}
\label{sec:cont-with-resp}


In this section we prove Part~5 of Theorem~\ref{thm:balanced--properties}. Prior to that, we need some notation and tools. 

Recall the notion of marked hypergraphs from Section~\ref{sec:mbh-mbh}.
Let $\Pi: \mbH_*(\Xi) \rightarrow \mH_*$ be the function that removes marks, i.e.
\begin{equation}
  \label{eq:projection-mbH*-mH*-2}
  \Pi([\bH, i])  = [H, i],
\end{equation}
where $H$ is the underlying hypergraph associated to $\bH$. 
It can be easily checked that
$\Pi$ is a continuous map.

Note that allocations could be considered as marks with values in $[0,1]$. 
Hence, we can capture the notion of a balanced allocation using the formalism in Section~\ref{sec:mbh-mbh}, via the following definition. 

Consider the function $f: \mbH_{**}(\Xi) \rightarrow \Xi$ defined as $f(\bH, e, i) = \xi_{\bH}(e, i)$. When $\Xi$ has an additive structure  (e.g. $\Xi = [0,1]$) we may consider $\partial f : \mbH_*(\Xi) \rightarrow \Xi$, defined as  
\begin{equation}
\label{eq:partial-zeta-function-def}
  \partial f (\bH, i) = \sum_{e\in E(\bH), e \ni i} f(\bH, e, i) = \sum_{e\in E(\bH), e \ni i} \xi_{\bH}(e, i).
\end{equation}
By abuse of notation, we may write $\partial \xi_{\bH}(i)$ instead of $\partial f(\bH, i)$ with the above $f$.

\begin{definition}
\label{def:bmu-balancedness}
A measure $\bmu \in \mP(\mbH_*([0,1]))$ is called balanced if for $\vbmu$--almost every $[\bH, e, i] \in \mbH_*([0,1])$, we have 
\begin{equation}
  \label{eq:bmu-balanced-sum-1}
  \sum_{j \in e} \xi_{\bH}(e, j) = 1,
\end{equation}
and
\begin{equation}
\label{eq:bmu-balanced-load-condition}
  \partial \xi_{\bH}(j) > \partial \xi_{\bH}(j') \quad \text{ for some } j,j' \in e \quad \Longrightarrow \quad \xi_{\bH}(e, j) = 0.
\end{equation}
\end{definition}

Before proving the result of this section, we state the following lemmas and postpone their proofs until the end of this section. 

\begin{lem}
  \label{lem:weak-limit-of-balanced-network-is-balanced}
  If $\bmu_n$ is a sequence of balanced probability measures on $\mbH_*([0,1])$ with weak limit $\bmu$, then $\bmu$ is also balanced.
\end{lem}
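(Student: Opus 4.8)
The plan is to reduce both defining properties of balancedness, namely \eqref{eq:bmu-balanced-sum-1} and \eqref{eq:bmu-balanced-load-condition}, to statements about expectations of suitable bounded functionals, and then to pass these to the limit using weak convergence of $\bmu_n$ to $\bmu$ together with weak convergence of the associated directed measures $\vec{\bmu}_n$ to $\vec{\bmu}$. First I would record that weak convergence $\bmu_n \Rightarrow \bmu$ on $\mbH_*([0,1])$ implies $\vec{\bmu}_n \Rightarrow \vec{\bmu}$ on $\mbH_{**}([0,1])$, at least after checking a uniform integrability / tightness condition controlling $\deg$; here the mark space is $[0,1]$ and the constraint \eqref{eq:bmu-balanced-sum-1} forces $\partial\xi_{\bH}(i)$ to be comparable to $\deg_H(i)$ only through finitely many terms, so one should be careful — I would instead work directly with bounded test functions that depend on the rooted marked hypergraph only through a bounded-depth, bounded-degree neighborhood, or first truncate. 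The cleanest route is: for each $\Delta$, consider the event $e \in E(H^\Delta)$ (root edge has size $\le \Delta$ and all its vertices have degree $\le\Delta$), restrict attention there, and let $\Delta \to \infty$ at the end, using that $\vec{\bmu}$ has finite total mass.

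For the first property, I would express ``$\sum_{j\in e}\xi_{\bH}(e,j) = 1$ for $\vec{\bmu}$-a.e.\ root'' as the vanishing of $\int \left| \sum_{j\in e}\xi_{\bH}(e,j) - 1\right| \wedge 1 \, d\vec{\bmu}$. The integrand is a bounded function on $\mbH_{**}([0,1])$ which is continuous on the (open-and-closed up to the relevant depth) set where the root edge has a fixed size — more precisely, it is continuous at any point where the depth-$1$ structure is ``stable'', and one checks it is $\vec{\bmu}$-a.e.\ continuous, hence its integral against $\vec{\bmu}_n$ converges to its integral against $\vec{\bmu}$. Since each $\bmu_n$ is balanced this integral is $0$ for every $n$, so it is $0$ in the limit, giving \eqref{eq:bmu-balanced-sum-1}. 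For the second property I would do the analogous thing with the functional
\begin{equation*}
  g(\bH, e, i) := \max_{j, j' \in e} \left( \big(\partial\xi_{\bH}(j) - \partial\xi_{\bH}(j')\big)^+ \wedge 1 \right)\cdot \xi_{\bH}(e,j),
\end{equation*}
so that $\bmu$ (resp.\ $\bmu_n$) is balanced iff $g = 0$ $\vec{\bmu}$-a.e.\ (resp.\ $\vec{\bmu}_n$-a.e.); note $\partial\xi_{\bH}(j)$ depends on the marks on all edges at $j$, so $g$ depends on a depth-$2$ neighborhood, but it is still bounded and $\vec{\bmu}$-a.e.\ continuous. Passing to the limit as above gives $\int g\, d\vec{\bmu} = 0$, hence \eqref{eq:bmu-balanced-load-condition}.

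The main obstacle I anticipate is the continuity/truncation bookkeeping: $\partial\xi_{\bH}(j)$ is an infinite sum over edges at $j$ (well, finite for each fixed hypergraph but with no uniform bound), so the functionals $g$ above are not globally continuous and not globally bounded in a way that interacts cleanly with weak convergence — I expect to need the truncation by $\Delta$, together with the fact that $\vec{\bmu}(\{e \notin E(H^\Delta)\}) \to 0$ as $\Delta\to\infty$ (a consequence of $\vec{\bmu}$ being a finite measure, which in turn uses $\deg(\mu) = \deg(\Pi_*\bmu) < \infty$; here one must verify the limit $\bmu$ still has finite expected degree, e.g.\ by Fatou along the converging sequence). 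A secondary technical point is verifying $\vec{\bmu}$-a.e.\ continuity of the truncated functionals at points of $\mbH_{**}([0,1])$: discontinuities occur only on the set of rooted marked hypergraphs having two distinct vertices within the relevant depth with exactly equal load, or having a vertex of degree exactly $\Delta$, and I would argue these form a $\vec{\bmu}$-null set — or, more robustly, avoid this by choosing the cutoffs (the ``$\le \Delta$'' and the threshold in $[\,\cdot\,]^+$) generically so that the boundary has measure zero, exactly as is done for the analogous graph statement in \cite{anantharam2016densest}.
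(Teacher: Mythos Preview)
Your plan is viable, but the paper takes a cleaner route that sidesteps almost all of the difficulties you anticipate. Rather than working with $\vec{\bmu}_n$ and test functionals on $\mbH_{**}([0,1])$, the paper pulls the balancedness condition back to $\mbH_*([0,1])$: it defines
\[
A_* := \bigl\{\,[\bH,i] : \text{for all } e \ni i,\ j,j'\in e,\ \partial\xi_{\bH}(j) > \partial\xi_{\bH}(j') \Rightarrow \xi_{\bH}(e,j)=0 \,\bigr\},
\]
checks (via Lemma~\ref{lem:A-as-Atilde-ae}) that $\bmu$ satisfies \eqref{eq:bmu-balanced-load-condition} iff $\bmu(A_*)=1$, and then proves that $A_*$ is \emph{closed} in $\mbH_*([0,1])$. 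Closedness is a one–paragraph check exploiting the definition of $\bar d_*$: if $[\bH,i]\notin A_*$, witness a violating triple $(e,j,j')$; since the metric forces any sufficiently close $[\bH',i']$ to be locally isomorphic with marks uniformly close, the same violation persists. Portmanteau then gives $\bmu(A_*) \ge \limsup_n \bmu_n(A_*) = 1$. The same argument with $B_* := \{[\bH,i]:\sum_{j\in e}\xi_{\bH}(e,j)=1\ \forall\, e\ni i\}$ handles \eqref{eq:bmu-balanced-sum-1}.

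Comparing: your route would also work, but the concerns you flag mostly dissolve. After your $\Delta$-truncation, $\partial g_\Delta$ is bounded and genuinely continuous on $\mbH_*([0,1])$ (not merely a.e.\ continuous): the metric $\bar d_*$ forces matching local structure, so degrees and edge sizes are locally constant and the mark-sums $\partial\xi_{\bH}(j)$ vary continuously---there is no discontinuity at ``equal loads'' or at ``degree exactly $\Delta$'', and no generic cutoff is needed. Consequently $\int \partial g_\Delta\, d\bmu_n \to \int \partial g_\Delta\, d\bmu$ directly, with no need for $\vec{\bmu}_n \Rightarrow \vec{\bmu}$, uniform integrability of degrees, or even finiteness of $\vec{\bmu}$. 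The paper's closed-set argument is essentially the same idea packaged more efficiently: it replaces your continuous functional by its zero set and invokes the closed-set form of Portmanteau, absorbing all the truncation bookkeeping into a single topological observation.
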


\begin{lem}
  \label{lem:theta-theta'-balanced-sum-theta-theta'-loads-non-positive}
  Assume $\theta_1, \dots, \theta_n$ and $\theta'_1, \dots, \theta'_n$ are non--negative real numbers such that $\sum_i \theta_i =\sum_i \theta'_i = 1$. Also, assume that nonnegative real numbers $l_1, \dots, l_n$ and $l'_1, \dots, l'_n$ are given such that for $1 \leq i,j \leq n$, $l_i > l_j$ implies $\theta_i = 0$. Similarly, assume that $l'_i > l'_j$ implies $\theta'_i = 0$. Then, we have 
  \begin{equation*}
    \sum_{i=1}^n ( \theta_i - \theta'_i) \oneu{l_i > l'_i} \leq 0.
  \end{equation*}
\end{lem}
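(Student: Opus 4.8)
The plan is to exploit the structure of the indicator set and compare the two distributions "layer by layer" in $l$. First I would introduce the index set $S := \{ i : l_i > l'_i \}$, so that the claim is $\sum_{i \in S} \theta_i \leq \sum_{i \in S} \theta'_i$. I would like to bound $\sum_{i \in S} \theta_i$ from above by something of the form "total $\theta$-mass placed on indices whose $l$-value is at most some threshold'', and correspondingly bound $\sum_{i \in S} \theta'_i$ from below.

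The key observation is a minimax-type inequality coming from balancedness. Let $j^* \in S$ be an index in $S$ achieving the \emph{largest} value of $l'_{j^*}$ among indices in $S$ (if $S = \emptyset$ the statement is trivial, so assume $S \neq \emptyset$), and set $c := l'_{j^*}$. For any $i \in S$ we have $l_i > l'_i$; I would like to say that $l_i$ is, in a suitable sense, not much larger than $c$. Concretely: for $i \in S$, either $\theta_i = 0$ (in which case $i$ contributes nothing to $\sum_{i\in S}\theta_i$), or $\theta_i > 0$, in which case balancedness of $(\theta, l)$ forces $l_i \le l_k$ for \emph{every} $k$ with $\theta_k > 0$; in particular $l_i \le l_{j^*}$ if $\theta_{j^*} > 0$. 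This kind of case analysis is a bit delicate, so the cleaner route is the following. Define $m := \max \{ l'_i : i \in S\} = c$. I claim
\[
\sum_{i \in S} \theta_i \;\le\; \sum_{i : l_i \le c} \theta_i \quad\text{and}\quad \sum_{i : l'_i \le c} \theta'_i \;\le\; \sum_{i \in S} \theta'_i .
\]
The second inequality is immediate once one checks $\{ i : l'_i \le c \} \subseteq S$ is \emph{false} in general, so instead I would argue directly: every $i$ with $l'_i \le c$ and $i \notin S$ has $l_i \le l'_i \le c$; hmm — this still needs care. The honest statement to prove is that the set $T := \{ i : \theta_i > 0, i \in S\}$ satisfies $\max_{i \in T} l_i \le \min_{i : \theta'_i>0,\, i\notin S} l'_i$ is not quite right either.

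Let me restructure: the real engine is that for $i \in S$ with $\theta_i > 0$, balancedness of $(\theta,l)$ gives $l_i = \min_k\{l_k : \theta_k>0\} =: \lambda$; and for $i' \notin S$ (i.e. $l_{i'} \le l'_{i'}$), if $\theta'_{i'} < \theta_{i'}$ happens we need control. The clean finish: let $\lambda$ be the common load level of $(\theta,l)$, i.e.\ $\theta_i > 0 \Rightarrow l_i = \lambda$; set $U := \{ i : l_i \le \lambda \} \supseteq \{i : \theta_i > 0\}$, so $\sum_{i\in U}\theta_i = 1$. Now $S \cap \{\theta_i > 0\} \subseteq \{ i : l_i = \lambda, l'_i < \lambda\}$, hence for such $i$ we have $l'_i < \lambda$, i.e.\ $i \in \{ l' < \lambda\}$. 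Therefore $\sum_{i\in S}\theta_i = \sum_{i \in S, \theta_i>0}\theta_i \le \sum_{i : l'_i < \lambda}\theta_i$. On the other side, using balancedness of $(\theta',l')$ with common level $\lambda'$: if $l'_i < \lambda$ then either $\lambda' \le l'_i < \lambda$, or $l'_i < \lambda'$ forcing $\theta'_i = 0$. In the first case $\{l'_i < \lambda\}$ may fail to be a sublevel set of the primed configuration, so I would instead directly compare $\sum_{i : l'_i < \lambda}\theta_i \le \sum_{i : l'_i < \lambda}\theta'_i$, which follows because $\sum_i\theta_i = \sum_i\theta'_i = 1$ and on the complement $\{ l'_i \ge \lambda\}$ balancedness of $(\theta',l')$ forces $\theta'_i = 0$ whenever $l'_i > \lambda'$...

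I expect the \textbf{main obstacle} to be exactly this bookkeeping: pinning down that $\sum_{i : l'_i < \lambda} \theta_i \le \sum_{i : l'_i < \lambda}\theta'_i$, which should reduce to the elementary fact that if $\theta'$ puts all its mass on $\{l'_i \le \lambda'\}$ and $\lambda' \ge $ (something), then it puts at least as much mass on any up-set of the primed order as any other unit-mass vector does — combined with $\{l' < \lambda\}$ being a down-set. I would carry out the steps in this order: (1) dispose of $S = \emptyset$; (2) define $\lambda$, the load level of $(\theta,l)$, and show $S \cap \mathrm{supp}(\theta) \subseteq \{l' < \lambda\}$, giving $\sum_S \theta_i \le \sum_{l' < \lambda}\theta_i$; (3) show $\sum_{l'<\lambda}\theta_i \le \sum_{l'<\lambda}\theta'_i$ using that $(\theta',l')$ is balanced with level $\lambda'$ and that complements are handled by the unit-sum constraint (splitting on whether $\lambda' < \lambda$ or $\lambda' \ge \lambda$); (4) show $\sum_{l'<\lambda}\theta'_i \le \sum_S \theta'_i$, i.e.\ $\{l'_i < \lambda\} \subseteq S$ up to a $\theta'$-null set, because $l'_i < \lambda$ and $l_i \le l'_i$ would put $i$ in $\mathrm{supp}(\theta)$-range contradicting $l_i = \lambda$... (5) chain the inequalities. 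Step (4) is where I'd double-check whether the correct threshold is $\lambda$ or $\min(\lambda,\lambda')$, adjusting the choice of cut accordingly.
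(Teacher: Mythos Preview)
Your final plan (steps (1)--(5)) is correct and is essentially the paper's proof: both rest on the observation that balancedness forces $\mathrm{supp}(\theta)\subseteq\{i:l_i=\min_j l_j\}$ (your $\lambda$ is the paper's $L:=\min_i l_i$, and likewise $\lambda'=L'$), followed by a case split on $L>L'$ versus $L\le L'$ (exactly your split in step (3)). The paper works directly with the sets $A=\{i:l_i=L\}$ and $A'=\{i:l'_i=L'\}$ rather than your intermediate threshold set $\{l'<\lambda\}$, which avoids the false starts; also, your step (4) containment $\{l'_i<\lambda\}\subseteq S$ holds exactly (since $l_i\ge L=\lambda>l'_i$ forces $i\in S$), so no ``up to a $\theta'$-null set'' caveat is needed.
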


\begin{lem}
  \label{lem:C-compact-Cbar-compact}
  Assume $K$ is a compact subset of $\mH_*$ and $\Xi$ is a compact metric space. Define $\bar{K} \subset \mbH_*(\Xi)$ as 
  \begin{equation*}
    \bar{K} := \{ [\bH, i] \in \mbH_*(\Xi): [H, i] \in K\}.
  \end{equation*}
Then, $\bar{K}$ is compact in $\mbH_*(\Xi)$.
\end{lem}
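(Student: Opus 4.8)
The plan is to show that $\bar K$ is sequentially compact, which suffices since $\mbH_*(\Xi)$ is a metric space. So I would take an arbitrary sequence $\{[\bH_n, i_n]\}_{n \geq 1}$ in $\bar K$ and produce a convergent subsequence with limit in $\bar K$. Writing $[H_n, i_n] := \Pi([\bH_n, i_n])$, by hypothesis $[H_n, i_n] \in K$, and since $K$ is compact (hence sequentially compact) in $\mH_*$, we may pass to a subsequence along which $[H_n, i_n] \to [H_\infty, i_\infty]$ for some $[H_\infty, i_\infty] \in K$. The task is then to extract a further subsequence along which the marks also converge, yielding a limit $[\bH_\infty, i_\infty]$ whose underlying hypergraph is (isomorphic to) $H_\infty$, so that $\Pi([\bH_\infty, i_\infty]) = [H_\infty, i_\infty] \in K$ and hence $[\bH_\infty, i_\infty] \in \bar K$.

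The key step is a diagonal argument over increasing depths. Since $[H_n, i_n] \to [H_\infty, i_\infty]$ in $d_{\mH_*}$, for each fixed depth $d$ the truncations $(H_n, i_n)_d$ are eventually isomorphic to $(H_\infty, i_\infty)_d$ (as unmarked rooted hypergraphs). Because $\deg_{H_\infty}$ and edge sizes are finite, each truncation $(H_\infty, i_\infty)_d$ has only finitely many edge--vertex pairs within depth $d$; fixing representatives and identifying $(H_n,i_n)_d$ with $(H_\infty, i_\infty)_d$ via the isomorphism, the restriction of the mark function $\xi_{\bH_n}$ to these finitely many pairs is a point in $\Xi^{(\text{finite set})}$, a finite power of the compact metric space $\Xi$, hence compact. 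So for $d = 1$ we can pass to a subsequence along which the depth-$1$ marks converge in $\Xi$; within that, pass to a further subsequence for depth $2$, and so on. Taking the diagonal subsequence, the marks converge on every edge--vertex pair at every finite depth. This defines a mark function $\xi_{\bH_\infty}$ on $H_\infty$ (on each edge--vertex pair, as the limit of the corresponding marks), giving a marked rooted hypergraph $(\bH_\infty, i_\infty)$. One then checks that along the diagonal subsequence $\bar d_*([\bH_n, i_n], [\bH_\infty, i_\infty]) \to 0$: for any $m$, for $n$ large enough the level-$m$ isomorphism exists and the $\Xi$-distance between corresponding marks up to level $m$ is below $1/m$, forcing $m^* \to \infty$ and hence the metric distance to $0$. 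Since $\Pi([\bH_\infty, i_\infty]) = [H_\infty, i_\infty] \in K$, we get $[\bH_\infty, i_\infty] \in \bar K$.

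The main obstacle is bookkeeping: the level-$m$ isomorphisms witnessing $(H_n, i_n)_m \equiv (H_\infty, i_\infty)_m$ are only guaranteed to exist for $n$ large (depending on $m$) and need not be consistent across different $m$ or different $n$, so one must be careful to phrase the mark-convergence statement intrinsically (in terms of the limit object $(H_\infty, i_\infty)_m$ with its finitely many edge--vertex pairs) rather than by comparing marks of $\bH_n$ and $\bH_{n'}$ directly. A clean way around this is to first fix, for each $m$, a single isomorphism class representative and note that any two level-$m$ isomorphisms from $(H_n, i_n)_m$ to it differ by an automorphism of the finite rooted truncation, of which there are finitely many; one can then select the isomorphisms coherently (or simply work up to these finitely many automorphisms, which does not affect compactness of the resulting space of mark configurations). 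Once this is set up, everything else is the routine diagonal extraction and a verification that the constructed limit lies in $\mbH_*(\Xi)$, i.e.\ that $\xi_{\bH_\infty}$ is a well-defined $\Xi$-valued function on $\evpair(H_\infty)$.
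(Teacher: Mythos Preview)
Your proposal is correct and follows essentially the same route as the paper: sequential compactness via projecting to $K$, extracting a convergent unmarked subsequence, then a diagonal argument over increasing depths using compactness of $\Xi$ to make the marks converge at every level, yielding a limit in $\bar K$. The isomorphism-coherence bookkeeping you flag is glossed over in the paper's (terser) proof, but as you note it is harmless since each finite truncation has only finitely many automorphisms.
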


\begin{prop}
  \label{prop:law-convergence}
  Let $\{H_n\}_{n \geq 1}$ be a sequence of finite hypergraphs with local weak limit $\mu$. Then, if $\mL_{H_n}$ denotes the distribution of balanced load on $H_n$ with a vertex chosen uniformly at random, and $\mL$ is the law of $\partial \Theta$ corresponding to the balanced allocation $\Theta$ on $\mu$, we have 
  \begin{equation*}
    \mL_{H_{n}} \Rightarrow \mL.
  \end{equation*}
\end{prop}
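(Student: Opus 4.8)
The plan is to build, for each finite hypergraph $H_n$, a marked hypergraph $\bH_n$ on $[0,1]$ whose mark at $(e,i)$ records $\theta_n(e,i)$ for a fixed balanced allocation $\theta_n$ on $H_n$ (which exists by Proposition~\ref{prop:total-load-unique-finite-hypergraphs}), and then to track the associated measure $\bmu_n := u_{\bH_n} \in \mP(\mbH_*([0,1]))$. The key point is that $\bmu_n$ is a balanced measure in the sense of Definition~\ref{def:bmu-balancedness}: indeed, for a uniformly chosen vertex, the conservation constraint $\sum_{j\in e}\xi_{\bH_n}(e,j)=1$ and the balancedness implication $\partial\xi_{\bH_n}(j)>\partial\xi_{\bH_n}(j')\Rightarrow\xi_{\bH_n}(e,j)=0$ hold at every edge-vertex pair, hence $\vbmu_n$-almost everywhere. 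Since marks live in the compact space $[0,1]$, the family $\{\bmu_n\}$ is tight: this follows because the projection $\Pi_*\bmu_n = u_{H_n}$ converges weakly to $\mu$ (hence is tight), any compact $K\subseteq\mH_*$ lifts to a compact $\bar K\subseteq\mbH_*([0,1])$ by Lemma~\ref{lem:C-compact-Cbar-compact}, and $\bmu_n(\bar K^c)=u_{H_n}(K^c)$. So every subsequence has a further subsequence along which $\bmu_n$ converges weakly to some $\bmu\in\mP(\mbH_*([0,1]))$.

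Along such a convergent subsequence, Lemma~\ref{lem:weak-limit-of-balanced-network-is-balanced} gives that the limit $\bmu$ is balanced, and continuity of $\Pi$ forces $\Pi_*\bmu = \mu$. The next step is to argue that $\bmu$ is unimodular (it is the local weak limit of the finite marked hypergraphs $\bH_n$, so unimodularity follows from the discussion at the end of Section~\ref{sec:mbh-mbh}). Reading off the mark function defines an allocation $\Theta(H,e,i) := \xi_{\bH}(e,i)$ which, because $\bmu$ is balanced, is a balanced Borel allocation with respect to $\mu$; here one uses Proposition~\ref{prop:everyting-shows-at-the-root} to upgrade the $\vbmu$-a.e.\ statements into the pointwise-on-equivalence-class form required by Definition~\ref{def:borel-balanced-allocation}. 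By part~4 of Theorem~\ref{thm:balanced--properties}, $\partial\Theta = \partial\Theta_0$ $\mu$-almost surely for the canonical balanced allocation $\Theta_0$, so the pushforward of $\bmu$ under $[\bH,i]\mapsto \partial\xi_{\bH}(i)$ equals $\mL$. On the other hand, this same pushforward is the weak limit (along the subsequence) of the pushforward of $\bmu_n$ under the analogous map, which is exactly $\mL_{H_n}$ — provided the map $[\bH,i]\mapsto\partial\xi_{\bH}(i)$ is sufficiently well-behaved for passing to the limit. Since every subsequence has a further subsequence along which $\mL_{H_n}\Rightarrow\mL$, the whole sequence converges, proving the proposition.

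The main obstacle I anticipate is the last continuity/limit-passing step: the functional $[\bH,i]\mapsto\partial\xi_{\bH}(i) = \sum_{e\ni i}\xi_{\bH}(e,i)$ is a sum over all edges at the root, and this is \emph{not} a bounded continuous function on $\mbH_*([0,1])$ when degrees are unbounded (the sum can involve arbitrarily many terms, and a vertex's degree is only lower-semicontinuous in the local topology). To handle this I would truncate: for fixed $\Delta$, the functional $[\bH,i]\mapsto \sum_{e\ni i,\, e\in E(\bH^\Delta)}\xi_{\bH}(e,i)$ is bounded and continuous (it depends only on a bounded neighborhood), so weak convergence transfers cleanly; then one controls the tail $\sum_{e\ni i,\,e\notin E(\bH^\Delta)}\xi_{\bH}(e,i)$ using the uniform integrability coming from $\deg(\mu)<\infty$ together with the convergence $\int\deg_H(i)\,du_{H_n}\to\deg(\mu)$ (finiteness of the expected degree in the limit, plus the conservation identity $\int\partial\xi_{\bH_n}\,du_{H_n}=\int(1/|e|)\,d\vec{u}_{H_n}\le\deg(u_{H_n})$, bounds the contribution of high-degree vertices uniformly in $n$). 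Care is also needed to ensure the local weak limit hypothesis $u_{H_n}\Rightarrow\mu$ is genuinely used to get $\int\deg\,du_{H_n}\to\int\deg\,d\mu$; this requires a uniform-integrability argument on the degree, which in the general unbounded setting is the technically delicate part and may need an additional hypothesis or a careful Skorokhod-coupling argument.
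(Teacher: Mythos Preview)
Your outline matches the paper through tightness of $\{\bmu_n\}$, extraction of a subsequential limit $\bmu$, unimodularity of $\bmu$, and balancedness of $\bmu$ via Lemma~\ref{lem:weak-limit-of-balanced-network-is-balanced}. The divergence is at the identification step, and there is a genuine gap there. Writing ``$\Theta(H,e,i):=\xi_{\bH}(e,i)$'' does not define a Borel allocation on $\mH_{**}$: the mark $\xi_{\bH}(e,i)$ is a function of $[\bH,e,i]\in\mbH_{**}([0,1])$, not of the unmarked class $[H,e,i]$. Different marked hypergraphs projecting to the same $[H,e,i]$ can carry different marks (already on a finite triangle there are several balanced $\theta$'s with the same load vector but different edge values), so no map $\mH_{**}\to[0,1]$ arises, and part~4 of Theorem~\ref{thm:balanced--properties} cannot be invoked. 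The paper instead compares the two loads directly on $\mbH_*([0,1])$: it expands $\int(\partial\xi_{\bH}(i)-\partial\Theta_0([H,i]))^+\,d\bmu$ via the definition of $\vbmu$ and unimodularity of $\bmu$ into an integral of $\frac{1}{|e|}\sum_{j\in e}(\xi_{\bH}(e,j)-\Theta_0([H,e,j]))\oneu{\partial\xi_{\bH}(j)>\partial\Theta_0([H,j])}$, and then applies the elementary Lemma~\ref{lem:theta-theta'-balanced-sum-theta-theta'-loads-non-positive} (for two balanced weight/load pairs on a single edge, $\sum_i(\theta_i-\theta'_i)\oneu{l_i>l'_i}\le0$) to conclude the integrand is nonpositive; by symmetry, $\partial\xi_{\bH}(i)=\partial\Theta_0([H,i])$ $\bmu$-a.s. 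This is the uniqueness mechanism behind part~4, but executed in the marked space where one of the two ``allocations'' is not a function of the unmarked structure.

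Separately, the obstacle you flag is not one: the map $[\bH,i]\mapsto\partial\xi_{\bH}(i)$ is continuous on $\mbH_*([0,1])$. In the metric $\bar d_*$, once $\bar d_*([\bH,i],[\bH',i'])<1/(1+m)$ with $m\ge 1$ one has $(H,i)\equiv_1(H',i')$, which forces $\deg_H(i)=\deg_{H'}(i')$ exactly and puts the edges at the two roots in bijection with marks within $1/m$; the root degree is thus locally constant, not merely lower semicontinuous, and $|\partial\xi_{\bH}(i)-\partial\xi_{\bH'}(i')|\le\deg_H(i)/m$. The continuous mapping theorem then yields $(\partial\xi)_*\bmu_n\Rightarrow(\partial\xi)_*\bmu$ along the subsequence directly, with no truncation or uniform-integrability argument required.
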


\begin{proof}
  Define $\theta_n$ to be a balanced allocation on $H_n$ and  $\bH_n$ to be the marked hypergraph obtained by adding $\theta_n$ to $H_n$ as edge marks, i.e. $\xi_{\bH_n}(e, i) = \theta_n(e, i)$ for $(e, i) \in \evpair(H)$.
Note that $H_n$ is a finite hypergraph; hence, $\theta_n$ is well defined and $\partial \theta_n(i)$ is unique for $ i \in V(H_n)$. 
Now, define $\bmu_n \in \mP(\mbH_*([0,1]))$ to be the distribution of $[\bH_n, v]$ where $v \in V(H_n)$ is chosen uniformly at random. We claim that $\{\bmu_n\}_{n=1}^\infty$ is a tight sequence, which means that it has a convergent subsequence. Since $\mu_n$ converges weakly to $\mu$, Prokhorov's theorem implies that $\mu_n$ is tight in $\mP(\mH_*)$ (see, for instance, \cite[Theorems 5.1 and 5.2]{billingsley2013convergence}). 
 Consequently, for $\epsilon> 0$, there is a compact set $K \subset \mH_*$ such that $\mu_n(K^c) \leq \epsilon$ for all $n$. Define
  \begin{equation*}
    \bar{K} := \{ [\bH, i] \in \mbH_*([0,1]): [H, i] \in K\}.
  \end{equation*}
From Lemma~\ref{lem:C-compact-Cbar-compact}, $\bar{K}$ is compact in $\mbH_*([0,1])$. 
It is easy to see that $\bmu_n(\bar{K}^c) = \mu_n(K^c)$ which means that $\bmu_n$ is a tight sequence. 
Hence, it has a subsequence converging weakly to some $\bmu \in \mP( \mbH_*)$. In order to simplify the notation, assume that this subsequence is the whole sequence.

With the projection map $\Pi$ defined in \eqref{eq:projection-mbH*-mH*}, define $\nu_n$ and $\nu$ to be the pushforward measures on $\mH_*$ corresponding to $\bmu_n$ and $\bmu$, respectively. As $\Pi$ removes marks, we have  $\nu_n = \mu_n$ and $\nu = \mu$. 
Now, note that $\bmu_n \Rightarrow \bmu$ implies $(\Pi)_* \bmu_n \Rightarrow (\Pi)_* \bmu$ which means that $\mu_n \Rightarrow (\Pi)_* \bmu$. On the other hand, we know $\mu_n \Rightarrow \mu$. Thereby, $(\Pi)_* \bmu = \mu$.

Note that, with the above construction, $\mL_{H_n}$ is the pushforward of $\bar{\mu}_n$ under the mapping $[\bH, i] \mapsto \partial \xi_{\bH}(i)$ defined in \eqref{eq:partial-zeta-function-def}. Therefore, as $\bar{\mu}_n \Rightarrow \bar{\mu}$, $\mL_{H_n}$ converges weakly to the law of $\partial \xi_{\bH}(i)$ under $\bar{\mu}$. Consequently, to show that $\mL_{H_n} \Rightarrow \mL$, it suffices to show that 


\begin{equation}
  \label{eq:partial-zeta-i=partial-Theta0-i}
  \partial \xi_{\bH}(i) = \partial \Theta_0([H, i]) \qquad \bmu\text{--a.s.}.
\end{equation}
From Proposition~\ref{prop:uimoularity-of-networks-weak-limit} in Appendix~\ref{sec:everything-shows-at} we know that $\bmu$ is unimodular.  Hence, we have  
\begin{equation*}
  \begin{split}
  &\int \left ( \partial \xi_{\bH}(i) - \partial \Theta_0([H, i]) \right )^+ d \bmu(\bH, i) \\
&\qquad= \int \left ( \partial_{\bH} \xi(i) - \partial \Theta_0([H, i]) \right ) \oneu{\partial \xi_{\bH}(i) > \partial \Theta_0([H, i])} d \bmu([\bH, i]) \\
  &\qquad= \int \left ( \xi_{\bH}(e, i) - \Theta_0([H,e, i]) \right ) \oneu{\partial \xi_{\bH}(i) > \partial \Theta_0([H, i])} d \vbmu([\bH,e, i]) \\
  &\qquad= \int \frac{1}{|e|} \sum_{j \in e} \left ( \xi_{\bH}(e, j) - \Theta_0([H,e,j]) \right ) \oneu{\partial \xi_{\bH}(j) > \partial \Theta_0([H, j])} d \vbmu([\bH, e, i),
  \end{split}
\end{equation*}
where the last equality follows from unimodularity of $\bmu$. 
From Lemma~\ref{lem:weak-limit-of-balanced-network-is-balanced}, $\bmu$ is balanced in the sense of Definition~\ref{def:bmu-balancedness} above.
Also, Lemma~\ref{lem:theta-theta'-balanced-sum-theta-theta'-loads-non-positive} together with the balancedness of $\bar{\mu}$ and $\Theta_0$, implies that the integrand is non--positive almost everywhere, which means that  $\partial \xi_{\bH}(i) \leq \partial \Theta_0([H, i])$ $\bmu$-almost surely. It could be proved in a similar fashion that $\partial \Theta_0(H, i) \leq \partial \xi_{\bH}(i)$ $\bmu$-almost surely. This proves \eqref{eq:partial-zeta-i=partial-Theta0-i}.

So far, we have shown that $\mL_{H_n}$ has a subsequence $\mL_{H_{n_k}}$ such that $ \mL_{H_{n_k}} \Rightarrow \mL$. This argument could be repeated for any subsequence of $H_n$, i.e. any subsequence $H_{n_k}$ has a further subsequence $H_{n_{k_l}}$ such that $\mL_{H_{n_{k_l}}} \Rightarrow \mL$. This implies that $\mL_{H_n} \Rightarrow \mL$ (see, for instance, Theorem~2.2 in \cite{billingsley1971weak}).
\end{proof}

\begin{proof}[Proof of Lemma~\ref{lem:weak-limit-of-balanced-network-is-balanced}]
First, we show that $\bmu$ satisfies \eqref{eq:bmu-balanced-load-condition}. Define 
\begin{equation*}
  A_{**} := \{ [\bH, e, i] \in \mbH_{**}([0,1]): \forall j,j' \in e, \partial \xi_{\bH}(j) > \partial \xi_{\bH}(j') \Rightarrow \xi_{\bH}(e, j) = 0 \},
\end{equation*}
and
\begin{equation*}
  A_* := \{ [\bH, i] \in \mbH_*([0,1]): \forall e \ni i, j,j' \in e, \partial \xi_{\bH}(j) > \partial \xi_{\bH}(j') \Rightarrow \xi_{\bH}(e, j) = 0 \}.
\end{equation*}
We claim balancedness of $\bmu \in \mP(\mH_*([0,1]))$ is equivalent to $\bmu(A_*) = 1$. By definition, $\bmu$ being balanced means $\bmu(A_{**}) = \bmu(\mbH_{**}([0,1]))$. This is equivalent to $\int \oneu{A_{**}} d \vbmu = \int 1 d \vbmu$, or $\int \partial \oneu{A_{**}} d \bmu = \int \deg_H(i) d \bmu$. But we have  $\partial \oneu{A_{**}} \leq \deg_H(i)$ pointwise. Therefore, $\partial \oneu{A_{**}}(H, i, \xi) = \deg_H(i)$, $\bmu$--almost surely. This is equivalent to $[\bH, e, i] \in A_{**}$ for all $e \ni i$, $\bmu$--almost surely, or $[\bH, i] \in A_*$ $\bmu$--almost surely which is in turn equivalent to $\bmu(A_*) = 1$.

Now, we claim that $A_*$ is closed in $\mbH_*([0,1])$ (with respect to the topology induced by the distance $\bar{d}_*$ defined in Section~\ref{sec:mbh-mbh}). Equivalently, we show that $A_*^c$ is open. Take an arbitrary $[\bH, i] \in A_*^c$. Since $[\bH, i] \notin A_*$, there exists $e \ni i$ and $j,j' \in e$ such that $\partial \xi_{\bH}(j) > \partial \xi_{\bH}(j')$ but $\xi_{\bH}(e, j) > 0$. If  $[\bH', i'] \in \mbH_*([0,1])$ is such that $d := \bar{d}_*([\bH, i], [\bH', i'])$ satisfies $d < 1/4$, then $[H, i] \equiv_3 [H', i']$ (recall that $H$ and $H'$ are the underlying unmarked hypergraphs associated to $\bH$ and $\bH'$, respectively). Moreover, if we define 
\begin{equation*}
  \epsilon = \min \left \{ \frac{\partial \xi_{\bH}(j) - \partial \xi_{\bH}(j')}{3} , \frac{\xi_{\bH}(e, j)}{2} \right \},
\end{equation*}
and
\begin{equation*}
\Delta = \max_{k: d_H(i, k) \leq 3} \deg_H(k),
\end{equation*}
then if
\begin{equation*}
  d < \frac{1}{1+ \frac{\Delta}{\epsilon}},
\end{equation*}
one can check that $\partial \xi_{\bH'}(\phi(j)) > \partial \xi_{\bH'}(\phi(j'))$ while $\xi_{\bH'}(\phi(e), \phi(j)) > 0$, where $\phi$ is the local isomorphism between $H$ and $H'$ following from $[H, i] \equiv_3 [H', i']$. This in particular means that $[\bH', i'] \notin A_*$. Consequently, the ball with radius $\min \{ 1/4, 1/ (1+(\Delta / \epsilon)) \}$ around $[\bH, i]$ is in $A_*^c$. This means that $A_*$ is closed.

Notice that since $\bmu_n$ is balanced, $\bmu_n(A_*) = 1$. On the other hand, as  $\bmu_n \Rightarrow \bmu$ and $A_*$ is closed, we have
\begin{equation*}
  \bmu(A_*) \geq \limsup \bmu_n(A_*) = 1,
\end{equation*}
which means that $\bmu(A_*) = 1$ and $\bmu$ is balanced.

Now we turn to showing that $\bmu$ also satisfies \eqref{eq:bmu-balanced-sum-1}. Similar to the above approach, if we define
\begin{equation*}
B_{**} := \{ [\bH, e, i] \in \mH_{**}([0,1]): \sum_{j \in e} \xi_{\bH}(e, j) = 1 \},
\end{equation*}
and 
\begin{equation*}
  B_* := \{ [\bH, i] \in \mH_*([0,1]): [\bH, e, i] \in B_{**} \, \forall e \ni i \},
\end{equation*}
we can show that for $\vbmu$--almost every $[\bH, e, i] \in \mH_{**}([0,1])$ we have  $\sum_{j \in e} \xi_{\bH}(e, j) =1$. Hence, all the conditions in Definition~\ref{def:bmu-balancedness} are satisfied and $\bmu$ is balanced. 
\end{proof}

\begin{proof}[Proof of Lemma~\ref{lem:theta-theta'-balanced-sum-theta-theta'-loads-non-positive}]
Define $L:= \min_i l_i$ and  $A := \{ 1 \leq i \leq n: l_i = L\}$. Likewise, let $L':= \min_i l'_i$ and  $A' := \{ 1 \leq i \leq n: l'_i = L'\}$. The given condition implies that $\theta_j = 0$ for $j \notin A$ and similarly $\theta'_j = 0$ for $j \notin A'$.

First assume that $L > L'$. In this case, we have
\begin{equation*}
  \begin{split}
    \sum_i ( \theta_i - \theta'_i) \oneu{l_i > l'_i} &\stackrel{(a)}{=} \sum_{i \in A \cup A'} (\theta_i - \theta'_i) \oneu{l_i > l'_i} \\
    &= \sum_{i \in A \setminus A'} ( \theta_i - \theta'_i) \oneu{l_i > l'_i} + \sum_{i \in A'} (\theta_i - \theta'_i) \oneu{l_i > l'_i} \\
    &\stackrel{(b)}{=} \sum_{i \in A \setminus A'} \theta_i \oneu{L > l'_i} + \sum_{i \in A'} (\theta_i - \theta'_i) \oneu{l_i > l'_i} \\
    &\stackrel{(c)}{=} \sum_{i \in A \setminus A'} \theta_i \oneu{L > l'_i} + \sum_{i \in A'} (\theta_i - \theta'_i) \\
    &\leq \sum_{i \in A \setminus A'} \theta_i + \sum_{i \in A'} \theta_i - \theta'_i \\
    &= 1 - 1 = 0,
  \end{split}
\end{equation*}
where $(a)$ holds since $\theta$ and $\theta'$ are zero outside $A \cup A'$, $(b)$ uses $\theta'_i = 0$ for $i \notin A'$, and $(c)$ uses the fact that for $i \in A'$, $l_i \geq L > L' = l'_i$.

Now, for the case $L \leq L'$ we have 
\begin{equation*}
  \begin{split}
    \sum_{i=1}^n (\theta_i - \theta'_i) \oneu{l_i > l'_i} &= \sum_{i \in A} (\theta_i - \theta'_i) \oneu{l_i > l'_i} + \sum_{i \in A' \setminus A} (\theta_i - \theta'_i) \oneu{l_i > l'_i} \\
    &\stackrel{(a)}{=} \sum_{i \in A' \setminus A} (\theta_i - \theta'_i) \oneu{l_i > l'_i} \\
    &\stackrel{(b)}{=} \sum_{i \in A' \setminus A} - \theta'_i \oneu{l_i > l'_i} \\
    & \leq 0,
  \end{split}
\end{equation*}
where $(a)$ follows from the fact that for $i \in A$, $l_i = L \leq L' \leq l'_i$ and $(b)$ employs $\theta_i = 0$ for $i \notin A$. This completes the proof.
\end{proof}


\begin{proof}[Proof of Lemma~\ref{lem:C-compact-Cbar-compact}]
We take a sequence $[\bH_n, i_n]$ in $\bar{K}$ and try to find a converging subsequence. Notice that $[H_n, i_n]$ is a sequence in $K$. Therefore, it  has  a convergent subsequence. Without loss of generality, assume this subsequence is the whole sequence, converging to some $[H, i] \in \mH_*$. By reducing to a further subsequence, we may assume that $d_{\mH_*}([H_n, i_n], [H, i]) \leq 1/(n+1)$. This means that,  for all $m\geq n$, we have  $(H_m, i_m) \equiv_n (H,i)$. Since $(H,i)$ is locally finite,  there are finitely many marks up to level  $n$ in $(H, i)$ which all have values in the compact space $\Xi$. Hence, there is a subsequence where the marks in the first $n$ levels in $\bH_n$ are convergent. With this, we may associate marks to $(H, i)_n$ using the limiting values. Since this is true for all $n$, via a diagonalization argument we may construct a marked rooted hypergraph $(\bH, i)$ together with a subsequence $(\bH_{m_l}, i_{m_l})$ such that the underlying unmarked hypergraph is identical to $H$ and also, for any integer $k$, the marks up to depth $k$ in $(\bH_{m_l}, i_{m_l})$ converge to those in $(\bH, i)$ as $m\rightarrow \infty$. This means that $[\bH_{m_l}, i_{m_l}] \rightarrow [\bH, i]$ and completes the proof. 
\end{proof}


\section{Response Functions}
\label{sec:response-functions}

Let $H$ be a fixed hypergraph (not necessary bounded) and $i \in V(H)$ be a vertex in $H$. Fix $\epsilon > 0$. The response function $\rf_{(H, i)}^\epsilon: \reals \rightarrow \reals$ is defined as follows: $\rf_{(H, i)}^\epsilon(t)$
is the total load at node $i$ corresponding to the canonical $\epsilon$--balanced allocation with respect to an external load with value $t$ at node $i$ (recall the definition of canonical $\epsilon$--balanced allocations from Section~\ref{sec:epsil-balanc-alloc}). 
More precisely, given $t \in \reals$, let $b_{t,i}: V(H) \rightarrow \reals$ be the baseload function such that $b_{t,i}(i) = t$ and $b_{t,i}(j) = 0$ for $j \neq i$. Moreover, let $\theta_\epsilon^{b_{t,i}}$ be the canonical $\epsilon$--balanced allocation on $H$ with respect to the baseload $b_{t,i}$, as was defined in Section~\ref{sec:epsil-balanc-alloc}. We then define
\begin{equation*}
  \rf_{(H, i)}^\epsilon(t) := \partial_{b_{t,i}} \theta_\epsilon^{b_{t,i}}(i).
\end{equation*}

It turns out that this function has the following properties:

\begin{prop}
  \label{prop:response-non-expansion}
  Given a  vertex rooted hypergraph $(H,i)$, for any $\epsilon>0$ and $x < y$, we have 
  \begin{equation}
    \label{eq:f-non-expansion}
    0 \leq \rf^\epsilon_{(H, i)}(y) - \rf^\epsilon_{(H, i)}(x) \leq y - x.
  \end{equation}
Also, 
  \begin{equation}
    \label{eq:f-deg}
    0 \leq \rf^\epsilon_{(H, i)}(x) - x \leq \deg_H(i).
  \end{equation}
\end{prop}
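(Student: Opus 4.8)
The plan is to obtain both displays from the monotonicity and contraction results already established for canonical $\epsilon$--balanced allocations. For \eqref{eq:f-non-expansion}, fix $x < y$ and consider the two baseload functions $b_{x,i}$ and $b_{y,i}$ on the same hypergraph $H$. Since $b_{x,i}(j) \le b_{y,i}(j)$ for every $j \in V(H)$ (they agree off $i$, and $x < y$ at $i$), Proposition~\ref{prop:monotonicity-canonical-eba}, applied with $H = H'$, gives $\partial_{b_{x,i}}\theta_\epsilon^{b_{x,i}}(i) \le \partial_{b_{y,i}}\theta_\epsilon^{b_{y,i}}(i)$, i.e.\ $\rf^\epsilon_{(H,i)}(x) \le \rf^\epsilon_{(H,i)}(y)$, which is the lower bound. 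For the upper bound $\rf^\epsilon_{(H,i)}(y) - \rf^\epsilon_{(H,i)}(x) \le y - x$, I would invoke the nonexpansivity result Proposition~\ref{prop:non-expansivitiy}: with $b = b_{y,i}$ and $b' = b_{x,i}$ we get
\begin{equation*}
  \bigl|\rf^\epsilon_{(H,i)}(y) - \rf^\epsilon_{(H,i)}(x)\bigr| \le \norm{\pbt_\epsilon - \pbpt'_\epsilon}_{l^1(V(H))} \le \norm{b_{y,i} - b_{x,i}}_{l^1(V(H))} = |y - x|,
\end{equation*}
since $b_{y,i}$ and $b_{x,i}$ differ only at $i$, where the difference is $y - x$. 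Combining with the lower bound gives \eqref{eq:f-non-expansion}.

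For \eqref{eq:f-deg}, recall that by construction $\rf^\epsilon_{(H,i)}(x) = \partial_{b_{x,i}}\theta_\epsilon^{b_{x,i}}(i) = x + \sum_{e \ni i}\theta_\epsilon^{b_{x,i}}(e,i)$, because the baseload contributes exactly $b_{x,i}(i) = x$ at node $i$ and $\partial\theta(i) = \sum_{e \ni i}\theta(e,i)$. Hence $\rf^\epsilon_{(H,i)}(x) - x = \sum_{e \ni i}\theta_\epsilon^{b_{x,i}}(e,i)$. Each summand $\theta_\epsilon^{b_{x,i}}(e,i)$ lies in $[0,1]$ by the definition of an allocation (Definition~\ref{def:allocation}), so this sum is at least $0$ and at most the number of edges containing $i$, which is $\deg_H(i)$. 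This proves \eqref{eq:f-deg} directly.

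The only mild subtlety is that Propositions~\ref{prop:monotonicity-canonical-eba} and \ref{prop:non-expansivitiy} are stated for \emph{canonical} $\epsilon$--balanced allocations, and $\rf^\epsilon_{(H,i)}$ was defined precisely in terms of canonical allocations, so there is no gap there; I would just make this matching explicit in one sentence. I expect no real obstacle: the heavy lifting was done in Section~\ref{sec:epsilon-balancing}, and this proposition is essentially a bookkeeping corollary of monotonicity, nonexpansivity, and the normalization $\sum_{j\in e}\theta(e,j)=1$. If one preferred not to cite Proposition~\ref{prop:non-expansivitiy}, an alternative for the upper bound in \eqref{eq:f-non-expansion} would be a direct $\epsilon$--balanced fixed-point argument bounding the perturbation propagating out from $i$ via Lemma~\ref{lem:fx-fx'}, but using the already-proven nonexpansivity is cleaner and shorter.
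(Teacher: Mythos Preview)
Your proof is correct. The lower bound in \eqref{eq:f-non-expansion} and all of \eqref{eq:f-deg} match the paper exactly. For the upper bound in \eqref{eq:f-non-expansion}, however, you take a different route from the paper: you invoke the $l^1$--nonexpansivity of Proposition~\ref{prop:non-expansivitiy} and read off the single coordinate at $i$, whereas the paper uses a baseload--shift trick. Specifically, the paper introduces $d_{x,i}$ with $d_{x,i}(i)=0$, $d_{x,i}(j)=-x$ for $j\neq i$ (and similarly $d_{y,i}$), notes that $b_{x,i} = d_{x,i} + x$ is a constant shift so the $\epsilon$--balanced allocation on $H^\Delta$ is unchanged, observes that $d_{x,i}\geq d_{y,i}$ pointwise, and applies the monotonicity Proposition~\ref{prop:bounded-partial-b-increasing} on $H^\Delta$ before sending $\Delta\to\infty$. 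Your argument is shorter and perfectly valid given that Proposition~\ref{prop:non-expansivitiy} is already proved; the paper's argument is slightly more self-contained in that it only relies on monotonicity and the shift-invariance of the $\epsilon$--balanced condition, not on the full $l^1$ contraction.
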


\begin{proof}
  Let 
  $\theta_\epsilon^{b_{x,i}}$
  be the canonical $\epsilon$--balanced allocation with respect to the baseload $b_{x,i}$, as defined above.
  Then, by definition, $\rf^\epsilon_{(H, i)}(x) = \partial_{b_{x,i}}\theta_\epsilon^{b_{x,i}}(i)$.
Let $b_{y,i}$ and $\theta_\epsilon^{b_{y,i}}$ be defined similarly, with $x$ being replaced by $y$.
As $x < y$, Proposition~\ref{prop:monotonicity-canonical-eba} implies that $\partial_{b_{x,i}}\theta_\epsilon^{b_{x,i}}(i) \leq \partial_{b_{y,i}}\theta_\epsilon^{b_{y,i}}(i)$, which means  $\rf^\epsilon_{(H, i)}(x) \leq \rf^\epsilon_{(H, i)}(y)$. 

In order to show that $\rf^\epsilon_{(H, i)}(y) - \rf^\epsilon_{(H, i)}(x) \leq y - x$, let $d_{x,i}$ be the baseload function such that $d_{x,i}(i) = 0$ and $d_{x,i}(j) = -x$ for $j\in V(H)$, $j \neq i$. Moreover, let $d_{y,i}$ be the baseload function such that $d_{y,i}(i) = 0$ and $d_{y,i}(j) = -y$ for $j \in V(H)$, $j\neq i$. 
Let 
$\theta_\epsilon^{b_{x,i},\Delta}$
and 
$\theta_\epsilon^{b_{y,i},\Delta}$
be the canonical $\epsilon$--balanced allocations on $H^\Delta$ with respect to $b_{x,i}$ and $b_{y,i}$, respectively. It is easy to see that, on $H^\Delta$, $\theta_\epsilon^{b_{x,i},\Delta}$ and $\theta_\epsilon^{b_{y,i},\Delta}$ are $\epsilon$--balanced with respect to $d_{x,i}$ and $d_{y,i}$ as well, respectively. This is because $b_{x,i}(j) = d_{x,i}(j) + x$ and $b_{y,i}(j) = d_{y,i}(j) + y$, for all $j \in V(H)$. As $d_{x,i}(i) = 0$ and $b_{x,i}(i) = x$, we have 
\begin{equation*}
  \partial_{d_{x,i}} \theta_\epsilon^{b_{x,i},\Delta}(i) = \partial_{b_{x,i}} \theta_\epsilon^{b_{x,i},\Delta}(i) - x.
\end{equation*}
Likewise, $  \partial_{d_{y,i}} \theta_\epsilon^{b_{y,i},\Delta}(i) = \partial_{b_{y,i}} \theta_\epsilon^{b_{y,i},\Delta}(i) - y$. 
On the other hand, as $d_{x,i}(j) \geq d_{y,i}(j)$ for all $j \in V(H)$, using Proposition~\ref{prop:bounded-partial-b-increasing}, $\partial_{d_{x,i}} \theta_\epsilon^{b_{x,i},\Delta}(i) \geq \partial_{d_{y,i}} \theta_\epsilon^{b_{y,i},\Delta}(i)$. Comparing with the above, this means that 
\begin{equation*}
  \partial_{b_{x,i}} \theta_\epsilon^{b_{x,i},\Delta}(i) - x \geq \partial_{b_{y,i}} \theta_\epsilon^{b_{y,i},\Delta}(i) - y.
\end{equation*}
Sending $\Delta \rightarrow \infty$, we have $\partial_{b_{y,i}}\theta_\epsilon^{b_{y,i}}(i) - \partial_{b_{x,i}} \theta_\epsilon^{b_{x,i}} \leq y - x$. This means that $\rho_{(H, i)}^\epsilon(y) - \rho_{(H,i)}^\epsilon(x) \leq y- x$.


To show \eqref{eq:f-deg}, simply note that $\rf^\epsilon_{(H, i)}(x) - x$ is $\partial \theta_\epsilon^{b_{x,i}}(i)$, which is the sum of $\deg_H(i)$ many numbers in the interval $[0,1]$, hence is in the interval $[0,\deg_H(i)]$.
\end{proof}


In view of Proposition \ref{prop:response-non-expansion}, it is
convenient to define the following class of functions:

\begin{definition}
\label{def:Upsilon-functions}
  For $C > 0$, let $\Upsilon(C)$ denote the class of functions $g: \reals \rightarrow \reals$ satisfying the following conditions:
  \begin{enumerate}[label=$(\roman{*})$]
  \item $g$ is nondecreasing.
  \item $g$ is nonexpansive, i.e. for $x \leq y$ we have $g(y) - g(x) \leq y - x$.
  \item $0 \leq g(x) - x \leq C$.
  \end{enumerate}
\end{definition}

In this notation, Proposition~\ref{prop:response-non-expansion} implies that for any vertex rooted hypergraph $(H, i)$ and $\epsilon>0$, 
\begin{equation*}
 \rf_{(H, i)}^\epsilon(.) \in \Upsilon(\deg_H(i)).
\end{equation*}
It is easy to check the following properties of this class of functions:

\begin{lem}
\label{lem:Upsilon-properties}
For any function $g \in \Upsilon(C)$ we have
\begin{enumerate}[label=$(\roman{*})$]
\item $g$ is continuous.
\item $\lim_{x \rightarrow \pm \infty} g(x) = \pm \infty$.
\item For any $t \in \reals$, the set $\{ x \in \reals: g(x) =t \}$ is a nonempty bounded closed interval in $\reals$.
\item The function $g^{-1}: \reals \rightarrow \reals$ defined as
  \begin{equation}
    \label{eq:non-decreasing-non-expansive-inverse}
    g^{-1}(t) := \max \{ x \in \reals: g(x) = t \},
  \end{equation}
  is well defined,  nondecreasing and right--continuous. Moreover, if $D$ denotes its set of discontinuities of $g^{-1}$, $D$ is countable. Furthermore,  the set $\{x: g(x) = t\}$ has exactly one element iff $t \notin D$.
\item Let $D$ be the set of discontinuities of $g^{-1}$ and $t \notin D$. If for some $a$,  $a < g^{-1}(t)$, then we have $g(a) < t$. Moreover, if for some $a$ we have $a > g^{-1}(t)$, then we have $g(a) > t$. 
\end{enumerate}
\end{lem}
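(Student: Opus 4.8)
The plan is to verify the five items in order, each being a short deduction from the three defining properties of $\Upsilon(C)$ in Definition~\ref{def:Upsilon-functions}, namely that $g$ is nondecreasing, nonexpansive, and satisfies $0 \le g(x) - x \le C$.

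First, for item $(i)$: nonexpansiveness gives $|g(y) - g(x)| \le |y - x|$ directly once we combine it with monotonicity (for $x \le y$, $0 \le g(y) - g(x) \le y - x$), so $g$ is $1$-Lipschitz, hence continuous. For item $(ii)$: from $0 \le g(x) - x \le C$ we get $x \le g(x) \le x + C$, so $g(x) \to +\infty$ as $x \to +\infty$ and $g(x) \to -\infty$ as $x \to -\infty$ by squeezing. For item $(iii)$: given $t$, since $g$ is continuous with $g(x) \to \pm\infty$, the intermediate value theorem gives some $x$ with $g(x) = t$, so the level set is nonempty; it is closed because $g$ is continuous; it is bounded because $g(x) \ge x$ forces $x \le t$ (upper bound) and $g(x) \le x + C$ forces $x \ge t - C$ (lower bound); and it is an interval because $g$ is nondecreasing (if $g(a) = g(b) = t$ with $a < b$, then for $a \le x \le b$, monotonicity sandwiches $g(x) = t$).

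For item $(iv)$: by $(iii)$ the level set $\{x : g(x) = t\}$ is a nonempty bounded closed interval, so its maximum exists and $g^{-1}$ is well defined. Monotonicity of $g^{-1}$: if $t_1 < t_2$, I would argue that every point of the level set at $t_1$ lies to the left of every point of the level set at $t_2$ — indeed if $g(a) = t_1 < t_2 = g(b)$ then $a < b$ since $g$ is nondecreasing — hence $g^{-1}(t_1) \le g^{-1}(t_2)$. Right-continuity: the level set at $t$ is $[\,\alpha(t), g^{-1}(t)\,]$ where $\alpha(t) := \min\{x : g(x) = t\}$; I would show $g^{-1}(t^+) = g^{-1}(t)$ by noting that if $t_n \downarrow t$ then $g^{-1}(t_n)$ is nonincreasing and bounded below by $g^{-1}(t)$, converging to some $\ell \ge g^{-1}(t)$; by continuity $g(\ell) = \lim g(g^{-1}(t_n)) = \lim t_n = t$, so $\ell \le g^{-1}(t)$, giving $\ell = g^{-1}(t)$. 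Countability of the discontinuity set $D$ is the standard fact that a monotone real function has at most countably many discontinuities. Finally, $\{x : g(x) = t\}$ is a singleton iff $\alpha(t) = g^{-1}(t)$; I would show this happens iff $t \notin D$ by identifying a jump of $g^{-1}$ with a nondegenerate level set of $g$: if $\alpha(t) < g^{-1}(t)$ then for $s$ slightly less than $t$ we have $g^{-1}(s) < \alpha(t)$ (since any $x$ with $g(x) = s < t$ satisfies $x < \alpha(t)$ as $g$ is nondecreasing and $g$ is not constant equal to $s$ on a neighborhood), producing a jump at $t$; conversely a jump at $t$ forces, via the same continuity argument as above applied from the left, that the left limit $g^{-1}(t^-)$ is strictly less than $g^{-1}(t)$ while still mapping into the level set, so the level set is nondegenerate. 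For item $(v)$: suppose $t \notin D$, so the level set is the singleton $\{g^{-1}(t)\}$. If $a < g^{-1}(t)$, then $g(a) \le t$ by monotonicity, and $g(a) \ne t$ since $a$ is not in the level set, so $g(a) < t$; symmetrically, if $a > g^{-1}(t)$, then $g(a) \ge t$ and $g(a) \ne t$, so $g(a) > t$.

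The only mildly delicate point is item $(iv)$, specifically the equivalence ``the level set is a singleton iff $t \notin D$'' and the right-continuity of $g^{-1}$; everything else is a routine squeeze or a citation of the standard monotone-function facts. I would spend the bulk of the written proof carefully setting up the correspondence between jumps of $g^{-1}$ and nondegenerate level sets of $g$, using continuity of $g$ to pass limits inside $g$, and handling the two one-sided limits separately.
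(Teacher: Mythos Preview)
Your proof is correct. The paper does not actually prove this lemma---it is introduced with the phrase ``It is easy to check the following properties of this class of functions'' and left to the reader---so there is no proof in the paper to compare against; your argument is precisely the kind of routine verification the authors had in mind, and the only part requiring any care (the equivalence in $(iv)$ between jumps of $g^{-1}$ and nondegenerate level sets of $g$) you handle correctly via continuity of $g$.
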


\subsection{Recursion for Response functions on Hypertrees}
\label{sec:recurs-resp-funct}

Recall from Section~\ref{sec:regul-perop-stand} that if $T$ is a hypertree, then for $(e, i) \in \evpair(T)$, $T_{e \rightarrow i}$ is obtained by removing $e$ from $T$ and looking at the connected component of the resulting hypertree rooted at $i$.


Given $x \in \reals$, let $b_x$ be the baseload function on $T_{e \rightarrow i}$ with $b_x(i) = x$ and $b_x(j) = 0$, $j \neq i$. With this, let $\theta_\epsilon^{b_x}$ be the canonical $\epsilon$--balanced allocation on $T_{e\rightarrow i}$ with respect to $b_x$. As was discussed above, $\rf_{T_{e \rightarrow i}}^\epsilon(x) = x + \partial \theta_\epsilon^{b_x}(i)$. 

Now we attempt to find some recursive expressions for such response functions.
Before that, we need the following general lemma, whose proof is postponed to the end of this section. 

\begin{lem}
  \label{lem:venkat-Gibbs-epsilon-uniqueness}
Given $C >0$ and a collection of nondecreasing functions $g_i : [0,1] \rightarrow \reals$, $1 \leq i \leq n$,  the set of fixed point equations 
  \begin{equation}
    \label{eq:theta-i-g-i-fixed-point}
    \theta_i = \frac{e^{- g_i(\theta_i)}}{\sum_{j=1}^n e^{-g_j(\theta_j)} + C} \qquad 1 \leq i \leq n,
  \end{equation}
has a unique solution $(\theta_1, \dots, \theta_n) \in [0,1]^n$.
\end{lem}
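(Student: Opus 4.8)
\textbf{Proof proposal for Lemma~\ref{lem:venkat-Gibbs-epsilon-uniqueness}.}

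The plan is to set up the problem as a fixed point of a single continuous self-map of the compact convex cube $[0,1]^n$ and obtain existence from the Brouwer (or Schauder--Tychonoff) fixed point theorem, exactly as was done in Section~\ref{sec:epsil-balanc-with}. Concretely, consider $\Phi:[0,1]^n\to[0,1]^n$ with $\Phi_i(\theta_1,\dots,\theta_n):= e^{-g_i(\theta_i)}/(\sum_{j=1}^n e^{-g_j(\theta_j)}+C)$. Each $g_i$ is nondecreasing, hence Borel; but to invoke Brouwer I would first note that a nondecreasing function on $[0,1]$ has at most countably many discontinuities, and in fact for the application it suffices to observe that $\theta\mapsto e^{-g_i(\theta)}$ is nonincreasing and bounded in $[0,1]$. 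Since $\Phi_i$ lands in $(0,1)$ because $C>0$, and since the coordinates $\theta_i$ of $\Phi$ each depend only on $\theta_i$ (through $g_i$) and on the common denominator, monotonicity in each argument is available; existence will follow either from Brouwer applied after a regularization of the $g_i$, or more cleanly from a monotonicity/lattice (Tarski) argument on $[0,1]^n$, which is robust to the $g_i$ being merely monotone. I would lean on the Tarski fixed point theorem here precisely to avoid continuity issues: $\Phi$ need not be monotone as stated, but one can symmetrize by the substitution below.

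For uniqueness, which is the crux, I would argue as follows. Suppose $(\theta_i)$ and $(\theta_i')$ are two solutions. Write $S:=\sum_j e^{-g_j(\theta_j)}+C$ and $S':=\sum_j e^{-g_j(\theta_j')}+C$. From $\theta_i = e^{-g_i(\theta_i)}/S$ we get $\log\theta_i + g_i(\theta_i) = -\log S$, and similarly $\log\theta_i' + g_i(\theta_i') = -\log S'$. The key structural fact is that $h_i(\theta):=\log\theta + g_i(\theta)$ is \emph{strictly} increasing on $(0,1]$ (strictly because of the $\log\theta$ term, regardless of $g_i$ being only nondecreasing). Hence the sign of $\theta_i-\theta_i'$ equals the sign of $h_i(\theta_i)-h_i(\theta_i') = \log S' - \log S$, which is the \emph{same} for every $i$. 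So either $\theta_i\ge\theta_i'$ for all $i$ or $\theta_i\le\theta_i'$ for all $i$. In the first case $e^{-g_i(\theta_i)}\le e^{-g_i(\theta_i')}$ for all $i$ (using $g_i$ nondecreasing), so $S\le S'$, i.e. $\log S'-\log S\ge 0$; combined with the sign identity this is consistent, so I push further: $S\le S'$ forces $-\log S\ge -\log S'$, hence $h_i(\theta_i)\ge h_i(\theta_i')$, hence $\theta_i\ge\theta_i'$ — consistent but not yet a contradiction. To close it, observe that if $\theta_{i_0}>\theta_{i_0}'$ strictly for some $i_0$, then $e^{-g_{i_0}(\theta_{i_0})}\le e^{-g_{i_0}(\theta_{i_0}')}$, and summing, $S\le S'$ with the $\log$ relation gives $\theta_i\ge\theta_i'$ for all $i$; but then $\sum_i e^{-g_i(\theta_i)}\le \sum_i e^{-g_i(\theta_i')}$ while simultaneously $\sum_i\theta_i = (S-C)/S = 1-C/S$ and $\sum_i\theta_i' = 1-C/S'$, so $\sum_i\theta_i\le\sum_i\theta_i'$, i.e. $S\le S'$ again — and equality in all these monotone inequalities is forced only if $g_{i_0}(\theta_{i_0})=g_{i_0}(\theta_{i_0}')$, in which case $h_{i_0}(\theta_{i_0}) = \log\theta_{i_0}+g_{i_0}(\theta_{i_0}) > \log\theta_{i_0}'+g_{i_0}(\theta_{i_0}') = h_{i_0}(\theta_{i_0}')$ by strict monotonicity of $\log$, contradicting $h_{i_0}(\theta_{i_0})=h_{i_0}(\theta_{i_0}')$ (which follows since $-\log S = -\log S'$ once $S=S'$). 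Thus $\theta_i=\theta_i'$ for all $i$.

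The main obstacle I anticipate is bookkeeping the chain of monotone inequalities cleanly so that the strict inequality coming from $\log\theta$ actually bites: the functions $g_i$ are only nondecreasing, so all the ``softness'' that produces uniqueness must be extracted from the $\log\theta_i$ term, and one has to be careful that $\theta_i=0$ does not occur (it cannot, since $C>0$ makes $\Phi_i>0$, so all solutions lie in $(0,1]^n$ and $\log\theta_i$ is finite). I would also double-check the degenerate edge case $n=0$ or all $g_i\equiv$ constant, where the statement reduces to the elementary one-variable fact that $\theta=e^{-c}/(\text{const}\cdot e^{-c}+C)$ has a unique solution. For the existence half, if one prefers to avoid Tarski, regularize each $g_i$ by a continuous nondecreasing approximation, apply Brouwer, and pass to the limit using compactness of $[0,1]^n$ together with the uniqueness just proved to identify the limit; but the cleanest write-up is to prove uniqueness first and then note existence follows from Tarski's theorem applied to the monotone map obtained after the order-reversing substitution $\theta_i\mapsto$ the map $\theta\mapsto \big(\theta_i\big)$ viewed through $h_i$, which is monotone on the complete lattice $(0,1]^n$.
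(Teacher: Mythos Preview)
Your uniqueness argument is essentially the paper's, repackaged. The identity $h_i(\theta_i)=\log\theta_i+g_i(\theta_i)=-\log S$ is just the logarithm of the paper's ratio observation $\theta_i/\theta_j=e^{-g_i(\theta_i)}/e^{-g_j(\theta_j)}$, and both immediately yield that the signs of $\theta_i-\theta_i'$ agree across all $i$. The endgame is also the same: the paper writes $\sum_i\theta_i=\bigl(\sum_i e^{-g_i(\theta_i)}\bigr)\big/\bigl(\sum_i e^{-g_i(\theta_i)}+C\bigr)$ and invokes strict monotonicity of $x\mapsto x/(x+C)$, while you write $\sum_i\theta_i=1-C/S$; these are identical manipulations. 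Your write-up is more circuitous than needed: once you have $\theta_i\ge\theta_i'$ for all $i$ together with $\sum_i\theta_i\le\sum_i\theta_i'$, you are done immediately, without the detour through $g_{i_0}(\theta_{i_0})=g_{i_0}(\theta_{i_0}')$.

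The existence discussion is a distraction and should be dropped. The paper's proof addresses only uniqueness; in the sole application (Proposition~\ref{prop:T-unbounded-f-epsilon-inverse}) a solution is already handed to you by the canonical $\epsilon$--balanced allocation, and moreover the $g_i$ arising there are (rescaled) response functions lying in $\Upsilon(\cdot)$, hence continuous, so Brouwer would apply directly if existence were actually needed in that generality. Your Tarski suggestion does not work, as you yourself note ($\Phi$ is not monotone in the product order), and the regularization-plus-Brouwer sketch is never carried out.
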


\edit
\begin{prop}
\label{prop:T-unbounded-f-epsilon-inverse}
  Assume $T$ is a locally finite hypertree (not necessarily bounded), $\epsilon> 0$, and $(e, i) \in \evpair(T)$. Then, $\rft{e}{i}^\epsilon(.)$ is an invertible function, and for $t \in \reals$, we have
  \begin{equation}
    \label{eq:epsilon-rft-inverse-t-sum}
    \left ( \rft{e}{i}^\epsilon \right )^{-1}(t) = t - \sum_{e' \ni i: e' \neq e} \left ( 1 - \sum_{j \in e', j \neq i} \zeta^\epsilon_{e', j} \right ),
  \end{equation}
  where $\{ \zeta^\epsilon_{e', j} \}$ for $e' \ni i$, $e' \neq e$, $j \in e'$, $j \neq i$ are the unique solutions to the set of equations 
  \begin{equation}
    \label{eq:theta-epsilon-e'-j-fixed-point}
    \zeta^\epsilon_{e',j} = \frac{ \exp \left ( - \frac{\rft{e'}{j}^\epsilon(\zeta^\epsilon_{e', j})}{\epsilon} \right )}{e^{- t / \epsilon} + \sum_{l \in e', l \neq i} \exp \left ( - \frac{\rft{e'}{l}^\epsilon(\zeta^\epsilon_{e', l})}{\epsilon} \right ) }.
  \end{equation}
\end{prop}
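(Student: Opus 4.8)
The plan is to ``unfold'' one level of the hypertree at the root $i$ of $T_{e\rightarrow i}$, then use the regularity property of canonical allocations (Proposition~\ref{prop:regularity-canonical-e-ba}) together with the uniqueness Lemma~\ref{lem:venkat-Gibbs-epsilon-uniqueness}. Fix $x\in\reals$ and set $t:=\rft{e}{i}^\epsilon(x)=x+\partial\theta_\epsilon^{b_x}(i)$, where $\theta_\epsilon^{b_x}$ is the canonical $\epsilon$--balanced allocation on $T_{e\rightarrow i}$ with respect to the baseload $b_x$ (value $x$ at $i$, value $0$ elsewhere). The edges of $T_{e\rightarrow i}$ incident to $i$ are precisely the edges $e'\ni i$ of $T$ with $e'\neq e$. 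For such an $e'$ and $j\in e'$ with $j\neq i$, I would first record that, because $T$ is a hypertree and $i\notin V(T_{e'\rightarrow j})$, one has $(T_{e\rightarrow i})_{e'\rightarrow j}=T_{e'\rightarrow j}$, so the response function of this subtree rooted at $j$ is exactly $\rft{e'}{j}^\epsilon$.

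Next, writing $\zeta_{e',j}:=\theta_\epsilon^{b_x}(e',j)$, I would apply Proposition~\ref{prop:regularity-canonical-e-ba} to conclude that the restriction of $\theta_\epsilon^{b_x}$ to $T_{e'\rightarrow j}$ is the canonical $\epsilon$--balanced allocation there with respect to the baseload equal to $b_x(j)+\zeta_{e',j}=\zeta_{e',j}$ at $j$ and $0$ elsewhere; hence the total load of $j$ inside $T_{e\rightarrow i}$ equals $\rft{e'}{j}^\epsilon(\zeta_{e',j})$. Substituting these loads, and the total load $t$ at $i$, into the defining identity of an $\epsilon$--balanced allocation at the edge $e'$ (Definition~\ref{def:epsilon-balanced-with-baseload}) produces exactly the system \eqref{eq:theta-epsilon-e'-j-fixed-point} for the family $\{\zeta_{e',j}\}_{j\in e',\,j\neq i}$. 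For each fixed $e'$ this system is decoupled from the others and has the form covered by Lemma~\ref{lem:venkat-Gibbs-epsilon-uniqueness}, with $g_j:=\rft{e'}{j}^\epsilon(\cdot)/\epsilon$ (nondecreasing on $[0,1]$ by Proposition~\ref{prop:response-non-expansion}) and $C:=e^{-t/\epsilon}$; so the $\zeta_{e',j}$ are uniquely determined by $t$.

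Then I would close the loop using conservation of mass at each edge $e'\ni i$, $e'\neq e$, which gives $\theta_\epsilon^{b_x}(e',i)=1-\sum_{j\in e',\,j\neq i}\zeta_{e',j}$, whence
\begin{equation*}
  t \;=\; x+\partial\theta_\epsilon^{b_x}(i)\;=\; x+\sum_{e'\ni i,\ e'\neq e}\Bigl(1-\sum_{j\in e',\,j\neq i}\zeta_{e',j}\Bigr),
\end{equation*}
so that $x=t-\sum_{e'\ni i,\,e'\neq e}\bigl(1-\sum_{j\in e',\,j\neq i}\zeta_{e',j}\bigr)$. Invertibility of $\rft{e}{i}^\epsilon$ then drops out: surjectivity onto $\reals$ is immediate from $0\le\rft{e}{i}^\epsilon(x)-x\le\deg_{T_{e\rightarrow i}}(i)$ (Proposition~\ref{prop:response-non-expansion}), and injectivity is exactly what the previous two paragraphs establish, since any $x$ with $\rft{e}{i}^\epsilon(x)=t$ must equal the right-hand side of \eqref{eq:epsilon-rft-inverse-t-sum}, an expression of $t$ alone. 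Identifying this expression with $(\rft{e}{i}^\epsilon)^{-1}(t)$ gives \eqref{eq:epsilon-rft-inverse-t-sum}.

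I expect the main obstacle to be purely bookkeeping: tracking how the baseload at $j$ accumulates $\zeta_{e',j}$ when passing to $T_{e'\rightarrow j}$ via Proposition~\ref{prop:regularity-canonical-e-ba}, and verifying the combinatorial claims about ``directed'' subtrees in a hypertree (in particular $(T_{e\rightarrow i})_{e'\rightarrow j}=T_{e'\rightarrow j}$, and that $i$ is incident in $T_{e\rightarrow i}$ to exactly the edges $e'\ni i$ with $e'\neq e$). There is no analytic difficulty beyond invoking Lemma~\ref{lem:venkat-Gibbs-epsilon-uniqueness}; the one mildly non-routine point is noticing that the explicit formula itself forces injectivity, and hence invertibility, of $\rft{e}{i}^\epsilon$.
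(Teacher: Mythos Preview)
Your proposal is correct and follows essentially the same route as the paper: define $\zeta_{e',j}=\theta_\epsilon^{b_x}(e',j)$, invoke Proposition~\ref{prop:regularity-canonical-e-ba} to identify the load at $j$ as $\rft{e'}{j}^\epsilon(\zeta_{e',j})$, plug into the $\epsilon$--balanced identity to obtain \eqref{eq:theta-epsilon-e'-j-fixed-point}, and conclude uniqueness edge-by-edge via Lemma~\ref{lem:venkat-Gibbs-epsilon-uniqueness}. The only cosmetic difference is that the paper phrases surjectivity through Lemma~\ref{lem:Upsilon-properties} rather than directly via Proposition~\ref{prop:response-non-expansion}, and you are slightly more explicit about the tree identity $(T_{e\rightarrow i})_{e'\rightarrow j}=T_{e'\rightarrow j}$.
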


\edit
\begin{proof}
  Note that Lemma~\ref{lem:Upsilon-properties} implies that the set
$    A(t):= \{ x \in \reals: \rft{e}{i}^\epsilon(x) = t \}$
is not empty. Hence, in order to show that $\rft{e}{i}^\epsilon(.)$ is invertible, we should show that $A(t)$ is a singleton for all $t \in \reals$. By definition,  $x \in A(t)$ means that 
we have 
\begin{equation}
\label{eq:t=x+sum-f-inverse}
  t = x + \sum_{e' \ni i, e' \neq e} \left ( 1 - \sum_{j \in e', j \neq i} \theta_\epsilon^{b_x}(e' ,j) \right ).
\end{equation}
For each $(e',j)$ in the above summation, as $T_{e' \rightarrow j}$ is a subtree of $T_{e\rightarrow i}$, we can treat $\theta_\epsilon^{b_x}$ as an allocation on $T_{e'\rightarrow j}$ via the identity projection. With this, Proposition~\ref{prop:regularity-canonical-e-ba} implies that $\theta_\epsilon^{b_x}$ is the canonical $\epsilon$--balanced allocation on $T_{e' \rightarrow j}$ with respect to the baseload function that evaluates to $\theta^\epsilon(e' ,j)$ at $j$ and zero elsewhere. Thereby, by the definition of the response function, for each such pair $(e', j)$, we have $\partial \theta_\epsilon^{b_x}(j) = \rft{e'}{j}^\epsilon(\theta^\epsilon(e' ,j))$. Consequently,
\begin{equation*}
  \theta_\epsilon^{b_x}(e',j) = \frac{ \exp \left ( - \frac{\rft{e'}{j}^\epsilon(\theta_\epsilon^{b_x}(e',j))}{\epsilon} \right )}{e^{-t/\epsilon} + \sum_{l \in e', l \neq i} \exp \left ( - \frac{\rft{e'}{l}^\epsilon(\theta_\epsilon^{b_x}(e' ,l))}{\epsilon} \right ) }.
\end{equation*}
Lemma~\ref{lem:venkat-Gibbs-epsilon-uniqueness} guarantees that for each $e' \ni i$, $e' \neq e$, there is a unique set of solutions to these equations. 
From \eqref{eq:t=x+sum-f-inverse}, we realize that for any $t \in \reals$, there is a unique solution for $x$. 
This implies the invertibility of $\rft{e}{i}^\epsilon$. 
Rearranging  \eqref{eq:t=x+sum-f-inverse}, we get \eqref{eq:epsilon-rft-inverse-t-sum}, with $\zeta_{e', j} = \theta_\epsilon^{b_x}(e' ,j)$.
\end{proof}

Now, we will send $\epsilon$ to zero in the above Proposition and show that, under some conditions, the sequence of response functions converges pointwise to a limit which satisfies a certain fixed point equation.
To do so, we need the following lemma, whose proof is given at the end of this section.
\begin{lem}
  \label{lem:limit-function-inverse}
  Assume $\{ g_n \}_{n = 1}^\infty$ is a sequence of functions in $\Upsilon(C)$ that converge pointwise to a function $g$, i.e. $g(x) = \lim_{n \rightarrow \infty} g_n(x)$ for all $x \in \reals$. Then, $g$ is also in $\Upsilon(C)$. Furthermore, if $D_n$ denotes the set of discontinuities of $g_n$ and $D$ denotes the set of discontinuities of $g$, then for $t \notin (\bigcup_{n} D_n ) \cup D$ we have
  \begin{equation}
\label{eq:lim-gn-1--g-1}
    \lim_{n \rightarrow \infty} g_n^{-1}(t) = g^{-1}(t).
  \end{equation}
\end{lem}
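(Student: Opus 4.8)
The plan is to treat the two assertions of Lemma~\ref{lem:limit-function-inverse} separately: first that the pointwise limit $g$ again lies in $\Upsilon(C)$, and then the convergence of the generalized inverses at the relevant points. Here $D_n$ and $D$ are to be read as the discontinuity sets of $g_n^{-1}$ and $g^{-1}$, consistently with part~$(iv)$ of Lemma~\ref{lem:Upsilon-properties} (recall functions in $\Upsilon(C)$ are themselves continuous).

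For the first part I would simply pass to the limit in each of the three defining properties of $\Upsilon(C)$. If $x \le y$ then $g_n(x) \le g_n(y)$ for every $n$, so $g(x) \le g(y)$; thus $g$ is nondecreasing. Likewise $g_n(y) - g_n(x) \le y - x$ gives $g(y) - g(x) \le y - x$ in the limit, and $0 \le g_n(x) - x \le C$ gives $0 \le g(x) - x \le C$. Hence $g \in \Upsilon(C)$, so Lemma~\ref{lem:Upsilon-properties} applies to $g$: $g^{-1}$ is well defined, nondecreasing, right--continuous, and its discontinuity set $D$ is countable; the same holds for each $g_n^{-1}$, so $\bigl(\bigcup_n D_n\bigr)\cup D$ is a countable set.

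For the second part I would fix $t \notin \bigl(\bigcup_n D_n\bigr)\cup D$ and set $x^{*} := g^{-1}(t)$, $x_n := g_n^{-1}(t)$. Two preliminary remarks: $(a)$ since $g_n$ is continuous and $\{x : g_n(x)=t\}$ is a nonempty closed interval containing its maximum $x_n$ (Lemma~\ref{lem:Upsilon-properties}, parts $(i)$ and $(iii)$), we have $g_n(x_n)=t$ for all $n$; $(b)$ property $(iii)$ of $\Upsilon(C)$ gives $t - C \le x_n \le t$, so $\{x_n\}$ is bounded. Because of $(b)$ it is enough to show that every convergent subsequence of $\{x_n\}$ has limit $x^{*}$, since a bounded sequence all of whose convergent subsequences share the limit $x^{*}$ must itself converge to $x^{*}$, which is exactly \eqref{eq:lim-gn-1--g-1}. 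So suppose $x_{n_k} \to x'$ and argue by contradiction that $x' \ne x^{*}$. If $x' < x^{*}$, choose $a$ with $x' < a < x^{*}$; since $t \notin D$ and $a < g^{-1}(t)$, part $(v)$ of Lemma~\ref{lem:Upsilon-properties} yields $g(a) < t$. But $g_{n_k}(a) \to g(a)$ by pointwise convergence, and for large $k$ we have $x_{n_k} < a$, so monotonicity of $g_{n_k}$ gives $t = g_{n_k}(x_{n_k}) \le g_{n_k}(a)$; letting $k\to\infty$ gives $t \le g(a) < t$, a contradiction. The case $x' > x^{*}$ is symmetric: pick $a$ with $x^{*} < a < x'$, use part $(v)$ to get $g(a) > t$, note $a < x_{n_k}$ for large $k$ so $g_{n_k}(a) \le g_{n_k}(x_{n_k}) = t$, and pass to the limit to obtain $g(a) \le t < g(a)$, again a contradiction. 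Hence $x' = x^{*}$, which finishes the proof.

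I do not anticipate a serious obstacle here; the only points that require care are recognizing that convergence should be extracted through the subsequence principle rather than via a direct quantitative estimate, and checking that the strict inequalities in part $(v)$ of Lemma~\ref{lem:Upsilon-properties} are precisely what drive the contradiction, which is why the hypothesis $t \notin D$ is essential (the stronger assumption $t \notin \bigcup_n D_n$ is not actually needed for this argument, but it is harmless to keep it).
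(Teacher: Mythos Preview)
Your proof is correct and follows essentially the same approach as the paper: pass to the limit in the defining inequalities to get $g\in\Upsilon(C)$, then use boundedness of $\{x_n\}$ in $[t-C,t]$ and a subsequence argument, deriving a contradiction from part~$(v)$ of Lemma~\ref{lem:Upsilon-properties}. The only cosmetic difference is that the paper applies part~$(v)$ directly at the limit point $x'$ and uses the nonexpansive property $|g_n(x_n)-g_n(x')|\le |x_n-x'|$ to control $g_n(x_n)$, whereas you insert an intermediate point $a$ between $x'$ and $x^*$ and use monotonicity of $g_{n_k}$; both routes yield the same contradiction. Your closing remark that the hypothesis $t\notin\bigcup_n D_n$ is not actually used is also correct, and indeed the paper's own argument uses only $t\notin D$.
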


Now, we can write recursive equations for the limit of $\rft{e}{i}^\epsilon$, if it exists. 

\begin{prop}
  \label{prop:f-epsilon-converges-f-recursive-hypertree}
  Assume $T$ is a locally finite hypertree (not necessarily bounded) and 
$\epsilon_n$ is a sequence of positive numbers converging to zero, with $\theta_{\epsilon_n}$ being the canonical  $\epsilon_n$--balanced allocation on 
$T$.
If $l_i := \lim_{n \rightarrow \infty} \partial \theta_{\epsilon_n}(i)$ exists for all $i \in V(T)$, then, for all $(e, i) \in \evpair(T)$, $\rft{e}{i}^{\epsilon_n}(.)$ converges pointwise to some $\rft{e}{i}(.) \in \Upsilon(\deg_T(i) - 1)$. Moreover, for all $t \in \reals$, we have 
  \begin{equation}
    \label{eq:f-inverse-recursive}
    \rft{e}{i}^{-1}(t) = t - \sum_{e' \ni i: e' \neq e} \left [1 - \sum_{j \in e', j \neq i} \left ( \rft{e'}{j}^{-1}(t) \right)^+ \right ]_0^1,
  \end{equation}
  where the inverse functions are defined as in \eqref{eq:non-decreasing-non-expansive-inverse}. Furthermore, for a node $i \in V(T)$ we have
  \begin{equation}
    \label{eq:l_i>t-iff-sum-f}
    l_i > t \qquad \Longleftrightarrow \qquad \sum_{e \ni i} \left [ 1 - \sum_{j \in e, j \neq i} \left ( \rf^{-1}_{T_{e \rightarrow j}}(t) \right )^+ \right ]_0^1 > t.
  \end{equation}
\end{prop}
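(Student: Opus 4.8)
The strategy is to pass to the limit $\epsilon_n \to 0$ in Proposition~\ref{prop:T-unbounded-f-epsilon-inverse}, using Lemma~\ref{lem:limit-function-inverse} to control the behaviour of the inverses. The first step is to establish pointwise convergence of $\rft{e}{i}^{\epsilon_n}(\cdot)$. Fix $(e,i) \in \evpair(T)$ and $x \in \reals$. Recall that $\rft{e}{i}^{\epsilon_n}(x) = x + \partial\theta_{\epsilon_n}^{b_x}(i)$ where $\theta_{\epsilon_n}^{b_x}$ is the canonical $\epsilon_n$--balanced allocation on $T_{e\to i}$ with baseload $b_x$ (value $x$ at $i$, zero elsewhere). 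The key observation is that placing baseload $x$ at $i$ in $T_{e\to i}$ is the same data as the hypertree $T$ restricted appropriately: by Proposition~\ref{prop:regularity-canonical-e-ba}, the canonical $\epsilon_n$--balanced allocation on $T$ with \emph{zero} baseload, when restricted to $T_{e\to i}$, is the canonical $\epsilon_n$--balanced allocation on $T_{e\to i}$ with baseload value $\theta_{\epsilon_n}(e,i)$ at $i$. So I would first handle the case $x = \lim_n \theta_{\epsilon_n}(e,i)$ (which exists up to a subsequence, being in $[0,1]$, and in fact along the full sequence once we know the load limits $l_\cdot$ exist), where $\partial\theta_{\epsilon_n}^{b_x}(i) \to l_i - x$ by hypothesis; for general $x$, I would use the nonexpansivity bound \eqref{eq:f-non-expansion} together with monotonicity (Proposition~\ref{prop:monotonicity-canonical-eba}) to get that $\rft{e}{i}^{\epsilon_n}$ is a family of functions in $\Upsilon(\deg_T(i)-1)$ which is equicontinuous and pointwise bounded on compacts, hence has convergent subsequences; one then needs to argue the limit is unique. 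Uniqueness of the limit is where I expect the real work: I would prove it by downward induction on depth is not available (the tree is infinite), so instead I would exploit the recursion directly — show that any subsequential limit $\rft{e}{i}$ must satisfy \eqref{eq:f-inverse-recursive}, and then invoke a contraction/uniqueness argument for that fixed-point system.

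For the recursion \eqref{eq:f-inverse-recursive}, I would start from \eqref{eq:epsilon-rft-inverse-t-sum}--\eqref{eq:theta-epsilon-e'-j-fixed-point} in Proposition~\ref{prop:T-unbounded-f-epsilon-inverse}. Fix $t \notin \big(\bigcup_n \mathrm{Disc}(\rft{e'}{j}^{\epsilon_n}) \big) \cup \mathrm{Disc}(\rft{e'}{j})$ for all the finitely many relevant children $(e',j)$ — this is the union of countably many countable sets hence countable, so the identity extends to all $t$ by right-continuity of the inverses (Lemma~\ref{lem:Upsilon-properties}(iv)). For such $t$, Lemma~\ref{lem:limit-function-inverse} gives $\big(\rft{e'}{j}^{\epsilon_n}\big)^{-1}(t) \to \big(\rft{e'}{j}\big)^{-1}(t)$. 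The substitution $\zeta^\epsilon_{e',j} = \theta_\epsilon^{b_x}(e',j)$ in \eqref{eq:theta-epsilon-e'-j-fixed-point}: in this Gibbs expression, as $\epsilon_n \to 0$, the softmax weight $\zeta^{\epsilon_n}_{e',j}$ concentrates — if $\rft{e'}{j}^{\epsilon_n}(\zeta^{\epsilon_n}_{e',j})$ stays bounded away from $\min_{l}$ of its competitors it tends to $0$, and the aggregate sum $1 - \sum_{j\neq i}\zeta^{\epsilon_n}_{e',j}$ tends to the share retained at the "other side." The clean way to see the limit is: $x = \big(\rft{e}{i}^{\epsilon_n}\big)^{-1}(t)$ satisfies, for each child, a zero-temperature-limit water-filling identity, and I claim $\sum_{j\neq i}\zeta^{\epsilon_n}_{e',j} \to \big[\,\cdot\,\big]$ where the bracket is precisely $1 - \big[1 - \sum_{j\neq i}(\rft{e'}{j}^{-1}(t))^+\big]_0^1$. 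Concretely, I would show $\zeta^{\epsilon_n}_{e',j} \to (\rft{e'}{j}^{-1}(t))^+ / (\text{normalization})$ appropriately and that the total load pulled through edge $e'$ at the limit equals $[1 - \sum_{j\in e', j\neq i}(\rft{e'}{j}^{-1}(t))^+]_0^1$ — this is the hypergraph analogue of the water-filling identity, and the $[\cdot]_0^1$ truncation appears because the share of a unit-load edge is clamped to $[0,1]$. Taking $n\to\infty$ in \eqref{eq:epsilon-rft-inverse-t-sum} then yields \eqref{eq:f-inverse-recursive}.

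For \eqref{eq:l_i>t-iff-sum-f}, I would use the identity $l_i = \lim_n \partial\theta_{\epsilon_n}(i) = \lim_n \sum_{e\ni i}\theta_{\epsilon_n}(e,i)$. By Proposition~\ref{prop:regularity-canonical-e-ba}, for each $e\ni i$ the quantity $\theta_{\epsilon_n}(e,i)$ is determined by the response functions of the subtrees $T_{e\to j}$, $j\in e$, $j\neq i$, hanging off that edge: indeed $\theta_{\epsilon_n}(e,i) = 1 - \sum_{j\in e, j\neq i}\zeta^{\epsilon_n}$ with $\zeta$ the shares flowing into the $j$-subtrees, and in the $\epsilon_n\to 0$ limit $\sum_{e\ni i}\theta_{\epsilon_n}(e,i) \to \sum_{e\ni i}[1 - \sum_{j\in e, j\neq i}(\rf^{-1}_{T_{e\to j}}(l_i))^+]_0^1$ evaluated consistently at the load level $t=l_i$. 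So $l_i$ is characterized as the (unique, by monotonicity) value $t$ for which $\sum_{e\ni i}[1 - \sum_{j}(\rf^{-1}_{T_{e\to j}}(t))^+]_0^1 = t$; since the left side is nonincreasing in $t$ and $t \mapsto t$ is increasing, this gives the claimed equivalence \eqref{eq:l_i>t-iff-sum-f} via a standard threshold argument (the left side exceeds $t$ iff $t < l_i$). The main obstacle throughout is the careful justification of the zero-temperature limit of the Gibbs/softmax expressions — in particular handling the points where the inverse functions are discontinuous (ties in the water-filling), for which the countability of the discontinuity set and right-continuity are exactly the tools provided by Lemma~\ref{lem:Upsilon-properties} and Lemma~\ref{lem:limit-function-inverse}.
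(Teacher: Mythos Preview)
Your overall strategy for the recursion \eqref{eq:f-inverse-recursive} and the use of Lemma~\ref{lem:limit-function-inverse} on a co-countable set, then extending by right-continuity, is exactly what the paper does. But there is a genuine gap in your convergence argument for $\rft{e}{i}^{\epsilon_n}(x)$.

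You propose to get subsequential limits by equicontinuity and then pin down uniqueness by ``show that any subsequential limit $\rft{e}{i}$ must satisfy \eqref{eq:f-inverse-recursive}, and then invoke a contraction/uniqueness argument for that fixed-point system.'' This does not work. First, the recursion relates the response function at $(e,i)$ to those at the children $(e',j)$, so to even write down \eqref{eq:f-inverse-recursive} for a subsequential limit you would already need convergence at all descendants along the \emph{same} subsequence --- the argument is circular. Second, and more seriously, no uniqueness result for the infinite system \eqref{eq:f-inverse-recursive} is available or proved anywhere; on an infinite tree such fixed-point systems typically do \emph{not} have unique solutions (compare Figure~\ref{fig:hajek-counter}).

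The paper's route is completely different and uses the hypothesis in full strength. Fix $x$ and look at the whole allocation $\theta^{b_x}_{\epsilon_n}$ on $T_{e\to i}$, not just its value at $i$. Any two subsequential limits $\theta_1,\theta_2$ in $[0,1]^{\evpair(T_{e\to i})}$ are \emph{balanced} allocations with respect to $b_x$ (pass to the limit in the $\epsilon$-balance equations). By Proposition~\ref{prop:regularity-canonical-e-ba} the restriction of $\theta_{\epsilon_n}$ to $T_{e\to i}$ is canonical $\epsilon_n$-balanced with baseload $\theta_{\epsilon_n}(e,i)$ at $i$; combining this with the nonexpansivity bound of Proposition~\ref{prop:non-expansivitiy} gives, for any finite $K\subset V(T_{e\to i})\setminus\{i\}$,
\[
\sum_{j\in K}\bigl|\partial\theta^{b_x}_{\epsilon_n}(j)-\partial\theta_{\epsilon_n}(j)\bigr|\le |x|+1.
\]
Now the assumption that $\partial\theta_{\epsilon_n}(j)\to l_j$ for \emph{every} $j$ (not just $j=i$) lets you pass to limits along the two subsequences and conclude $\sum_{j\in K}|\partial\theta_1(j)-\partial\theta_2(j)|\le 2(|x|+1)$, hence $\|\partial\theta_1-\partial\theta_2\|_{l^1(V(T_{e\to i}))}<\infty$. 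Proposition~\ref{prop:weak-uniqueness} (weak uniqueness of balanced allocations under a finite $l^1$ difference) then forces $\partial\theta_1\equiv\partial\theta_2$, in particular at $i$. This is the missing idea in your plan; the hypothesis on the $l_j$'s is exactly what makes the $l^1$ bound go through.

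Two smaller points. In the softmax limit, the guess $\zeta^{\epsilon_n}_{e',j}\to (\rft{e'}{j}^{-1}(t))^+/(\text{normalization})$ is not correct in general: when $\sum_{j}(\rft{e'}{j}^{-1}(t))^+\le 1$ the paper shows $\zeta^*_{e',j}=(\rft{e'}{j}^{-1}(t))^+$ with no normalization, and when the sum exceeds $1$ only $\sum_j\zeta^*_{e',j}=1$ is claimed; getting this right requires a careful three-case analysis. For \eqref{eq:l_i>t-iff-sum-f}, the paper's route is more direct than your threshold argument: one repeats the whole proof with $(T,i)$ in place of $T_{e\to i}$, observes $l_i=\rf_{(T,i)}(0)$, and uses Lemma~\ref{lem:Upsilon-properties}(v) to get $l_i>t\Leftrightarrow \rf_{(T,i)}^{-1}(t)<0$, which is exactly \eqref{eq:l_i>t-iff-sum-f} once the recursion for $\rf_{(T,i)}^{-1}$ is substituted.
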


\begin{proof}
First we fix $x \in \reals$ and $(e, i) \in \evpair(T)$ and show that, as $n\rightarrow \infty$, $\rf_{T_{e \rightarrow i}}^{\epsilon_n}(x)$ is convergent. We call the limit $\rf_{T_{e \rightarrow i}}(x)$. Let $\theta^{b_x}_{\epsilon_n}$ denote the canonical $\epsilon_n$--balanced allocation on $T_{e \rightarrow i}$ with baseload $b_x$, which equals
$x$ at $i$ and is zero elsewhere. By definition,  $\rf^{\epsilon_n}_{T_{e \rightarrow i}}(x) = x + \partial \theta^{b_x}_{\epsilon_n}(i)$. Thus, it suffices to show that $\partial \theta^{b_x}_{\epsilon_n}(i)$ is convergent. Note that $\theta^{b_x}_{\epsilon_n}$ is a sequence in the compact space $[0,1]^{\evpair(T_{e\rightarrow i})}$ equipped with the product topology. On the other hand, $\partial \theta^{b_x}_{\epsilon_n}(i)$ is a bounded sequence depending only on a finite number of  coordinates, namely $\deg_T(i) - 1$. As a result, in order to show that $\partial \theta^{b_x}_{\epsilon_n}(i)$ is convergent, it suffices to show that if $\theta_1$ and $\theta_2$ are two subsequential limits of $\theta^{b_x}_{\epsilon_n}$ in $[0,1]^{\evpair(T_{e \rightarrow i})}$, then $\pt_1(i) = \pt_2(i)$. Passing to the limit in \eqref{eq:epsilon-baseload-definition}, we realize that both $\theta_1$ and $\theta_2$ are balanced allocations on $T_{e \rightarrow i}$ with respect to the baseload $b_x$.
Using Proposition~\ref{prop:weak-uniqueness}, it suffices to show that $\norm{\pt_1 - \pt_2}_{l^1(V(T_{e \rightarrow i}))} < \infty$. 
From Proposition~\ref{prop:regularity-canonical-e-ba}, we know that the restriction of $\theta_{\epsilon_n}$ to $T_{e \rightarrow i}$ is the canonical $\epsilon_n$--balanced allocation with baseload $\theta_{\epsilon_n}(e ,i)$ at $i$ and zero elsewhere. Hence, if $K$ is a finite subset of $V(T_{e \rightarrow i}) \setminus \{i\}$, using Proposition~\ref{prop:non-expansivitiy}, we have 
\begin{equation*}
  \sum_{j \in K} | \partial \theta^{b_x}_{\epsilon_n}(j) - \partial \theta_{\epsilon_n}(j)| \leq |x| + \theta_{\epsilon_n}(e ,i) \leq |x| + 1.
\end{equation*}
Using the triangle inequality, for integers $n$ and $m$,
\begin{equation*}
  \sum_{j \in K} | \partial \theta^{b_x}_{\epsilon_n}(j) - \partial \theta^{b_x}_{\epsilon_m}(j) |\leq 2(|x| + 1) + \sum_{j \in K} | \partial \theta_{\epsilon_n}(j) - \partial \theta_{\epsilon_m}(j)|.
\end{equation*}
Now, send $n$ to infinity along the subsequence of $\theta^{b_x}_{\epsilon_n}$ that converges to $\theta_1$. Likewise, send $m$ to infinity along the subsequence of $\theta^{b_x}_{\epsilon_n}$ that converges to $\theta_2$. Using the assumption
that $\partial \theta_{\epsilon_n}(j)$ is convergent,
and since $K$ is finite, we get 
\begin{equation*}
  \sum_{j \in K} | \pt_1(j) - \pt_2(j) | \leq 2 (|x| + 1).
\end{equation*}
Since $K$ is arbitrary, sending $K$ to $V(T_{e\rightarrow i}) \setminus \{ i\}$ we get
\begin{equation*}
\begin{split}
  \sum_{j \in V(T_{e \rightarrow i})} | \pt_1(j) - \pt_2(j) | &= |\pt_1(i) - \pt_2(i)|+ \\
&\qquad \sum_{j \in V(T_{e \rightarrow i}) \setminus \{i\}} | \pt_1(j) - \pt_2(j) |  \\
&\leq 2( |x| + 1 ) + 2 \deg_T(i) < \infty,
\end{split}
\end{equation*}
which means $\norm{\pt_1 - \pt_2}_{l^1(V(T_{e \rightarrow i}))} < \infty$. This means that for all $x \in \reals$ and $(e, i) \in \evpair(T)$, $\rf_{T_{e \rightarrow i}}^{\epsilon_n}(x) \rightarrow \rf_{T_{e \rightarrow i}}(x)$.
As $\rf_{T_{e \rightarrow i}}^{\epsilon_n}(.) \in \Upsilon(\deg_T(i) - 1)$, this in particular implies that $\rf_{T_{e \rightarrow i}}(.) \in \Upsilon(\deg_T(i) - 1)$.

Now, we prove \eqref{eq:f-inverse-recursive}.
Using part $(iv)$ of Lemma~\ref{lem:Upsilon-properties}, both sides of \eqref{eq:f-inverse-recursive} are right continuous functions of $t$. Hence, if $D$ denotes the union of the discontinuity sets of $\rft{e}{i}^{-1}$ and $\rft{e'}{j}^{-1}$ for $e' \ni i$ and $j \in e'$, $D$ is countable and hence $D^c$ is dense in $\reals$. Thus, due to the right continuity, it suffices to show \eqref{eq:f-inverse-recursive} for $t \notin D$.

Now, take some $t \in D^c$. Using Lemma~\ref{lem:limit-function-inverse} we have 
\begin{equation*}
  \rft{e}{i}^{-1}(t) = \lim_{n \rightarrow \infty} \left ( \rft{e}{i}^{\epsilon_n} \right)^{-1} (t),
\end{equation*}
for $t$ in a dense subset of $D^c$. By abuse of notation, we continue
to denote this subset by $D^c$.
Using Proposition~\ref{prop:T-unbounded-f-epsilon-inverse}, $(\rft{e}{i}^{\epsilon_n})^{-1}$ is expressed in terms of $\zeta^{\epsilon_n}_{e', j}$ which are solutions to \eqref{eq:theta-epsilon-e'-j-fixed-point}. Comparing this to \eqref{eq:f-inverse-recursive}, it suffices to prove that for each $e' \ni i, e' \neq e$, we have
\begin{equation}
  \label{eq:theta-epsilon->zero-trunaction-response}
   \lim_{n \rightarrow \infty} 1 - \sum_{j \in e', j \neq i} \zeta^{\epsilon_n}_{e', j} = \left [ 1 - \sum_{j \in e', j\neq i} \left ( \rft{e'}{j}^{-1}(t) \right )^+ \right ]_0^1.
\end{equation}
Fixing such an $e'$, since for each $j \in e'$, $j \neq i$, the sequence $\{\zeta^{\epsilon_n}_{e' , j}\}_{n=1}^\infty$ is in the compact set $[0,1]$, it suffices to show that if for all $j \in e'$, $j \neq i$, there is a subsequence  $\zeta^{\epsilon_{n_k}}_{e', j} $ converging to some $\zeta^*_{e', j}$, then
\begin{equation}
  \label{eq:theta-epsilon-*-truncation-response}
  1 - \sum_{j \in e', j \neq i} \zeta^*_{e', j} = \left [ 1 - \sum_{j \in e', j\neq i} \left ( \rft{e'}{j}^{-1}(t) \right )^+ \right ]_0^1.
\end{equation}
Without loss of generality and in order to simplify the notation, we may assume that the subsequence is the whole sequence, i.e. for all $j \in e'$, $j \neq i$,  $\zeta^{\epsilon_n}_{e' , j} \rightarrow \zeta^*_{e' , j}$. We show \eqref{eq:theta-epsilon-*-truncation-response} in different cases:

\textbf{Case I}: $\sum_{j \in e', j\neq i} \left ( \rft{e'}{j}^{-1}(t) \right )^+ \leq 1$: since $\sum_{j \in e', j\neq i} \left ( \rft{e'}{j}^{-1}(t) \right )^+$ is nonnegative, it suffices to show that $\zeta^*_{e', j} = \left ( \rft{e'}{j}^{-1}(t) \right)^+$ for each $j \in e'$, $j \neq i$. For such a $j$, we do this in two subcases:

\textbf{Case Ia}: First, assume $\rft{e'}{j}^{-1}(t) \leq 0$, in which case we should show $\zeta^*_{e', j} = 0$. If this is not the case, as $\zeta^*_{e',j} \in [0,1]$, there must be the case that $\zeta^*_{e',j} > 0 \geq \rft{e'}{j}^{-1}(t)$. Then, part $(v)$ of Lemma~\ref{lem:Upsilon-properties} implies that $\rft{e'}{j}(\zeta^*_{e', j}) \geq t + \delta$ for some $\delta > 0$. With this,
\begin{align*}
    \rft{e'}{j}^{\epsilon_n}(\zeta^{\epsilon_n}_{e', j}) - t &= \rft{e'}{j}^{\epsilon_n}(\zeta^{\epsilon_n}_{e', j}) - \rft{e'}{j}^{\epsilon_n}(\zeta^*_{e', j}) \\
    & \qquad + \rft{e'}{j}^{\epsilon_n}(\zeta^*_{e', j}) - \rft{e'}{j}(\zeta^*_{e', j}) \\
    & \qquad + \rft{e'}{j}(\zeta^*_{e', j}) - t \\
    & \geq - |\zeta^{\epsilon_n}_{e',j} - \zeta^*_{e', j} | \\
    & \qquad + \rft{e'}{j}^{\epsilon_n}(\zeta^*_{e', j}) - \rft{e'}{j}(\zeta^*_{e', j}) \\ 
    & \qquad + \rft{e'}{j}(\zeta^*_{e' ,j}) - t.
\end{align*}
Note that the first two terms converge to zero as $n\rightarrow \infty$ and hence they could be made smaller than $\delta/3$ by choosing $n$ large enough. Thus $\rft{e'}{j}^{\epsilon_n}(\zeta^{\epsilon_n}_{e', j}) - t \geq \delta / 3$ for $n$ large enough. On the other hand
\begin{equation*}
  \begin{split}
    \zeta^{\epsilon_n}_{e',j} &\leq \frac{1}{1 + \exp \left ( - \frac{t - \rft{e'}{j}^{\epsilon_n}(\zeta^{\epsilon_n}_{e', j})}{\epsilon_n} \right )} \\
    &\leq \frac{1}{1 + e^{ \delta / (3 \epsilon_n)}}.
  \end{split}
\end{equation*}
Sending $n$ to infinity, since $\delta$ is fixed,  we realize that $\zeta^{\epsilon_n}_{e' , j}$ converges to zero, which is a contradiction with the assumption that $\zeta^*_{e', j} > 0$.

\textbf{Case Ib}: Now consider the case $\rft{e'}{j}^{-1}(t) > 0$. If $\zeta^*_{e', j} > \rft{e'}{j}^{-1}(t)$, following a similar argument as  in case Ia, we can conclude that $\zeta^*_{e', j} = 0$, which is a contradiction. Hence, assume $\zeta^*_{e' , j} < \rft{e'}{j}^{-1}(t)$. Since $t \in D^c$ is a continuity point of $\rft{e'}{j}^{-1}$, using Lemma~\ref{lem:Upsilon-properties} part $(v)$, we have $\rft{e'}{j}(\zeta^*_{e' , j}) \leq t - \delta $ for some $\delta > 0$. An argument similar to that in case Ia implies $\rft{e'}{j}^{\epsilon_n}(\zeta^{\epsilon_n}_{e', j}) \leq t - \delta / 3$ for $n$ large enough. Now
\begin{align*}
    1 - \sum_{l \in e', l \neq i} \zeta^*_{e' , l} &= \lim_{n \rightarrow \infty} 1 - \sum_{l \in e', l\neq i} \zeta^{\epsilon_n}_{e' , l} \\
    &= \lim_{n \rightarrow \infty} \frac{1}{1 + \sum_{l \in e', l \neq i} \exp \left ( - \frac{ \rft{e'}{l}^{\epsilon_n}(\zeta^{\epsilon_n}_{e' , l}) - t}{\epsilon_n} \right ) } \\
    &\leq \lim_{n \rightarrow \infty} \frac{1}{1 + \exp \left ( - \frac{ \rft{e'}{j}^{\epsilon_n}(\zeta^{\epsilon_n}_{e' , j}) - t}{\epsilon_n} \right ) } \\
    &\leq \frac{1}{1 + e^{\delta / (3\epsilon_n)}}.
\end{align*}
Since $\delta$ is fixed, sending $n$ to infinity we realize that $\sum_{l \in e', l \neq i} \zeta^*_{e' , l} = 1$. This, together with our earlier assumption of Case I, means that 
\begin{equation*}
  \sum_{l \in e', l \neq i} \zeta^*_{e' , l}  = 1 \geq \sum_{l \in e', l \neq i} \left ( \rft{e'}{l}^{-1}(t) \right )^+ = \sum_{l \in e', l \neq i, \rft{e'}{l}^{-1}(t) > 0} \rft{e'}{l}^{-1}(t).
\end{equation*}
Since we have assumed that $\zeta^*_{e', j} < \rft{e'}{j}^{-1}(t)$,
this means there exists some $j' \in e'$, $j' \neq i$ such that $\rft{e'}{j'}^{-1}(t) > 0$ and $\zeta^{*}_{e' , j'} > \rft{e'}{j}^{-1}(t) > 0$. This, as we discussed above, results in $\zeta^*_{e' , j'} = 0$, which is a contradiction. Hence, $\zeta^*_{e' , j}$ should be equal to $\rft{e'}{j}^{-1}(t)$ and the proof of this case is complete. 

\textbf{Case II}: $\sum_{j \in e', j\neq i} \left ( \rft{e'}{j}^{-1}(t) \right )^+ > 1$, in which case we need to show that $1-\sum_{j \in e', j \neq i} \zeta^*_{e' , j} = 0$ to conclude \eqref{eq:theta-epsilon-*-truncation-response}.  Since 
\begin{align*}
\sum_{j \in e', j \neq i} \zeta^*_{e' , j} &= \lim_{n \rightarrow \infty} \sum_{j \in e', j \neq i} \zeta^{\epsilon_n}_{e',j} \\
&= \lim_{n \rightarrow \infty} \frac{\sum_{l \in e', l \neq i} \exp  ( - \rft{e'}{l}^{\epsilon_n}(\zeta^{\epsilon_n}_{e', l})/\epsilon  )}{e^{-t/\epsilon} + \sum_{l \in e', l \neq i} \exp  ( - \rft{e'}{l}^{\epsilon_n}(\zeta^{\epsilon_n}_{e', l})/\epsilon  )} \\
&\leq 1 < \sum_{j \in e', j\neq i} \left ( \rft{e'}{j}^{-1}(t) \right )^+,
\end{align*}
there should exist some $j^* \in e', j^* \neq i$ such that $\zeta^*_{e', j^*} < \rft{e'}{j^*}^{-1}(t)$ and $\rft{e'}{j^*}^{-1} > 0$. Since $t \notin D$, using Lemma~\ref{lem:Upsilon-properties} we have $\rft{e'}{j^*}(\zeta^*_{e', j^*}) \leq t - \delta$ for some $\delta > 0$. Using calculations similar to those in case Ia above, for $n$ large enough we have $\rft{e'}{j^*}^{\epsilon_n}(\zeta^{\epsilon_n}_{e' , j^*}) \leq t - \delta / 3$. Now, 
\begin{equation*}
  \begin{split}
    1 - \sum_{j \in e', j \neq i} \zeta^{\epsilon_n}_{e' , j} &= \frac{1}{1 + \sum_{j \in e', j \neq i} \exp \left ( - \frac{\rft{e'}{j}^{\epsilon_n}(\zeta^{\epsilon_n}_{e' , j}) - t}{\epsilon_n} \right ) } \\
    &\leq \frac{1}{1 +  \exp \left ( - \frac{\rft{e'}{j^*}^{\epsilon_n}(\zeta^{\epsilon_n}_{e' , j^*}) - t}{\epsilon_n} \right ) } \\
    &\leq \frac{1}{1 + e^{\delta / (3 \epsilon_n)}}.
  \end{split}
\end{equation*}
Since $\delta$ is fixed, sending $n$ to infinity we conclude that $1 - \sum_{j \in e', j \neq i} \zeta^*_{e' , j} = 0$, which completes the argument of this case.  

Having verified \eqref{eq:theta-epsilon-*-truncation-response} in all cases, we conclude \eqref{eq:f-inverse-recursive}.
Now, we prove \eqref{eq:l_i>t-iff-sum-f}. Note that in the above discussion we started with the rooted tree $T_{e \rightarrow i}$. However, it can be verified that all the arguments are valid if we start with the tree $T$ rooted at an arbitrary vertex $i \in V(T)$, i.e. $(T, i)$. Convergence of $\rf_{(T, i)}$ is also similar. In this case, repeating the above argument, \eqref{eq:f-inverse-recursive} becomes
 \begin{equation}
    \label{eq:f-inverse-recursive-T-i}
    \rf_{(T, i)}^{-1}(t) = t - \sum_{e \ni i} \left [1 - \sum_{j \in e, j \neq i} \left ( \rft{e}{j}^{-1}(t) \right)^+ \right ]_0^1.
  \end{equation}
On the other hand, note that
\begin{equation*}
  l_i = \lim_{n \rightarrow \infty} \pt_n(i) = \lim_{n \rightarrow \infty} \rf_{(T, i)}^{\epsilon_n}(0) = \rf_{(T, i)}(0).
\end{equation*}
Hence, $l_i > t$ iff $\rf_{(T, i)}(0) > t$, or equivalently, using part $(v)$ of Lemma~\ref{lem:Upsilon-properties}, $\rf_{(T, i)}^{-1}(t) < 0$. Substituting into \eqref{eq:f-inverse-recursive-T-i}, we conclude \eqref{eq:l_i>t-iff-sum-f}, and the proof is complete. 
\end{proof}

\edit
\begin{proof}[Proof of Lemma~\ref{lem:venkat-Gibbs-epsilon-uniqueness}]
Assume $(\theta_1, \dots, \theta_n)$ and $(\theta'_1, \dots, \theta'_n)$ are two distinct solutions to this set of equations. We claim that it can not be the case that $\theta'_i > \theta_i$ for some $i$ and $\theta'_j \leq \theta_j$ for some other $j \neq i$. Assume this holds. Since the right hand side of \eqref{eq:theta-i-g-i-fixed-point} is positive, all $\theta_i$'s and $\theta'_i$'s are positive. On the other hand, we have 
\begin{equation*}
  \frac{e^{-g_i(\theta'_i)}}{e^{-g_j(\theta'_j)}} = \frac{\theta'_i}{\theta'_j} > \frac{\theta_i}{\theta_j} = \frac{e^{-g_i(\theta_i)}}{e^{-g_j(\theta_j)}},
\end{equation*}
which means 
\begin{equation*}
  g_i(\theta'_i) + g_j(\theta_j) < g_j(\theta'_j) + g_i(\theta_i).
\end{equation*}
But $\theta'_i > \theta_i$ implies $g_i(\theta'_i) \geq g_i(\theta_i)$ since $g_i$ is nondecreasing. On the other hand, $\theta'_j \leq \theta_j$ implies $g_j(\theta'_j) \geq g_j(\theta_j)$ which is a contradiction with the above inequality. 

Hence, without loss of generality, we may assume that $\theta'_i \geq \theta_i$ for all $1 \leq i \leq n$. If it is not the case that $\theta_i = \theta'_i$ for all $1 \leq i \leq n$, then
\begin{equation*}
  \frac{\sum_{i=1}^n e^{-g_i(\theta_i)}}{\sum_{i=1}^n e^{-g_i(\theta_i)} + C} = \sum \theta_i < \sum \theta'_i =   \frac{\sum_{i=1}^n e^{-g_i(\theta'_i)}}{\sum_{i=1}^n e^{-g_i(\theta'_i)} + C}.
\end{equation*}
On the other hand, $\theta'_i \geq \theta_i$ and $g_i$ being nondecreasing implies that  $e^{-g_i(\theta_i)} \geq e^{-g_i(\theta'_i)}$ for all $1 \leq i \leq n$, which is in contradiction with the above inequality.
\end{proof}

\begin{proof}[Proof of Lemma~\ref{lem:limit-function-inverse}]
Sending $n$ to infinity in the three conditions of Definition~\ref{def:Upsilon-functions} and using the fact that $g_n$ converges pointwise to $g$ implies that $g$ is in $\Upsilon(C)$. To show \eqref{eq:lim-gn-1--g-1}, 
given $t \notin (\bigcup_{n} D_n ) \cup D$, define $x_n := g_n^{-1}(t)$ and $x := g^{-1}(t)$. Since $g_n \in \Upsilon(C)$, we have $x_n \leq g_n(x_n) \leq x_n + C$ or $x_n \in [t-C, t]$, which is a compact set. Hence, it suffices to show that any subsequential limit of $x_n$ is equal to $x$. Thus, without loss of generality, we may assume that $x_n \rightarrow x'$, and we show that $x' = x$. 

If $x'< x$, since $t$ is a continuity point for $g$, Lemma~\ref{lem:Upsilon-properties} part $(iv)$ implies that $g(x') < t$. Thereby, $g(x') \leq t - \delta$ for some $\delta > 0$. Now,
\begin{equation*}
  \begin{split}
    g_n(x_n) & = g_n(x_n) - g_n(x') + g_n(x') - g(x') + g(x') \\
    &\leq |x_n - x'| + |g_n(x') - g(x')| + g(x'),
  \end{split}
\end{equation*}
where the last inequality employs the fact that  $g_n \in \Upsilon(C)$. Since $x_n \rightarrow x'$ and $g_n$ converges pointwise to $g$, for large $n$ the first two terms could be made smaller than $\delta/3$. Thus, for large $n$, $g_n(x_n) \leq t- \delta /3$, which is a contradiction with $g_n(x_n) = t$. The assumption $x' > x$ similarly results in contradiction. As a result, $x' = x$, and the proof is complete. 
\end{proof}


\section{Characterization of the Mean Excess Function for Galton Watson Processes}
\label{sec:char-mean-excess}

In this section, we prove Theorem~\ref{thm:fixed-point-generalized-geeneralization-v1}. This is done in two steps. First, in Section~\ref{sec:lower-bound}, we prove that for any set of fixed points $\{Q_l\}_{l \geq 2}$, the LHS of \eqref{eq:mean-excecss-characterization-general-main-tatement} is no less than the RHS. This is proved in Proposition~\ref{prop:fixed-point--lower-bound-general}. Later, in Section~\ref{sec:existence-good-q}, we show that there exists a set of fixed points achieving the maximum in \eqref{eq:mean-excecss-characterization-general-main-tatement}. This is done in Proposition~\ref{prop:fixed-point-existence-Q}.

\subsection{Lower Bound}
\label{sec:lower-bound}

In this section, we use the 
indexing notation
$\nvertex$ and $\nedge$, which was introduced in Section~\ref{sec:unim-galt-wats}. The level of a vertex  $(s_1, e_1, i_1, \dots, s_k, e_k, i_k) \in \nvertex$  is defined to be $k$, and the level of $\emptyset$ is defined to be zero. Likewise, the level of an edge $(s_1, e_1, i_1, \dots, s_k, e_k) \in \nedge$ is defined to be $k$. 
Also, recall that for $v \in \nvertex$, $s \geq 2$ and $e \geq 1$, $(v, s, e)$ is an element in $\nedge$ obtained by  concatenating $(s, e)$ to the end of the string representing $v$ in $\nvertex$. Likewise, for $s \geq 2$, $e \geq 1$ and 
$1 \leq i \leq s-1$, 
$(v, s, e, i) \in \nvertex$ is defined similarly.  

We need the following tool before proving our lower bound. In the following lemma, $\Pi: \mbH_{*}(\reals) \rightarrow \mH_{*}$ is the projection defined in \eqref{eq:projection-mbH*-mH*}.

\begin{lem}
  \label{lem:kolmogorov}
Let $t \in \reals$ together with  distributions $P$ and $\{Q_k\}_{k \geq 2} \in \mQ$ be given as in Theorem \ref{thm:fixed-point-generalized-geeneralization-v1}. 
  Given a probability distribution $W$ on $\natszf$, there is a random marked rooted tree $(\bbT_W,\emptyset)$, with marks taking values in $\reals$, and with vertex set and edge set $\nvertex$ and $\nedge$, respectively, such that the underlying unmarked rooted tree is a Galton Watson tree such that the type of the root is distributed according to $W$, and the type of a non--root vertex $v = (s_1, e_1, i_1, \dots, s_r, e_r, i_r)$ in the subtree below $v$ is distributed according to 
  $\hat{P}_{s_r}$. 
  Moreover, the marks of $\bbT_W$ satisfy 
  \begin{equation}
    \label{eq:TW-marks-relation}
    \xi_{\bbT_W}(e,i) = t - \sum_{e' \ni i, e' \neq e} \left [ 1 - \sum_{j \in e', j \neq i} \xi_{\bbT_W}(e', j)^+ \right ]_0^1,
  \end{equation}
for all $(e, i) \in \evpair(\bbT_W)$. 
Furthermore, for any $L \geq 1$, conditioned on the structure of the tree up to depth $L$, the set of marks from edges in level $L$ towards vertices in that level are independent, and  for any edge--vertex pair $(e, i)$ both in level $L$, $\xi_{\bbT_W}(e, i)$ is distributed according to $Q_k$ where $k$ is the size of $e$. 

In particular, when $W = P$, the measure $\nu \in \mP(\mbH_*(\reals))$, which is defined to be the law of $[\bbT_P, \emptyset]$, is unimodular and $(\Pi)_*\nu = \ugwt(P)$. 
\end{lem}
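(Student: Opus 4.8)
The plan is to construct the random marked rooted tree $(\bbT_W, \emptyset)$ by a Kolmogorov-extension argument over the index sets $\nvertex$ and $\nedge$, and then specialize to $W = P$ and verify unimodularity. First I would build the underlying unmarked Galton--Watson tree: generate the family of types $(\Gamma_a, a \in \nvertex)$ independently, with $\Gamma_\emptyset \sim W$ and $\Gamma_{(s_1,e_1,i_1,\dots,s_r,e_r,i_r)} \sim \hat P_{s_r}$, and let the present vertices/edges be those whose indices are consistent with these types (as in Definition~\ref{def:ugwt(P)} and the surrounding discussion). The content of the lemma is in producing the marks $\xi_{\bbT_W}(e,i)$ so that the recursive identity \eqref{eq:TW-marks-relation} holds and so that, conditioned on the tree structure up to depth $L$, the level-$L$ edge-to-vertex marks are independent with $\xi_{\bbT_W}(e,i) \sim Q_k$ where $k = |e|$.

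The key construction: for a non-root index, think of the mark $\xi_{\bbT_W}(e,i)$ directed from an edge $e$ at level $\ell$ toward its child vertex $i$ (or, more conveniently, toward the parent) as a ``message''. I would define these messages top-down from the deleted-subtree response picture: for an edge $e$ of size $k$ rooted below a vertex, and each child $j \in e$, the subtree hanging off $j$ is again a $\hat P$-type Galton--Watson tree, and I can define $\xi_{\bbT_W}(e,j)$ by the recursion
\begin{equation*}
  \xi_{\bbT_W}(e,j) = t - \sum_{e' \ni j,\, e' \neq e} \left[ 1 - \sum_{l \in e',\, l \neq j} \xi_{\bbT_W}(e',l)^+ \right]_0^1,
\end{equation*}
which is exactly \eqref{eq:TW-marks-relation} restricted to a directed edge. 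To make this a well-defined random variable I would invoke the fixed-point property: since $\{Q_k\}_{k\ge 2} \in \mQ$, i.e. $Q_k = F_{P,t}^{(k)}(\{Q_l\})$, the distributional recursion is consistent, so I can realize the messages level-by-level with the correct marginals. Concretely, I would define, for each $L$, the joint law of the tree truncated at depth $L$ together with the level-$L$ messages drawn i.i.d.\ (conditioned on structure) from the appropriate $Q_k$, then propagate inward via the recursion; the fixed-point equation \eqref{eq:general-fixed-points} guarantees that these finite-depth laws are consistent as $L$ varies (the level-$(L-1)$ messages obtained by applying the recursion to i.i.d.\ level-$L$ messages again have the $Q_k$ marginals and the conditional independence structure). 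Kolmogorov's extension theorem on the product space $\reals^{\evpair}$ then yields the infinite random marked tree $(\bbT_W,\emptyset)$ with all the stated properties, and the last clause about level-$L$ marks being conditionally independent with law $Q_{|e|}$ is built in by construction.

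For the final assertions with $W = P$: let $\nu \in \mP(\mbH_*(\reals))$ be the law of $[\bbT_P,\emptyset]$. That $(\Pi)_*\nu = \ugwt(P)$ is immediate, since forgetting marks leaves exactly the Galton--Watson tree of Definition~\ref{def:ugwt(P)}. For unimodularity I would verify the mass-transport identity $\int f\, d\vec\nu = \int \nabla f\, d\vec\nu$ for nonnegative Borel $f$ on $\mbH_{**}(\reals)$. The natural approach is to mimic the proof that $\ugwt(P)$ is unimodular (Proposition~\ref{prop:ugwt-unimodular}, proved in Appendix~\ref{sec:unimodularity-ugwtp}): decompose $\int f\, d\vec\nu$ over edges incident to the root by size $k$ and by the position of the root within the edge, use the size-biasing relation \eqref{eq:size-biased-distirbution} between $P$ and $\hat P_k$ to re-root the edge at another of its vertices, and check that the marks transform correctly under this re-rooting. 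The crucial point is that the mark recursion \eqref{eq:TW-marks-relation} is ``local and symmetric'' in the sense that the value $\xi_{\bbT_W}(e,i)$ depends only on the two subtrees that $e$ separates, so re-rooting $e$ at $j$ instead of $i$ maps the law of the marked configuration seen from $i$ to the law seen from $j$ in precisely the way required by $\nabla$. I expect this re-rooting/mark-compatibility verification to be the main obstacle: one must set up careful bijective bookkeeping between $\nvertex$-indices viewed from $\emptyset$ and viewed from a neighbor, track how the recursively-defined marks on both ``sides'' of the root edge are preserved, and combine this with the standard size-biasing computation — none of the individual pieces is hard, but getting the marked version exactly right requires care.
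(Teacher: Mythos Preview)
Your proposal is correct and takes essentially the same approach as the paper: the paper also builds the marked tree by Kolmogorov extension, defining for each depth $L$ the level-$L$ downward marks as conditionally i.i.d.\ with law $Q_{|e|}$ and propagating inward via the recursion (with consistency guaranteed by $\{Q_k\}\in\mQ$), then defining the upward marks from \eqref{eq:TW-marks-relation}. For unimodularity the paper likewise mimics the proof of Proposition~\ref{prop:ugwt-unimodular}, introducing the auxiliary root law $\wtP_k$ and using that the marks $\xi_{\bbT_{\wtP_k}}((k,1),v)$, $v\in(k,1)$, are i.i.d.\ $Q_k$ to obtain the exchangeability of the marked subtrees under re-rooting within the root edge.
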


\begin{proof}
  We first generate the collection of random variables $\Gamma_\emptyset, (\Gamma_v, X_v)_{v \in \nvertex \setminus \{\emptyset\}}$, such that $\Gamma_v$ for $v \in \nvertex$ takes value in $\natszf$ and $X_v$ for $v \in \nvertex \setminus \{\emptyset \}$ takes values in $\reals$, with the following properties: $(i)$ $(\Gamma_v)_{v \in \nvertex}$ are independent from each other such that $\Gamma_\emptyset$ has law $W$ and for $v = (s_1, e_1, i_1, \dots, s_r, e_r, i_r) \in \nvertex$, $\Gamma_v$ has law $\hat{P}_{s_r}$; $(ii)$ For any $v \in \nvertex \setminus \{\emptyset\}$, we have 
  \begin{equation}
    \label{eq:Xv-relation}
    X_v  = t- \sum_{k=2}^{h(\Gamma_v)} \sum_{l=1}^{\Gamma_v(k)} \left [ 1 - X_{(v, k, l , 1)}^+ - \dots - X_{(v, k, l, k-1)}^+ \right]_0^1;
  \end{equation}
$(iii)$ For any $L \geq 1$, $X_v$ for nodes $v$ at level $L$ are independent and for $v = (s_1, e_1, i_1, \dots, s_L, e_L, i_L)$ at level $L$, $X_v$ is  distributed according to 
$Q_{s_L}$.

We construct the law of the above random variables satisfying the above conditions  using Kolmogorov's extension theorem (see, for instance, \cite{tao2011introduction}). For an integer $L \geq 1$, define $A_L$ to be the set of nodes in $\nvertex$ with level at most $L$. For each $L \geq 1$, we introduce  the law of a subset of the above family of random variables, namely $\Gamma_\emptyset, (X_v, \Gamma_v)_{v \in A_L \setminus \{\emptyset\}}$, and denote this law by $\nu_L$. To start with, we generate $\Gamma_v$, $v \in A_L$ independently such that $\Gamma_\emptyset$ has law $W$ and $\Gamma_v$ for $v=(s_1, e_1, i_1, \dots, s_r, e_r, i_r)$, $X_v$ has law $\hat{P}_{s_r}$. In the next step, we generate $X_v$ for nodes $v$ with depth equal to $L$ independently such that $X_v$ for $v = (s_1, e_1, i_1, \dots, s_L, e_L, i_L)$ has law $Q_{s_L}$. 
Next, we define $X_v$ for nodes at levels $1$ through $L-1$ using the relation \eqref{eq:Xv-relation} inductively starting from level $L-1$ all the way up to level $1$. Using the fact that  $Q_k = F_{P,t}^{(k)}(\{Q_l\}_{l \geq 2})$, see \eqref{eq:generalized-fixed-point}, and also that $\Gamma_v$ for $v=(s_1, e_1, i_1, \dots, s_r, e_r, i_r)$ has law $\hat{P}_{s_r}$, it is evident that the set of measures $\{\nu_L\}_{L \geq 1}$ are consistent. Therefore, Kolmogorov's extension theorem implies that the set of random variables with the conditions stated above exist. 

Now, we turn these random variables into a marked random rooted tree $\bbT_W$ having vertex set and edge set $\nvertex$ and $\nedge$, respectively. To do so, we first construct the underlying unmarked tree $\bbT_W$ given the types $\Gamma_v$, $v \in \nvertex$. In the next step, for any edge $e$ and vertex $v$ being at the same level in the tree, we set $\xi_{\bbT_W}(e, v) := X_v$. 
It can be easily seen that the ``upward'' marks, i.e. marks from edges towards nodes above them, are immediately unambiguously defined.
To see this, 
for an edge $e$ at level 1, we define $\xi_{\bbT_W}(e, \emptyset)$ using \eqref{eq:TW-marks-relation}.
 We then inductively go down one level at a time, to define all 
 the other upward marks.

Now, we show that the measure $\nu$, which is defined to be the law of $[\bbT_P, \emptyset]$, is unimodular. Our proof technique is similar to that of Lemma~\ref{lem:int-f-dvmu-gen-UGWT} in Appendix~\ref{sec:unimodularity-ugwtp}. Take a Borel function $f: \mbH_{**}(\reals) \rightarrow [0,\infty)$ and note that due to our above construction,
\begin{equation*}
  \int f d \vnu = \int \partial f d \nu = \ev{\sum_{k=2}^{h(\Gamma_\emptyset)} \sum_{l=1}^{\Gamma_\emptyset(k)} f(\bbT_P, (k, l), \emptyset)}.
\end{equation*}
Using the symmetry in the construction, we have
\begin{equation*}
\int f d \vnu = \sum_{\gamma \in \natszf} P(\gamma) \sum_{k=2}^{h(\gamma)} \gamma(k) \ev{f(\bbT_P, (k, 1), \emptyset) | \Gamma_\emptyset = \gamma}.
\end{equation*}
As all the terms are nonnegative, we may change the order of summation. Also using the definition of $\hat{P}$, if $\Gamma$ is a random variable with law $P$, we get
\begin{equation*}
\int f d \vnu = \sum_{k=2}^\infty \ev{\Gamma(k)} \sum_{\gamma \in \natszf} \hat{P}_k(\gamma) \ev{f(\bbT_P, (k, 1), \emptyset) | \Gamma_\emptyset = \gamma + \typee_k}.
\end{equation*}
Now, for each $k \geq 2$, define $\wtP_k$ to be the law of the random variable $\Gamma_k + \typee_k$ where $\Gamma_k$ has law $\hat{P}_k$. With this, for each $k \geq 2$, the inner summation over $\gamma$ in the above expression could be interpreted as an expectation with respect to a tree with root type distribution $\wtP_k$, i.e. $\bbT_{\wtP_k}$. In fact, this shows that 
\begin{equation}
  \label{eq:int-f-dvmu-marked-kolmogorov}
   \int f d \vnu = \sum_{k=2}^\infty \ev{\Gamma(k)} \ev{f(\bbT_{\wtP_k}, (k, 1), \emptyset)}.
\end{equation}
Now, note that for each $k \geq 2$, due to the definition of $\wtP_k$, the tree $(\bbT_{\wtP_k})_{(k,1) \rightarrow \emptyset}$ rooted at $\emptyset$ (which we recall is obtained by removing $(k,1)$ and then taking the subtree rooted at $\emptyset$) has an underlying unmarked structure which is precisely $\gwt_k(P)$. This, in particular, implies that $\xi_{\bbT_{\wtP_k}}((k,1), \emptyset)$ has law $Q_k$. Also, by construction, for $1 \leq i \leq k-1$, $(\bbT_{\wtP_k})_{(k,1) \rightarrow (k, 1, i)}$ have independent underlying unmarked structures, all with law $\gwt_k(P)$. Moreover, the downward marks in $(\bbT_{\wtP_k})_{(k,1) \rightarrow \emptyset}$ and $(\bbT_{\wtP_k})_{(k,1) \rightarrow (k, 1, i)}$ for $1 \leq i \leq k-1$ are independent from each other and have the same distribution. Thereby, $\xi_{\bbT_{\wtP_k}}((k,1), v)$ for $v \in (k,1)$ are independent  and all have distribution $Q_k$. Now, we claim that for all $v \in (k,1)$, $(\bbT_{\wtP_k})_{(k,1) \rightarrow v}$ are equal in distribution. To see this, note that for all $v \in (k,1)$, the unmarked structure of  $(\bbT_{\wtP_k})_{(k,1) \rightarrow v}$ is $\gwt_k(P)$ and  the downward marks are constructed following the same recipes. As was discussed above, to construct upward marks, we start from the root and go down inductively. The fact that $\xi_{\bbT_{\wtP_k}}((k,1), v)$ for $v \in (k,1)$ are i.i.d. with law $Q_k$ guarantees that the downward marks in $(\bbT_{\wtP_k})_{(k,1) \rightarrow v}$ are equally distributed for all $v \in (k,1)$. This in particular implies that  for $1 \leq i \leq k-1$, 
\begin{equation*}
  \ev{f(\bbT_{\wtP_k}, (k, 1), \emptyset)}  = \ev{f(\bbT_{\wtP_k}, (k, 1), (k,1,i))}.
\end{equation*}
Using this and writing \eqref{eq:int-f-dvmu-marked-kolmogorov} for $\int \nabla f d \mu$, we conclude that $\int f d \vnu = \int \nabla f d \vnu$ which completes the proof of the unimodularity of $\nu$. 
\end{proof}

Before proving our lower bound, we state a modified version of our variational representation in Proposition~\ref{prop:variational} and a general lemma. The proof of the following lemmas are given at the end of this section.

\begin{lem}
  \label{lem:extended-variational-formula}
  Assume $\mu$ is a distribution on $\mH_*$ with $\deg(\mu) < \infty$  and $\nu$ is a unimodular distribution on $\mbH_*(\reals)$ such that $\left (\Pi \right)_* \nu = \mu$.
  Then, for any Borel allocation $\Theta: \mH_{**} \rightarrow [0,1]$ and any function $f: \mbH_*(\reals) \rightarrow [0,1]$, we have 
  \begin{equation*}
    \int (\partial \Theta - t)^+ d \mu  \geq \int \tilde{f}_\text{min} d \vnu - t \int f d \nu,
  \end{equation*}
  where
  \begin{equation*}
    \tilde{f}_\text{min}([\bH, e, i]) := \frac{1}{|e|} \min_{j \in e} f([\bH,  j]).
  \end{equation*}
\end{lem}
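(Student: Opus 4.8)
The plan is to deduce the claim from the inequality half of Proposition~\ref{prop:variational}, transported to the marked space $\mbH_*(\reals)$ and applied to the unimodular measure $\nu$, by lifting $\Theta$ along the mark-forgetting projection. The subtlety the lemma is really designed to handle is that $\mu$ itself need not be unimodular, so one cannot run the variational argument on $\mH_*$ directly; the measure $\nu$ supplies exactly the unimodular object on which the $\nabla$/unimodularity manipulation can be performed, and the hypothesis $(\Pi)_*\nu = \mu$ is what lets us push the resulting inequality back down to $\mu$.

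Concretely, first I would lift $\Theta$ by setting $\bar\Theta([\bH,e,i]) := \Theta([H,e,i])$, i.e.\ $\bar\Theta = \Theta \circ \Pi_{**}$ where $\Pi_{**}:\mbH_{**}(\reals)\to\mH_{**}$ removes marks. Since $\Pi_{**}$ is continuous (hence Borel) and $\Theta$ is Borel, $\bar\Theta$ is Borel; since forgetting marks leaves the root edge and its vertices unchanged, $\sum_{j\in e}\bar\Theta(\bH,e,j)=\sum_{j\in e}\Theta(H,e,j)=1$, so $\bar\Theta$ is a Borel allocation on $\mbH_{**}(\reals)$, and $\partial\bar\Theta(\bH,i)=\partial\Theta(H,i)$. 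Consequently $(\partial\bar\Theta - t)^+$ is the pullback through $\Pi$ of $(\partial\Theta - t)^+$, and since $(\Pi)_*\nu=\mu$ the change of variables formula gives $\int(\partial\bar\Theta-t)^+\,d\nu=\int(\partial\Theta-t)^+\,d\mu$. I would also record that $\deg(\nu)=\int\deg_H(i)\,d\nu=\int\deg_H(i)\,d\mu=\deg(\mu)<\infty$ because the degree depends only on the unmarked structure, so $\vnu$ has finite mass and every integral below is finite (indeed $0\le\partial\bar\Theta\le\deg_H(i)$ and $f\in[0,1]$, so the products are dominated by integrable functions).

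Next I would rerun the computation from the proof of Proposition~\ref{prop:variational} verbatim in the marked setting, which is legitimate because that argument used nothing specific to unmarked hypergraphs: the pointwise bound $x^+\ge xy$ for $y\in[0,1]$; the identity $f\,\partial\bar\Theta=\partial(\tilde f\,\bar\Theta)$ with $\tilde f(\bH,e,i):=f(\bH,i)$; the definition of $\vnu$, namely $\int\partial g\,d\nu=\int g\,d\vnu$; unimodularity of $\nu$ in the form $\int h\,d\vnu=\int\nabla h\,d\vnu$; and finally $\nabla(\tilde f\,\bar\Theta)(\bH,e,i)=\tfrac1{|e|}\sum_{j\in e}f(\bH,j)\bar\Theta(\bH,e,j)\ge\tfrac1{|e|}\min_{j\in e}f(\bH,j)=\tilde f_\text{min}(\bH,e,i)$, the last step using that $\bar\Theta$ is a nonnegative allocation. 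Chaining these,
\begin{align*}
  \int(\partial\Theta-t)^+\,d\mu &= \int(\partial\bar\Theta-t)^+\,d\nu \;\ge\; \int(\partial\bar\Theta-t)f\,d\nu \\
  &= \int\partial(\tilde f\,\bar\Theta)\,d\nu - t\int f\,d\nu
     = \int\tilde f\,\bar\Theta\,d\vnu - t\int f\,d\nu \\
  &= \int\nabla(\tilde f\,\bar\Theta)\,d\vnu - t\int f\,d\nu
     \;\ge\; \int\tilde f_\text{min}\,d\vnu - t\int f\,d\nu,
\end{align*}
which is exactly the asserted inequality.

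I do not expect a genuine obstacle here; the proof is bookkeeping with projections. The only points needing a little care are measurability (of $\bar\Theta$, handled by continuity of $\Pi_{**}$, and of $\tilde f_\text{min}$, a minimum over the finite vertex set of the root edge, handled exactly as in the unmarked case) and the verification that each quantity appearing sees the marks only through the well-defined objects above, in particular that $(\partial\Theta-t)^+$ factors through $\Pi$ and that $\tilde f_\text{min}$ as defined in the statement coincides with the $\nabla$-type quantity produced by the variational computation. To keep the write-up short I would first state as a one-line remark that the inequality in Proposition~\ref{prop:variational} holds on $\mbH_*(\Xi)$ for any unimodular $\nu$ with $\deg(\nu)<\infty$, and then simply instantiate it with $\nu$, $\bar\Theta$, and $f$.
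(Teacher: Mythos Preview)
Your proof is correct and follows essentially the same approach as the paper: lift $\Theta$ to $\mbH_{**}(\reals)$ via the mark-forgetting map, use $(\Pi)_*\nu=\mu$ to identify $\int(\partial\Theta-t)^+\,d\mu$ with the corresponding $\nu$-integral, and then run the chain $x^+\ge xy$, $\int f\,\partial\bar\Theta\,d\nu=\int \tilde f\,\bar\Theta\,d\vnu=\int\nabla(\tilde f\,\bar\Theta)\,d\vnu\ge\int\tilde f_{\min}\,d\vnu$ using unimodularity of $\nu$. One small remark on your motivating comment: while it is true that $\mu$ need not be unimodular here, in the paper's application $\mu=\ugwt(P)$ \emph{is} unimodular; the real reason the lemma is needed is that the test function $f$ depends on the marks, so the variational argument must be carried out on $\mbH_*(\reals)$ rather than on $\mH_*$.
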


\begin{lem}
  \label{lem:x_i<x_j+==>sum-x_i+<1}
  Assume $x_1, \dots, x_n$ are real numbers. Then $x_i < \left [1 - \sum_{j \neq i} x_j^+ \right ]_0^1$ for all $1 \leq i \leq n$ if and only if $\sum x_i^+ < 1$.
\end{lem}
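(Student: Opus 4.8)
The plan is to begin by eliminating the clipping operation $\left[\cdot\right]_0^1$. Since $x_j^+ \geq 0$ for every $j$, the quantity $T_i := \sum_{j \neq i} x_j^+$ is nonnegative, hence $1 - T_i \leq 1$, and therefore $\left[1 - T_i\right]_0^1 = (1 - T_i)^+ = \max\{1 - T_i,\, 0\}$. Setting $S := \sum_{j=1}^n x_j^+$ and noting $T_i = S - x_i^+$, the hypothesis ``$x_i < \bigl[1 - \sum_{j \neq i} x_j^+\bigr]_0^1$ for all $i$'' becomes exactly the family of inequalities $x_i < \max\{1 - S + x_i^+,\, 0\}$ for $1 \leq i \leq n$. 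Both implications become short once the statement is reformulated this way.

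For the reverse implication ($\sum x_i^+ < 1 \Rightarrow$ the inequalities), I would assume $S < 1$. Then $1 - S + x_i^+ \geq 1 - S > 0$, so $\max\{1 - S + x_i^+, 0\} = 1 - S + x_i^+$, and the target inequality $x_i < 1 - S + x_i^+$ rearranges to $x_i - x_i^+ < 1 - S$, which holds because the left-hand side is $\leq 0$ while the right-hand side is $> 0$.

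For the forward implication, I would argue by contradiction: suppose all the inequalities $x_i < \max\{1 - S + x_i^+, 0\}$ hold but $S \geq 1$. Since $S = \sum_j x_j^+ \geq 1 > 0$, some index $i$ has $x_i^+ > 0$, i.e. $x_i > 0$, so $x_i = x_i^+$. For this $i$, the inequality $x_i < \max\{1 - S + x_i^+, 0\}$ together with $x_i > 0$ forces $1 - S + x_i^+ > 0$ and $\max\{1 - S + x_i^+, 0\} = 1 - S + x_i^+$; substituting $x_i = x_i^+$ yields $x_i^+ < 1 - S + x_i^+$, i.e. $S < 1$, contradicting $S \geq 1$. (If one prefers to avoid contradiction, the forward direction can instead be split into the cases ``all $x_i \leq 0$'', where $S = 0 < 1$ trivially, and ``some $x_i > 0$'', handled exactly as above.)

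I do not expect any genuine obstacle here: the statement is purely arithmetic about the positive-part and clipping operations, and the only delicate point is tracking which branch of $\max\{\cdot,0\}$ is active in each case, which the reduction to $x_i < \max\{1 - S + x_i^+, 0\}$ makes transparent.
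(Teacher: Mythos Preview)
Your proof is correct and follows essentially the same approach as the paper's: both reduce to the case where some $x_i > 0$ in the forward direction and use $x_i = x_i^+$ to extract $S < 1$, and both verify the reverse direction by showing the clipping is inactive and the inequality reduces to $x_i - x_i^+ < 1 - S$. Your preliminary observation that $[1 - T_i]_0^1 = (1 - T_i)^+$ (since $T_i \geq 0$) and your introduction of $S$ and $T_i$ make the bookkeeping slightly more transparent, but the logical content is the same.
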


\begin{prop}
  \label{prop:fixed-point--lower-bound-general}
  Assume $P$ is a distribution on $\natszf$ such that 
$\ev{\norm{\Gamma}_1} < \infty$ 
where $\Gamma$ has law $P$. Then, with $\mu = \ugwt(P)$, for any $t \in \reals$, and any set of probability distributions on real numbers $\{Q_k\}_{k \geq 2}$ such that for all $k \geq 2$ we have $Q_k = F_{P, t}^{(k)}(\{Q_l\}_{l \geq 2})$, it holds that
  \begin{equation*}
    \Phi_\mu(t) \geq  \left ( \sum_{k=2}^\infty \frac{\ev{\Gamma(k)}}{k} \pr{ \sum_{i=1}^k X_{k,i}^+ < 1}\right ) - t \pr{\sum_{k=2}^{h(\Gamma)} \sum_{i=1}^{\Gamma(k)} Y_{k,i} > t}.
  \end{equation*}
  Here, in the first expression, $\Gamma$ is a random variable on $\natszf$ with law $P$ and $\{X_{k,i}\}_{k,i}$ are independent such that $X_{k,i}$ has law $Q_k$. Also, in the second expression, $\Gamma$ has law $P$ and $\{ Y_{k,i} \}_{k,i}$ are independent from each other and from $\Gamma$ such that $Y_{k,i}$ has the  law of the random variable $ [ 1 - (Z_{1}^+ + \dots + Z_{k-1}^+)]_0^1$ where $Z_j$ are i.i.d.\ with law $Q_k$.
\end{prop}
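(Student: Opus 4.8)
The strategy is to apply the variational lower bound from Lemma~\ref{lem:extended-variational-formula} with a carefully chosen test function $f$ built from the fixed-point distributions $\{Q_k\}$. Concretely, I would first invoke Lemma~\ref{lem:kolmogorov} with $W = P$ to produce the unimodular marked tree $\nu \in \mP(\mbH_*(\reals))$ with $(\Pi)_* \nu = \ugwt(P) = \mu$, whose marks $\xi_{\bbT_P}$ satisfy the recursion \eqref{eq:TW-marks-relation}. Then I would take the test function
\begin{equation*}
  f([\bH, i]) := \oneu{\text{all marks } \xi_{\bH}(e,i) \text{ into } i \text{ satisfy } \xi_{\bH}(e,i) < \textstyle\left[ 1 - \sum_{j \in e, j \neq i} \xi_{\bH}(e,j)^+ \right]_0^1},
\end{equation*}
which by Lemma~\ref{lem:x_i<x_j+==>sum-x_i+<1} is equivalent to saying that for each edge $e \ni i$, the ``downward'' marks into the vertices of $e$ other than $i$ sum (after taking positive parts) to something $< 1$, in the appropriate truncated sense. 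The point of choosing $f$ this way is that, by \eqref{eq:TW-marks-relation}, the condition $\{f([\bH,i]) = 1\}$ can be re-expressed as $\partial \xi_{\bH}(i) > t$ (summing the recursion over all $e \ni i$), so $f$ is essentially the indicator of the event that the ``load'' at the root exceeds $t$.

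Next I would compute the two integrals on the right-hand side of Lemma~\ref{lem:extended-variational-formula} for this $f$. For $\int f \, d\nu$: this is the probability, under the tree $\bbT_P$, that $\sum_{e \ni \emptyset} \left[ 1 - \sum_{j \in e, j \neq \emptyset} \xi(e,j)^+ \right]_0^1 > t$. Using the distributional structure from Lemma~\ref{lem:kolmogorov} — namely that the underlying tree has root type $\Gamma \sim P$ and that, conditioned on the first-level structure, the downward marks $\xi_{\bbT_P}(e,j)$ for $j$ a child of $\emptyset$ through an edge $e$ of size $k$ are independent with law $Q_k$ — this probability is exactly $\pr{\sum_{k=2}^{h(\Gamma)} \sum_{i=1}^{\Gamma(k)} Y_{k,i} > t}$, where $Y_{k,i} \stackrel{d}{=} [1 - (Z_1^+ + \cdots + Z_{k-1}^+)]_0^1$ with $Z_j$ i.i.d.\ $Q_k$; this matches the second term. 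For $\int \tilde f_\text{min} \, d\vnu$: by the definition of $\vnu$ this equals $\int \partial \tilde f_\text{min} \, d\nu = \ev{\sum_{e \ni \emptyset} \frac{1}{|e|} \min_{j \in e} f([\bbT_P, j])}$. Here I would use unimodularity of $\nu$ to re-root: by the reasoning in Lemma~\ref{lem:kolmogorov} (the displayed identity \eqref{eq:int-f-dvmu-marked-kolmogorov} and the size-biasing argument), $\int g \, d\vnu = \sum_{k \geq 2} \ev{\Gamma(k)} \ev{g(\bbT_{\wtP_k}, (k,1), \emptyset)}$, and under $\bbT_{\wtP_k}$ the $k$ vertices of the root edge are exchangeable, each seeing an independent $\gwt_k(P)$ subtree with an independent downward mark of law $Q_k$. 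The event $\{\min_{j \in e} f = 1\}$ then becomes the event that all $k$ of the relevant mark-sums are $< 1$ simultaneously, and a second application of Lemma~\ref{lem:x_i<x_j+==>sum-x_i+<1} collapses this to $\{\sum_{i=1}^k X_{k,i}^+ < 1\}$ with $X_{k,i}$ i.i.d.\ $Q_k$. The factor $\frac{1}{|e|} = \frac{1}{k}$ and the weight $\ev{\Gamma(k)}$ then reproduce $\sum_{k \geq 2} \frac{\ev{\Gamma(k)}}{k} \pr{\sum_{i=1}^k X_{k,i}^+ < 1}$, the first term. Combining, Lemma~\ref{lem:extended-variational-formula} gives $\Phi_\mu(t) = \int (\partial \Theta - t)^+ d\mu \geq \int \tilde f_\text{min} d\vnu - t \int f d\nu$, which is exactly the claimed inequality.

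**Main obstacle.** The routine but delicate part is bookkeeping the two combinatorial identities with the $[\,\cdot\,]_0^1$ truncation: verifying that $\{f([\bH,i])=1\}$ is precisely $\{\partial \xi_{\bH}(i) > t\}$ via \eqref{eq:TW-marks-relation}, and that $\{\min_{j \in e} f = 1\}$ on the re-rooted edge is precisely $\{\sum X_{k,i}^+ < 1\}$, both require applying Lemma~\ref{lem:x_i<x_j+==>sum-x_i+<1} in the right direction and being careful that the marks appearing are independent with the correct laws. The genuinely substantive step, however, is the re-rooting/exchangeability computation of $\int \tilde f_\text{min} \, d\vnu$: one must correctly track that under the size-biased tree $\bbT_{\wtP_k}$ the downward marks into all $k$ endpoints of the root edge are i.i.d.\ $Q_k$ (this is where the fixed-point property $Q_k = F_{P,t}^{(k)}(\{Q_l\})$ is essential, since it guarantees consistency of the Kolmogorov construction), and that they are independent of each other — this is the heart of why a clean product-form probability $\pr{\sum_{i=1}^k X_{k,i}^+ < 1}$ emerges. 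I would lean on the already-proved unimodularity of $\nu$ and the structural claims in Lemma~\ref{lem:kolmogorov} to handle this, rather than re-deriving the exchangeability from scratch.
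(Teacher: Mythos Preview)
Your approach is the paper's: same unimodular marked measure $\nu$ from Lemma~\ref{lem:kolmogorov}, same variational bound from Lemma~\ref{lem:extended-variational-formula}, and the same test function --- the paper writes it directly as $f=\oneu{\partial F>t}$ with $F([\bH,e,i])=[1-\sum_{j\neq i}\xi_{\bH}(e,j)^+]_0^1$, which on $\text{supp}(\nu)$ coincides with your mark-wise indicator via the recursion \eqref{eq:TW-marks-relation}, and then evaluates the two integrals exactly as you outline (using \eqref{eq:int-f-dvmu-marked-kolmogorov} and Lemma~\ref{lem:x_i<x_j+==>sum-x_i+<1} for $\int\tilde f_{\min}\,d\vnu$, and the level-one independence of marks for $\int f\,d\nu$). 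Two cosmetic fixes: the event $\{f=1\}$ re-expresses as $\partial F(i)>t$, not $\partial\xi_{\bH}(i)>t$ (for each fixed $e\ni i$ the recursion gives $\xi_{\bH}(e,i)=t-\partial F(i)+F(e,i)$, so your displayed inequality at $i$ is exactly $\partial F(i)>t$, no summation over $e$ needed); and Lemma~\ref{lem:x_i<x_j+==>sum-x_i+<1} asserts an equivalence only when the inequality holds at \emph{every} vertex of the edge, so it does not support your first informal restatement of $f$, though it is precisely the tool you (correctly) invoke later to collapse $\{\min_{j\in e}f=1\}$ into $\{\sum_{i=1}^k X_{k,i}^+<1\}$.
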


\begin{proof}
Note that the condition $\ev{\norm{\Gamma}_1} < \infty$ guarantees that $\deg(\mu)< \infty$. 
  Define the functions $F: \mbH_{**}(\reals) \rightarrow [0,1]$ and $f: \mbH_* \rightarrow [0,1]$ as 
\begin{equation*}
  F([\bH, e, i]) = \left [ 1 - \sum_{j \in e, j\neq i} \xi_{\bH}(e, j)^+ \right]_0^1,
\end{equation*}
and $  f([\bH, i]) := \oneu{\partial F([\bH,i]) > t}$.
  Using the unimodular measure $\nu$ constructed in Lemma~\ref{lem:kolmogorov} and the variational characterization in Lemma~\ref{lem:extended-variational-formula}, we have 
  \begin{equation}
    \label{eq:lower-bound-proof-var-char}
    \Phi_\mu(t) \geq \int \tilde{f}_\text{min} d \vnu - t \int f d \nu,
  \end{equation}
where 
\begin{equation*}
  \tilde{f}_\text{min}(\bH, e, j) = \frac{1}{|e|} \min_{j \in e} f(\bH, j) = \frac{1}{|e|} \one{\partial F (\bH,j) > t \,\,\forall\, j \in e}.
\end{equation*}
Following the proof of Lemma~\ref{lem:kolmogorov}, and in particular Equation~\eqref{eq:int-f-dvmu-marked-kolmogorov} therein, we have 
\begin{equation}
\label{eq:int-tildefmin-lowebound-var}
 \int \tilde{f}_\text{min} d \vnu = \sum_{k \geq 2} \ev{\Gamma(k)} \ev{\tilde{f}_\text{min}(\bbT_{\wtP_k},(k,1),\emptyset)}.
\end{equation}
Due to the definition of $f$, $  \tilde{f}_\text{min}(\bbT_{\wtP_k}, (k,1), \emptyset) = \frac{1}{k} \one{\partial F(\bbT_{\wtP_k}, v) > t \,\,\, \forall \, v \in (k,1)}$. For $v \in (k,1)$, 
\begin{align*}
\partial F(\bbT_{\wtP_k}, v) &= \sum_{e' \ni v} F(\bbT_{\wtP_k}, e', v) \\
&= \left[ 1 - \sum_{w \in (k,1), w \neq v} \xi_{\bbT_{\wtP_k}}((k,1),w)^+ \right]_0^1 +\\
&\qquad \sum_{e' \ni v,e' \neq (k,1)} \left [ 1 - \sum_{w \in e', w \neq v} \xi_{\bbT_{\wtP_k}}(e',w)^+ \right]_0^1.
\intertext{Using \eqref{eq:TW-marks-relation}, this yields}
\partial F(\bbT_{\wtP_k}, v) &= \left[ 1 - \sum_{w \in (k,1), w \neq v} \xi_{\bbT_{\wtP_k}}((k,1),w)^+ \right]_0^1 + t - \xi_{\bbT_{\wtP_k}}((k,1),v).
\end{align*}
Therefore, $\partial F(\bbT_{\wtP_k}, v) > t$ for all $v \in (k,1)$ if and only if for all $v \in (k,1)$, 
\begin{equation*}
  \xi_{\bbT_{\wtP_k}}((k,1),v) < \left[ 1 - \sum_{w \in (k,1), w \neq v} \xi_{\bbT_{\wtP_k}}((k,1),w)^+ \right]_0^1.
\end{equation*}
Using Lemma~\ref{lem:x_i<x_j+==>sum-x_i+<1}, this is equivalent to $\sum_{v \in (k,1)} \xi_{\bbT_{\wtP_k}}((k,1), v)^+ < 1$. Therefore, 
\begin{equation*}
  \ev{\tilde{f}_\text{min}(\bbT_{\wtP_k},(k,1),\emptyset)} = \frac{1}{k} \pr{\sum_{v \in (k,1)} \xi_{\bbT_{\wtP_k}}((k,1), v)^+ < 1}.
\end{equation*}
But as was shown in Lemma~\ref{lem:kolmogorov}, $\xi_{\bbT_{\wtP_k}}((k,1), v)$ for $v \in (k,1)$ are i.i.d. with law $Q_k$. Consequently, substituting in \eqref{eq:int-tildefmin-lowebound-var} we get
\begin{equation}
  \label{eq:int=tildefmin-lowerbound-simplified}
  \int \tilde{f}_\text{min} d \vnu = \sum_{k=2}^\infty \frac{\ev{\Gamma(k)}}{k} \pr{ \sum_{i=1}^k X_{k,i}^+ < 1},
\end{equation}
where $X_{k,i}$, $k \geq 2, 1 \leq i \leq k$ are independent such that $X_{k,i}$ has law $Q_k$. 

On the other hand,
\begin{align*}
  \int f d \nu = \pr{\partial F(\bbT_P, \emptyset) > t} &= \pr{\sum_{k=2}^{h(\Gamma_\emptyset)} \sum_{i=1}^{\Gamma_\emptyset(k)} F(\bbT_P, (k,i), \emptyset) > t} \\
&= \pr{\sum_{k=2}^{h(\Gamma_\emptyset)} \sum_{i=1}^{\Gamma_{\emptyset}(k)} \left [ 1 - \sum_{j=1}^{k-1} \xi_{\bbT_P}((k,i), (k,i,j))^+ \right]_0^1 > t}.
\end{align*}
But, as we saw in Lemma~\ref{lem:kolmogorov}, $\xi_{\bbT_P}((k,i), (k,i,j))$ for $k\geq 2$, $i \leq \Gamma_\emptyset(k)$, $1 \leq j \leq k-1$, are independent, and $\xi_{\bbT_P}((k,i), (k,i,j))$ has law $Q_k$. Thereby,
\begin{equation}
\label{eq:int-f-dnu-lowebound}
  \int f d \nu = \pr{\sum_{k=2}^{h(\Gamma)} \sum_{i=1}^{\Gamma(k)} Y_{k,i} > t},
\end{equation}
with $Y_{k,i}$ being independent from each other such that $Y_{k,i}$ has the law of the random variable $[1 - (Z_1^+ + \dots + Z_{k-1}^+)]_0^1$ with $Z_j$'s being i.i.d. with law $Q_k$. The proof is complete by substituting \eqref{eq:int=tildefmin-lowerbound-simplified} and \eqref{eq:int-f-dnu-lowebound} into~\eqref{eq:lower-bound-proof-var-char}.
\end{proof}

\begin{proof}[Proof of Lemma~\ref{lem:extended-variational-formula}]
By interpreting $\Theta$ as a function on $\mH_{**}(\reals)$ via $\Theta(\bH, e, i) := \Theta(H, e, i)$ (where we recall that $H$ is the underlying unmarked hypergraph associated to $\bH$), and also using the inequality $x^+ \geq xy$ which holds for $y \in [0,1]$, we have 
  \begin{equation}
\label{eq:proof-ext-var-1}
    \int (\partial \Theta - t)^+ d \mu \int (\partial \Theta -t)^+ d \nu \geq \int f \partial \Theta d \nu - t \int f d \nu.
  \end{equation}
  Since $\deg(\mu) < \infty$, all the integrals are finite and well defined.
  Due to the definition of $\vnu$ and the unimodularity of $\nu$ we have $\int f \partial \Theta d \nu = \int f \Theta d \vnu = \int \nabla (f \Theta) d \nu$.
On the other hand, 
  \begin{equation*}
    \nabla (f \Theta)(\bH, e, i) = \frac{1}{|e|} \sum_{j \in e} f(\bH,  j) \Theta(H, e, j) \geq \frac{1}{|e|} \min_{j \in e} f(\bH, j) = \tilde{f}_\text{min}(\bH, i),
  \end{equation*}
  where the inequality holds since $\sum_{j \in e} \Theta(H, e, j) = 1$ and $\Theta([H, e, j]) \geq 0$ for all $j \in e$. Substituting this into \eqref{eq:proof-ext-var-1} completes the proof.
\end{proof}

\begin{proof}[Proof of Lemma~\ref{lem:x_i<x_j+==>sum-x_i+<1}]
  First, assume that $x_i < \left [ 1 - \sum_{j \neq i} x_j^+ \right ]_0^1$ for all $i$. If $x_i \leq 0$ for all $i$ then nothing remains to be proved. Hence, assume that $x_i > 0$ for some $i$. Since $x_i < \left [1 - \sum_{j \neq i} x_j^+ \right ]_0^1$, we have $0<\sum_{j \neq i} x_j^+ < 1$, which means that  $1 - \sum_{j \neq i} x_j^+ \in [0,1]$. Therefore, $\left [1 - \sum_{j \neq i} x_j^+\right ]_0^1  = 1- \sum_{j \neq i} x_j^+$. On the other hand,
\begin{equation*}
  x_i^+ = x_i < \left [1 - \sum_{j \neq i} x_j^+\right ]_0^1 = 1 - \sum_{j \neq i} x_j^+,
\end{equation*}
which implies $\sum x_i^+ < 1$. 

In order to prove the other direction, take $1 \leq i \leq n$ and note that 
\begin{equation*}
x_i \leq x_i^+ = \sum_{k=1}^n x_k^+ - \sum_{j \neq i} x_j^+ < 1 - \sum_{j \neq i} x_j^+.
\end{equation*}
Moreover, $\sum_{j \neq i} x_j^+ \leq \sum x_i^+ < 1$. Thereby, $1 - \sum_{j \neq i} x_j^+ \in [0,1]$ and  $1 - \sum_{j \neq i} x_j^+ = \left [ 1 -\sum_{j \neq i} x_j^+ \right ]_0^1$. Substituting this into the above inequality completes the proof.
\end{proof}

\subsection{Upper Bound}
\label{sec:existence-good-q}

In this section, we show that there exists a family of probability distributions $\{Q_k\}_{k \geq 2}$ satisfying the fixed points equations \eqref{eq:general-fixed-points} and achieving the maximum on the RHS of
\eqref{eq:mean-excecss-characterization-general-main-tatement}.

\begin{prop}
  \label{prop:fixed-point-existence-Q}
  Assume $P$ is a distribution on $\natszf$ such that 
$\ev{\norm{\Gamma}_1} < \infty$, 
where $\Gamma$ has law $P$. Let $\mu = \ugwt(P)$. 
Given $t \in \reals$, there exists a family of probability  distributions $\{Q_l\}_{l\geq 2}$ on the set of real numbers such that, for each $k \geq 2$,  $Q_k = F^{(k)}_{P, t}(\{Q_l\}_{l \geq 2})$, and such that we have
  \begin{equation*}
    \Phi_\mu(t) = \left ( \sum_{k=2}^\infty \frac{\ev{\Gamma(k)}}{k} \pr{ \sum_{i=1}^k X_{k,i}^+ < 1}\right ) - t \pr{\sum_{k=2}^{h(\Gamma)} \sum_{i=1}^{\Gamma(k)} Y_{k,i} > t}.
  \end{equation*}
  Here, in the first expression, $\Gamma$ is a random variable on $\natszf$ with law $P$ and $\{X_{k,i}\}_{k,i}$ are i.i.d.\ such that $X_{k,i}$ has law $Q_k$. Also, in the second expression, $\Gamma$ has law $P$ and $\{ Y_{k,i} \}_{k,i}$ are independent from each other and from $\Gamma$ such that $Y_{k,i}$ has the  law of the random variable $ [ 1 - (Z_{1}^+ + \dots + Z_{k-1}^+)]_0^1$ where $Z_j$ are i.i.d. with law $Q_k$.
\end{prop}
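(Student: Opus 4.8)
The plan is to exhibit the maximizing family $\{Q_l\}_{l\ge2}$ explicitly, as the family of laws of the inverse limiting response functions evaluated at $t$ on the random hypertree $T\sim\ugwt(P)$, and then to verify that this family attains the claimed value. Since Proposition~\ref{prop:fixed-point--lower-bound-general} already gives $\Phi_\mu(t)\ge(\cdots)-t(\cdots)$ for \emph{every} $\{Q_l\}_{l\ge2}\in\mQ$, once we produce a single family attaining equality, that family is automatically a maximizer and $\Phi_\mu(t)$ equals the displayed right--hand side, which completes the proof. Throughout, the hypothesis $\ev{\norm{\Gamma}_1}<\infty$ is exactly $\deg(\mu)<\infty$, which is what Propositions~\ref{prop:epsilon-to-zero} and \ref{prop:variational} require.

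First I would invoke Proposition~\ref{prop:epsilon-to-zero} to obtain $\epsilon_n\downarrow0$ and a balanced allocation $\Theta_0$ with $\Theta_{\epsilon_n}\to\Theta_0$ $\vmu$--a.e.\ and $\partial\Theta_{\epsilon_n}\to\partial\Theta_0$ $\mu$--a.s., where by Remark~\ref{rem:Theta-epsilon-H**-canonical} we may take $\Theta_{\epsilon_n}(H,e,i)=\theta^H_{\epsilon_n}(e,i)$, the canonical $\epsilon_n$--balanced allocation on $H$. Applying the ``everything shows at the root'' principle (Proposition~\ref{prop:everyting-shows-at-the-root}) together with Lemmas~\ref{lem:a.s.in-mu--a.e.in-vmu} and \ref{lem:A-as-Atilde-ae} to the functions recording whether the limits of $\partial\theta^H_{\epsilon_n}(i)$ and of $\theta^H_{\epsilon_n}(e,i)$ exist, I would promote the root--level convergence to: for $\mu$--a.e.\ $[T,\emptyset]$, the limits $l_j:=\lim_n\partial\theta^T_{\epsilon_n}(j)=\partial\Theta_0(T,j)$ and $\zeta_{e,j}:=\lim_n\theta^T_{\epsilon_n}(e,j)=\Theta_0(T,e,j)$ exist for all $j\in V(T)$ and all $(e,j)\in\evpair(T)$, with $\sum_{j\in e}\zeta_{e,j}=1$ and $\zeta_{e,j}>0$ forcing $l_j=\min_{w\in e}l_w$ (from balancedness of $\Theta_0$). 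On this full--measure event Proposition~\ref{prop:f-epsilon-converges-f-recursive-hypertree} yields, for every $(e,i)\in\evpair(T)$, a limit $\rft{e}{i}:=\lim_n\rft{e}{i}^{\epsilon_n}\in\Upsilon(\deg_T(i)-1)$ obeying the recursion \eqref{eq:f-inverse-recursive} and the equivalence \eqref{eq:l_i>t-iff-sum-f}; moreover, using Proposition~\ref{prop:regularity-canonical-e-ba} one gets $\partial\theta^T_{\epsilon_n}(j)=\rft{e}{j}^{\epsilon_n}(\theta^T_{\epsilon_n}(e,j))$, hence in the limit $l_j=\rft{e}{j}(\zeta_{e,j})$ (a pointwise limit, since the $\rft{e}{j}^{\epsilon_n}$ are nonexpansive and $\rft{e}{j}$ continuous), and since $\rft{e}{j}^{\epsilon_n}$ depends only on the subtree $T_{e\to j}$, so does $\rft{e}{j}$.

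Next I would set $Q_k$ to be the law of $\left(\rft{e}{i}\right)^{-1}(t)$ (inverse as in \eqref{eq:non-decreasing-non-expansive-inverse}) for a size--$k$ edge $e$ incident to the root and a non--root vertex $i\in e$; by exchangeability of the $\ugwt(P)$ construction this is the law of $\rf^{-1}_{(T',\emptyset)}(t)$ for $T'\sim\gwt_k(P)$ (Definition~\ref{def:gwt_k(P)}). Feeding \eqref{eq:f-inverse-recursive} with the facts that the values $\rft{e'}{j}^{-1}(t)$ over edges $e'\ni i$, $e'\neq e$, and $j\in e'\setminus\{i\}$ below $i$ are independent (distinct subtrees being independent), distributed as $Q_{|e'|}$, and that the type of $i$ below it is the size--biased $\hat P_k$, gives $\left(\rft{e}{i}\right)^{-1}(t)\stackrel{d}{=}t-\sum_{k'\ge2}\sum_{m=1}^{\Gamma(k')}[1-X_{k',m,1}^+-\cdots-X_{k',m,k'-1}^+]_0^1$ with $\Gamma\sim\hat P_k$ and $X_{k',m,\cdot}$ i.i.d.\ $\sim Q_{k'}$; comparing with \eqref{eq:generalized-fixed-point}, this is exactly $Q_k=F^{(k)}_{P,t}(\{Q_l\}_{l\ge2})$, so $\{Q_l\}_{l\ge2}\in\mQ$. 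To evaluate $\Phi_\mu(t)$ I would then use the equality case of Proposition~\ref{prop:variational}: with $g:=\oneu{\partial\Theta_0>t}$, $\Phi_\mu(t)=\int\tilde{g}_\text{min}\,d\vmu-t\int g\,d\mu$. Here $\int g\,d\mu=\pr{l_\emptyset>t}$, and \eqref{eq:l_i>t-iff-sum-f} with the root type $\Gamma\sim P$ and $(\rft{e}{j}^{-1}(t))_{j\in e}$ i.i.d.\ $\sim Q_{|e|}$ (independent over edges at the root) give $\int g\,d\mu=\pr{\sum_{k=2}^{h(\Gamma)}\sum_{m=1}^{\Gamma(k)}Y_{k,m}>t}$ with $Y_{k,m}$ distributed as $[1-(Z_1^++\cdots+Z_{k-1}^+)]_0^1$, $Z_j$ i.i.d.\ $\sim Q_k$; this is the second term of the RHS. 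For $\int\tilde{g}_\text{min}\,d\vmu=\int\partial\tilde{g}_\text{min}\,d\mu=\ev{\sum_{e\ni\emptyset}\frac{1}{|e|}\one{l_j>t\ \forall j\in e}}$, I would group the root edges by size and run the size--biasing manipulation from the proof of Lemma~\ref{lem:kolmogorov} (cf.\ \eqref{eq:int-f-dvmu-marked-kolmogorov} and \eqref{eq:size-biased-distirbution}) to get $\sum_{k\ge2}\frac{\ev{\Gamma(k)}}{k}\pr{l_v>t\ \forall v\in e}$, where $e$ is a size--$k$ edge all $k$ of whose incident subtrees $T_{e\to v}$ are i.i.d.\ $\sim\gwt_k(P)$.

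The crux, and the step I expect to be the main obstacle, is the combinatorial identity that for such an edge $\{l_v>t\ \forall v\in e\}$ coincides with $\{\sum_{v\in e}(\rft{e}{v}^{-1}(t))^+<1\}$. Writing $r_v:=\rft{e}{v}^{-1}(t)$, using $l_v=\rft{e}{v}(\zeta_{e,v})$, the fact (from parts (iii)--(v) of Lemma~\ref{lem:Upsilon-properties}) that $\rft{e}{v}(\zeta)>t\iff\zeta>r_v$, balancedness ($\zeta_{e,v}>0\Rightarrow l_v=\min_w l_w$), and $\sum_v\zeta_{e,v}=1$, one proves both directions by a short water--filling argument: if $l_v>t$ for all $v$ then $\zeta_{e,v}>r_v$ for all $v$, so $\sum_v r_v^+\le\sum_v\zeta_{e,v}=1$ with strict inequality whenever some $r_v\ge0$ (and trivially otherwise); conversely, if $\min_v l_v=m^*\le t$ then $\zeta_{e,v}=0$ off the set $A$ where $l_v=m^*$, hence $\sum_{v\in A}\zeta_{e,v}=1$, while $\rft{e}{v}(\zeta_{e,v})=m^*\le t$ forces $\zeta_{e,v}\le r_v^+$ on $A$, giving $\sum_v r_v^+\ge\sum_{v\in A}\zeta_{e,v}=1$. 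Since $(\rft{e}{v}^{-1}(t))_{v\in e}$ are i.i.d.\ $\sim Q_k$, this turns $\pr{l_v>t\ \forall v\in e}$ into $\pr{\sum_{i=1}^k X_{k,i}^+<1}$, so $\int\tilde{g}_\text{min}\,d\vmu$ equals the first term of the RHS and the lower bound of Proposition~\ref{prop:fixed-point--lower-bound-general} is attained. Besides this identity, the points that will need the most care are the transfer of a.s.\ existence of root loads to a.s.\ existence of \emph{all} node loads and all edge allocations $\zeta_{e,j}$ (so that Proposition~\ref{prop:f-epsilon-converges-f-recursive-hypertree} legitimately applies $\mu$--a.s.), and keeping every limit along the one subsequence $\epsilon_n$ so that the response--function picture and the balanced allocation $\Theta_0$ remain consistent.
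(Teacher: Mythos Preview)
Your proposal is correct and mirrors the paper's proof in its overall architecture: obtain the balanced $\Theta_0$ as an $\epsilon_n\downarrow 0$ limit, promote convergence to every vertex/edge via Proposition~\ref{prop:everyting-shows-at-the-root}, invoke Proposition~\ref{prop:f-epsilon-converges-f-recursive-hypertree} to get the limiting response functions and the recursion~\eqref{eq:f-inverse-recursive}, set $Q_k$ to be the law of $\rft{e}{i}^{-1}(t)$ on a $\gwt_k(P)$ subtree, and then plug $f=\oneu{\partial\Theta_0>t}$ into the equality case of Proposition~\ref{prop:variational}, computing the two integrals via the branching structure and size--biasing (Lemma~\ref{lem:int-f-dvmu-gen-UGWT}). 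The one substantive divergence is in the key combinatorial step, the equivalence $\{l_v>t\ \forall v\in e\}\Leftrightarrow\{\sum_{v\in e}(\rft{e}{v}^{-1}(t))^+<1\}$. The paper proves this purely algebraically: writing $G(T,e,j)=\rft{e}{j}^{-1}(t)$ and $F(T,e,j)=[1-\sum_{l\neq j}G(T,e,l)^+]_0^1$, the recursion~\eqref{eq:fixed-point-a.s.} gives $G(T,e,j)=t-\partial F(T,j)+F(T,e,j)$, so $\partial F(T,j)>t\Leftrightarrow F(T,e,j)>G(T,e,j)$, and then Lemma~\ref{lem:x_i<x_j+==>sum-x_i+<1} closes the loop. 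Your route instead uses the limiting allocation values $\zeta_{e,v}=\Theta_0(T,e,v)$ together with $l_v=\rft{e}{v}(\zeta_{e,v})$ (obtained from Proposition~\ref{prop:regularity-canonical-e-ba} by passing to the limit via nonexpansivity) and the balancedness constraints to run a short water--filling argument. Both are valid; the paper's version is self--contained once~\eqref{eq:f-inverse-recursive} is in hand and avoids tracking $\zeta_{e,v}$, while yours is more operational and gives a transparent reason why the cutoff at total mass $1$ appears. Your explicit verification of $Q_k=F^{(k)}_{P,t}(\{Q_l\})$ from~\eqref{eq:f-inverse-recursive} and the independence of subtrees is also a point the paper leaves essentially implicit.
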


\begin{proof}
The condition $\ev{\norm{\Gamma}_1}< \infty$ guarantees that $\deg(\mu) < \infty$.   
Therefore, Proposition~\ref{prop:epsilon-to-zero} implies that there exists a sequence $\epsilon_n \downarrow 0$ such that the sequence of $\epsilon_n$--balanced allocations $\Theta_{\epsilon_n}$ converges $\vmu$--a.e. to a Borel allocation $\Theta_0$ which is balanced with respect to $\mu$.
As $\vmu$ is supported on $\mT_{**}$, Proposition~\ref{prop:everyting-shows-at-the-root} then implies that for $\vmu$--almost all $[T, e, i] \in \mT_{**}$, we have that for all $(e', i') \in \evpair(T)$, $\Theta_{\epsilon_n}(T, e',i') \rightarrow \Theta_0(T, e',i')$. Since all hypertrees in $\mT_{**}$ are locally finite, this means that for $\vmu$--almost all $[T, e, i] \in \mT_{**}$, we have that for all $i' \in V(T)$, $\partial \Theta_{\epsilon_n}(T, i) \rightarrow \partial \Theta_0(T, i)$. Recall from Remark~\ref{rem:Theta-epsilon-H**-canonical} that $\Theta_{\epsilon_n}(T, e, i) = \theta^T_{\epsilon_n}(e, i)$ where $\theta^T_{\epsilon_n}$ is the canonical $\epsilon_n$--balanced allocation for $T$. Thereby, for $\vmu$--almost $[T, e, i] \in \mT_{**}$, the conditions of Proposition~\ref{prop:f-epsilon-converges-f-recursive-hypertree} are satisfied. Thereby, for $\vmu$--almost all $[T, e, i]$, for all $(e', i') \in \evpair(T)$, $\rft{e'}{i'}^{\epsilon_n}(.)$ converges pointwise to some $\rft{e'}{i'}(.)$. Moreover, for $\vmu$--almost all $[T, e, i]$, we have that for all $(e', i') \in \evpair(T)$, 
  \begin{equation}
    \label{eq:fixed-point-a.s.}
    \rft{e'}{i'}^{-1}(t) = t - \sum_{e'' \ni i': e'' \neq e'} \left [1 - \sum_{j \in e'', j \neq i'} \left ( \rft{e''}{j}^{-1}(t) \right)^+ \right ]_0^1,
  \end{equation}
and
  \begin{equation}
    \label{eq:partial-Theta0>t<=>sum-rft}
    \partial \Theta_0(T, i') > t \qquad \Longleftrightarrow \qquad \sum_{e'' \ni i'} \left [ 1  - \sum_{j \in e'', j \neq i'} \left ( \rft{e''}{j}^{-1}(t) \right)^+ \right ]_0^1 > t.
  \end{equation}
For $[T, e, i]$ such that $\Theta_{\epsilon_n}(e', i')$ is not convergent for some $(e', i') \in \evpair(T)$, we may define $\rft{e''}{i''}^{-1}(.)$ arbitrarily for $(e'', i'') \in \evpair(T)$. This will not impact our argument, as this happens only on a measure zero set. 
Using Lemma~\ref{lem:A-as-Atilde-ae} part $(ii)$, we conclude that for $\mu$--almost all $[T, i] \in \mT_*$, we have that for all $(e', i') \in \evpair(T)$, \eqref{eq:fixed-point-a.s.} and \eqref{eq:partial-Theta0>t<=>sum-rft} hold. 

With this, we define the functions $F$ and $G$ on $\mT_{**}$ as follows:
  \begin{equation*}
    G(T, e, i) := \rft{e}{i}^{-1}(t),
  \end{equation*}
and
  \begin{equation*}
    F(T, e, i) := \left [ 1 - \sum_{j \in e, j \neq i} \left ( \rft{e}{j}^{-1}(t) \right)^+ \right ]_0^1 = \left [ 1 - \sum_{j \in e, j \neq i}  G(T, e, j)^+ \right ]_0^1.
  \end{equation*}
Moreover, define the function $f: \mT_* \rightarrow \{0,1\}$ as $f(T, i) = \oneu{\partial F(T, i) > t}$. From \eqref{eq:partial-Theta0>t<=>sum-rft}, $\mu$--a.s.\ we have $f = \oneu{\partial \Theta_0 > t}$.   Hence, using the variational characterization in Proposition~\ref{prop:variational}, we have
  \begin{equation}
    \label{eq:var-char-for-rho}
    \int (\partial \Theta_0 - t)^+ d \mu = \int \tilde{f}_\text{min} d \vmu - t \int f d \mu,
  \end{equation}
    where $\tilde{f}_\text{min}: \mH_{**} \rightarrow [0,1]$ is defined as 
  \begin{equation*}
    \tilde{f}_\text{min}(T, e, i) = \frac{1}{|e|} \min_{j \in e} f(T, j).
  \end{equation*}
With this and the definition of $f$, we have
  \begin{equation*}
    \tilde{f}_\text{min}(T, e, i) =
    \begin{cases}
      \frac{1}{|e|} & \partial F(T, j) > t \,\, \forall j \in e,\\
      0 & \text{otherwise}.
    \end{cases}
  \end{equation*}
  From \eqref{eq:fixed-point-a.s.}, for $\vmu$--almost every $[T, e, i]$ we have 
  \begin{equation*}
    G(T,e, j) = t - \sum_{e' \ni j, e' \neq e} F(T, e', j) = t - \partial F(T, j) + F(T, e, j).
  \end{equation*}
  Therefore, $\vmu$--almost everywhere, $\partial F(T, j) > t$ iff $F(T, e, j) > G(T, e, j)$. Consequently, $\vmu$--a.e., we have 
  \begin{equation}
    \label{eq:rho-tilde-min--Y>x}
    \tilde{f}_\text{min}(T, e, i)  =
    \begin{cases}
      \frac{1}{|e|} & \qquad F(T, e, j) > G(T, e, j) \,\,\, \forall j \in e,\\
      0 & \qquad \text{otherwise}.
    \end{cases}
  \end{equation}
  Note that, by definition, we have $F(T, e, j) = \left [ 1 - \sum_{l \in e, l \neq j} G(T, e, l)^+ \right ]_0^1$. Thereby, using Lemma~\ref{lem:x_i<x_j+==>sum-x_i+<1} in Section~\ref{sec:lower-bound}, $\vmu$--a.e. we have
  \begin{equation*}
    \tilde{f}_\text{min}(T, e, i)  =
    \begin{cases}
      \frac{1}{|e|} & \sum_{j \in e} G(T, e, j)^+ < 1,\\
      0 & \text{otherwise}.
    \end{cases}
  \end{equation*}
Note that the condition $\ev{\norm{\Gamma}_1} < \infty$ in particular implies that for all $k \geq 2$ we have $\ev{\Gamma(k)} < \infty$. 
With this, using Lemma~\ref{lem:int-f-dvmu-gen-UGWT} in Appendix~\ref{sec:unimodularity-ugwtp}, we have 
\begin{equation*}
  \int \tilde{f}_\text{min} d \vmu = \sum_{k=2}^\infty \ev{\Gamma(k)} \sum_{\gamma \in \natszf} \hat{P}_k(\gamma) \ev{f(\bT, (k, 1), \emptyset) | \Gamma_\emptyset = \gamma+\typee_k},
\end{equation*}
where $\bT$ is the random rooted hypertree of Definition~\ref{def:gwt_k(P)}. Following the argument in the proof of Proposition~\ref{prop:ugwt-unimodular} in Appendix~\ref{sec:unimodularity-ugwtp}, this can be written as 
\begin{equation*}
  \int \tilde{f}_\text{min} d\vmu = \sum_{k=2}^\infty \ev{\Gamma(k)} \ev{\tilde{f}_\text{min}(\tilde{\bT}_k, (k, 1), \emptyset)},
\end{equation*}
where for $k \geq 2$, $\tilde{\bT}_k$ is a  tree with root $\emptyset$
  that has an edge $(k,1)$ of size $k$ connected to the root, with the 
  type of the other edges connected to the root being $\hat{P}_k$, 
  and with the subtrees at the other vertices of all the edges (including the
  edge $(k,1)$) generated according to the rules of $\ugwt(P)$. 
Now, using the definition of $\tilde{f}_\text{min}$, we have 
\begin{equation*}
  \int \tilde{f}_\text{min} d\vmu = \sum_{k=2}^\infty \frac{\ev{\Gamma(k)}}{k} \pr{\sum_{v \in (k,1)} G(\tilde{\bT}_k, (k, 1), v)^+ < 1}.
\end{equation*}
Now, for every $k$ and $1 \leq i \leq k-1$, let $\tilde{\bT}_{k, i}$ be the hypertree below vertex $(k, 1, i)$ rooted at $(k, 1, i)$. Moreover, let $\tilde{\bT}_{k, k}$ be the hypertree rooted at $\emptyset$ obtained from $\tilde{\bT}_k$ by removing the edge $(k, 1)$ and all its subtree. 
Now, due to the construction of $\tilde{\bT}_k$, $\tilde{\bT}_{k, i}$, for $ 1\leq i \leq k$ are i.i.d.\ $\gwt_k(P)$ hypertrees. 
Hence,  $G(\tilde{\bT}_k, (k, 1), v)$ for $v \in (k,1)$ are independent and identically distributed. Let $Q_k$ be the common distribution. This means that 
\begin{equation}
  \label{eq:good-Q-int-f-dvmu-simplified}
  \int \tilde{f}_\text{min} d\vmu= \sum_{k=2}^\infty \frac{\ev{\Gamma(k)}}{k} \pr{\sum_{i=1}^k X_{k,i}^+ < 1},
\end{equation}
where for each $k \geq 2$, $X_{k,i}$, $1 \leq i \leq k$ are i.i.d. with law $Q_k$. 

On the other hand, with $\bT$ being  the random rooted hypertree of Definition~\ref{def:gwt_k(P)}, we have
\begin{align*}
  \int f d \mu &= \pr{\partial F(\bT, \emptyset) > t}= \pr{\sum_{k=2}^{h(\Gamma_\emptyset)} \sum_{i=1}^{\Gamma_\emptyset(k)} F(\bT, (k, i), \emptyset) > t} \\
  &= \pr{\sum_{k=2}^{h(\Gamma_\emptyset)} \sum_{i=1}^{\Gamma_\emptyset(k)} \left [ 1 - \sum_{j=1}^{k-1} G(\bT, (k, i), (k, i, j))^+ \right]_0^1 > t},
\end{align*}
where $\Gamma_\emptyset$ is the type of the root in $\bT$. 
But by definition, $G(\bT, (k, i), (k, i, j)) = \rho_{\bT_{(k,i) \rightarrow (k,i,j)}}^{-1}(t)$. But $\bT_{(k,i) \rightarrow (k,i,j)}$ for $2 \leq k \leq \Gamma_\emptyset$, $1 \leq i \leq \Gamma_\emptyset(k)$, $1 \leq j \leq k-1$ are independent and $\bT_{(k,i) \rightarrow (k,i,j)}$ has law $\gwt_k(P)$. Comparing this to the above definition of the distributions $Q_k$, $k \geq 2$, we realize that $G(\bT, (k, i), (k, i, j))$ for $2 \leq k \leq \Gamma_\emptyset$, $1 \leq i \leq \Gamma_\emptyset(k)$, $1 \leq j \leq k-1$ are independent and $G(\bT, (k, i), (k, i, j))$ has law $Q_k$. Consequently, as $\Gamma_\emptyset$ in the above expression has law $P$, we have
\begin{equation}
  \label{eq:good-Q-int-f-dmu-simplified}
  \int f d \mu = \pr{\sum_{k=2}^{h(\Gamma)} \sum_{i=1}^{\Gamma(k)} Y_{k,i} > t},
\end{equation}
where $\Gamma$ has law $P$ and $\{ Y_{k,i} \}_{k,i}$ are independent from each other and from $\Gamma$ such that $Y_{k,i}$ has the  law of the random variable $ [ 1 - (Z_{1}^+ + \dots + Z_{k-1}^+)]_0^1$ where $Z_j$ are i.i.d.\ with law $Q_k$. This together with \eqref{eq:good-Q-int-f-dvmu-simplified} and the variational expression~\eqref{eq:var-char-for-rho}, completes the proof.
\end{proof}


\section{Convergence of Maximum Load}
\label{sec:conv-maxim-load}

In this section,  we first  introduce our configuration model and conditions under which it converges to the unimodular Galton--Watson hypertree model defined in Section~\ref{sec:unim-galt-wats}. This is done in Section~\ref{sec:conf-model-hypergr} below, specifically Theorem~\ref{thm:conf-model-convegence-Un}. We then state the conditions under which we prove Theorem~\ref{thm:max-load} in Section~\ref{sec:maxload-prop}, Proposition~\ref{prop:max-load-detail} and give the proof. 


Before introducing our configuration model, we need to formally define multihypergraphs. Here, we only work with \emph{finite} multihypergraphs.


\begin{definition}
  \label{def:multi-hypergraph}
  A finite multihypergraph $H = \langle V, E = (e_j, j \in  J) \rangle$ consists of a finite vertex set $V$ together with a finite edge index set $J$ such that each hyperedge $e_j$ is a multiset of vertices in  $V$.
\end{definition}

Here, the assumption of $e_j$ being a multiset allows for vertices to appear more than once in each edge. In this case, we call such an edge a ``self loop''. Moreover, it might be the case that $e_j = e_{j'}$ for $j \neq j' \in I$, in which case we call $e_j$ and $e_{j'}$ ``multiple edges''. 

\subsection{Configuration model on Hypergraphs}
\label{sec:conf-model-hypergr}

We proposed a generalized Galton Watson process for hypertrees in Section~\ref{sec:unim-galt-wats} and showed it is unimodular. In this section, we propose a configuration model which converges to it in the local weak sense under certain conditions.

Assume that, for each integer $n$, a type sequence $\gn = (\gn_1, \dots, \gn_n)$ is given such that $\gn_i \in \natszf$ and
\begin{subequations}
  \begin{gather}
    \gn_i(k) = 0 \qquad \forall\, 1 \leq i \leq n, k > n, \text{and} \label{eq:gn-0-for-bigger-n}\\
    k \bigg| \sum_{i=1}^n \gn_i(k) \qquad \forall\,  2 \leq k \leq n, \label{eq:k|sum-gn-k} 
  \end{gather}
\end{subequations}
where the latter means that $k$ divides $\sum_{i=1}^n \gn_i(k)$.
In what follows, we generate a random multihypergraph on the vertex set $\{1, \dots, n \}$ such that the type of node $i$ is $\gn_i$.  For each $2 \leq k \leq n$ and $1 \leq i \leq n$, we attach $\gn_i(k)$ many objects $e^k_{i,1}, \dots, e^k_{i, \gn_i(k)}$ called ``$1/k$--edges'' to the node $i$ .
For each $k$, let $\Delta^{(n)}(k)$ be defined as the set of all $1/k$--edges, i.e.
\begin{equation*}
  \Delta^{(n)}(k) := \bigcup_{i=1}^n \{ e^k_{i,1}, \dots, e^k_{i, \gn_i(k)} \},
\end{equation*}
and let $\Delta^{(n)} := \cup_k \Delta^{(n)}(k)$ be the set of all ``partial edges'', where by a partial edge we mean a $1/k$--edge for some $k$.
Also, let $\Delta^{(n)}_i$  be the set of all partial edges connected to a node $i \in \{1, \dots, n\}$, i.e.
\begin{equation*}
  \Delta^{(n)}_i = \bigcup_{k: \gn_i(k) > 0} \{ e^k_{i, 1}, \dots, e^k_{i,\gn_i(k)} \}.
\end{equation*}
For a partial edge $e \in \Delta^{(n)}$, define $\nu(e)$ to be the node it corresponds to, i.e. $\nu(e^k_{i,j}) = i$. Also, define $|e|$ to be the size of  $e$, i.e. $|e^k_{i,j}| = k$. 

We say that a permutation $\sigma_k$ is a $k$--matching on the set $\Delta^{(n)}(k)$ if it is a permutation on $\Delta^{(n)}(k)$ with no fixed points and also with all the cycles having  size exactly equal to $k$. 
Due to the condition $k | \sum_{i=1}^n \gn_i(k)$, such $k$--matchings exist for all $k$ such that $\Delta^{(n)}(k) \neq \emptyset$. In fact, if for a finite nonempty set $A$, whose cardinality  is divisible by $k$, we denote the set of $k$--matchings on $A$ with $\mM_k(A)$, it could be easily checked that $  |\mM_k(A)| = \frac{|A|!}{k^{|A|/k} (|A|/k)!}$.

Given this, for each $k$ such that $\Delta^{(n)}(k) \neq \emptyset$, we pick $\sigma_k$ uniformly at random from $\mM_k(\Delta^{(n)}(k))$, 
independently over $k$.
With this, we generate a random multihypergraph $H_n$ on the set of vertices $\{1, \dots, n\}$. This is done by identifying each cycle of $\sigma_k$ such that $\Delta^{(n)}(k) \neq \emptyset$ of the form $(e, \sigma_k(e), \dots, \sigma_k^{(k-1)}(e))$ with the edge $\{\nu(e), \nu(\sigma_k(e)), \dots, \nu(\sigma_k^{(k-1)}(e)) \}$ in $H_n$. Here,  $e \in \Delta^{(n)}(k)$ and $\sigma_k^{(l)}$ 
denotes the  permutation $\sigma_k$ begin applied $l$ times.
Note that it is possible that two realizations of permutations as above result in the same multihypergraph $H_n$. As an example, if $n=3$, $\gn_1(2) = 2$, $\gn_2(2) = \gn_3(2) = 1$, and $\gn_i(k) = 0$ for $k \neq 2$, $1 \leq i \leq 3$, then the two permutations $\sigma_2$ and $\sigma'_2$ on $\Delta^{(n)}(2)$ presented in the cycle notation as $\sigma_2 = ((e^2_{1,1}, e^2_{2,1})(e^2_{1,2}, e^2_{3,1})$ and $\sigma'_2 = (e^2_{1,2}, e^2_{2,1})(e^2_{1,1}, e^2_{3,1})$ would result in the same multihypergraph (which turns out to be a simple graph in this example).

Note that, in general, $H_n$ might not be simple. In particular, it might contain  edges which contain a vertex more than once (we call such an edge a ``self loop''), or it might be the case that two edges exist in $H_n$ with the exact same multiset of vertices (we call such an edge a ``multiple edge''), or these two can happen simultaneously.
Having generated $H_n$, we may generate a simple hypergraph $H_n^e$ by deleting all such self loops and multiple edges in $H_n$, i.e.\ we first remove all self loops and, subsequently, we delete all edges with the same set of endpoints. 
See Figure~\ref{fig:conf-example} for an example of an instance of the configuration model.

\begin{figure}
  \begin{center}
  \begin{minipage}{0.3\linewidth}
    \centering
    \begin{tikzpicture}
    \node[Node,label={above:$1$}] (a1) at (90:1) {};
    \node[Node,label={left:$2$}] (a2) at (210:1) {};
    \node[Node,label={right:$3$}] (a3) at (-30:1) {};
    \node[Node,label={left:$4$}] (a4) at ($(a2)+(0,-1.7)$) {};
    \node[Node,label={right:$5$}] (a5) at ($(a3)+(0,-1.7)$) {};
    
    \draw[Orange,thick, dashed, dash pattern = on 2pt off 1pt] (a1) -- node[left,scale=0.8,black] {$e^3_{1,1}$} ($(a1)+(-120:0.5)$);
    \draw[Orange,thick, dashed, dash pattern = on 2pt off 1pt] (a1) -- node[right,scale=0.8,black] {$e^3_{1,2}$} ($(a1)+(-60:0.5)$);

    \draw[Orange,thick, dashed, dash pattern = on 2pt off 1pt] (a2) -- node[below,near end,scale=0.8,black] {$e^3_{2,2}$} ($(a2)+(0:0.5)$);
    \draw[Orange,thick, dashed, dash pattern = on 2pt off 1pt] (a2) -- node[above=1mm,scale=0.8,black,black] {$e^3_{2,1}$} ($(a2)+(60:0.5)$);
    \draw[magenta,thick, dashed, dash pattern = on 2pt off 1pt] (a2) -- node[left,near end, scale=0.8,black] {$e^2_{2,1}$} ($(a2)+(-90:0.5)$);

    \draw[Orange,thick, dashed, dash pattern = on 2pt off 1pt] (a3) -- node[below,near end,scale=0.8,black] {$e^3_{3,2}$} ($(a3)+(180:0.5)$);
    \draw[Orange,thick, dashed, dash pattern = on 2pt off 1pt] (a3) -- node[above=1mm,scale=0.8,black] {$e^3_{3,1}$} ($(a3)+(120:0.5)$);
    \draw[Orange,thick, dashed, dash pattern = on 2pt off 1pt] (a3) -- node[right,near end, scale=0.8,black] {$e^3_{3,3}$} ($(a3)+(-90:0.5)$);

    \draw[Orange,thick, dashed, dash pattern = on 2pt off 1pt] (a4) -- node[right=1mm,scale=0.8,black] {$e^3_{4,1}$} ($(a4)+(30:0.5)$);
    \draw[magenta,thick, dashed, dash pattern = on 2pt off 1pt] (a4) -- node[left,near end,scale=0.8,black] {$e^2_{4,1}$} ($(a4)+(90:0.5)$);

    \draw[Orange,thick, dashed, dash pattern = on 2pt off 1pt] (a5) -- node[left,scale=0.8,black] {$e^3_{5,1}$} ($(a5)+(150:0.5)$);
    \draw[magenta,thick, dashed, dash pattern = on 2pt off 1pt] (a5) -- node[above=1mm,scale=0.8,black] {$e^2_{5,1}$} ($(a5)+(90:0.5)$);
    \draw[magenta,thick, dashed, dash pattern = on 2pt off 1pt] (a5) -- node[right=1mm,scale=0.8,black] {$e^2_{5,2}$} ($(a5)+(30:0.5)$);
    
    \node at (0,-3) {$(a)$};
    \end{tikzpicture}
  \end{minipage}%
\begin{minipage}{0.3\linewidth}
  \centering
    \begin{tikzpicture}
    \node[Node,label={[label distance=3mm]above:$1$}] (b1) at (90:1) {};
    \node[Node,label={[label distance=3mm]left:$2$}] (b2) at (210:1) {};
    \node[Node,label={[label distance=3mm]right:$3$}] (b3) at (-30:1) {};
    \node[Node,label={[label distance=3mm]left:$4$}] (b4) at ($(a2)+(0,-1.7)$) {};
    \node[Node,label={[label distance=3mm]right:$5$}] (b5) at ($(a3)+(0,-1.7)$) {};
    
    \draw[Cyan, rounded corners=8pt, thick] ($(b1) + (90:0.3)$) -- ($(b2) + (210:0.3)$) -- ($(b3) + (-30:0.3)$) -- cycle;
    \draw[Cyan, rounded corners=12pt, thick] ($(b1) + (90:0.5)$) -- ($(b2) + (210:0.5)$) -- ($(b3) + (-30:0.5)$) -- cycle;
    
    \draw[Cyan, rounded corners=7pt, thick] ($(b3)+(0.2,0.35)$) -- ($(b4)+(-0.3,-0.15)$) -- ($(b5)+(0.2,-0.15)$) -- cycle;

    \draw[Cyan, thick] (b2) -- (b4);
    \draw[Cyan, thick] (b5) -- (b5);
    
    \draw[Cyan,thick] (b5) to[in=-10,out=-90,loop] (b5);
    \node at (0,-3) {$(b)$};
    \end{tikzpicture}
  \end{minipage}%
\begin{minipage}{0.3\linewidth}
  \centering
    \begin{tikzpicture}
    \node[Node,label={[label distance=3mm]above:$1$}] (b1) at (90:1) {};
    \node[Node,label={[label distance=3mm]left:$2$}] (b2) at (210:1) {};
    \node[Node,label={[label distance=3mm]right:$3$}] (b3) at (-30:1) {};
    \node[Node,label={[label distance=3mm]left:$4$}] (b4) at ($(a2)+(0,-1.7)$) {};
    \node[Node,label={[label distance=3mm]right:$5$}] (b5) at ($(a3)+(0,-1.7)$) {};
    
    
    \draw[Cyan, rounded corners=7pt, thick] ($(b3)+(0.2,0.35)$) -- ($(b4)+(-0.3,-0.15)$) -- ($(b5)+(0.2,-0.15)$) -- cycle;

    \draw[Cyan, thick] (b2) -- (b4);
    \draw[Cyan, thick] (b5) -- (b5);

    \node at (0,-3) {$(c)$};
    \end{tikzpicture}
  \end{minipage}
\end{center}
  \caption{An example of an instance of the  configuration model on $n=5$ vertices with vertex types  $\gn_1 = (0,2)$, $\gn_2 = (1,2)$, $\gn_3 = (0,3)$, $\gn_4=(1,1)$, $\gn_5=(1,2)$. Here, a type $\gamma$ is represented by a vector where the first coordinate is $\gamma(2)$, the second is $\gamma(3)$ and so on. $(a)$ illustrates the set of partial edges connected to vertices. $(b)$ depicts the multihypergraph $H_n$ formed by matching the partial edges using the permutations $\sigma_2$ and $\sigma_3$ which are represented in the cycle notation as $\sigma_2 = (e^2_{2,1},e^2_{4,1})(e^2_{5,1},e^2_{5,2})$ and $\sigma_3 = (e^3_{1,1}, e^3_{2,1}, e^3_{3,1})(e^3_{1,2},e^3_{2,2},e^3_{3,2})(e^3_{3,3},e^3_{4,1},e^3_{5,1})$. $(c)$ illustrates the simple hypergraph $H_n^e$ formed by removing the multiple edges of size 3 on vertices $1,2,3$ and also the self loop on vertex $5$. }
  \label{fig:conf-example}
\end{figure}
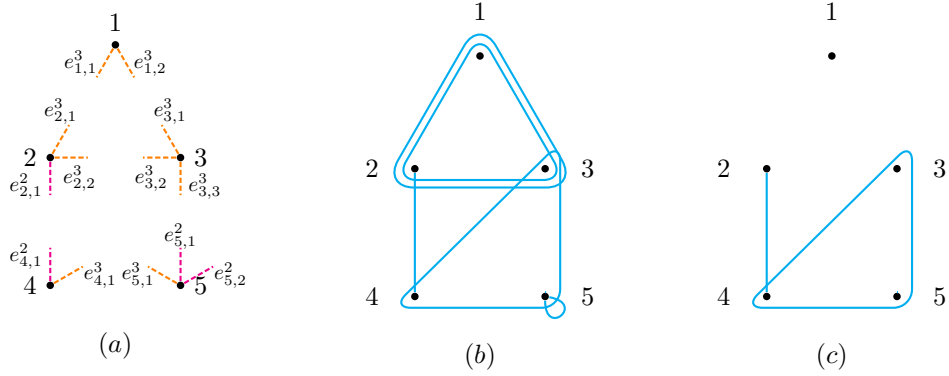

If one fixes a type sequence $\gn = (\gn_1, \dots, \gn_n)$ for each $n$, then the above configuration model generates a sequence of random hypergraphs $H^e_n$. Since $H^e_n$ is random, $u_{H^e_n}$ is also a random probability distribution over $\mH_*$. Let $\ev{u_{H^e_n}}$ be the expectation with respect to the randomness of $H^e_n$, i.e. a weighted average over all the possible configurations of $H^e_n$ (the number of such configurations is indeed finite for each $n$). Hence, $\ev{u_{H^e_n}}$ is a sequence of probability distributions over $\mH_*$ and one can ask whether it has a weak limit. On the other hand, one can build all $H^e_n$ on a common probability space under which $H^e_n$ are independent for different $n$. Then, for some $\mu \in \mP(\mH_*)$, we say that $u_{H^e_n} \Rightarrow \mu$ almost surely when outside a measure zero set in this common probability space, this convergence holds. The following theorem shows that under some conditions on the type sequence, $H^e_n$ has a local weak limit. See Appendix~\ref{sec:proof-theorem-lws-conf-model} for a proof. 

\begin{thm}
\label{thm:conf-model-convegence-Un}
Assume that a probability distribution $P$ on $\natszf$ is given and define $I := \{ k \geq 2: P(\{\gamma \in \natszf: \gamma(k) > 0 \}) > 0 \}$. 
Furthermore, let $\gn= (\gn_1, \dots, \gn_n)$ be a type sequence satisfying~\eqref{eq:gn-0-for-bigger-n} and \eqref{eq:k|sum-gn-k} such that
\begin{subequations}
\begin{gather}
  \gn_i(k) = 0 \qquad \forall k \notin I \quad \forall n,i \label{eq:type-zero-outside-index-set}, \\
    \lim_{n \rightarrow} \frac{1}{n} \sum_{i=1}^n \oneu{\gn_i = \gamma} = P(\gamma) \qquad \forall \gamma \in \natszf \label{eq:fraction-converge-P(gamma)},\\
    \limsup_{n \rightarrow \infty} \frac{1}{n} \sum_{i=1}^n \snorm{\gn_i}_1^2 < \infty. \label{eq:second-moment-of-one-norm-bounded}
  \end{gather}
\end{subequations}
Additionally, assume that there are constants $c_1, c_2, c_3, \epsilon>0$ such that for $n$ large enough,
\begin{subequations}
  \begin{gather}
            \max_{1 \leq i \leq n} \snorm{\gn_i}_1 \leq c_1 (\log n)^{c_2}, \label{eq:norm-1-gamma-log} \\
            \max_{1 \leq i \leq n} h(\gn_i) \leq c_1 (\log n)^{c_2} \label{eq:norm-infty-gamma-log}, \\
        \forall 2 \leq k \leq n \quad \Delta^{(n)}(k) = \emptyset \quad \text{ or } \quad |\Delta^{(n)}(k)| \geq c_3 n^\epsilon. \label{eq:delta-n-k-big-enough}
  \end{gather}
\end{subequations}
Then, if $H^e_n$ is the random hypergraph generated from the configuration model corresponding to $\gn$ described above, we have $u_{H^e_n} \Rightarrow \ugwt(P)$ almost surely. 
\end{thm}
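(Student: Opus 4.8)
The plan is to apply Lemma~\ref{lem:eq-condition-local-weak-convergence}. Since $\ugwt(P)$ is supported on $\mT_*$, it suffices to prove that for every $d\geq 1$ and every rooted hypertree $(T,i)$ of depth at most $d$, the \emph{random} quantity $u_{H^e_n}(A_{(T,i)})$ converges almost surely to $\ugwt(P)(A_{(T,i)})$, where $A_{(T,i)}:=\{[H,j]\in\mH_*:(H,j)_d\equiv(T,i)\}$. I would split this into a first--moment statement, $\ev{u_{H^e_n}(A_{(T,i)})}\to\ugwt(P)(A_{(T,i)})$, and a concentration statement, namely that $\pr{|u_{H^e_n}(A_{(T,i)})-\ev{u_{H^e_n}(A_{(T,i)})}|>\delta}$ decays fast enough in $n$ to be summable; together with the independence of the $H^e_n$ across $n$ and Borel--Cantelli, these yield the almost sure convergence.

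For the first moment I would run a breadth--first exploration of the depth--$d$ neighborhood of a uniformly chosen root $v$ in the multihypergraph $H_n$, revealing the matchings $\sigma_k$ one cycle at a time as partial edges are discovered, and establish three facts. \emph{(i)} The root's type $\gn_v$ is $P$--distributed in the limit by \eqref{eq:fraction-converge-P(gamma)}; and whenever an unexplored partial edge of size $k$ is matched, the conditional probability (given the bounded history) that the node at the other end has type $\gamma$ is $\approx \gamma(k)\big(\frac1n\sum_i\oneu{\gn_i=\gamma}\big)\big/\big(\frac1n|\Delta^{(n)}(k)|\big)$, which, using \eqref{eq:fraction-converge-P(gamma)}, the identity $\frac1n|\Delta^{(n)}(k)|=\frac1n\sum_i\gn_i(k)$, and the uniform integrability supplied by \eqref{eq:second-moment-of-one-norm-bounded}, converges to $\gamma(k)P(\gamma)/\ev{\Gamma(k)}$; hence the residual type available for further exploration is exactly $\hat P_k$--distributed in the limit, matching Definition~\ref{def:ugwt(P)}. \emph{(ii)} By \eqref{eq:norm-1-gamma-log} and \eqref{eq:norm-infty-gamma-log}, the total number of vertices and partial edges encountered within depth $d$ is at most $(c_1(\log n)^{c_2})^{O(d)}=\mathrm{polylog}(n)$ with probability tending to one. \emph{(iii)} Conditioned on this polylogarithmic bound, the probability that the exploration ever re--discovers an already--seen vertex (a ``collision'', which is also what creates a self--loop or a multiple edge inside the neighborhood) is $O\!\big(\mathrm{polylog}(n)/\min\{|\Delta^{(n)}(k)|:\Delta^{(n)}(k)\neq\emptyset\}\big)=O(\mathrm{polylog}(n)/n^\epsilon)\to 0$ by \eqref{eq:delta-n-k-big-enough} and a union bound over the $\mathrm{polylog}(n)$ revelation steps. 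On the collision--free event the explored neighborhood of $v$ in $H_n$ is a hypertree and coincides with that in $H^e_n$, and by \emph{(i)} its law converges to the depth--$d$ truncation of $\ugwt(P)$, finishing the first--moment claim (here \eqref{eq:type-zero-outside-index-set} is used to know that only sizes $k\in I$ arise).

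For the concentration, I would generate each relevant matching $\sigma_k$ (only $k\leq c_1(\log n)^{c_2}$ actually occur, by \eqref{eq:norm-infty-gamma-log}) through a sequential pairing procedure, so that $N_n(T,i):=|\{v\in\{1,\dots,n\}:(H^e_n,v)_d\equiv(T,i)\}|$ becomes a function of $O(n\,\mathrm{polylog}(n))$ elementary choices, each of which, when altered, changes at most $\mathrm{polylog}(n)$ of the depth--$d$ neighborhoods, since only vertices within distance $d$ of the affected partial edges are touched. A bounded--differences (Azuma/McDiarmid) inequality then gives $\pr{|N_n(T,i)-\ev{N_n(T,i)}|>\delta n}\leq 2\exp\!\big(-c\,\delta^2 n/\mathrm{polylog}(n)\big)$, which is summable in $n$; dividing by $n$ yields the required concentration of $u_{H^e_n}(A_{(T,i)})$, and Borel--Cantelli together with the first--moment convergence completes the proof.

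The \textbf{main obstacle} is step \emph{(iii)} combined with the bookkeeping in \emph{(i)}: one must set up the exploration so that at each revelation the unmatched partial edges of a given size behave like a uniformly random set, quantify the collision probability against the polylogarithmic neighborhood size, and simultaneously control the discrepancy between the finite--$n$ size--biased type statistics and $\hat P_k$ uniformly over the (growing but polylogarithmic) range of edge sizes and over the bounded exploration depth. This is precisely the point where all of the hypotheses \eqref{eq:type-zero-outside-index-set}--\eqref{eq:delta-n-k-big-enough} are used together, and where the careful handling of the passage from $H_n$ to the simple hypergraph $H^e_n$ enters.
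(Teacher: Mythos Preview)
Your proposal is correct and follows essentially the same approach as the paper: reduce via Lemma~\ref{lem:eq-condition-local-weak-convergence} to events $A_{(T,i)}$, establish the first moment through a breadth--first exploration (Propositions~\ref{prop:Hn-v0-d-simple-converge-1} and~\ref{prop:conf-model_unerased-convergence} in the paper), and then concentrate via a bounded--differences martingale argument applied to the matchings $\sigma_k$ generated one cycle at a time. One minor remark: the neighborhood--size bound in your step~\emph{(ii)} is actually deterministic under \eqref{eq:norm-1-gamma-log}--\eqref{eq:norm-infty-gamma-log}, not merely high--probability; and in the concentration step the paper applies Azuma to each matching $\sigma_{k_i}$ separately (conditioning on the others) and then union--bounds over the at most $c_1(\log n)^{c_2}$ relevant sizes, which is the cleanest way to organize the ``elementary choices'' you mention.
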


\begin{rem}
\label{rem:I-everything}
  Note the  the index set $I$ would let us include only certain edge sizes in our model. For instance, when $I = \{2\}$, this theorem reduces to a statement for the graph pairing model, and when $I = \{k:k \geq 2\}$ it allows for all edge sizes to be present. 
\end{rem}

\begin{rem}
\label{rem:sum-gni-Ev-Gamma-k}
  For a fixed $k$ and for each $n$, define $X_n$ to be an integer valued  random variable taking value $\gn_i(k)$ with probability $1/n$ for $1 \leq i \leq n$. Then, the condition~\eqref{eq:second-moment-of-one-norm-bounded} implies that the sequence $\{X_n\}$ is uniformly integrable. Also, \eqref{eq:fraction-converge-P(gamma)} implies that $X_n \stackrel{d}{\rightarrow} \Gamma(k)$ where $\Gamma$ has law $P$. Thus, $\ev{X_n} \rightarrow \ev{\Gamma(k)}$, i.e. 
  \begin{equation}
      \lim_{n \rightarrow \infty} \frac{1}{n} \sum_{i=1}^n \gn_i(k) = \ev{\Gamma(k)} \quad \forall k >1 \label{eq:ev-type-first-moment-converge}.
  \end{equation}
This identity is  useful in our future analysis.
\end{rem}

\begin{rem}
It can be proved that, under some regularity conditions for $P$, when the type sequence is generated i.i.d., that the conditions of Theorem~\ref{thm:conf-model-convegence-Un}  are satisfied with probability one. However, we omit the proof of such a claim here, since it is not central to our discussion.
\end{rem}

We will use the following simplified version of the above theorem in this section.


\begin{cor}
  \label{cor:conf-model-convergence-finite-index-set}
  Assume that a probability distribution $P$ over $\natszf$ is given such that for a finite set $I \subset \{2, 3, \dots, \}$, if $\Gamma$ is a random variable with law $P$, we have $\pr{\Gamma(k) > 0} > 0$ for $k \in I$ and $\pr{\Gamma(k) > 0} = 0$ for $k \notin I$. Furthermore, assume $\gn = (\gn_1, \dots, \gn_n)$ is a type sequence satisfying \eqref{eq:gn-0-for-bigger-n}, \eqref{eq:k|sum-gn-k}, \eqref{eq:type-zero-outside-index-set} and \eqref{eq:fraction-converge-P(gamma)}. 
Additionally, assume that 
for some $\theta > 0$, 
  \begin{equation}
    \label{eq:exp-theta-mgf-finite}
    \limsup_{n \rightarrow \infty} \frac{1}{n} \sum_{i=1}^n e^{\theta \snorm{\gn_i}_1} < \infty.
  \end{equation}
Then, $u_{H^e_n} \Rightarrow \ugwt(P)$ almost surely.
\end{cor}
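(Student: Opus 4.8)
The plan is to deduce the corollary directly from Theorem~\ref{thm:conf-model-convegence-Un} by checking that, under the hypotheses of the corollary, all the remaining assumptions \eqref{eq:second-moment-of-one-norm-bounded}, \eqref{eq:norm-1-gamma-log}, \eqref{eq:norm-infty-gamma-log} and \eqref{eq:delta-n-k-big-enough} of that theorem are met. The conditions \eqref{eq:gn-0-for-bigger-n}, \eqref{eq:k|sum-gn-k}, \eqref{eq:type-zero-outside-index-set} and \eqref{eq:fraction-converge-P(gamma)} are assumed outright, and the index set $I$ appearing in Theorem~\ref{thm:conf-model-convegence-Un} coincides with the finite set $I$ of the corollary, since $\pr{\Gamma(k)>0}>0$ holds precisely for $k \in I$. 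So it only remains to verify the four displayed bounds and then invoke the theorem.

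First I would record two elementary facts: for each $\theta>0$ there is a constant $C_\theta<\infty$ with $x^2 \le C_\theta e^{\theta x}$ for all $x \ge 0$, and for any finite collection of nonnegative reals $(a_i)$ one has $e^{\theta \max_i a_i} \le \sum_i e^{\theta a_i}$. From the first fact, $\frac1n \sum_{i=1}^n \snorm{\gn_i}_1^2 \le C_\theta \frac1n \sum_{i=1}^n e^{\theta \snorm{\gn_i}_1}$, so \eqref{eq:exp-theta-mgf-finite} immediately gives \eqref{eq:second-moment-of-one-norm-bounded}. From the second fact, $e^{\theta \max_i \snorm{\gn_i}_1} \le \sum_i e^{\theta \snorm{\gn_i}_1} \le C' n$ for all large $n$, whence $\max_i \snorm{\gn_i}_1 \le \theta^{-1}(\log C' + \log n) \le c_1 \log n$ for $n$ large, which is \eqref{eq:norm-1-gamma-log} with $c_2 = 1$. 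The bound \eqref{eq:norm-infty-gamma-log} does not even need the moment hypothesis: by \eqref{eq:type-zero-outside-index-set} each $\gn_i$ is supported on the finite set $I$, so $h(\gn_i) \le \max I$ for all $i$ and all $n$.

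For \eqref{eq:delta-n-k-big-enough}, note that $|\Delta^{(n)}(k)| = \sum_{i=1}^n \gn_i(k)$. If $k \notin I$ then $\gn_i(k) = 0$ for all $i$ by \eqref{eq:type-zero-outside-index-set}, so $\Delta^{(n)}(k) = \emptyset$; likewise for $k > n$ by \eqref{eq:gn-0-for-bigger-n}. If $k \in I$, then, since \eqref{eq:second-moment-of-one-norm-bounded} has now been established, Remark~\ref{rem:sum-gni-Ev-Gamma-k} applies and gives $\frac1n \sum_{i=1}^n \gn_i(k) \to \ev{\Gamma(k)}$; moreover $\ev{\Gamma(k)} > 0$ because $\pr{\Gamma(k)>0}>0$. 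Taking $c_3 := \tfrac12 \min_{k \in I} \ev{\Gamma(k)} > 0$ (a minimum over the finite set $I$, hence positive and attained) and $\epsilon := 1$, we obtain $|\Delta^{(n)}(k)| \ge c_3 n \ge c_3 n^\epsilon$ for every $k \in I$ and all $n$ beyond a threshold that is uniform in $k$ because $I$ is finite. All hypotheses of Theorem~\ref{thm:conf-model-convegence-Un} then hold, so $u_{H^e_n} \Rightarrow \ugwt(P)$ almost surely.

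The argument is essentially bookkeeping and there is no genuine obstacle; the one point to be careful about is the order of deduction, since \eqref{eq:delta-n-k-big-enough} relies on the first-moment convergence of Remark~\ref{rem:sum-gni-Ev-Gamma-k}, which itself requires \eqref{eq:second-moment-of-one-norm-bounded}, so the latter must be derived first. The finiteness of $I$ is used crucially twice: it makes \eqref{eq:norm-infty-gamma-log} trivial and it allows a single pair $(c_3,\epsilon)$ to work uniformly over all $k$ in \eqref{eq:delta-n-k-big-enough}.
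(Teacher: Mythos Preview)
Your proof is correct and follows essentially the same approach as the paper: both verify that the hypotheses of Theorem~\ref{thm:conf-model-convegence-Un} hold by deriving \eqref{eq:second-moment-of-one-norm-bounded}, \eqref{eq:norm-1-gamma-log}, \eqref{eq:norm-infty-gamma-log}, and \eqref{eq:delta-n-k-big-enough} from \eqref{eq:exp-theta-mgf-finite} and the finiteness of $I$, using the same elementary inequalities and the first-moment convergence of Remark~\ref{rem:sum-gni-Ev-Gamma-k}. Your derivation of \eqref{eq:second-moment-of-one-norm-bounded} via $x^2 \le C_\theta e^{\theta x}$ applied directly to $\snorm{\gn_i}_1$ is in fact slightly cleaner than the paper's, which bounds $\gn_i(k)^2$ instead.
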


\begin{proof}
We check that in this regime, all the conditions of Theorem~\ref{thm:conf-model-convegence-Un} are satisfied.    
Note that 
   \begin{align*}
     \frac{1}{n} \sum_{i=1}^n \gn_i(k)^2 \leq \frac{2!}{\theta^2} \frac{1}{n} \sum_{i=1}^n e^{\theta \gn_i(k)} \leq \frac{2!}{\theta^2} \frac{1}{n} \sum_{i=1}^n e^{\theta \snorm{\gn_i}_1},
   \end{align*}
which, together with \eqref{eq:exp-theta-mgf-finite}, implies \eqref{eq:second-moment-of-one-norm-bounded}.
In order to show \eqref{eq:norm-1-gamma-log}, note that \eqref{eq:exp-theta-mgf-finite} implies there is a constant $\lambda < \infty$ such that $\frac{1}{n} \sum_{i=1}^n e^{\theta \snorm{\gn_i}_1} < \lambda$ for all $n$. Now, 
$\exp(\theta \max_{1 \leq i \leq n} \snorm{\gn_i}_1) \leq \sum_{i=1}^n e^{\theta \snorm{\gn_i}_1} \leq n \lambda$. Hence, $  \max_{1 \leq i \leq n} \snorm{\gn_i}_1 \leq (\log n + \log \lambda)/\theta$, which shows \eqref{eq:norm-1-gamma-log}. On the other hand, 
$h(\gn_i) \leq \max\{ k: k \in I\}$ 
for all $i$ and $n$. Hence, \eqref{eq:norm-infty-gamma-log} also holds.

For a fixed $k \in I$, using \eqref{eq:ev-type-first-moment-converge}, which follows from \eqref{eq:second-moment-of-one-norm-bounded} which was proved above, for $n$ large enough we have
$|\Delta^{(n)}(k)| = \sum_{i=1}^n \gn_i(k) \geq n \ev{\Gamma(k)}/2$.
Since there are finitely many $k \in I$, we have that for $n$ large enough
$|\Delta^{(n)}(k)| \geq n \min_{k' \in I} \ev{\Gamma(k')}/2$.
But for $k' \in I$, we have $\ev{\Gamma(k')} \geq \pr{\Gamma(k') > 0} > 0$ by assumption. Consequently, \eqref{eq:delta-n-k-big-enough} holds with $\epsilon = 1$ and  $c_3:= \min_{k' \in I} \ev{\Gamma(k')}/2 > 0$. This means that  all the conditions of Theorem~\ref{thm:conf-model-convegence-Un} are satisfied and  $u_{H^e_n} \Rightarrow \ugwt(P)$ almost surely.
\end{proof}

\subsection{Statement of the result}
\label{sec:maxload-prop}

Here, we state the conditions under which we prove Theorem~\ref{thm:max-load}. 

\begin{prop}
\label{prop:max-load-detail}
  Assume that a probability distribution $P$ over $\natszf$ is given such that for a finite set $I \subset \{2, 3, \dots, \}$, if $\Gamma$ is a random variable with law $P$, we have $\pr{\Gamma(k) > 0} > 0$ for $k \in I$ and $\pr{\Gamma(k) > 0} = 0$ for $k \notin I$. 
 Moreover, assume that for all $k\in I$, $\ev{\Gamma(k)} < \infty$.
Also, with $k_\text{min}$ being the minimum element in $I$, assume that $\pr{\Gamma(k_\text{min}) = 0} + \pr{\Gamma(k_\text{min}) = 1} < 1$. Moreover, assume that a sequence of types $\gn = (\gn_1, \dots, \gn_n)$ is given satisfying 
  \eqref{eq:gn-0-for-bigger-n}, \eqref{eq:k|sum-gn-k}, \eqref{eq:type-zero-outside-index-set}~\eqref{eq:fraction-converge-P(gamma)} and \eqref{eq:exp-theta-mgf-finite}. 
  Then, if $H_n^e$ is the simple hypergraph generated by the configuration model in Section~\ref{sec:conf-model-hypergr}, $\varrho(H_n^e)$ converges in probability to $\varrho(\mu)$, where $\mu = \ugwt(P)$. 
\end{prop}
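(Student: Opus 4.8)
# Proof proposal for Proposition~\ref{prop:max-load-detail}

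\textbf{Overall strategy.} The plan is to prove a two-sided bound. For the lower bound $\liminf_n \varrho(H_n^e) \geq \varrho(\mu)$ in probability, I would exploit the duality \eqref{eq:max-load-duality}, namely $\varrho(H_n^e) = \max_{S \neq \emptyset} |E_{H_n^e}(S)|/|S|$, together with local weak convergence: for any $t < \varrho(\mu)$, the limit $\mu = \ugwt(P)$ puts positive mass on rooted hypertrees whose root has balanced load $> t$, hence (using the variational/recursive description from Section~\ref{sec:char-mean-excess} and the mean excess function) a positive fraction of vertices in $H_n^e$ lie in ``locally dense'' neighborhoods; one then extracts from a typical such neighborhood, or from a union of them, a finite vertex set $S$ with $|E_{H_n^e}(S)|/|S|$ close to $t$. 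Concretely I would first establish, via Corollary~\ref{cor:conf-model-convergence-finite-index-set}, that $u_{H_n^e} \Rightarrow \ugwt(P)$ almost surely, and then show that for a balanced allocation on $H_n^e$, the load distribution $\mL_{H_n^e}$ converges weakly to $\mL = \mL_\mu$ by part~5 of Theorem~\ref{thm:balanced--properties}; this already forces $\liminf_n \varrho(H_n^e) \geq \varrho(\mu)$ because $\mL_{H_n^e}$ has positive mass above any $t < \varrho(\mu)$ for $n$ large, and the maximum is at least any value in the support.

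\textbf{The upper bound.} The harder direction is $\limsup_n \varrho(H_n^e) \leq \varrho(\mu)$ in probability, since local weak convergence alone does not control the maximum (as noted in the text, a bounded clique can inflate $\varrho(H_n)$). Here I would use the duality formula again: I must show that with high probability \emph{no} nonempty vertex subset $S$ of $H_n^e$ has $|E_{H_n^e}(S)|/|S| > \varrho(\mu) + \delta$, for each fixed $\delta > 0$. The plan is a first-moment / union bound argument over the configuration model. Fix $\delta>0$ and let $t := \varrho(\mu)+\delta$. For a candidate ``witness'' set, it suffices to bound the probability that there exists a connected sub-multihypergraph on $s$ vertices carrying at least $ts$ edges, summed over $s$. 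Using the explicit structure of the matching model (choose $\sigma_k$ uniformly among $k$-matchings on $\Delta^{(n)}(k)$), one estimates the expected number of such configurations: the number of ways to pick $s$ labeled vertices and then force $\lceil ts\rceil$ partial edges incident to them to be matched \emph{within} the set contributes a factor that decays geometrically once $t > \varrho(\mu)$, because $\varrho(\mu)$ is precisely the critical edge density threshold for $\ugwt(P)$. The condition \eqref{eq:exp-theta-mgf-finite} (exponential moment on $\|\gn_i\|_1$) is what keeps the number of incident partial edges under control so that these expectations are summable in $s$; the finiteness of $I$ bounds edge sizes so the combinatorial factors $k^{|A|/k}(|A|/k)!$ in $|\mM_k(A)|$ behave. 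The extra hypothesis $\pr{\Gamma(k_\text{min})=0}+\pr{\Gamma(k_\text{min})=1}<1$ ensures the limiting tree is genuinely branching at the smallest edge size, so that $\varrho(\mu)$ is strictly above the trivial value and the threshold phenomenon is non-degenerate; I would also use it, if needed, to rule out pathological deterministic densifications in $\gn$.

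\textbf{Connecting the threshold to $\varrho(\mu)$.} A key intermediate step is to identify $\varrho(\mu)$ with the supremum of densities achievable ``locally'' in the configuration model. I would argue: on one hand, by Theorem~\ref{thm:fixed-point-generalized-geeneralization-v1} and the definition \eqref{eq:max-load--mu}, $\varrho(\mu) = \sup\{t : \Phi_\mu(t) > 0\}$; by part~5 of Theorem~\ref{thm:balanced--properties} and \eqref{eq:mean-excess-definition}, $\Phi_\mu(t) = \lim_n \int(\partial\theta_n - t)^+ \, du_{H_n^e}$, so for $t < \varrho(\mu)$ a positive fraction of vertices have load $\geq t$, and by Hajek's duality each such vertex witnesses (inside a bounded-radius ball, after a truncation argument using the finiteness of $I$ and the exponential moment bound to control ball sizes) a set $S$ with density near $t$ — giving the lower bound cleanly. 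On the other hand, for $t > \varrho(\mu)$ the first-moment bound above shows such dense sets do not exist w.h.p. — giving the upper bound. I expect the \textbf{main obstacle} to be the first-moment estimate for the upper bound: one must carefully count sub-configurations of the random matchings that induce an over-dense induced sub-hypergraph, uniformly over all subset sizes $s$ from $1$ to $n$, and show the expected count is $o(1)$; the small-$s$ range (bounded cliques, multiple edges, self-loops — which are anyway deleted in passing to $H_n^e$) and the linear-in-$n$ range require somewhat different treatments, with the exponential-moment hypothesis \eqref{eq:exp-theta-mgf-finite} doing the heavy lifting in the latter via a Chernoff-type bound on the total partial-edge degree of a random $s$-subset. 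Once both bounds are in hand, combining them yields convergence in probability of $\varrho(H_n^e)$ to $\varrho(\mu)$, completing the proof of Theorem~\ref{thm:max-load} as well.
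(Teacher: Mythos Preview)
Your lower bound is essentially the paper's argument: Corollary~\ref{cor:conf-model-convergence-finite-index-set} gives $u_{H_n^e}\Rightarrow\mu$ a.s., Theorem~\ref{thm:balanced--properties} part~5 then gives $\mL_{H_n^e}\Rightarrow\mL$, and for $t<\varrho(\mu)$ the portmanteau theorem forces $\liminf_n\mL_n((t-\epsilon,\infty))>0$, hence $\pr{\varrho(H_n^e)\le t-\epsilon}\to 0$. No need to extract a witnessing set $S$ by hand.

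The upper bound, however, is where your plan diverges from the paper and where there is a genuine gap. You propose a first-moment/union bound over \emph{all} subset sizes $1\le s\le n$, with separate treatment of small $s$ and $s$ linear in $n$. The paper does not do this, and for good reason: a first-moment bound summed over $s$ of order $n$ would require the combinatorial threshold for dense subsets to coincide with $\varrho(\mu)$, which you have no independent handle on. Instead, the paper reuses the weak convergence $\mL_n\Rightarrow\mL$ for the large-$s$ regime. The decomposition is
\[
\pr{\varrho(H_n^e)\ge t+\epsilon}=\pr{\mL_n([t+\epsilon,\infty))>\delta}+\pr{0<\mL_n([t+\epsilon,\infty))\le\delta}.
\]
The first term vanishes by portmanteau since $\mL$ has no mass at or above $t+\epsilon$. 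On the second event, the set $S=\{i:\partial\theta_n(i)\ge t+\epsilon\}$ is nonempty with $|S|\le\delta n$, and balancedness forces every edge touching $S$ and $S^c$ to put zero load on its $S$-endpoints, so $\sum_{i\in S}\partial\theta_n(i)=|E_{H_n^e}(S)|$; hence $|E_{H_n^e}(S)|\ge(t+\epsilon)|S|$. Thus only \emph{small} dense subsets need to be ruled out, and this is done by a first-moment bound (Lemmas~\ref{lem:maxload-binom} and~\ref{lem:maxload-tozero}) over $s\le\delta n$ only.

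You have also misread the role of the hypothesis $\pr{\Gamma(k_{\min})=0}+\pr{\Gamma(k_{\min})=1}<1$. The first-moment bound in Lemma~\ref{lem:maxload-tozero} does not work once $t>\varrho(\mu)$; it works once $t>\frac{1}{k_{\min}-1}$, a purely combinatorial threshold coming from the stochastic domination in Lemma~\ref{lem:maxload-binom} (the probability that a partial edge of size $k$ lands entirely inside $S$ scales like $(s/n)^{k-1}\le(s/n)^{k_{\min}-1}$). To apply Lemma~\ref{lem:maxload-tozero} at $t+\epsilon$ with $t=\varrho(\mu)$, the paper must show $\varrho(\mu)\ge\frac{1}{k_{\min}-1}$. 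This is exactly where the branching assumption enters: it guarantees that with positive $\mu$-probability the root sits in a finite sub-hypertree with $M$ edges all of size $k_{\min}$, on which the balanced load is $\frac{M}{1+M(k_{\min}-1)}$; letting $M\to\infty$ gives $\varrho(\mu)\ge\frac{1}{k_{\min}-1}$. Without this step the argument does not close.
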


Before proving this proposition, we need the following two lemmas, whose proofs are given at the end of this section. 

\begin{lem}
\label{lem:maxload-binom}
  With the assumptions of Proposition~\ref{prop:max-load-detail}, there is a constant $c_4 > 0$ such that for $n$ large enough, for any subset $S \subset \{1, \dots, n\}$, the number of edges in $H_n^e$ with all endpoints in $S$ is stochastically dominated by 
  the sum of $s$ independent Bernoulli random variables, each having
  mean $c_4 s^{k_\text{min} - 1}/n^{k_\text{min}-1}$,
  where $s := \sum_{i \in S} \snorm{\gn_i}_1$. 
\end{lem}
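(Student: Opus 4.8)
The plan is to compare $H_n^e$ with the underlying multihypergraph $H_n$ and then to reveal the random matchings $\sigma_k$ one cycle at a time, in a carefully chosen order, producing at most $s$ indicator variables whose conditional success probabilities are all bounded by $p := c_4 s^{k_\text{min}-1}/n^{k_\text{min}-1}$. Since $H_n^e$ is obtained from $H_n$ by deleting self-loops and multiple edges, $|E_{H_n^e}(S)| \le |E_{H_n}(S)|$, so it suffices to bound the number of edges of $H_n$ with all endpoints in $S$. Writing $\Delta^{(n)}_S(k) := \bigcup_{i \in S}\{e^k_{i,1},\dots,e^k_{i,\gn_i(k)}\}$ and $\Delta^{(n)}_S := \bigcup_{k}\Delta^{(n)}_S(k)$ (so $|\Delta^{(n)}_S| = s$), an edge of size $k$ in $H_n$ lies inside $S$ precisely when the corresponding $k$-cycle of $\sigma_k$ uses only partial edges from $\Delta^{(n)}_S(k)$; call such a cycle \emph{good}, so that $|E_{H_n}(S)|$ equals the total number of good cycles over all $k \in I$.

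I would generate each $\sigma_k$ via the standard cycle-by-cycle description of a uniform $k$-matching: given the cycles already revealed and any still-unmatched size-$k$ partial edge $f$, the cycle through $f$ is $(f, g_1, \dots, g_{k-1})$ with $(g_1,\dots,g_{k-1})$ uniform over ordered $(k-1)$-tuples of distinct unmatched size-$k$ partial edges other than $f$, and this remains valid whichever unmatched $f$ one chooses to expand next. Fix any ordering $f_1,\dots,f_s$ of $\Delta^{(n)}_S$ and process them in this order; let $\mathcal F_{j-1}$ denote the information revealed after $f_1,\dots,f_{j-1}$, and on reaching $f_j$ set $W_j := 0$ if $f_j$ is already matched, and otherwise reveal the cycle through $f_j$ and set $W_j := 1$ iff that cycle is good. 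Any good cycle lies entirely in $\{f_1,\dots,f_s\}$ and is revealed exactly once, at its first element in this order, forcing the corresponding $W_j$ to be $1$; hence $|E_{H_n^e}(S)| \le |E_{H_n}(S)| \le \sum_{j=1}^s W_j$ with each $W_j \in \{0,1\}$.

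It then remains to show $\pr{W_j = 1 \mid \mathcal F_{j-1}} \le p$ for all $j$ and all large $n$. This is $0$ if $f_j$ is matched; otherwise, with $k := |f_j|$, $m$ the number of currently unmatched size-$k$ partial edges, $a$ the number of those lying in $\Delta^{(n)}_S(k)$, and $s_k := |\Delta^{(n)}_S(k)|$, the cycle-by-cycle rule gives
\begin{equation*}
  \pr{W_j = 1 \mid \mathcal F_{j-1}} = \frac{\left[(a-1)(a-2)\cdots(a-k+1)\right]^+}{(m-1)(m-2)\cdots(m-k+1)} \le \frac{s^{k-1}}{(m-k+1)^{k-1}},
\end{equation*}
using $a \le s_k \le s$ and the fact that $m$ is a positive multiple of $k$ (so $m \ge k$). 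The key combinatorial observation is that each previously processed $f_i$ of size $k$ that was unmatched at its turn removed exactly $k$ size-$k$ partial edges, and there are at most $s_k$ such indices, whence $m \ge |\Delta^{(n)}(k)| - k s_k$. From the hypotheses, \eqref{eq:ev-type-first-moment-converge} (valid here since \eqref{eq:exp-theta-mgf-finite} implies \eqref{eq:second-moment-of-one-norm-bounded}) together with $\ev{\Gamma(k)} \ge \pr{\Gamma(k) > 0} > 0$ for $k \in I$ yields a constant $c_3 > 0$ with $|\Delta^{(n)}(k)| \ge c_3 n$ for all $k \in I$ and all large $n$, while \eqref{eq:exp-theta-mgf-finite} and $\theta x \le e^{\theta x}$ yield a constant $C_0$ with $s \le \sum_{i=1}^n \snorm{\gn_i}_1 \le C_0 n$. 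I would then split on whether $k s_k \le |\Delta^{(n)}(k)|/2$: in the first case $m \gtrsim n$, so the displayed bound is $\le \mathrm{const} \cdot (s/n)^{k-1} = \mathrm{const}\cdot (s/n)^{k - k_\text{min}} s^{k_\text{min}-1}/n^{k_\text{min}-1} \le \mathrm{const}\cdot s^{k_\text{min}-1}/n^{k_\text{min}-1}$, using $s/n \le C_0$ and the finiteness of $I$ (so $k - k_\text{min}$ is bounded); in the second case $s \ge s_k \gtrsim n$, so $n/s$ is bounded and the probability, being $\le 1$, is again $\le \mathrm{const}\cdot s^{k_\text{min}-1}/n^{k_\text{min}-1}$. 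Taking $c_4$ to absorb these finitely many constants gives $\pr{W_j = 1 \mid \mathcal F_{j-1}} \le p$, and a standard martingale coupling then shows $\sum_j W_j$, hence $|E_{H_n^e}(S)|$, is stochastically dominated by a sum of $s$ independent $\mathrm{Bernoulli}(p)$ random variables; if $p \ge 1$ the assertion is trivial since each edge inside $S$ consumes at least $k_\text{min} \ge 2$ partial edges of $\Delta^{(n)}_S$, so $|E_{H_n^e}(S)| \le s/2$.

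I expect the main obstacle to be precisely this uniform lower bound on $m$: as the exploration proceeds the pool of unmatched partial edges shrinks, which is genuinely troublesome only when $S$ is a constant fraction of the vertex set — but in that regime $s = \Theta(n)$ makes the target Bernoulli mean $p$ of constant order, so the case split above disposes of it.
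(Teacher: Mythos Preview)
Your proof is correct and follows essentially the same approach as the paper: reduce to $H_n$, reveal the matchings one cycle at a time starting from the partial edges attached to $S$, and bound each conditional success probability. The only notable difference is in how the shrinking pool is handled: the paper observes that when $k_t s_{k_t}\le m_{k_t}$ the ratio $s_{k_t}(t)/m_{k_t}(t)$ is bounded by the initial ratio $s_{k_t}/m_{k_t}$ (and otherwise the target bound exceeds $1$), whereas you lower-bound the remaining pool by $|\Delta^{(n)}(k)|-ks_k$ and case-split on $ks_k\le|\Delta^{(n)}(k)|/2$; your version is a bit more explicit about the coupling and the edge case $p\ge 1$, and more careful about absorbing the factor $(s/n)^{k-k_{\min}}$ via $s\le C_0 n$, which the paper glosses over.
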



\begin{lem}
\label{lem:maxload-tozero}
  With the assumptions of Proposition~\ref{prop:max-load-detail}, if $t > \frac{1}{k_\text{min}-1}$, there exists $\delta>0$ such that if $Z^{(n)}_{\delta, t}$ denotes the number of subsets $S$ of $\{1, \dots, n\}$ with size at most $n \delta$ such that $H_n^e$ has at least $t |S|$ many edges with all endpoints in $S$, we have $\pr{Z^{(n)}_{\delta, t}>0} \rightarrow 0$ as $n \rightarrow \infty$. 
\end{lem}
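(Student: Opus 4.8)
The plan is a first-moment (union bound) argument. Abbreviate $k := k_\text{min}$ and set $\beta := (k-1)t - 1$; the hypothesis $t > \tfrac{1}{k-1}$ is exactly the assertion $\beta > 0$, and this is the inequality that makes the estimate work. Since $Z^{(n)}_{\delta,t}$ is a non-negative integer random variable, $\pr{Z^{(n)}_{\delta,t} > 0} \le \ev{Z^{(n)}_{\delta,t}}$, and by linearity $\ev{Z^{(n)}_{\delta,t}} = \sum_{S : |S| \le n\delta} \pr{|E_{H^e_n}(S)| \ge t|S|}$. For a fixed $S$ with $|S| = m$, put $s := \sum_{i \in S} \snorm{\gn_i}_1$. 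By Lemma~\ref{lem:maxload-binom}, $|E_{H^e_n}(S)|$ is stochastically dominated by $\bin(s,p)$ with $p := c_4 s^{k-1}/n^{k-1}$, so the Chernoff bound $\pr{\bin(s,p) \ge a} \le (esp/a)^a$ (valid for $a \ge sp$) gives
\begin{equation*}
  \pr{\,|E_{H^e_n}(S)| \ge tm\,} \le \left( \frac{e c_4 s^k}{t\, m\, n^{k-1}} \right)^{tm} \qquad \text{whenever } c_4 s^k/n^{k-1} \le tm,
\end{equation*}
and trivially $\pr{\,|E_{H^e_n}(S)| \ge tm\,} \le 1$ in all cases.

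The second ingredient is a bound on how many subsets contribute. I will organize the sum by the pair $(m,s) = (|S|, \sum_{i \in S}\snorm{\gn_i}_1)$ and bound $N(m,s) := \#\{ S : |S| = m,\ \sum_{i\in S}\snorm{\gn_i}_1 = s \}$ in two ways: crudely by $N(m,s) \le \binom{n}{m} \le (en/m)^m$, and, for large $s$, by an exponential-moment estimate. Hypothesis~\eqref{eq:exp-theta-mgf-finite} supplies $\theta>0$ and a finite constant $c_7$ with $\tfrac1n \sum_{i=1}^n e^{\theta \snorm{\gn_i}_1} \le c_7$ for all large $n$; hence $\max_i \snorm{\gn_i}_1 \le \theta^{-1}\log(c_7 n)$, so every actual subset has $s \le m\,\theta^{-1}\log(c_7 n)$, and by Cauchy--Schwarz $\sum_i e^{\theta\snorm{\gn_i}_1/2} \le n\sqrt{c_7}$. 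Using $e_m(x_1,\dots,x_n) \le \tfrac1{m!}(\sum_i x_i)^m$ with $x_i = e^{\theta\snorm{\gn_i}_1/2}$,
\begin{equation*}
  N(m,s) \le e^{-\theta s/2}\, e_m\!\big( e^{\theta\snorm{\gn_1}_1/2}, \dots, e^{\theta\snorm{\gn_n}_1/2} \big) \le e^{-\theta s/2}\left( \frac{e\sqrt{c_7}\,n}{m} \right)^{m}.
\end{equation*}
I will then split the $s$-sum at $s = c_5 m$ with $c_5 := 4kt/\theta$. For $m \le s \le c_5 m$ I use $N(m,s) \le (en/m)^m$, the Chernoff bound (legitimate because $c_4(c_5 m)^k/n^{k-1} \le tm$ once $\delta$ is small and $m \le n\delta$), and the fact that there are at most $c_5 m$ values of $s$, whose $m$-th root $(c_5 m)^{1/m}$ is bounded uniformly in $m$. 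For $s > c_5 m$ I use the exponential-moment estimate; after the substitution $s = c_5 m + r$ the sum $\sum_{r\ge0} e^{-\theta r/2}(c_5 m + r)^{ktm} \le (c_5 m)^{ktm}\sum_{r\ge0} e^{-(\theta/2 - kt/c_5)r}$ converges because $c_5 > 2kt/\theta$, and the polynomial factor $s^{ktm}$ is entirely absorbed by the exponential weight, so no $\log n$ survives. In each range, after taking an $m$-th root, the total contribution of size-$m$ subsets is $\le \big(C_6 (m/n)^\beta\big)^m$ for a constant $C_6$ depending only on $c_4, k, t, \theta, c_7$; the leftover subcase where only $\pr{\cdot}\le 1$ is available forces $s \gtrsim m^{1/k}n^{(k-1)/k}$, which makes $e^{-\theta s/2}$ super-polynomially small in $n$ and hence negligible.

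Finally I will fix $\delta>0$ so small that $C_6 \delta^\beta \le \tfrac12$. Then for every $m \le n\delta$ we have $C_6(m/n)^\beta \le \tfrac12$, so for any fixed $M_0$,
\begin{equation*}
  \ev{Z^{(n)}_{\delta,t}} \le \sum_{m=1}^{\lfloor n\delta\rfloor} \big( C_6 (m/n)^\beta \big)^m \le \sum_{m=1}^{M_0} \big( C_6 (m/n)^\beta \big)^m + \sum_{m > M_0} 2^{-m};
\end{equation*}
the first sum is a fixed finite sum of terms each $O(n^{-\beta})$ and hence tends to $0$ as $n\to\infty$, while the second is $\le 2^{-M_0}$. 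Letting $n\to\infty$ and then $M_0\to\infty$ gives $\ev{Z^{(n)}_{\delta,t}} \to 0$, hence $\pr{Z^{(n)}_{\delta,t} > 0} \to 0$.

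The main obstacle is the non-uniformity of the degrees $\snorm{\gn_i}_1$: \eqref{eq:exp-theta-mgf-finite} only provides an exponential moment, and a union bound that naively inserts the worst-case degree $\Theta(\log n)$ for every vertex of $S$ produces a spurious factor $(\log n)^{\Theta(tm)}$ that is fatal for subsets of size $\asymp n\delta$. The two-regime split on $s$ combined with the exponential-moment count bound on $N(m,s)$ is precisely what prevents the $\log n$ from ever touching the dominant subsets; the delicate bookkeeping is to verify that the threshold $c_5$ can be chosen depending only on $\theta, k, t$ so that the auxiliary series converges, and that the resulting constant $C_6$ is genuinely independent of $n$ and $m$.
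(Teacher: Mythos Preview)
Your argument is correct and follows the same first-moment template as the paper: bound $\ev{Z^{(n)}_{\delta,t}}$ via Lemma~\ref{lem:maxload-binom}, reduce to $\sum_m (C(m/n)^\beta)^m$ with $\beta = t(k_{\min}-1)-1>0$, and finish by splitting the sum. The genuine difference is in how the factor $s^{k\lceil tm\rceil}$ (with $s=\sum_{i\in S}\snorm{\gn_i}_1$) is controlled when summing over $S$. You stratify by $(m,s)$ and split on $s\lessgtr c_5 m$: for small $s$ the crude count $N(m,s)\le\binom{n}{m}$ suffices, while for large $s$ you invoke the exponential-moment bound $N(m,s)\le e^{-\theta s/2}(e\sqrt{c_7}n/m)^m$ and check that $e^{-\theta s/2}$ beats the growth $s^{ktm}$. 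The paper is slicker: it applies $x^N\le N!\,e^x$ with $x=\theta s$ and $N=\lceil tm\rceil k_{\min}$ to replace $s^N$ by $\frac{N!}{\theta^N}\prod_{i\in S}e^{\theta\snorm{\gn_i}_1}$ \emph{before} summing over $S$; the product then factorizes, and $\sum_{|S|=L}\prod_{i\in S}e^{\theta\snorm{\gn_i}_1}\le\frac{1}{L!}\big(\sum_i e^{\theta\snorm{\gn_i}_1}\big)^L\le(n\lambda)^L/L!$ directly from~\eqref{eq:exp-theta-mgf-finite}. This collapses your two-regime split and leftover case into a single line. (Incidentally, your leftover subcase is unnecessary: $\pr{\bin(s,p)\ge a}\le\binom{s}{a}p^a\le(esp/a)^a$ holds for all integers $a\ge1$ regardless of whether $a\ge sp$, so the Chernoff-type bound is always valid.) Both routes land on essentially the same summand, and your final $M_0$-split is a clean alternative to the paper's $g$-monotonicity argument for the tail sum.
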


\begin{proof}[Proof of Proposition~\ref{prop:max-load-detail}]
Let $\mu_n$ denote $u_{H^e_n}$, which is a random probability distribution on $\mH_*$. Then, Corollary~\ref{cor:conf-model-convergence-finite-index-set} 
 guarantees that $\mu_n \Rightarrow \mu$ almost surely. As a result, if $\mL_{n}$ is the law of the balanced load for $H^e_n$ and $\mL$ is the law of $\partial \Theta$, where $\Theta$ is the balanced allocation corresponding to $\mu$, then, using 
Theorem~\ref{thm:balanced--properties},
we have  $\mL_{n} \Rightarrow \mL$ almost surely. Now, let $t := \varrho(\mu)$, and fix $\epsilon > 0$. Due to the definition of $\varrho(\mu)$, $\mL((t-\epsilon, \infty)) > 0$. As a result, using the portmanteau theorem  (see, for instance, \cite[Theorem~2.1]{billingsley2013convergence}), since $\mu_n \Rightarrow \mu$ almost surely, we have 
\begin{equation*}
  \liminf_n \mL_n((t-\epsilon, \infty)) > 0 \qquad a.s..
\end{equation*}
This means that 
\begin{equation}
\label{eq:pr-varrho-Hen<t-epislon-zero}
  \pr{\varrho(H^e_n) \leq t - \epsilon} = \pr{\mL_{n}((t - \epsilon, \infty)) = 0} \rightarrow 0.
\end{equation}

Now we show that $\pr{\varrho(H^e_n) \geq t + \epsilon}$ also converges to zero. 
To do so, fix some $\delta > 0$ and note that  
\begin{equation}
\label{eq:pr-varrho-Hen>t+1}
\begin{split}
  \pr{\varrho(H^e_n) \geq t + \epsilon} & = \pr{\mL_n([t+\epsilon, \infty)) > 0} \\
  &= \pr{\mL_n([t+\epsilon, \infty)) > \delta} + \pr{0 < \mL_n([t+\epsilon, \infty)) \leq \delta}.
\end{split}
\end{equation}
The portmanteau theorem implies that 
$ \pr{\mL_n([t+\epsilon, \infty)) > \delta}$ converges to zero. Now, we argue that the second term also converges to zero. If $\theta_n$ denotes the balanced allocation on $H^e_n$,  then, by the definition of $\mL_n$, on the event $0 < \mL_n([t+\epsilon, \infty)) \leq \delta$, the set $S:= \{ 1 \leq i \leq n: \partial \theta_n(i) \geq t +\epsilon \}$ is non--empty and $|S| \leq \delta n$. Now if $e \in E(H^e_n)$ with $i, j \in e$ such that $i \in S$ and $j \in S^c$, then, since $\partial \theta_n(j) < t + \epsilon \leq \partial \theta_n(i)$, we have $\theta_n(e, i) = 0$. Hence, for all $i \in S$, $\partial \theta_n(i) = \sum_{e \subseteq S, e\in E(H^e_n)} \theta_n(e, i)$. Thus, 
\begin{equation*}
  \sum_{i \in S} \partial \theta_n(i) = | \{ e\in E(H^e_n): e \subseteq S \} | = |E_{H^e_n}(S)|.
\end{equation*}
On the other hand, we have $\sum_{i \in S} \partial \theta_n(i) \geq |S|(t+\epsilon)$. Hence, $|E_{H^e_n}(S)| \geq (t+\epsilon)|S|$,
where $E_{H_n^e}(S)$ denotes the set of edges in $H_n^e$ with all endpoints in $S$. 
As a result, if $Z^{(n)}_{\delta, t+\epsilon}$ denotes the number of subsets $S \subset \{1, \dots, n\}$ with size at most $n \delta$ such that $H_n^e$ has  at least $(t+\epsilon)|S|$ many edges with all endpoints in $S$, we have 
\begin{align*}
  \pr{0 < \mL_n([t+\epsilon, \infty)) < \delta} &\leq \mathbb{P}\Bigg (\exists\, S \subseteq \{1, \dots, n\}, \,\, 0 < |S| \leq \delta n, \\
&\qquad \qquad |E_{H^e_n}(S)| \geq (t+\epsilon) |S| \Bigg ) \\
  &\leq \pr{Z^{(n)}_{\delta, t+\epsilon} > 0}.
\end{align*}

If we have $t \geq 1/(k_\text{min}-1)$, Lemma~\ref{lem:maxload-tozero} above implies that 
$\pr{Z^{(n)}_{\delta, t+\epsilon} > 0}$ goes to zero as $n\rightarrow \infty$ and we are done. We now argue why this is the case. For this, note that Proposition~\ref{prop:epsilon-to-zero} together with Proposition~\ref{prop:everyting-shows-at-the-root} imply that there exists a sequence of $\epsilon_m$--balanced allocations $\Theta_{\epsilon_m}$ such that for $\mu$--almost all $[H, i] \in \mH_*$, for all vertices $j \in V(H)$, $\partial \Theta_{\epsilon_m} (H, j) \rightarrow \partial \Theta_0(H, j)$ where $\Theta_0$ is a balanced allocation with respect to $\mu$. Moreover, using Remark~\ref{rem:Theta-epsilon-H**-canonical}, $\partial \Theta_{\epsilon_m}(H, j) = \partial \theta_{\epsilon_m}^{H}(j)$ where $\theta_{\epsilon_n}^H$ is the canonical $\epsilon$--balanced allocation on $H$. 

On the other hand, the assumption $\pr{\Gamma(k_\text{min}) = 0} + \pr{\Gamma(k_\text{min}) = 1} < 1$ guarantees that for all integer $M \geq 2$,  there is a nonzero probability under $\mu$ that the Galton--Watson process has a finite sub--hypertree containing the root, and  having $M$ edges with all these edge having size $k_\text{min}$. It can be easily seen that a finite hypertree with $M$ edges all having size $c$ has $1+M(c-1)$ vertices and there is a balanced allocation on such a hypertree such that all the vertices get the same amount of load, which is equal to $\frac{M}{1+M(c-1)}$. 
Motivated by the discussion in the previous paragraph, for $\mu$--almost all $[H, i] \in \mH_*$, $\partial \Theta_0(H, i) = \lim_{m \rightarrow \infty} \partial \theta_{\epsilon_m}^{H}(i)$ where $\theta_{\epsilon_m}^H$ is the canonical $\epsilon_m$--balanced allocation on $H$. This, together with Proposition~\ref{prop:monotonicity-canonical-eba}, implies that for each integer $M$,  there is a nonzero probability under $\mu$ that $\partial \Theta_0(H, i)$  is at least $M / (1+M(k_\text{min}-1))$. Sending $M \rightarrow \infty$, this means that $t = \varrho(\mu) \geq 1/(k_\text{min}-1)$. As was discussed above, using Lemma~\ref{lem:maxload-tozero} above,  $\pr{Z^{(n)}_{\delta, t+\epsilon} > 0} \rightarrow 0$. Thus, $\pr{\varrho(H^e_n) \geq t + \epsilon}$ goes to zero as $n$ goes to infinity. This together with \eqref{eq:pr-varrho-Hen<t-epislon-zero} proves that $\varrho(H^e_n) \stackrel{p}{\rightarrow} \varrho(\mu)$.
\end{proof}

\begin{proof}[Proof of Lemma~\ref{lem:maxload-binom}]
  For $k \in I$, let $s_k := \sum_{i \in S} \gn_i(k)$ and $m_k := |\Delta^{(n)}(k)|$. 
As the set of edges in $H_n^e$ is a subset of that of $H_n$, we may prove the result for $H_n$ instead. This allows us to directly analyze the configuration model. 
  Let $A$ be the set of the partial edges  connected to vertices in $S$. Note that $|A| = s$. 
  Arbitrarily order the elements in $A$. 
At time $t =1$, we pick the smallest element in $A$. Let $k_1$ be the size of this edge. Then, we choose $k_1-1$ other partial edges in $\Delta^{(n)}(k_1)$ uniformly at random to form an edge in $H_n$. We continue this process until all the elements in $A$ are used up. More precisely, at time $t$, we pick the smallest available partial edge in $A$, namely $e_t$, and if $k_t$ is the size of $e_t$, we match $e_t$ with $k_t-1$ other elements in the available partial edges in $\Delta^{(n)}(k_t)$ uniformly at random to form an edge in $H_n$. At time $t$, let $s_{k_t}(t)$ and $m_{k_t}(t)$ be the number of available partial edges of size $k_t$ in $A$ and $\Delta^{(n)}(k_t)$, respectively. With this, if $p_t$ denotes the probability that $e_t$ is matched with partial edges all inside $A$, 
  \begin{equation*}
    p_t = \frac{s_{k_t}(t)-1}{m_{k_t}(t) - 1} \times \frac{s_{k_t}(t) - 2}{m_{k_t}(t) - 2} \times \dots \times \frac{s_{k_t}(t) - (k_t-1)}{m_{k_t}(t) - (k_t-1)} \leq \left ( \frac{s_{k_t}(t)}{m_{k_t}(t)} \right)^{k_t-1}. 
  \end{equation*}
Note that if $l_t$ denotes the number of partial edges of size $k_t$ among $e_1, \dots, e_{t-1}$, $m_{k_t}(t)$ is precisely $m_{k_t} - l_t k_t$. On the other hand, $s_{k_t}(t)\leq s_{k_t} - l_t$, where equality holds only  if all the partial edges of size $k$ among $e_1, \dots, e_{t-1}$ are matched with partial edges outside $A$. With this, 
\begin{equation*}
  p_t \leq 
  \left ( \frac{k_t s_{k_t}}{m_{k_t}} \right)^{k_t-1}.
\end{equation*}
This is because, if $k_t s_{k_t} \leq m_{k_t}$, the even
stronger inequality $p_t \leq \left ( \frac{s_{k_t}}{m_{k_t}} \right)^{k_t-1}$
holds, while
otherwise the RHS is at least 1, and the inequality is trivial. 
Furthermore, as we saw in the proof of Corollary~\ref{cor:conf-model-convergence-finite-index-set}, there exists $\alpha >0$ such that for $n$ large enough  and all $k \in I$,  $m_k \geq n \alpha$. This together with the fact that $s_k \leq s$ for all $k \in I$ implies
\begin{equation*}
  p_t \leq k_\text{max}^{k_\text{max}-1} \left ( \frac{s}{n \alpha} \right)^{k_\text{min} - 1},
\end{equation*}
where $k_\text{max}$ denotes the maximum element in $I$. 
As this upper bound is a constant, and the above process can continue for at most $s$ steps until we match all partial edges in $A$, the number of edges with all endpoints in $S$ is stochastically dominated by $\text{Binomial}(s,  k_\text{max}^{k_\text{max}-1} \left ( \frac{s}{n \alpha} \right)^{k_\text{min} - 1})$. 
The proof is complete by setting $c_4 := k_\text{max}^{k_\text{max}-1} / \alpha^{k_\text{min}-1}$. 
\end{proof}

\begin{proof}[Proof of Lemma~\ref{lem:maxload-tozero}]
  For positive integers $L$ and $r$,
let $X^{(n)}_{L, r}$ denote the number of subsets $S \subset \{1, \dots, n\}$ with size $L$ such that $H_n^e$ has at least $r$ many edges with all endpoints in $S$. 
With this, $\ev{Z^{(n)}_{\delta, t}} = \sum_{L=1}^{\lfloor n\delta \rfloor} \ev{X^{(n)}_{L, \lceil Lt \rceil}}$. Now, fix integers $L$ and $r$ such that $ L \leq n \delta$, with $\delta > 0$ sufficiently small.  Let  $\mathcal{S}_L$ denote the set of $S \subset \{1, \dots, n\}$ with size equal to $L$. Using Lemma~\ref{lem:maxload-binom} and the fact that for a binomial random variable $Z$, $\pr{Z \geq r} \leq (\ev{Z})^r / r!$, we have 
  \begin{align*}
    \ev{X^{(n)}_{L, r}} &\leq \sum_{S \in \mathcal{S}_L} \frac{1}{r!} \left ( c_4 \frac{\left ( \sum_{i \in S} \snorm{\gn_i}_1\right)^{k_\text{min}}}{n^{k_\text{min}-1}} \right )^r.
\intertext{Using the inequality $x^m \leq m! e^x$ which holds for $x \geq 0$ and integer $m$, this yields}
\ev{X^{(n)}_{L, r}} &\leq \sum_{S \in \mathcal{S}_L} \frac{(rk_\text{min})! c_4^r}{r! \theta^{r k_\text{min}} n^{r(k_\text{min}-1)}} \prod_{i \in S} e^{\theta \snorm{\gn_i}_1} \\
&\leq \frac{(rk_\text{min})! c_4^r}{r! \theta^{r k_\text{min}} n^{r(k_\text{min}-1)}} \frac{1}{L!} \left ( \sum_{i=1}^n e^{\theta \snorm{\gn_i}_1} \right )^L,
\intertext{where $\theta > 0$ is as in the statement of Corollary
\ref{cor:conf-model-convergence-finite-index-set}.
Using the assumption \eqref{eq:exp-theta-mgf-finite}, there exists $\lambda >0$ such that $\sum_{i=1}^n e^{\theta \snorm{\gn_i}_1} < n \lambda$ for all $n$. Using this, together with the inequalities $L! \geq (L/e)^L$ and $(rk_\text{min})!/r! \leq (rk_\text{min})^{r(k_\text{min}-1)}$, and rearranging the terms, this yields }
\ev{X^{(n)}_{L, r}} &\leq \left ( \frac{c_5 r}{n} \right )^{r(k_\text{min}-1)} \left ( \frac{e n \lambda}{L} \right )^L,
  \end{align*}
where $c_5 := k_\text{min} c_4^{\frac{1}{k_\text{min}-1}}/ \theta^{\frac{k_\text{min}}{k_\text{min}-1}}$. 
Note that we may assume $c_5 > 1$; otherwise, we may replace it with $c_5 \vee 1$ which makes this quantity even bigger; for, the exponent of $c_5$ is positive.

Using  this bound for $r =  \lceil L t \rceil$, we have 
\begin{align*}
  \ev{X^{(n)}_{L, \lceil Lt \rceil}} &\leq \left ( \frac{c_5 \lceil Lt \rceil }{n} \right)^{\lceil Lt \rceil (k_\text{min}-1)} \left ( \frac{e n \lambda}{L} \right)^L \\
&= \left ( \frac{c_5 \lceil Lt \rceil }{L} \right)^{\lceil Lt \rceil (k_\text{min}-1)} (e\lambda)^L \left ( \frac{L}{n} \right)^{\lceil Lt \rceil (k_\text{min}-1) - L}.
\end{align*}
Using $c_5 > 1$, $L/ n < 1$, $Lt \leq \lceil Lt \rceil \leq L(t+1)$  and the assumption $t > 1/(k_\text{min}-1)$, we get the upper bound
\begin{equation*}
  \ev{X^{(n)}_{L, \lceil Lt \rceil}}  \leq f\left( \frac{L}{n} \right)^L,
\end{equation*}
where
\begin{equation*}
  f(x) := c_6 x^{t(k_\text{min}-1)-1},
\end{equation*}
with $  c_6 := e \lambda (c_5(t+1))^{(t+1)(k_\text{min}-1)}$. 
Note that $f(L/n)^L = \exp(-n g(L/n))$, where
\begin{equation*}
  g(x) := - x \log c_6 -(t(k_\text{min}-1) - 1) x \log x.
\end{equation*}
As $t > 1/(k_\text{min}-1)$, 
there exists $a >0$ such that $g(x)$ is strictly increasing and strictly positive in $(0,a)$. Now, we choose $\delta > 0$ such that $\delta <a$ and also $f(\delta) < 1$. This is possible since the assumption $t > 1/(k_\text{min}-1)$ guarantees $f(\delta) \rightarrow 0$ as $\delta \downarrow 0$. With this, for any  $0<\zeta < \delta$, we have 
\begin{align*}
  \ev{Z^{(n)}_{\delta,t}} &\leq \sum_{L=1}^{\lfloor n \delta \rfloor} f(L/n)^L \\
  &= \sum_{L=1}^{\lfloor n \zeta \rfloor} f(L/n)^L + \sum_{L= \lfloor n \zeta \rfloor +1}^{\lfloor n \delta \rfloor} \exp(-ng(L/n)).
\intertext{Using the facts that $f$ is increasing in $(0,\infty)$, $g$ is increasing in $(0,a)$ and $0<L/n\leq \delta <a$, we have}
\ev{Z^{(n)}_{\delta,t}} &\leq \left ( \sum_{L=1}^{\lfloor n \zeta \rfloor} f(\zeta)^L \right )+ n\delta \exp(- n g(\zeta)).
\intertext{But $f(\zeta) < f(\delta) < 1$. Therefore,}
\ev{Z^{(n)}_{\delta,t}} &\le \frac{f(\zeta)}{1 - f(\zeta)} + n \delta \exp(-ng(\zeta)).
\end{align*}
Now, by sending $n$ to infinity, the second term vanishes, because $g(\zeta)>0$, and we get $\limsup \ev{Z^{(n)}_{\delta,t}} \leq f(\zeta) / (1-f(\zeta))$. Furthermore, by sending $\zeta\rightarrow 0$, we get $\ev{Z^{(n)}_{\delta,t}} \rightarrow 0$ which means $\pr{Z^{(n)}_{\delta, t} > 0} \rightarrow 0$, as $Z^{(n)}_{\delta,t}$ is integer valued. 
\end{proof}


\appendix

\section{Weak uniqueness of balanced allocations}
\label{sec:weak-uniq-balan}

\begin{proof}[Proof of Proposition~\ref{prop:weak-uniqueness}]
  For a fixed $\delta> 0$, define the set 
  \begin{equation*}
    A_\delta := 
    \{i \in V(H): \pbt(i) - \pbt'(i) > \delta \}.
  \end{equation*}
By assumption, we have $\sum_{i \in V(H)} |\pbt(i) - \pbt'(i) | < \infty$.
Hence, $A_\delta$ is a finite set. Moreover
\begin{equation}
  \label{eq:sum-A-delta-e-in-out}
  \sum_{i \in A_\delta} \pbt(i) - \pbt'(i) 
  = \sum_{i \in A_\delta} \sum_{e \ni i, e \nsubseteq A_\delta} 
  \theta(e ,i) - \theta'(e ,i).
\end{equation}
Now, fix some $e \in E(H)$ such that $e \cap A_\delta \neq \emptyset$ and $e \nsubseteq A_\delta $. For $i \in e \cap A_\delta$ and $j \in e \setminus A_\delta$, 
we have 
\begin{equation*}
  \pbt(j) - \pbt'(j)\leq \delta  < \pbt(i) - \pbt(i'),
\end{equation*}
which means that
\begin{equation*}
  \pbt(j) - \pbt(i) < \pbt'(j) - \pbt'(i).
\end{equation*}
Hence it is either the case that 
$\pbt'(j) > \pbt'(i)$ 
or $\pbt(j) < \pbt(i)$. 

If $\theta(e ,i) = 0$ for all $i \in e \cap A_\delta$, then 
$\sum_{i \in e \cap A_\delta} \theta(e ,i) - \theta'(e ,i) \leq 0$.
If $\theta(e ,i^*) \neq 0$ for some $i^* \in e \cap A_\delta$, 
then $\pbt(i^*) \le \pbt(j)$ for all $j \in e \setminus A_\delta$. Consequently,  $\pbt'(j) > \pbt'(i^*)$ for all $j \in e \setminus A_\delta$; thereby,
 $\theta'(e ,j) = 0$ for all $j \in e \setminus A_\delta$.
This means that $\sum_{i \in e \cap A_\delta} \theta'(e ,i) = 1 \geq \sum_{i \in e \cap A_\delta} \theta(e ,i)$.
Hence, we have observed that, in either case, we have
$\sum_{i \in e \cap A_\delta} \theta(e ,i) - \theta'(e ,i) \leq 0$. Since this is true for all $e$ such that $e \cap A_\delta \neq \emptyset$ and $e \nsubseteq A_\delta$, substituting into \eqref{eq:sum-A-delta-e-in-out} we realize that 
\begin{equation*}
  \sum_{i \in A_\delta} \pbt(i) - \pbt'(i) \leq 0.
\end{equation*}
On the other hand, $ \sum_{i \in A_\delta} \pbt(i) - \pbt'(i) \geq \delta |A_\delta|$. Combining these two we conclude that $A_\delta = \emptyset$. Symmetrically, the set $B_\delta:= \{ i \in V(H): \pbt'(i) - \pbt(i) > \delta \}$ should be empty. Since $\delta$ is arbitrary, we conclude that $\pt \equiv \pt'$, i.e. $\pbt \equiv \pbt'$, which completes the proof.
\end{proof}


\section{$\mbH_*(\Xi)$ and $\mbH_{**}(\Xi)$ are Polish spaces}
\label{sec:mh_-mh_-are}

In this section, we prove that $\mbH_*(\Xi)$ and $\mbH_{**}(\Xi)$ are Polish spaces when $\Xi$ is a Polish space. In particular, by setting $\Xi = \{ \emptyset \}$, this means that $\mH_*$ and $\mH_{**}$ are Polish spaces.

\begin{prop}
  \label{prop:H*-H**-Polish}
  Assume $\Xi$ is a Polish space. Then, $\mbH_*(\Xi)$ and $\mbH_{**}(\Xi)$ are Polish spaces.
\end{prop}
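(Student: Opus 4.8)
The goal is to show that $\mbH_*(\Xi)$ and $\mbH_{**}(\Xi)$, equipped with the metrics $\bar d_*$ and $\bar d_{**}$ respectively, are complete and separable metric spaces. The argument proceeds along the same lines as the classical statement for rooted graphs (see Aldous--Lyons), adapted to hypergraphs and to the presence of $\Xi$-valued edge-vertex marks. I will treat $\mbH_*(\Xi)$ in detail; the case of $\mbH_{**}(\Xi)$ is entirely analogous, carrying an extra distinguished edge through every step. Recall that we have already observed (in Section~\ref{sec:mbh-mbh}) that $\bar d_*$ is well defined, symmetric, and satisfies the triangle inequality via the observation that if $(\bH_1,i_1)\equiv_m(\bH_2,i_2)$ with mark-distance $\le 1/m$ up to level $m$, and $(\bH_2,i_2)\equiv_{m'}(\bH_3,i_3)$ with mark-distance $\le 1/m'$ up to level $m'$, then $(\bH_1,i_1)$ and $(\bH_3,i_3)$ agree combinatorially up to level $\min\{m,m'\}$ with mark-distance at most $1/\min\{m,m'\}$ up to that level (using that $1/m \le 1/\min\{m,m'\}$ and $1/m' \le 1/\min\{m,m'\}$, and then possibly a further halving to be safe — but the clean way is to note $\bar d_*$ is, up to a factor $2$, equivalent to the metric $\inf\{2^{-m}: \text{agreement to level }m\text{ with mark error}\le 2^{-m}\}$, which is an ultrametric-type construction). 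That $\bar d_*([\bH_1,i_1],[\bH_2,i_2])=0$ forces $[\bH_1,i_1]=[\bH_2,i_2]$ follows because agreement at every finite level, with marks matching exactly in the limit, produces a compatible family of finite rooted isomorphisms, which by a König's-lemma / diagonal argument assembles into a global rooted isomorphism of marked hypergraphs preserving all marks.

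\textbf{Separability.} The plan is to exhibit a countable dense set. Let $\mbH_*^{\mathrm{fin}}(\Xi_0)$ be the set of isomorphism classes of \emph{finite, connected} vertex rooted hypergraphs whose marks all lie in a fixed countable dense subset $\Xi_0\subseteq\Xi$ (such a $\Xi_0$ exists since $\Xi$ is Polish). There are countably many finite connected hypergraphs up to isomorphism — each has a bounded number of vertices and finitely many possible edge sets — and for each such hypergraph only countably many ways to assign marks from $\Xi_0$; hence this set is countable. Given any $[\bH,i]\in\mbH_*(\Xi)$ and any $m\ge 1$, the local finiteness assumption guarantees that the truncation $\langle V^H_{i,m},E_H(V^H_{i,m})\rangle$ is finite; we may then replace each mark on an edge-vertex pair within this truncation by an element of $\Xi_0$ within distance $1/m$ of it (using density), obtaining a finite marked rooted hypergraph $[\bH',i']\in\mbH_*^{\mathrm{fin}}(\Xi_0)$ with $(\bH,i)\equiv_m(\bH',i')$ combinatorially and mark-error at most $1/m$ up to level $m$, so $\bar d_*([\bH,i],[\bH',i'])\le 1/(1+m)$. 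Letting $m\to\infty$ shows $\mbH_*^{\mathrm{fin}}(\Xi_0)$ is dense.

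\textbf{Completeness.} This is where the main work lies, and it will be the principal obstacle. Let $\{[\bH_n,i_n]\}$ be a Cauchy sequence. Passing to a subsequence, we may assume $\bar d_*([\bH_n,i_n],[\bH_{n+1},i_{n+1}])\le 2^{-(n+2)}$, so that for each $n$ there is an integer $m_n\ge n$ (with $m_n\uparrow\infty$) such that $(\bH_n,i_n)\equiv_{m_n}(\bH_{n+1},i_{n+1})$ and the marks agree up to error $1/m_n$ on level $m_n$. By the triangle-type estimate above, for any fixed depth $d$ all but finitely many of the $(\bH_n,i_n)$ share a common combinatorial truncation $(T_d,*)$ at depth $d$, and these truncations are consistent as $d$ grows; moreover, on each such truncation the marks form a Cauchy sequence in $\Xi$ pointwise over the (finitely many) edge-vertex pairs, hence converge since $\Xi$ is complete. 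One then builds the limit object $(\bH_\infty,i_\infty)$ by taking the increasing union of the combinatorial truncations — using local finiteness to see the union is again a locally finite connected hypergraph — and assigning to each edge-vertex pair the limiting mark value; this is well-defined because each pair eventually stabilizes in the truncations. Finally one checks $\bar d_*([\bH_n,i_n],[\bH_\infty,i_\infty])\to 0$ directly from the construction: given $m$, for $n$ large the truncation of $\bH_n$ at depth $m$ agrees combinatorially with that of $\bH_\infty$, and the marks differ by at most a quantity tending to $0$, which eventually drops below $1/m$. The subsequential limit is then promoted to a genuine limit of the original Cauchy sequence by the standard fact that a Cauchy sequence with a convergent subsequence converges. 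The care needed in the bookkeeping of nested isomorphisms — ensuring the finite rooted isomorphisms witnessing $\equiv_{m_n}$ can be chosen compatibly so that the diagonal/König extraction actually produces a single coherent limiting hypergraph with coherent marks — is the delicate point, and is handled exactly as in the unmarked graph case, the marks riding along passively because $\Xi$-completeness supplies their limits.

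For $\mbH_{**}(\Xi)$ the identical argument applies, with every rooted truncation, isomorphism, and limit carrying the additional distinguished root edge $e$; density is witnessed by finite connected edge-vertex rooted marked hypergraphs with marks in $\Xi_0$, and completeness follows the same subsequence-and-diagonalization scheme. Setting $\Xi=\{\emptyset\}$ (a one-point, hence trivially Polish, space) recovers Corollary~\ref{cor:H*-H**-Polish}, that $\mH_*$ and $\mH_{**}$ are Polish.
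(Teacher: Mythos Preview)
Your proposal is correct and follows essentially the same route as the paper's proof: separability via finite connected rooted hypergraphs with marks drawn from a countable dense subset of $\Xi$, and completeness via passing to a fast Cauchy subsequence, building the limit combinatorially from the consistent sequence of finite truncations, and obtaining the limiting marks as limits of Cauchy sequences in $\Xi$. The paper is somewhat more explicit than you are about the compatibility of the finite isomorphisms (it writes down $\phi_{n,m}=\phi_{m-1,m}\circ\cdots\circ\phi_{n,n+1}$ and $\phi_m=\phi_{n,m}\circ\phi_n$), whereas you defer this to a K\"onig/diagonal extraction ``exactly as in the unmarked graph case''; this is a difference of presentation, not of substance.
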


\begin{proof}
  We give the proof for $\mbH_*(\Xi)$ here. The proof for $\mbH_{**}(\Xi)$ is similar, and is therefore omitted.
  
 First, we show $\mbH_*(\Xi)$ is separable. Since $\Xi$ is separable, it has a countable dense subset $X = \{ \zeta_1, \zeta_2, \dots \} \subseteq \Xi$. Define $A_n$ to be the set of all hypergraphs with $n$ vertices with marks taking values in $X$, i.e.
  \begin{equation*}
    A_n := \{ [\bH,i] \in \mbH_*(\Xi): |V(\bH)| = n, \xi_{\bH}(e ,i) \in X \, \forall (e, i) \in \evpair(\bH) \}.
  \end{equation*}
Since there are finitely many hypergraphs on $n$ vertices and 
$X$ is countable, we see that $A_n$ is  countable. Now, define $A = \cup_n A_n$, which is countable. We claim that $A$ is dense in $\mbH_*(\Xi)$. To see this, for $[\bH, i] \in \mbH_*(\Xi)$ and $\epsilon > 0$ given, pick $(\bH, i) \in [\bH, i]$. Then take $n$ large enough such that $\frac{1}{1+n} < \epsilon$. With $H$ being the underlying unmarked hypergraph associated to $\bH$, we now define a marked rooted hypergraph $(\bH', i')$ where the underlying unmarked hypergraph $H'$ has the property that $(H', i')$ is the truncation of $(H, i)$ up to depth $n$ and the mark function $\xi_{\bH'}$ is defined as follows. For $(\tilde{e}, \tilde{i}) \in \evpair(H')$, define $\xi_{\bH'}(\tilde{e} ,\tilde{i}) \in X$ such that $d_\Xi(\xi_{H'}(\tilde{e} ,\tilde{i}), \xi_{\bH}(\tilde{e} ,\tilde{i})) < 1/(n+1)$. In this way, we have
\begin{equation*}
  \bar{d}_* ( [\bH, i], [\bH', i']) \leq \frac{1}{1+n} < \epsilon.
\end{equation*}
But, $(\bH', i')$ is finite, and the edge marks are in $X$; hence, $[\bH', i'] \in A$. Since $\epsilon$ was arbitrary, this shows that $A$ is dense in $\mbH_*(\Xi)$. Thus, $\mbH_*(\Xi)$ is separable.

Now, we turn to showing that $\mbH_*(\Xi)$ is complete. Take a Cauchy sequence $[\bH_n, i_n]$ in $\mbH_*(\Xi)$ and let $(\bH_n, i_n)$ be an arbitrary member of $[\bH_n, i_n]$.
 Without loss of generality, by taking a subsequence if needed, we can assume that for $m > n$ we have 
\begin{equation*}
  \bar{d}_*([\bH_n, i_n], [\bH_m, i_m]) < \frac{1}{1+n}.
\end{equation*}
This means that, with $H_k$ being the underlying unmarked hypergraph associated to $\bH_k$ for $k \geq 1$, we have 
\begin{equation}
  \label{eq:Hn-in--Hm-hm--distance}
(H_n, i_n) \equiv_n (H_m, i_m) \qquad \forall m > n,
\end{equation}
 and 
\begin{equation}
  \label{eq:marks-cauchy-distance}
  d_\Xi(\xi_{\bH_n}(e', i'), \xi_{\bH_m}(\phi_{n , m}(e') ,\phi_{n, m}(i'))) < \frac{1}{1+n},
\end{equation}
for all  $e' \in E_{H_n}(V^{H_n}_{i_n, n})$ and $i'\in e'$, where $\phi_{n, m}$ is the depth $n$ isomorphism between $H_n$ and $H_m$. Note that we can choose $\phi_{n, m}$ for $n>m$ so that
\begin{equation}
  \label{eq:phi-n-m--1-step}
  \phi_{n,m} = \phi_{m-1, m} \circ \dots \circ \phi_{n, n+1}.
\end{equation}
In fact, since the RHS of \eqref{eq:phi-n-m--1-step} defines a depth $n$ isomorphism from $(H_n, i_n)$ to $(H_m, i_m)$, one can define $\phi_{n,m}$ in this way.

In view of \eqref{eq:Hn-in--Hm-hm--distance}, we can 
construct a rooted hypergraph $(H, i)$ so that $(H, i) \equiv_n (H_n, i_n)$ for all $n$. 
Further, there are depth $n$ isomorphisms, $\phi_n$,  
from $(H, i)$ to $(H_n, i_n)$, which satisfy the consistency condition
\begin{equation}
  \label{eq:phi-m--phi-n-m-phi-n}
  \phi_m = \phi_{n, m} \circ \phi_n \qquad \forall m > n.
\end{equation}

So far we have constructed a rooted hypergraph $(H, i)$ such that  $[H_n, i_n] \rightarrow [H, i]$. Now, we construct a marked rooted hypergraph $(\bH, i)$, where its underlying unmarked rooted hypergraph is $(H,i)$, and the mark function $\xi_{\bH}: \evpair(H) \rightarrow \Xi$ is defined as follows. Take $(e', i') \in \evpair(H)$ and 
 choose $d$ such that $e' \in E_H(V^H_{i, d})$. We claim that the sequence 
\begin{equation*}
  \{ \xi_{\bH_n}(\phi_n(e'), \phi_n(i')) \}_{n \geq d},
\end{equation*}
is Cauchy in $\Xi$. 
Indeed, for $m > n$, using \eqref{eq:marks-cauchy-distance}, we have
\begin{equation*}
  d_\Xi(\xi_{\bH_n}(\phi_n(e'), \phi_n(i')), \xi_{\bH_m}(\phi_{n,m}\circ\phi_n(e'), \phi_{n,m}\circ \phi_n(i'))) < \frac{1}{1+n}.
\end{equation*}
Using \eqref{eq:phi-m--phi-n-m-phi-n}, this means
\begin{equation*}
  d_\Xi(\xi_{\bH_n}(\phi_n(e'), \phi_n(i')), \xi_{\bH_m}(\phi_m(e'), \phi_m(i'))) < \frac{1}{1+n},
\end{equation*}
which means that the sequence is Cauchy in $\Xi$. Since $\Xi$ is complete, we can define $\xi_{\bH}(e' ,i')$ to be the limit of this sequence. 

Now, we show that $[\bH_n, i_n] \rightarrow [\bH, i]$. 
For a given $d \in \nats$, define
\begin{equation*}
  A_d := \{ (e',i'): e' \in E_H(V^{H}_{i,d}), i' \in e' \}.
\end{equation*}
Since $H_n$ are locally finite, $H$ is also locally finite, and thus $A_d$ is finite. On the other hand, since $\xi_{\bH_n}(\phi_n(e') ,\phi_n(i')) \rightarrow \xi_{\bH}(e' ,i')$ for all $(e', i') \in A_d$, there exists a $N_d>d$ such that for all $n > N$, we have 
\begin{equation*}
  d_\Xi(\xi_{\bH_n}(\phi_n(e'), \phi_n(i')), \xi_{\bH}(e' ,i')) < \frac{1}{1+d} \qquad \forall (e', i') \in A_d.
\end{equation*}
Moreover, since $n > N_d > d$, $[H_n , i_n] \equiv_d [H, i]$. Hence,
\begin{equation*}
  \bar{d}_* ([\bH_n, i_n], [\bH, i]) < \frac{1}{1+d} \qquad \forall n > N_d.
\end{equation*}
Since $d$ was arbitrary, this means that $[\bH_n, i_n] 
\rightarrow [\bH, i]$ and $\mbH_*(\Xi)$ is complete.
\end{proof}

As was mentioned earlier, by setting 
$\Xi = \{ \emptyset \}$, we conclude that $\mH_*$ and $\mH_{**}$ are Polish 
spaces. This is explicitly stated below as a corollary.

\begin{cor}
  \label{cor:H*-H**-Polish}
  The spaces $\mH_*$ and $\mH_{**}$ are Polish spaces.
\end{cor}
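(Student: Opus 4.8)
**Corollary~\ref{cor:H*-H**-Polish}** follows immediately from **Proposition~\ref{prop:H*-H**-Polish}** once we observe that the unmarked spaces $\mH_*$ and $\mH_{**}$ are recovered by taking the mark space to be a one-point set.

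I should organize the reasoning around that identification. The plan is to take $\Xi = \{\emptyset\}$, the one-point metric space (which is trivially complete and separable, hence Polish). With this choice, a $\Xi$-valued edge mark on a hypergraph $H$ is the unique constant function $\xi: \evpair(H) \to \{\emptyset\}$, so there is exactly one marked hypergraph for each underlying hypergraph. Consequently the map $\bH \mapsto H$ sending a marked hypergraph to its underlying unmarked hypergraph is a bijection, and it descends to bijections $\mbH_*(\{\emptyset\}) \to \mH_*$ and $\mbH_{**}(\{\emptyset\}) \to \mH_{**}$ on the level of isomorphism classes, since Definition~\ref{def:rooted-marked-isomorphism} reduces to Definition~\ref{def:isomorphism} when there is only one possible mark value (the mark-matching condition $\xi_{\bH_1}(e,i) = \xi_{\bH_2}(\phi(e),\phi(i))$ is automatic).

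Next I would check that these bijections are in fact isometries, so that topological completeness and separability transfer. In the definition of $\bar d_*$, the extra requirement that the $\Xi$-distance between corresponding marks up to level $m$ be at most $1/m$ is vacuous when $d_\Xi \equiv 0$, so the supremum $m^*$ defining $\bar d_*$ coincides with the supremum $m^*$ defining $d_{\mH_*}$; hence $\bar d_*([\bH_1,i_1],[\bH_2,i_2]) = d_{\mH_*}([H_1,i_1],[H_2,i_2])$. The same argument applies verbatim to $\bar d_{**}$ and $d_{\mH_{**}}$. Therefore the bijections above are bijective isometries, and since $\mbH_*(\{\emptyset\})$ and $\mbH_{**}(\{\emptyset\})$ are Polish by Proposition~\ref{prop:H*-H**-Polish}, so are $\mH_*$ and $\mH_{**}$.

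There is no real obstacle here: the only thing to be careful about is to state precisely that $\{\emptyset\}$ is a legitimate choice of mark space (it is a complete separable metric space under the zero metric) and that the mark-related clauses in the marked definitions genuinely collapse in this case. Since the excerpt already flags this reduction ("In particular, by setting $\Xi = \{\emptyset\}$, this means that $\mH_*$ and $\mH_{**}$ are Polish spaces"), the corollary is essentially a one-line consequence once Proposition~\ref{prop:H*-H**-Polish} is in hand, and the proof can simply be: \emph{Apply Proposition~\ref{prop:H*-H**-Polish} with $\Xi = \{\emptyset\}$, noting that $\mbH_*(\{\emptyset\})$ is isometric to $\mH_*$ and $\mbH_{**}(\{\emptyset\})$ is isometric to $\mH_{**}$.}
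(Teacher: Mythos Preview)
Your proposal is correct and takes exactly the same approach as the paper, which simply states that setting $\Xi = \{\emptyset\}$ in Proposition~\ref{prop:H*-H**-Polish} yields the result. Your additional verification that the induced bijections are isometries is more explicit than what the paper provides, but the underlying idea is identical.
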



\section{Some properties of measures on $\mH_*$}
\label{sec:prop-unim-meas}

\begin{proof}[Proof of Lemma~\ref{lem:A-as-Atilde-ae}]
  \underline{Part $(i)$}: We have 
  \begin{equation*}
    \vmu(\tilde{A}) = \int \oneu{\tilde{A}} d\vmu = \int \partial \oneu{\tilde{A}} d\mu.
  \end{equation*}
  But,
  \begin{equation*}
    \partial \oneu{\tilde{A}} (H, i) = \sum_{e \ni i} \oneu{\tilde{A}}(H, e, i) = \sum_{e \ni i} \oneu{A}(H, i) = \deg_H(i) \oneu{A}(H, i).
  \end{equation*}
Hence
\begin{equation*}
 \vmu(\tilde{A}) = \int \oneu{A}(H, i) \deg_H(i) d \mu = \int \deg_H(i) d \mu = \int d \vmu = \vmu(\mH_{**}),
\end{equation*}
which shows that $\tilde{A}$ happens $\vmu$ almost everywhere and the proof is complete. 

\underline{Part $(ii)$}: Note that 
\begin{equation*}
  \vmu(B) = \int_{\mH_{**}} \one{[H, e, i] \in B} d \vmu([H, e, i]) = \int_{\mH_*} \sum_{e \ni i} \one{[H, e, i] \in B} d \mu([H, i]). 
\end{equation*}
On the other hand, $\vmu(B) = \vmu(\mH_{**}) = \deg(\mu) = \int \deg_H(i) d \mu([H, i])$. Moreover, for all $[H, i] \in \mH_*$, $\sum_{e \ni i} \one{[H, e, i] \in B} \leq \deg_H(i)$. Consequently, it must be the case that for $\mu$--almost all $[H, i] \in \mH_*$, $\sum_{e \ni i} \one{[H, e, i] \in B} = \deg_H(i)$, or equivalently $[H, e, i] \in B$ for all $e \ni i$. 
\end{proof}

\begin{proof}[Proof of Lemma~\ref{lem:a.s.in-mu--a.e.in-vmu}]
  If we define
  \begin{equation*}
    A := \{ [H, i] \in \mH_*: f_k([H, i]) \rightarrow f_0([H, i]) \},
  \end{equation*}
  and 
  \begin{equation*}
    \tilde{A} := \{ [H, e, i] \in \mH_{**} : \tilde{f}_k([H, e, i]) \rightarrow \tilde{f}_0([H, e, i]) \},
  \end{equation*}
  then we have 
  \begin{equation*}
    \tilde{A} = \{ [H, e, i] \in \mH_{**} : [H, i] \in A \}.
  \end{equation*}
Then the proof is 
an immediate conseqeunce of
Lemma~\ref{lem:A-as-Atilde-ae}.
\end{proof}

\begin{proof}[Proof of Lemma~\ref{lem:vmu-a.e.-del-mu-a.s.}]
Define $B:= \{[H,e, i] \in \mH_{**}: f_k (H, e, i) \rightarrow f_0(H, e, i)\}$. As $\vmu(B) = \vmu(\mH_{**})$, from part $(ii)$ of Lemma~\ref{lem:A-as-Atilde-ae}, for $\mu$--almost all $[H, i] \in \mH_*$, for all $e \ni i$, $f_k(H, e, i) \rightarrow f_0(H, e, i)$. This in particular implies that for $\mu$--almost all $[H, i] \in \mH_*$, $\partial f_k(H, i) \rightarrow \partial f_0(H, i)$.
\end{proof}




\section{Proof of Lemma~\ref{lem:eq-condition-local-weak-convergence}}
\label{sec:proof-lemma-equivalent-condition-for-lwc}

We first prove that if the condition mentioned in Lemma~\ref{lem:eq-condition-local-weak-convergence} is satisfied, then $\mu_n \Rightarrow  \mu$. 
Fix $\epsilon > 0$. 
Let $f: \mH_* \rightarrow \reals$ be a uniformly continuous and bounded function.
There is some $\delta > 0$ such that $|f([H, i]) - f([H', i'])| < \epsilon$ when $d_{\mH_*}([H, i], [H', i'])< \delta$. Now choose $d$ such that $1/(1+d) < \delta$. 
For all rooted trees $[H, i] \in \mT_*$, $[H, i] \in A_{(H,i)_d}$.
Hence, one can find countably many rooted trees $\{ T_j, i_j \}_{j = 1}^\infty$ with depth at most $d$ such that $A_{(T_j, i_j)}$ partitions $\mT_*$; hence, one can find finitely many $(T_j, i_j)$, $1 \leq j \leq m$ such that $\sum_{j=1}^m \mu(A_{(T_j, i_j)}) \geq 1 - \epsilon$. If $\mA$ denotes $\cup_{j=1}^m A_{(T_j, i_j)}$, then we have 
\begin{equation*}
  \begin{split}
    \left | \int f d \mu - \sum_{j=1}^m f([T_j, i_j]) \mu (A_{(T_j, i_j)}) \right | & \leq \sum_{j=1}^m \left | \int_{A_{(T_j, i_j)}} f d \mu - f([T_j, i_j]) \mu(A_{(T_j, i_j)})\right|  \\
&\qquad \qquad +\norm{f}_\infty \mu(\mA^c) \\
    &\leq \epsilon( 1 + \norm{f}_\infty),
  \end{split}
\end{equation*}
where the last inequality uses the facts that $\mu(\mA^c) < \epsilon$ and $|f([H, i]) - f([T_j, i_j])| < \epsilon$ for $[H, i] \in A_{(T_j, i_j)}$, $1 \leq j \leq m$ since $1 / (1+d) < \epsilon$. Similarly, we have 
\begin{equation*}
\begin{split}
    \left | \int f d \mu_n - \sum_{j=1}^m f([T_j, i_j]) \mu(A_{(T_j, i_j)}) \right | &\leq \left | \int f d \mu_n - \sum_{j=1}^m f([T_j, i_j]) \mu_n(A_{(T_j, i_j)}) \right | \\
&\qquad + \sum_{j=1}^m |f(T_j, i_j)| | \mu_n(A_{(T_j, i_j)}) - \mu(A_{(T_j, i_j)}) | \\
    &\leq \norm{f}_\infty \left ( 1 - \sum_{j=1}^m \mu_n(A_{(T_j, i_j)}) \right ) + \epsilon \\
    &\qquad +\norm{f}_\infty\sum_{j=1}^m | \mu_n(A_{(T_j, i_j)}) - \mu(A_{(T_j, i_j)})|.
\end{split}
\end{equation*}
Combining these two, we have 
\begin{align*}
  \left | \int f d \mu_n - \int f d \mu \right | &\leq \norm{f}_\infty \left ( 1 - \sum_{j=1}^m \mu_n(A_{(T_j, i_j)}) \right )  + \norm{f}_\infty \left | \sum_{j=1}^m \mu_n(A_{(T_j, i_j)}) - \mu(A_{(T_j, i_j)}) \right | \\
&\qquad+ \epsilon(2 + \norm{f}_\infty).
\end{align*}
Now, as $n$ goes to infinity, 
$\mu_n(A_{(T_j, i_j)}) \rightarrow \mu(A_{(T_j, i_j)})$ by assumption.
Also, $\mu(\mA^c) < \epsilon$.
 Thus, we have 
\begin{equation*}
  \limsup_{n \rightarrow \infty} \left | \int f d\mu_n - \int f d \mu \right | \leq \epsilon(2 + 2 \norm{f}_\infty).
\end{equation*}
Since $\norm{f}_\infty < \infty$ and this is true for any $\epsilon > 0$, we get $\int f d \mu_n \rightarrow \int f d \mu$; hence, $\mu_n \Rightarrow \mu$.

For the converse, note that $\oneu{A_{(T, j)}}$ is a continuous function since $\oneu{A_{(T, j)}}([H, i]) = \oneu{A_{(T, i)}}([H', j'])$ for $d_{\mH_*}([H, i], [H', i']) < 1 / (1+d)$.


\section{Some properties of Unimodular Measures} 
\label{sec:everything-shows-at}

First, we prove Proposition~\ref{prop:everyting-shows-at-the-root}.
Our proof depends on the following lemma:

\begin{lem}
  \label{lem:tau-tau1-tau2}
  Assume $\tau: \mH_{**} \rightarrow \reals$ is a measurable function and $\mu \in \mP(\mH_*)$ is a unimodular measure such that $\tau = 1$, $\vmu$--almost everywhere. Then, we have
  \begin{enumerate}
  \item With $\tau_1(H, e, i) := \one{\tau(H, e', i) = 1, \forall e' \ni i}$, it holds that $\tau_1 = 1$ $\vmu$--almost everywhere.
  \item With $\tau_2(H, e, i) := \one{\tau(H, e, i') = 1, \forall i' \in e}$, it holds that $\tau_2 = 1$ $\vmu$--almost everywhere.
  \end{enumerate}
\end{lem}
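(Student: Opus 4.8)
\textbf{Plan for the proof of Lemma~\ref{lem:tau-tau1-tau2}.}
The strategy for both parts is the same: express the indicator function $\tau_i$ in terms of a $\partial$ or $\nabla$ of a nonnegative function built from $\mathbbm{1}[\tau = 1]$, use the hypothesis that $\tau = 1$ $\vmu$--a.e.\ to compute the relevant integral, and then use a pointwise domination together with unimodularity (or the relation between $\vmu$ and $\mu$) to conclude that the integral of $\tau_i$ equals $\vmu(\mH_{**})$, which forces $\tau_i = 1$ $\vmu$--a.e.\ since $\tau_i \in \{0,1\}$.

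For part (1), I would set $\sigma(H,e,i) := \mathbbm{1}[\tau(H,e,i) = 1]$, so that $\sigma = 1$ $\vmu$--a.e.\ by hypothesis. Note that $\tau_1(H,e,i) = \mathbbm{1}[\sigma(H,e',i) = 1 \ \forall e' \ni i]$ depends on $(H,e,i)$ only through $[H,i]$, i.e.\ $\tau_1$ is (the $\mH_{**}$--lift of) a function on $\mH_*$. The key observation is that $\partial \sigma(H,i) = \sum_{e' \ni i} \sigma(H,e',i) \le \deg_H(i)$ pointwise, with equality exactly when $\sigma(H,e',i) = 1$ for all $e' \ni i$, i.e.\ exactly when $\tau_1(H,e,i) = 1$ for any (hence all) $e \ni i$. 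Since $\sigma = 1$ $\vmu$--a.e., we have $\int \partial \sigma \, d\mu = \int \sigma \, d\vmu = \vmu(\mH_{**}) = \deg(\mu) = \int \deg_H(i) \, d\mu$, so $\partial\sigma(H,i) = \deg_H(i)$ for $\mu$--almost all $[H,i]$. This is exactly the statement that $\tau_1 = 1$ $\mu$--a.s.\ on $\mH_*$, and then Lemma~\ref{lem:A-as-Atilde-ae}$(i)$ (or a direct repetition of its argument) upgrades this to $\tau_1 = 1$ $\vmu$--a.e.\ on $\mH_{**}$. This part uses only the relation between $\mu$ and $\vmu$, not unimodularity.

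For part (2), unimodularity does the work. Again set $\sigma(H,e,i) := \mathbbm{1}[\tau(H,e,i) = 1]$, so $\sigma = 1$ $\vmu$--a.e.\ and hence $\int \sigma \, d\vmu = \vmu(\mH_{**})$. By unimodularity, $\int \nabla \sigma \, d\vmu = \int \sigma \, d\vmu = \vmu(\mH_{**})$. But $\nabla\sigma(H,e,i) = \frac{1}{|e|}\sum_{j \in e}\sigma(H,e,j) \le 1$ pointwise, with equality exactly when $\sigma(H,e,j) = 1$ for all $j \in e$, i.e.\ exactly when $\tau_2(H,e,i) = 1$. Since $\int \nabla\sigma \, d\vmu = \vmu(\mH_{**}) = \int 1 \, d\vmu$ and $\nabla\sigma \le 1$ pointwise, we get $\nabla\sigma = 1$ $\vmu$--a.e., i.e.\ $\tau_2 = 1$ $\vmu$--a.e., as desired.

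I do not expect a serious obstacle here; the only point requiring a little care is making sure the functions $\sigma$, $\tau_1$, $\tau_2$ are genuinely Borel measurable on the respective Polish spaces $\mH_*$, $\mH_{**}$ (so that the integrals and the applications of unimodularity and of Lemma~\ref{lem:A-as-Atilde-ae} are legitimate), and checking that $\tau_1$ is well-defined as a function on equivalence classes. Both follow from the fact that, for a given finite depth, the combinatorial type and the finite list of incident edges at the root are determined by the equivalence class, combined with measurability of $\tau$; this is the kind of routine verification I would state but not belabor. Proposition~\ref{prop:everyting-shows-at-the-root} will then follow by applying part (1) and part (2) in succession (intersecting the two $\vmu$--full-measure sets and using Lemma~\ref{lem:A-as-Atilde-ae}$(ii)$ to transfer back to $\mu$ if needed) to propagate ``$\tau = 1$'' from the root edge-vertex pair to all edges incident to the root vertex and to all vertices of each such edge, and iterating along the connected component.
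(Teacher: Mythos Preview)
Your proposal is correct and follows essentially the same approach as the paper: for part~(1) you unpack the argument that the paper packages as Lemma~\ref{lem:A-as-Atilde-ae}(ii) (comparing $\partial\sigma$ to $\deg_H(i)$) before invoking part~(i), and for part~(2) your unimodularity-plus-$\nabla\sigma\le 1$ argument is exactly what the paper does. The only cosmetic difference is that the paper cites Lemma~\ref{lem:A-as-Atilde-ae}(ii) directly for part~(1) rather than redoing its proof inline.
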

\begin{proof}
In order to prove the first part, note that from Lemma~\ref{lem:A-as-Atilde-ae} part $(ii)$, we have that for $\mu$--almost all $[H, i] \in \mH_*$, $\tau(H, e', i) = 1$ for all $e' \ni i$. Now, part $(i)$ of Lemma~\ref{lem:A-as-Atilde-ae} implies that for $\vmu$--almost all $[H, e, i] \in \mH_{**}$, $\tau(H, e, i)  = 1$ for all $e' \ni i$, which is precisely what we need to prove.


  For the second part, since $\mu$ is unimodular, we have 
  \begin{equation*}
    \int \oneu{\tau = 1} d \vmu = \int \nabla \oneu{\tau = 1} d \vmu = \int \frac{1}{|e|} \sum_{i' \in e} \oneu{\tau(H, e, i') = 1} d \vmu (H, e, i).
  \end{equation*}
  But since $\oneu{\tau = 1} =1 $ holds $\vmu$--almost everywhere and $0 \leq \frac{1}{|e|} \sum_{i' \in e} \oneu{\tau(H, e, i') = 1} \leq 1$, we conclude that 
  \begin{equation*}
    \frac{1}{|e|} \sum_{i' \in e} \oneu{\tau(H, e, i') = 1} = 1, \qquad \vmu\text{--a.e.}.
  \end{equation*}
  Since the summands are either zero or one, this means that $\tau(H, e, i') = 1$ for all $i' \in e$, $\vmu$--almost everywhere, which is what we wanted to show.
\end{proof}

\begin{proof}[Proof of Proposition~\ref{prop:everyting-shows-at-the-root}]
  Define 
\begin{equation}
  \label{eq:A-k-condition-satisfied-up-to-distance-k}
  A_k := \{ [H, e, i] \in \mH_{**} : \tau(H, e', i') = 1 \,\, \forall i' \in V(H): d_H(i, i') \leq k, \,\, \forall  e' \ni i' \}.
\end{equation}
Note that 
$A_0 = \{ [H, e, i] : \tau(H, e, i) = 1 \}$, for which 
it is known from the assumption that $\vmu(A_0) = \vmu(\mH_{**})$. Now, we want to show that $\vmu(A_k) = \vmu(\mH_{**})$ for all $k \ge 0$.
We will do this
by induction on $k$. Assume that $\vmu(A_k) = \vmu(\mH_{**})$. Hence,
with $\phi_k(H, e, i) := \one{(H, e, i) \in A_k}$, we have $\phi_{k} = 1$ $\vmu$--almost everywhere. Now, we will use Lemma~\ref{lem:tau-tau1-tau2} to prove that $\phi_{k+1} = 1$ $\vmu$--almost everywhere. 

To do so, using part 2 of Lemma~\ref{lem:tau-tau1-tau2}, if we define 
\begin{equation*}
  B^k_1 := \{ [H, e, i] : \forall i' \in e, \,\phi_k(H, e, i') = 1 \},
\end{equation*}
then we know $\vmu(B^k_1) = \vmu(\mH_{**})$. Then, applying part 1 of Lemma~\ref{lem:tau-tau1-tau2} for the function $\oneu{B^k_1}$, we get that 
\begin{equation*}
  B^k_2 := \{ [H, e, i] : \forall e' \ni i, [H, e', i] \in B^k_1 \},
\end{equation*}
has the property that $\vmu(B^k_2) = \vmu(\mH_{**})$. On the other hand, 
\begin{equation*}
  B^k_2 = \{ [H, e, i] :  \forall e' \ni i,\,\, \forall i' \in e', [H, e', i'] \in A_k \} = A_{k+1}.
\end{equation*}
Hence, we have proved that $\vmu(A_{k+1}) = \vmu(\mH_{**})$,
which is the inductive step.

Thus, $\vmu( \cap_{k \in \nats} A_k) = \vmu(\mH_{**})$. Using the fact that  the vertex set is countable, and 
that the hypergraphs corresponding to the elements of
 $\mH_*$ are connected, the property $\tau$ holds for all the directed edges $\vmu$--almost everywhere, in $A := \cap_{k \in \nats} A_k$. Hence, the proof is complete.
\end{proof}

Now, we prove that the local weak limit of finite marked hypergraphs is a unimodular probability distribution on $\mbH_*(\Xi)$.  By setting $\Xi = \{ \emptyset \}$ in the following proposition, 
we can conclude that the local weak limit of finite simple hypergraphs  is unimodular, as claimed in Section~\ref{sec:unimodularity}.

\edit
\begin{prop}
  \label{prop:uimoularity-of-networks-weak-limit}
  Assume $\{\bH_n\}$ is a sequence of finite marked hypergraphs,
  with $\xi_{\bH_n} = \xi_n$ the associated edge mark functions,
   taking values in some metric space $\Xi$.
  Now, if 
\begin{equation*}
\bmu_n := u_{\bH_n} = \frac{1}{|V(\bH_n)|} \sum_{i \in V(\bH_n)} \delta_{[(\bH_n,i), i]},
\end{equation*}
then $\bmu_n \in \mP(\mbH_*(\Xi))$  is unimodular for each $n$. Moreover, if $\bmu_n$ converge weakly to some limit $\bmu \in \mP(\mbH_*(\Xi))$ such that $\deg (\bmu) < \infty$, $\bmu$ is also unimodular.
\end{prop}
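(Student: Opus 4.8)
The plan is to verify the two claims in turn. First, for fixed $n$, I would check directly that $\bmu_n = u_{\bH_n}$ is unimodular. By definition I must show $\int f d\vbmu_n = \int \nabla f d\vbmu_n$ for every nonnegative Borel $f: \mbH_{**}(\Xi) \rightarrow [0,\infty)$. Unwinding the definition of $\vbmu_n$ via $\int f d\vbmu_n = \int \partial f d\bmu_n = \frac{1}{|V(\bH_n)|}\sum_{i \in V(\bH_n)} \sum_{e \ni i} f(\bH_n, e, i)$, I would observe that this is a sum over all edge-vertex pairs $(e,i)$ with $i \in e \in E(\bH_n)$, weighted by $1/|V(\bH_n)|$. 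On the other hand, $\int \nabla f d\vbmu_n = \frac{1}{|V(\bH_n)|}\sum_{i}\sum_{e \ni i}\frac{1}{|e|}\sum_{j \in e} f(\bH_n, e, j)$; in this double-counted form, each edge-vertex pair $(e,j)$ receives weight $\frac{1}{|V(\bH_n)|}\cdot\frac{1}{|e|}\cdot|\{i \in e\}| = \frac{1}{|V(\bH_n)|}$. So both integrals equal $\frac{1}{|V(\bH_n)|}\sum_{(e,i): i\in e}f(\bH_n,e,i)$, hence are equal. (One should note the terms are all nonnegative, so no integrability issue arises, and equivalence classes versus representatives cause no trouble because $\partial$ and $\nabla$ are well-defined on equivalence classes as established in Remarks~\ref{rem:partial-well-defined} and \ref{rem:cap-del-well-defined}.)

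For the second claim, I would pass to the limit. The subtlety is that $f \mapsto \int \nabla f \, d\vbmu$ and $f \mapsto \int f \, d\vbmu$ are defined through the unbounded quantity $\partial f$, so weak convergence of $\bmu_n$ does not immediately transfer. The standard route (following the graph case in \cite{aldous2007processes}) is: it suffices to prove the unimodularity identity for $f$ of the form $f(\bH, e, i) = g(\bH, e, i)\,\mathbbm{1}[\deg_H(i) \le \Delta]$ where $g$ is bounded continuous and $\Delta \in \nats$, and then let $\Delta \to \infty$ using $\deg(\bmu) < \infty$ and monotone/dominated convergence. For such a truncated $f$, both $\partial f$ and $\nabla f$ are bounded, and $\partial f$ is a bounded function on $\mbH_*(\Xi)$ that is continuous at every point whose degree is not exactly $\Delta$; since $\{[\bH,i]: \deg_H(i) = \Delta\}$ has $\bmu$-measure that can be made small off a countable set of $\Delta$ (or one restricts to $\Delta$ outside the at-most-countable set of atoms of the degree distribution), the map $[\bH,i] \mapsto \partial f(\bH,i)$ is $\bmu$-a.e.\ continuous and bounded, so $\int \partial f \, d\bmu_n \to \int \partial f \, d\bmu$, i.e.\ $\int f \, d\vbmu_n \to \int f \, d\vbmu$. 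The same argument applied to the bounded a.e.-continuous function $\nabla f$ gives $\int \nabla f \, d\vbmu_n \to \int \nabla f \, d\vbmu$. Combining with the first part, $\int f \, d\vbmu = \lim_n \int f\,d\vbmu_n = \lim_n \int \nabla f\,d\vbmu_n = \int \nabla f\,d\vbmu$ for all such truncated $f$.

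Finally I would remove the truncation: for a general nonnegative Borel $f$, write $f_\Delta(\bH,e,i) := f(\bH,e,i)\mathbbm{1}[\deg_H(i)\le\Delta]$, which increases to $f$ pointwise as $\Delta \to \infty$; monotone convergence on both sides (valid since $\deg(\bmu) = \vbmu(\mbH_{**}(\Xi)) < \infty$ keeps things finite and $\nabla$ preserves monotone limits) yields $\int f\,d\vbmu = \int \nabla f\,d\vbmu$, so $\bmu$ is unimodular. The main obstacle is the degree-truncation / continuity bookkeeping in the middle step — making precise that $\partial f$ and $\nabla f$ are $\bmu$-almost-everywhere continuous bounded functions so that weak convergence applies, and handling the at-most-countable set of ``bad'' $\Delta$ values where the degree distribution has an atom. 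Everything else is routine double counting and a standard monotone-class/limiting argument; I would lean on Corollary~\ref{cor:H*-H**-Polish} and Proposition~\ref{prop:H*-H**-Polish} to ensure the relevant spaces are Polish so that Portmanteau applies.
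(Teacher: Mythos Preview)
Your approach is essentially the paper's: verify unimodularity of $\bmu_n$ by the same double-counting over $(e,i)\in\evpair(\bH_n)$, then truncate by degree to get bounded test functions and pass to the limit. Two remarks where the paper is slightly cleaner.

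First, the concern you flag as ``the main obstacle'' evaporates: the map $[\bH,e,i]\mapsto\mathbbm{1}[\deg_H(i)\le\Delta]$ is continuous on $\mbH_{**}(\Xi)$ for every $\Delta$, because degree is locally constant in the metric $\bar d_{**}$ (any two edge--vertex rooted hypergraphs at distance $<1/2$ agree to depth~$1$ and hence have the same root degree). So $f_\Delta:=g\cdot\mathbbm{1}[\deg\le\Delta]$ is continuous whenever $g$ is, $\partial f_\Delta$ is a bona fide bounded continuous function on $\mbH_*(\Xi)$, and there is no need to restrict to $\Delta$ outside the atoms of the degree distribution.

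Second, your final paragraph is written as though the identity $\int f_\Delta\,d\vbmu=\int\nabla f_\Delta\,d\vbmu$ has already been established for $f_\Delta=f\cdot\mathbbm{1}[\deg\le\Delta]$ with \emph{arbitrary} nonnegative Borel $f$, but your middle step only treats $f_\Delta=g\cdot\mathbbm{1}[\deg\le\Delta]$ with $g$ bounded continuous. The paper bridges this more directly: once the identity holds for every bounded continuous $f$ (after sending $\Delta\to\infty$ via dominated convergence, which uses $\deg(\bmu)<\infty$), the two finite measures $g\mapsto\int g\,d\vbmu$ and $g\mapsto\int\nabla g\,d\vbmu$ on the Polish space $\mbH_{**}(\Xi)$ agree on all bounded continuous functions, hence are equal, and the identity for all nonnegative Borel $f$ follows immediately.
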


\edit
\begin{proof}
  First, we show that $\bmu_n$ is unimodular for each $n$. For this, take a Borel function $f:\mbH_{**}(\Xi) \rightarrow [0,\infty)$ and note that 
  \begin{equation*}
    \begin{split}
      \int f(\bH, e, i) d \vbmu_n &= \int \partial f (\bH, i) d \bmu_n \\
      &= \frac{1}{|V(\bH_n)|} \sum_{i \in V(\bH_n)} \partial f(\bH_n, i) \\
      &= \frac{1}{|V(\bH_n)|} \sum_{i \in V(\bH_n)} \sum_{e \ni i} f(\bH_n, e, i) \\
      &= \frac{1}{|V(\bH_n)|} \sum_{e \in E(\bH_n)} \sum_{i \in e} f(\bH_n, e, i) \\
      &= \frac{1}{|V(\bH_n)|} \sum_{e \in E(\bH_n)} \sum_{i \in e} \nabla f(\bH_n,e , i) \\
      &= \int \nabla f(\bH, e, i) d \vbmu_n.
    \end{split}
  \end{equation*}
  Since this holds for all nonnegative Borel functions $f$, $\bmu_n$ is unimodular, by definition.

Now, for each $n$, define measures $\vbmu^{(1)}_n$ and $\vbmu^{(2)}_n$ on $\mbH_{**}(\Xi)$ so that for any Borel function $f: \mbH_{**}(\Xi) \rightarrow [0,\infty)$, we have 
\begin{equation}
\label{eq:vbmu-1-n}
  \int f d \vbmu^{(1)}_n := \int \sum_{e \ni i} f(\bH, e, i) d \bmu_n([\bH, i]),
\end{equation}
and 
\begin{equation}
\label{eq:vbmu-2-n}
  \int f d \vbmu^{(2)}_n := \int \sum_{e \ni i} \frac{1}{|e|} \sum_{j \in e} f(\bH, e, j) d \bmu_n([\bH, i]).
\end{equation}
We also define $\vbmu^{(1)}$ and $\vbmu^{(2)}$ for $\bmu$ in a similar fashion. Note that the  the RHS of~\eqref{eq:vbmu-1-n} is $\int \partial f d \bmu_n = \int f d \vbmu_n$. Therefore,  $\vbmu^{(1)}_n = \vbmu_n$. Also, the RHS of \eqref{eq:vbmu-2-n} is $\int \partial \nabla f d \bmu_n = \int \nabla f d \vbmu_n $.  Hence, $\int f d \vbmu^{(2)}_n = \int \nabla f d \vbmu_n$. Since we have shown that $\bmu_n$ is unimodular, this implies that $\vbmu^{(1)}_n = \vbmu^{(2)}_n$.

Now, we claim that $\vbmu^{(1)} = \vbmu^{(2)}$. To show this, take a bounded continuous function $f: \mbH_{**}(\Xi) \rightarrow \reals$. For $k > 0$, 
define
\begin{equation*}
  f_k(\bH, e, i) :=
  \begin{cases}
    f(\bH,e, i) & \mbox{ if $\deg_{H}(i) \leq k $},\\
    0 & \text{otherwise.}
  \end{cases}
\end{equation*}
It is easy to see that $f_k$ is continuous for each $k$, as $f$ is continuous. Moreover, $\partial f_k$  and $\partial \nabla f_k$ are bounded for each $k$. This, together with 
$\bmu_n \Rightarrow \bmu$, implies that 
\begin{equation*}
  \int f_k d \vbmu^{(1)}_n = \int \partial f_k d \bmu_n \rightarrow \int \partial f_k d \bmu = \int f_k d \vbmu^{(1)},
\end{equation*}
and 
\begin{equation*}
  \int f_k d \vbmu^{(2)}_n = \int \partial \nabla f_k d \bmu_n \rightarrow \int \partial \nabla f_k d \bmu = \int f_k d \vbmu^{(2)}.
\end{equation*}
This, together with the fact that $\vbmu^{(1)}_n = \vbmu^{(2)}_n$, implies that for all $k$
\begin{equation*}
  \int f_k d \vbmu^{(1)} = \int f_k d \vbmu^{(2)}.
\end{equation*}
Note that  as all hypergraphs are locally finite, $f_k \rightarrow f$ pointwise. Thus, sending $k$ to infinity and using the dominated convergence theorem, we have that for any bounded continuous function $f: \mbH_{**}(\Xi) \rightarrow [0,\infty)$,
\begin{equation*}
  \int f d \vbmu^{(1)} = \int f d \vbmu^{(2)}.
\end{equation*}
Since $f$ can be an arbitrary bounded continuous function, 
we have $\vbmu^{(1)} = \vbmu^{(2)}$. But the definition of $\vbmu^{(1)}$ and $\vbmu^{(2)}$ then implies that for any nonnegative Borel function $f$ we have $\int f d \vbmu = \int \nabla f d \vbmu$, which means that $\bmu$ is unimodular. 
\end{proof}


\section{Proof of unimodularity of $\ugwt(P)$}
\label{sec:unimodularity-ugwtp}

Here we show that $\ugwt(P)$ is unimodular. Before that we prove the following lemma, which is useful in calculating $\int f d \mu$ when $\mu = \ugwt(P)$.

\begin{lem}
  \label{lem:int-f-dvmu-gen-UGWT}
  Let $P \in \mP(\natszf)$, and $\Gamma$ a random variable with law $P$,
  where $\ev{\Gamma(k)} < \infty$ for all $k \geq 2$. Moreover, let $\mu = \ugwt(P)$. Then, for any Borel function $f: \mH_{**} \rightarrow [0, \infty)$, we have 
  \begin{equation*}
    \int f d \vmu = \sum_{k=2}^\infty \ev{\Gamma(k)} \sum_{\gamma \in \natszf} \hat{P}_k(\gamma) \ev{f(T, (k, 1), \emptyset) | \Gamma_\emptyset = \gamma+\typee_k},
  \end{equation*}
where the expectation is with respect to the random rooted hypertree of Definition~\ref{def:gwt_k(P)}.
  Here, $\typee_k \in \natszf$ is such that $\typee_k(k) = 1$ and $\typee_k(j) = 0$ for $j \neq k$.
\end{lem}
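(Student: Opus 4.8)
The plan is to unfold the definition of $\vmu$ and use the explicit construction of $\ugwt(P)$ via the type sequences $\{\Gamma_a\}_{a \in \nvertex}$. First I would write $\int f\, d\vmu = \int \partial f\, d\mu = \ev{\sum_{e \ni \emptyset} f(T, e, \emptyset)}$, where the expectation is over $(T, \emptyset)$ with law $\ugwt(P)$. By the construction in Definition~\ref{def:ugwt(P)}, the edges incident to the root are indexed by pairs $(k, e)$ with $2 \le k \le h(\Gamma_\emptyset)$ and $1 \le e \le \Gamma_\emptyset(k)$, and the edge $(k,e)$ has size $k$. So the sum becomes $\ev{\sum_{k=2}^{h(\Gamma_\emptyset)} \sum_{e=1}^{\Gamma_\emptyset(k)} f(T, (k,e), \emptyset)}$. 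Conditioning on $\Gamma_\emptyset = \gamma$ and using the exchangeability built into the construction (the subtrees hanging off the $\gamma(k)$ copies of size-$k$ edges at the root are i.i.d., and within each such edge the $k-1$ non-root vertices carry i.i.d.\ subtrees), one gets $\ev{f(T,(k,e),\emptyset) \mid \Gamma_\emptyset = \gamma} = \ev{f(T,(k,1),\emptyset)\mid \Gamma_\emptyset = \gamma}$ for every $1\le e\le\gamma(k)$, hence
\begin{equation*}
  \int f\, d\vmu = \sum_{\gamma \in \natszf} P(\gamma) \sum_{k=2}^{h(\gamma)} \gamma(k)\, \ev{f(T,(k,1),\emptyset) \mid \Gamma_\emptyset = \gamma}.
\end{equation*}

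Next I would reorganize the double sum. Since all terms are nonnegative, Tonelli lets me swap the order and write $\sum_{k \ge 2} \sum_{\gamma: \gamma(k) > 0} \gamma(k) P(\gamma)\, \ev{f(T,(k,1),\emptyset) \mid \Gamma_\emptyset = \gamma}$. The key algebraic step is the size-biasing identity: for fixed $k$,
\begin{equation*}
  \sum_{\gamma} \gamma(k) P(\gamma)\, \ev{f(T,(k,1),\emptyset)\mid \Gamma_\emptyset = \gamma} = \ev{\Gamma(k)} \sum_{\gamma'} \hat P_k(\gamma')\, \ev{f(T,(k,1),\emptyset) \mid \Gamma_\emptyset = \gamma' + \typee_k},
\end{equation*}
which follows by the substitution $\gamma = \gamma' + \typee_k$ (so $\gamma(k) = \gamma'(k) + 1$) together with the definition $\hat P_k(\gamma') = (\gamma'(k)+1)P(\gamma'+\typee_k)/\ev{\Gamma(k)}$ in \eqref{eq:size-biased-distirbution}, valid whenever $\ev{\Gamma(k)} > 0$; when $\ev{\Gamma(k)} = 0$ both sides vanish since $\Gamma(k) = 0$ a.s. Combining gives exactly the claimed formula, with the expectation on the right interpreted via the random rooted hypertree of Definition~\ref{def:gwt_k(P)}: conditioning $\ugwt(P)$ on $\Gamma_\emptyset = \gamma' + \typee_k$ and picking out the distinguished size-$k$ edge at the root reproduces precisely the law described there (the root type is the conditioned $P$, the other root edges and all deeper subtrees follow the $\hat P$ rules).

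The main obstacle I anticipate is not any single computation but making the exchangeability/symmetry argument airtight: one must check carefully that under $\ugwt(P)$, conditioned on $\Gamma_\emptyset$, the marked (here: unmarked, but rooted-at-an-incident-edge) objects $(T, (k,e), \emptyset)$ for $e = 1, \dots, \Gamma_\emptyset(k)$ are identically distributed, and that relabeling the $e$-index is a measure-preserving bijection on the space of type sequences $\{\Gamma_a\}_{a\in\nvertex}$. This is intuitively clear from the recursive i.i.d.\ structure of Definition~\ref{def:ugwt(P)}, but writing it precisely requires exhibiting the relabeling map on $\nvertex$ and $\nedge$ and noting it preserves the product law of the $\Gamma_a$'s because, for each non-root node, the law $\hat P_{s_k}$ depends only on the size $s_k$ of the last edge in its address, which is unchanged by permuting the $e$-coordinates. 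Once that is in place, Tonelli and the size-biasing identity finish the proof; no other delicate point arises because all quantities are nonnegative and $\ev{\Gamma(k)} < \infty$ guarantees the rearranged sums are well-defined.
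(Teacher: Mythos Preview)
Your proposal is correct and follows essentially the same route as the paper's proof: unfold $\int f\,d\vmu = \int \partial f\,d\mu$, condition on $\Gamma_\emptyset$, use the symmetry among the $\Gamma_\emptyset(k)$ size-$k$ edges at the root to reduce to the edge $(k,1)$, swap sums by Tonelli, and apply the size-biasing identity $\gamma(k)P(\gamma) = \ev{\Gamma(k)}\hat P_k(\gamma-\typee_k)$ via the substitution $\gamma = \gamma'+\typee_k$. The paper handles the symmetry step with a one-line ``due to symmetry'' and does not spell out the $\ev{\Gamma(k)}=0$ case, so your plan is if anything slightly more careful than the original.
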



\begin{proof}
  Due to the definition of $\vmu$, we have 
  \begin{equation*}
    \int f d \vmu = \int \partial f d \mu = \ev{ \sum_{k=2}^{h(\Gamma_\emptyset)} \sum_{i=1}^{\Gamma_\emptyset(k)} f(T, (k, i), \emptyset)}.
  \end{equation*}
Since $\Gamma_\emptyset$ has distribution $P$, we have 
\begin{equation*}
  \ev{ \sum_{k=1}^{h(\Gamma_\emptyset)} \sum_{i=1}^{\Gamma_\emptyset(k)} f(T, (k, i), \emptyset)} = \sum_{\gamma \in \natszf} P(\gamma) \ev{ \sum_{k=2}^{h(\gamma)} \sum_{i=1}^{\gamma(k)} f(T, (k, i), \emptyset) \Bigg | \Gamma_\emptyset = \gamma}.
\end{equation*}
Now, due to symmetry, conditioned on $\Gamma_\emptyset = \gamma$, for a given $k \leq h(\gamma)$, all $f(T, (k, i), \emptyset)$ for $1 \leq i \leq \gamma(k)$ have the same distribution, hence 
\begin{align*}
  \int f d \vmu &= \sum_{\gamma \in \natszf} P(\gamma) \sum_{k=2}^{h(\gamma)} \gamma(k) \ev{f(T, (k, 1), \emptyset)| \Gamma_\emptyset = \gamma} \\
&= \sum_{\gamma \in \natszf} P(\gamma) \sum_{k=2}^{\infty} \gamma(k) \ev{f(T, (k, 1), \emptyset)| \Gamma_\emptyset = \gamma},
\end{align*}
where the second equality uses the fact that $\gamma(k) = 0$ for $k > h(\gamma)$. Now, since all the terms are nonnegative, employing Tonelli's theorem to switch the order of integrals we have 
\begin{equation*}
  \int f d \vmu = \sum_{k=2}^\infty \sum_{\gamma \in \natszf: \gamma(k) > 0} P(\gamma) \gamma(k) \ev{f(T, (k, 1), \emptyset) | \Gamma_\emptyset = \gamma}.
\end{equation*}
Using the definition of $\hat{P}_k$, $P(\gamma) \gamma(k)$ is equal to $\ev{\Gamma(k)} \hat{P}_k(\gamma - \typee_k)$ for $\gamma \in \natszf$ such that $\gamma(k) > 0$, where $\typee_k \in \natszf$ is such that $\typee_k(k) = 1$ and $\typee_k(j) = 0$ for $j \neq k$. Hence, we have 
\begin{equation*}
\begin{split}
  \int f d \vmu &= \sum_{k=2}^\infty \ev{\Gamma(k)} \sum_{\gamma \in \natszf: \gamma(k) > 0} \hat{P}_k(\gamma - \typee_k) \ev{f(T, (k, 1), \emptyset) | \Gamma_\emptyset = \gamma} \\
  &= \sum_{k=2}^\infty \ev{\Gamma(k)} \sum_{\gamma \in \natszf} \hat{P}_k(\gamma) \ev{f(T, (k, 1), \emptyset) | \Gamma_\emptyset = \gamma+\typee_k}. \\
  \end{split}
\end{equation*}
We can interpret the last expression above as follows. In computing
$\int f d \vmu$, when we consider $f([T,e,\emptyset])$, the 
edge $e$ attached to the root $\emptyset$ of the tree $T$
is of size $k$ with 
probability $\Gamma(k)$. This explains the outer summation.
Since the contribution to the integral is the same whichever edge of
size $k$ connected to the root is picked, suppose the edge picked is 
the edge $(k,1)$. Then the type of the rest of the
edges connected to the root is given by $\hat{P}_k$, and so 
$\Gamma_\emptyset$ will equal $\gamma+\typee_k$ with probability 
$\hat{P}_k(\gamma)$. This explains the inner summation.
\end{proof}

Now we are ready to prove the unimodularity of $\ugwt(P)$:

\begin{proof}[Proof of Proposition~\ref{prop:ugwt-unimodular}]
  We need to prove that for a nonnegative measurable function $f: \mH_{**} \rightarrow [0,\infty)$ we have $\int f d \vmu = \int \nabla f d \vmu$. 
   Using Lemma~\ref{lem:int-f-dvmu-gen-UGWT}, we have 
  \begin{equation*}
    \begin{split}
      \int \nabla f d \vmu &\stackrel{(a)}{=} \sum_{k=2}^\infty \ev{\Gamma(k)} \ev{\nabla f(\tilde{T}_k, (k,1), \emptyset)}  \\
      &= \sum_{k=2}^\infty \ev{\Gamma(k)} \frac{1}{k} \left ( \ev{f(\tilde{T}_k, (k, 1), \emptyset)} + \sum_{i=1}^{k-1} \ev{f(\tilde{T}_k, (k, 1), (k,1, i))} \right ).
    \end{split}
  \end{equation*}
  Here, for $k \ge 2$, $\tilde{T}_k$ denotes a tree with root $\emptyset$
  that has an edge $(k,1)$ of size $k$ connected to the root, with the 
  type of the other edges connected to the root being $\hat{P}_k$, 
  and with the subtrees at the other vertices of all the edges (including the
  edge $(k,1)$) generated according to the rules of $\ugwt(P)$.
  Step (a) is justified because, for each $k \ge 2$, we have
  \[
  \ev{\nabla f(\tilde{T}_k, (k,1), \emptyset)} = \sum_{k=2}^\infty \ev{\Gamma(k)} \sum_{\gamma \in \natszf} \hat{P}_k(\gamma) \ev{\nabla f(T, (k, 1), \emptyset) | \Gamma_\emptyset = \gamma+\typee_k}. 
  \]
Now, because of the symmetry in the construction of $\tilde{T}_k$,
for each $i = 1, \ldots, k-1$, 
 $(\tilde{T}_k, (k, 1), (k, 1, i))$ has the same distribution as $(\tilde{T}_k, (k, 1), \emptyset)$. Hence, $\ev{f(\tilde{T}_k, (k, 1), (k,1, i))} = \ev{f(\tilde{T}_k, (k, 1), \emptyset)}$. Substituting and simplifying we get
  \begin{equation*}
    \int \nabla f d \vmu = \sum_{k=2}^\infty \ev{\Gamma(k)} \ev{f(\tilde{T}, (k, 1), \emptyset)} = \int f d \vmu,
  \end{equation*}
where the last equality again uses Lemma~\ref{lem:int-f-dvmu-gen-UGWT}. This completes the proof of the unimodularity of $\mu$.
\end{proof}


\section{Configuration model on hypergraphs and their local weak limit: Proof of Theorem~\ref{thm:conf-model-convegence-Un}}
\label{sec:proof-theorem-lws-conf-model}

In this section, we prove Theorem~\ref{thm:conf-model-convegence-Un}. We prove this  in two steps: first, in Section~\ref{sec:conv-expect}, we prove that $\ev{u_{H^e_n}}$ converges weakly to $\ugwt(P)$, where the expectation in $\ev{u_{H^e_n}}$ is taken with respect to the randomness in the construction of $H^e_n$. Later, in Section~\ref{sec:alsm-sure-conv}, we conclude the almost sure convergence by a concentration argument. See \cite{Bordenave14randomgraphs} for an argument on the local weak convergence of the configuration model in the graph regime.

Throughout this section, we employ the vertex and edge indexing notations $\nvertex$ and $\nedge$ defined in Section~\ref{sec:unim-galt-wats}. 
Recall that for $a \in \nvertex$ and integers $s \geq 2$, $e \geq 1$ and $1 \leq r \leq s-1$, $(a, s, e, r)$ is the element in $\nvertex$ obtained from $a$ by concatenating $s, e, r$ at the end of the string representing $a$. 
For an edge $e \in \nedge$ and an integer $r$, $(e, r)$ is defined similarly. 
Furthermore, $\mT(\nvertex, \nedge)$ in this section denotes the set of hypertrees with vertex set and edge set being subsets of $\nvertex$ and $\nedge$, respectively. Such a hypertree is treated to be rooted at $\emptyset$, unless otherwise stated. Moreover, for a sequence of types $\{ \gamma_a \}_{a \in \nvertex}$, $\mT(\{\gamma_a\}_{a \in \nvertex})$ denotes the tree in $\mT(\nvertex, \nedge)$ in which the type of each node $a \in \nvertex$ in the hypertree below that node is equal to $\gamma_a$ (recall Figure~\ref{fig:gamma-GWT} from Section~\ref{sec:unim-galt-wats} as an example). 

Before going through the proof, we need to define a procedure called the ``exploration process'' in Section~\ref{sec:exploration-process} below.

\subsection{Exploration process}
\label{sec:exploration-process}

Assume that a random hypergraph $H_n$ on the vertex set $\{1, \dots, n\}$ is obtained from a given type sequence $\gn = (\gn_1, \dots, \gn_n)$ satisfying \eqref{eq:gn-0-for-bigger-n} and \eqref{eq:k|sum-gn-k}. Note that, following our discussion in Section~\ref{sec:conf-model-hypergr}, $H_n$ is identified by a set of random matchings $\sigma_2, \dots, \sigma_n$. 
Here, we introduce a procedure that choses a node $v_0$  uniformly at random in $\{1, \dots, n\}$ and explores the local neighborhood of that node in a breadth--first manner. This process at each step produces a hypertree in $\mT(\nvertex, \nedge)$, which turns out to be locally isomorphic to the neighborhood of $v_0$ given that the local neighborhood of $v_0$ in $H_n$ is tree--like. 
A similar process in the graph regime is introduced in \cite{Bordenave14randomgraphs}.

Formally speaking, the exploration process starts at time $t=0$ with choosing a vertex $v_0 \in \{1,\dots, n\}$ uniformly at random as the root.
At each time $t\geq 1$, we have a node indexing function $\phi_t: \nvertex \rightarrow \{ 1, \dots, n\} \cup \{ \times \}$. 
To begin with, we define 
$\phi_1(\emptyset) = v_0$ and $\phi_1(\bi) = \times$ for $\emptyset \neq \bi \in \nvertex$. Also, at each time step $t$, we partition $\Delta^{(n)}$ into three sets: an ``active set'' $A_t$, a ``connected set'' $C_t$, and an ``undiscovered set'' $U_t$. These are initialized by setting $A_1 = \Delta^{(n)}_{v_0}$, $C_1 = \emptyset$ and $U_1 = \Delta^{(n)} \setminus A_1$. Moreover, at time $t$, $N_t \subset\{1, \dots, n\}$ contains the explored nodes at time $t$, and is initialized as $N_1 = \{ v_0 \}$. 

At time $t$, given the sets $A_t, U_t, C_t$ and $\phi_t$, we first form the  set $W_t :=\{(\bi, k, j): \phi_t(\bi) \neq \times, e^k_{\phi_t(\bi), j} \in A_t \}$.  
If $W_t$ is nonempty, we  define $(\bi_t, k_t, j_t)$ to be the lexicographically smallest element in $W_t$ and let $e_t := e^{k_t}_{\phi_t(\bi_t), j_t}$. In fact, $e_t$ is the partial edge chosen at time $t$ to be matched with other partial edges to form an edge. Now, define
\begin{equation*}
  e_{t, j} := \sigma_{k_t}^{(j-1)}(e_t) \qquad 1 \leq j \leq k_t - 1,
\end{equation*}
which are  the $k_t - 1$ other partial edges matched with $e_t$. 
Also, for $1 \leq j \leq k_t - 1$, let $u_{t, j} := \nu(e_{t, j})$ be the node associated to $e_{t, j}$. 
Moreover, we update the sets $C_t, A_t, U_t$ and $N_t$ as follows:
\begin{subequations}
\begin{align}
    C_{t+1} &= C_t \cup \{ e_t, e_{t, 1}, \dots, e_{t, k_t - 1} \}, \\
    A_{t+1} &= A_t \setminus \{ e_t, e_{t, 1}, \dots, e_{t, k_t - 1} \} \bigcup_{j=1}^{k_t  -1} \left ( \Delta^{(n)}_{u_{t,j}} \cap U_t \right ),\\
    U_{t+1} &= U_t \setminus \bigcup_{j=1}^{k_t - 1} \Delta^{(n)}_{u_{t, j}}, \\
    N_{t+1} &= N_t \cup \{ u_{t, 1}, \dots, u_{t, k_t - 1} \}.
\end{align}
\end{subequations}
In order to update $\phi_t$, define $\tilde{j}_t$ to be the minimum $j$ such that 
\begin{equation*}
  \phi_t((\bi_t, k_t, j, 1)) = \times.
\end{equation*}
With this,  set $\phi_{t+1}$ to be equal to $\phi_{t}$ except for the following values:
\begin{equation*}
  \phi_{t+1}((\bi_t, k_t, \tilde{j}_t, l)) = u_{t,l} \qquad 1 \leq l \leq k_t - 1.
\end{equation*}
This in particular means that the set of nodes in $\{1, \dots, n\}$ that appear in the range of $\phi_{t+1}$ is precisely $N_{t+1}$. 

At each time $t$, we define a rooted hypertree formed by the exploration process, which we denote by $R_t$, which is a member of $\mT(\nvertex, \nedge)$. $R_t$  is identified through the mapping $\phi_t$, i.e. its vertex set is $\{ \bi \in \nvertex: \phi_t(\bi) \neq \times \}$, and its edge set is $\{(\bi_r, k_r, \tilde{j}_r), 1 \leq r \leq t\}$. 
 This process continues until $A_t = \emptyset$, which results in exploring the connected component of $v_0$. Note that since the permutations determining the configuration process  are random, the exploration process is in fact a random process. Let $\mF_t$ be the sigma field generated by all the random variables defined above up to time $t$. 
 Let $\tau$ be the stopping time corresponding to $A_\tau = \emptyset$. For the sake of simplicity, for $t > \tau$, we define $R_t = R_\tau$.


\subsection{Convergence of expectation}
\label{sec:conv-expect}
In this section, we provide the proof of the convergence of $\ev{u_{H^e_n}}$. This is done in two parts. Loosely speaking, 
we first show that, for any integer $d \geq 1$, with high probability, $H_n$ rooted at a vertex chosen uniformly at random up to depth $d$ is simple. Then,  
we prove that the multihypergraph $H_n$ obtained from the configuration model is locally and asymptotically similar to the Galton--Watson process. These two together complete the proof of Theorem~\ref{thm:conf-model-convegence-Un}. The former is proved in Proposition~\ref{prop:Hn-v0-d-simple-converge-1} below and the latter in Proposition~\ref{prop:conf-model_unerased-convergence}.

Let $\bar{\mu}^e_n$ and $\mu$ denote $\ev{u_{H^e_n}}$ and  $\ugwt(P)$, respectively. In order to show that $\bar{\mu}^e_n \Rightarrow \mu$,  using Lemma~\ref{lem:eq-condition-local-weak-convergence}, it suffices to prove that for any $d \geq 1$, and with $(T, o)$ being a rooted hypertree of depth at most $d$, $\bar{\mu}^e_n(A_{(T, o)})\rightarrow \mu(A_{(T, o)})$, where we recall that $A_{(T, o)} = \{ [H, j] \in \mH_*: (H, j)_d \equiv (T, o) \}$.
Note that, with $v_0$ being chosen uniformly at random in $\{1, \dots, n\}$, we have 
\begin{equation}
  \label{eq:bar-mu-n-A-T-i--prob-Hn-v0-equiv-T-i}
  \begin{split}
    \bar{\mu}^e_n(A_{(T, o)}) &= \ev{\frac{1}{n} \sum_{i=1}^n \delta_{[H^e_n(i), i]}(A_{(T, o)})} \\
    &= \ev{\frac{1}{n} \sum_{i=1}^n \oneu{[H^e_n(i), i] \in A_{(T, o)}}} \\
    &= \frac{1}{n} \sum_{i=1}^n \pr{(H^e_n, i)_d \equiv (T,o)} \\
    &= \pr{(H^e_n, v_0)_d \equiv (T,o)}.
  \end{split}
\end{equation}
Thus, motivated by the above discussion, we need to show that for all $d\geq 1$ and $(T, o)$ with depth at most $d$, $\pr{(H^e_n, v_0)_d \equiv (T,o)} \rightarrow \mu(A_{(T, o)})$.
We prove this in two steps. First, we prove in Proposition~\ref{prop:Hn-v0-d-simple-converge-1} that the probability of $(H_n, v_0)_d$ being a simple hypertree goes to one as $n$ goes to infinity. Subsequently,  in Proposition~\ref{prop:conf-model_unerased-convergence}, we show that
$\pr{(H_n^e, v_0)_d \equiv (T,i)}$ converges to $\mu(A_{(T, i)})$. 

In the following statement, $\gn = (\gn_1, \dots, \gn_n)$ is a fixed type sequence and $H_n$ is the random multihypergraph resulting from the configuration model. Moreover, for an integer $d \geq 1$ and vertex $v \in \{1, \dots, n\}$, $(H_n, v)_d$ is the multihypergraph rooted at vertex $v$ containing nodes with distance at most $d$ from $v$ and edges in $H_n$ with all endpoints among this set of vertices. 

\begin{prop}
  \label{prop:Hn-v0-d-simple-converge-1}
Assume conditions~\eqref{eq:norm-1-gamma-log}, \eqref{eq:norm-infty-gamma-log} and \eqref{eq:delta-n-k-big-enough} are satisfied with constants $c_1, c_2, c_3, \epsilon>0$. 
Then, if 
$v_0$ is chosen uniformly at random from $\{1, \dots, n\}$,
  \begin{equation*}
    \lim_{n \rightarrow \infty} \pr{(H_n, v_0)_d \text{ is a simple hypertree} } = 1.
  \end{equation*}
\end{prop}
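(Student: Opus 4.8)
The plan is to analyze the exploration process from Section~\ref{sec:exploration-process} started at the uniformly random root $v_0$, run for enough steps to reveal the depth-$d$ neighborhood, and show that with probability tending to one no "collision" occurs, where a collision is any event that would create a cycle or a repeated vertex within an edge (a self-loop) or two parallel edges inside $(H_n,v_0)_d$. Equivalently, $(H_n,v_0)_d$ fails to be a simple hypertree precisely when, at some step $t$ of the exploration with the chosen partial edge $e_t$ at a vertex of depth $\le d-1$, at least one of the $k_t-1$ partner partial edges $e_{t,j}=\sigma_{k_t}^{(j-1)}(e_t)$ lands on a vertex already discovered (i.e. $u_{t,j}\in N_t$), or two of the partners $e_{t,j},e_{t,j'}$ land on the same vertex. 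So the strategy is: bound the number of exploration steps needed to reach depth $d$, bound the probability of a collision at each step, and take a union bound.

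First I would control the size of the explored region. Because the root type is some $\gn_{v_0}$ and every newly discovered vertex $u$ contributes its $\snorm{\gn_u}_1$ partial edges to the active set, the total number of partial edges, vertices, and exploration steps needed to reveal everything up to depth $d$ is at most a product of $d$ factors each bounded by $\max_i \snorm{\gn_i}_1 \le c_1(\log n)^{c_2}$ by \eqref{eq:norm-1-gamma-log}; similarly each edge has size at most $c_1(\log n)^{c_2}$ by \eqref{eq:norm-infty-gamma-log}. Hence there is a deterministic bound $B_n \le (c_1(\log n)^{c_2})^{O(d)} = \mathrm{polylog}(n)$ on the number of exploration steps, on the number of discovered vertices $|N_t|$, and on the number of partial edges consumed, valid up to the point where depth $d$ is fully explored. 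The key point is that $B_n$ is polylogarithmic, while the relevant denominators are polynomial in $n$.

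Next I would bound the per-step collision probability. At step $t$, conditioned on $\mF_t$, the partner partial edges of $e_t$ are an (ordered) uniformly random choice of $k_t-1$ distinct elements from the remaining unmatched partial edges of size $k_t$; by the bookkeeping in Section~\ref{sec:exploration-process}, if $\Delta^{(n)}(k_t)\ne\emptyset$ then at step $t\le B_n$ at least $|\Delta^{(n)}(k_t)| - O(B_n \cdot \max_i h(\gn_i)) \ge c_3 n^\epsilon - \mathrm{polylog}(n)$ unmatched partial edges of size $k_t$ remain, using \eqref{eq:delta-n-k-big-enough}. The number of partial edges of size $k_t$ attached to already-discovered vertices (i.e. to $N_t$) is at most $B_n \cdot \max_i \snorm{\gn_i}_1 = \mathrm{polylog}(n)$. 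Therefore the probability that a given partner lands in this "bad" set is $O(\mathrm{polylog}(n)/n^\epsilon)$, and similarly the probability that two partners coincide on a vertex is $O(\mathrm{polylog}(n)/n^\epsilon)$ (in fact even smaller); summing over the $\le k_t-1 \le \mathrm{polylog}(n)$ partners at this step gives a per-step collision probability $O(\mathrm{polylog}(n)/n^\epsilon)$. A union bound over the $\le B_n = \mathrm{polylog}(n)$ steps needed to expose depth $d$ yields
\begin{equation*}
  \pr{(H_n,v_0)_d \text{ not a simple hypertree}} = O\!\left(\frac{\mathrm{polylog}(n)}{n^\epsilon}\right) \to 0,
\end{equation*}
which is the claim. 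One technical point to handle carefully: I must argue that, absent any collision, the hypertree $R_t$ produced by the exploration process is genuinely isomorphic as a rooted hypergraph to $(H_n,v_0)_d$ — this requires checking that distinct exploration vertices map to distinct vertices of $\{1,\dots,n\}$ and that no edge of $H_n$ with all endpoints in the depth-$d$ ball is missed; this is essentially the content of Definition~\ref{def:hypertree} combined with the exploration bookkeeping, and I expect it to be routine but slightly tedious.

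The main obstacle is getting the counting in the second step exactly right: one must be careful that the "remaining unmatched partial edges of size $k_t$" at step $t$ is still polynomially large (which is where \eqref{eq:delta-n-k-big-enough} together with $B_n$ being polylogarithmic is essential) and that every kind of cycle-creating event inside the depth-$d$ ball — a partner hitting an old vertex at any of the $\le d$ levels, two partners of the same edge hitting the same new vertex, or a self-loop — is captured by the union bound. Once the bound "polylog numerator over polynomial denominator, polylog many steps" is in place, the conclusion is immediate.
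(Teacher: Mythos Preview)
Your proposal is correct and follows essentially the same approach as the paper: bound the number of exploration steps by a polylog quantity, bound each step's collision probability by $O(\mathrm{polylog}(n)/n^\epsilon)$ using \eqref{eq:delta-n-k-big-enough}, and take a union bound. The paper resolves your ``technical point'' about edges lying entirely among depth-$d$ vertices simply by running the exploration one level deeper, to depth $d+1$ (with the explicit step bound $\beta_n=(c_1(\log n)^{c_2})^{2d+5}$), so that any such edge is actually discovered and accounted for in the collision events; your phrasing ``$e_t$ at a vertex of depth $\le d-1$'' should accordingly be ``depth $\le d$''.
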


\begin{proof}
  Note that \eqref{eq:norm-1-gamma-log}  and \eqref{eq:norm-infty-gamma-log} imply that the degrees of all vertices and the sizes of all edges in $H_n$ are bounded by $\alpha_n := c_1 (\log n)^{c_2}$. Therefore, there are at most $\beta_n := (\alpha_n)^{2d+5}$ edges in $(H_n, v_0)_{d+1}$. Since at each step of the exploration process we form one edge, $(H_n, v_0)_{d+1}$ is completely observed up to step $\beta_n$. On the other hand, if at step $t$, the partial edge to be matched at that step, which is $e_t$ in our notation, is matched to partial edges outside $A_t$, all of the endpoints of the newly formed edge at step $t$ are in $N_t^c$, i.e. they are not observed so far. If in addition to this property, all the $k_t - 1$ vertices of these partial edges that are matched with $e_t$ are distinct, no loops or multiple edges are  formed at step $t$. If these properties hold for all $1 \leq t \leq \beta_n$, $(H_n,v_0)_d$ will be a simple  hypertree. Note that, in order to make sure that $(H_n, v_0)_d$ is a simple hypertree, we need to make sure that there is no loop in the exploration process up to depth $d+1$. This guarantees that even vertices at depth $d$ do not get connected to each other.

To formalize this, fix $1 \leq t \leq \beta_n$ and assume that the exploration process is not finished up to step $t$, and at step $t$, we need to match the partial edge $e_t$ of size $k_t$. Let $E_{t,1}$ denote the event that $e_{t,1} = \sigma_{k_t}^{(1)}(e_t) \in A_t\cap \Delta_{k_t}^{(n)}$. Moreover, for $2 \leq i \leq k_t -1$, let $E_{t,i}$ be te event that 
\begin{equation*}
  e_{t,i} = \sigma_{k_t}^{(i)}(e_t) \in (A_t \cap \Delta_{k_t}^{(n)})\cup \left ( \bigcup_{j=1}^{i-1} \Delta_{k_t}^{(n)}(v(e_{t,j})) \right ),
\end{equation*}
where we recall that $\Delta_{k_t}^{(n)}(v(e_{t,j}))$ denotes the set of partial edges of size $k_t$ connected to the vertex associated to $e_{t,j}$. Note that having chosen $e_{t, 1}, \dots, e_{t,i-1}$, there are $|\Delta_{k_t}^{(n)}| - |\Delta_{k_t}^{(n)}\cap C_t| - i$ many candidates for $\sigma_{k_t}^{(i)}(e_t)$, each having the same chance of being chosen. We claim that $|A_t\cap \Delta_{k_t}^{(n)}| \leq t \alpha_n^2$. The reason is that at each step in the exploration process, at most $\alpha_n$ many new vertices are added to $N_t$, each of whcih having at most $\alpha_n$ many partial edges of size $k_t$. On the other hand, $|\Delta_{k_t}^{(n)}(v(e_{t,j}))| \leq \alpha_n$. Consequently, 
\begin{equation*}
   \left | (A_t \cap \Delta_{k_t}^{(n)})\cup \left ( \bigcup_{j=1}^{i-1} \Delta_{k_t}^{(n)}(v(e_{t,j})) \right ) \right | \leq (t+1) \alpha_n^2.
\end{equation*}
Additionally, at each step, at most $\alpha_n$ partial edges are added to $C_t$ to form a edge; therefore, $|C_t| \leq t \alpha_n$. This, together with \eqref{eq:delta-n-k-big-enough}, implies that for $1 \leq i \leq k_t \leq \alpha_n$, we have $|\Delta_{k_t}^{(n)}| - |\Delta_{k_t}^{(n)}\cap C_t| - i \geq c_3 n^\epsilon - (t+1)\alpha_n$. Since each of these candidates have the same probability of being chosen, we have 
\begin{equation*}
  \pr{E_{t, i}} \leq \frac{(t+1)\alpha_n^2}{c_3 n^\epsilon - (t+1)\alpha_n} \qquad 1 \leq i \leq k_t - 1.
\end{equation*}
If $E_t$ denotes $\cup_{i=1}^{k_t - 1} E_{t, i}$, using $k_t \leq \alpha_n$ and the union bound, we have
\begin{equation*}
  \pr{\cup_{t=1}^{\beta_n}E_t} \leq \frac{(\beta_n+1)^2\alpha_n^3}{c_3n^\epsilon - (\beta_n +1)\alpha_n}.
\end{equation*}
Note that, $\alpha_n$ and $\beta_n$ scale logarithmically in $n$, and $d$ is fixed. Hence, due to the $c_3n^\epsilon$ term in the denominator, the above probability goes to zero as $n$ goes to infinity. As was discussed above, outside the event $\cup_{t=1}^{\beta_n}E_t$, the rooted hypergraph $(H_n, v_0)_d$ is a simple hypertree. 
\end{proof}

\begin{prop}
  \label{prop:conf-model_unerased-convergence}
With the assumptions of Theorem~\ref{thm:conf-model-convegence-Un}, for an integer $d \geq 1$ and a rooted hypertree $(T, o)$ with depth at most $d$, if $v_0$ is a node chosen uniformly at random from $\{1, \dots, n\}$, we have
  \begin{equation*}
    \lim_{n \rightarrow \infty} \pr{(H^e_n, v_0)_d \equiv (T,o)} = \mu(A_{(T, o)}).
  \end{equation*}
\end{prop}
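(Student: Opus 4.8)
The plan is to compare the law of $(H^e_n, v_0)_d$ with that of the rooted hypertree produced by the exploration process, and then to show that the exploration process, when truncated at depth $d$, converges in distribution to the $\ugwt(P)$ measure truncated at depth $d$. There are two reductions at the outset. First, by Proposition~\ref{prop:Hn-v0-d-simple-converge-1}, with probability tending to $1$ the rooted multihypergraph $(H_n, v_0)_{d}$ (in fact $(H_n,v_0)_{d+1}$) is a simple hypertree; on this event $(H^e_n, v_0)_d = (H_n, v_0)_d$, because the erasure of self-loops and multiple edges only affects edges that lie on a cycle or touch a repeated vertex, and there are none within radius $d$. So it suffices to prove $\pr{(H_n, v_0)_d \equiv (T,o)} \to \mu(A_{(T,o)})$, and moreover on the high-probability ``good'' event the explored tree $R_t$ at the end of the exploration equals $(H_n, v_0)$ locally, i.e.\ $R_{\tau}$ truncated at depth $d$ is isomorphic as a rooted hypertree to $(H_n, v_0)_d$.

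The core of the argument is then a step-by-step coupling/analysis of the exploration process of Section~\ref{sec:exploration-process}. I would run the exploration for the bounded number $\beta_n = \alpha_n^{2d+5}$ of steps needed to reveal everything within depth $d$, where $\alpha_n = c_1(\log n)^{c_2}$ bounds all degrees and edge sizes by \eqref{eq:norm-1-gamma-log}, \eqref{eq:norm-infty-gamma-log}. At a generic step $t$ in which we match a partial edge $e_t$ of size $k_t$: conditioned on $\mathcal{F}_t$, the remaining $k_t - 1$ slots are filled by sampling without replacement from the as-yet-unmatched partial edges of size $k_t$. On the good event (no collisions), each newly attached vertex $u_{t,j}$ is a fresh vertex, and the probability that $u_{t,j}$ has type $\gamma$ is, up to a $o(1)$ correction, the size-biased probability: the number of type-$\gamma$ vertices with a free $1/k_t$-edge, times $k_t$ (the number of half-edges they offer), divided by the total number of free $1/k_t$ half-edges $|\Delta^{(n)}(k_t)| - O(\beta_n \alpha_n)$. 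By \eqref{eq:ev-type-first-moment-converge} (Remark~\ref{rem:sum-gni-Ev-Gamma-k}), the denominator is $n\ev{\Gamma(k_t)}(1+o(1))$ and the numerator is $n(\gamma(k_t)+1)P(\gamma+\typee_{k_t})(1+o(1))\cdot k_t / k_t$... more precisely, counting half-edges gives exactly $\hat P_{k_t}(\gamma)$ in the limit, by \eqref{eq:size-biased-distirbution} and \eqref{eq:fraction-converge-P(gamma)}, \eqref{eq:second-moment-of-one-norm-bounded} (the latter giving uniform integrability so the size-biased empirical distribution converges to $\hat P_{k_t}$). The root vertex $v_0$ has type $\gamma$ with probability $\frac1n\sum_i \oneu{\gn_i = \gamma} \to P(\gamma)$ by \eqref{eq:fraction-converge-P(gamma)}. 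Since the number of half-edges emanating from the root of size $k$ is $\gamma(k)$, and all subsequent types are size-biased, these are exactly the generating rules of $\ugwt(P)$ in Definition~\ref{def:ugwt(P)}.

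To turn these one-step estimates into convergence of the depth-$d$ law, I would argue by induction on the exploration steps, maintaining the invariant that the distribution of $R_t$ (on the good event) is within total variation $O(\beta_n^2 \alpha_n^3 / (c_3 n^\epsilon))$ — the same error term as in Proposition~\ref{prop:Hn-v0-d-simple-converge-1} — of the distribution of the first $t$ steps of a breadth-first exploration of the $\ugwt(P)$ tree; the denominators $|\Delta^{(n)}(k)| \ge c_3 n^\epsilon$ from \eqref{eq:delta-n-k-big-enough} are what make the sampling-without-replacement corrections and the collision probabilities negligible. Since $d$ is fixed and $\alpha_n, \beta_n$ grow only polylogarithmically, all error terms are $o(1)$, giving $\pr{(H_n,v_0)_d \equiv (T,o)} \to \pr{\text{$\ugwt(P)$ tree truncated at depth $d$} \equiv (T,o)} = \mu(A_{(T,o)})$.

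\textbf{Main obstacle.} The routine part is the collision/exploration bookkeeping (it closely mirrors Proposition~\ref{prop:Hn-v0-d-simple-converge-1} and the graph-case argument of \cite{Bordenave14randomgraphs}). The genuinely delicate step is showing that the empirical \emph{size-biased} type distribution among vertices carrying a free $1/k$-half-edge converges to $\hat P_k$ uniformly over the (polylogarithmically many) exploration steps, despite the fact that types are unbounded in $\norm{\cdot}_1$: one must use \eqref{eq:second-moment-of-one-norm-bounded} to get uniform integrability of $\gn_i(k)$, so that removing $O(\beta_n \alpha_n)$ half-edges over the course of the exploration perturbs the size-biased empirical law by only $o(1)$, and simultaneously control that the root type distribution is not distorted by conditioning on $v_0$ lying in a large component. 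Handling this bias-vs-unbounded-degree tension carefully, uniformly in $n$, is where the real work lies.
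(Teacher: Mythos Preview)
Your outline is essentially the paper's proof: reduce via Proposition~\ref{prop:Hn-v0-d-simple-converge-1} to the event that $(H_n,v_0)_{d+1}$ is a simple hypertree, identify $(H^e_n,v_0)_d$ with the depth-$d$ truncation of the exploration tree $R_t$ on that event, and then analyze the exploration step by step, showing each newly attached vertex gets a type distributed (asymptotically) according to the size-biased law $\hat P_{k_t}$.

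The paper organizes the computation a bit differently, and this difference dissolves the obstacle you flag. Rather than bounding the total-variation distance between the law of $R_t$ and the first $t$ steps of a breadth-first $\ugwt(P)$ exploration, the paper fixes a \emph{labeled} representative $\tilde T\in\mT(\nvertex,\nedge)$ with $(\tilde T,\emptyset)\equiv(T,o)$, writes the target as a finite sum $\mu(A_{(T,o)})=\sum_{\tilde T}\pi_{\tilde T}$ with the explicit product formula \eqref{eq:pi-tT}, and then shows $\pr{R_{r+l+1}=\tilde T,\ \cap_t E_t^c}\to\pi_{\tilde T}$ for each such $\tilde T$ by a finite induction over the $r$ edges of $\tilde T$ of depth $\le d-1$. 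Because $\tilde T$ is fixed, only a fixed finite list of types $\gamma^{\tilde T}_{(e^{\tilde T}_t,j)}$ enters, and for each of these one only needs the pointwise convergence $\frac1n|\{i:\gn_i=\gamma\}|\to P(\gamma)$ from \eqref{eq:fraction-converge-P(gamma)} together with $\frac1n|\Delta^{(n)}(k)|\to\ev{\Gamma(k)}$ from \eqref{eq:ev-type-first-moment-converge}. No uniform (in $\gamma$) or quantitative convergence of the size-biased empirical law to $\hat P_k$ is required; in particular the explicit TV rate $O(\beta_n^2\alpha_n^3/(c_3 n^\epsilon))$ you write cannot follow from the assumptions, since \eqref{eq:fraction-converge-P(gamma)} carries no rate. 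Your worry about ``conditioning on $v_0$ lying in a large component'' also does not arise: $v_0$ is chosen uniformly, and on the good event the number of exploration steps touching depth $\le d-1$ is the constant $r$ determined by $\tilde T$, not $\beta_n$. In short, your plan is correct, but by fixing the labeled target tree first, the ``bias-vs-unbounded-degree'' issue you single out disappears and the argument becomes a routine finite product of convergent conditional probabilities.
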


Note that, the above proposition, together with the discussion in \eqref{eq:bar-mu-n-A-T-i--prob-Hn-v0-equiv-T-i}, implies that $\ev{u_{H^e_n}} \Rightarrow \mu$.  In Section~\ref{sec:alsm-sure-conv} below, we show that $u_{H^e_n}$ is concentrated around its mean, which completes the proof of Theorem~\ref{thm:conf-model-convegence-Un}.

\begin{proof}
Given the rooted hypertree $(T, o)$ with depth at most $d$, let $\mC_{[T, o]}$ denote the set of hypertrees $\tilde{T} \in \mT(\nvertex, \nedge)$ such that $(\tilde{T},\emptyset) \equiv (T, o)$. For $\tilde{T} \in \mC_{[T, o]}$, let $e^{\tilde{T}}_1, \dots, e^{\tilde{T}}_r$ be the edges in $\tilde{T}$ with depth at most $d-1$, ordered lexicographically in $\nedge$. Moreover, let $e^{\tilde{T}}_{r+1}, \dots, e^{\tilde{T}}_{r+l}$ denote the edges in $\tilde{T}$ with depth precisely $d$, ordered lexicographically (if there is no such edge, $l=0$). For a node $\bi$ in $\tilde{T}$, let $\gamma^{\tilde{T}}_{\bi}$ denote the type of the vertex $\bi$ in the subtree below $\bi$.
Furthermore, define 
  \begin{equation*}
    \pi_{\tilde{T}} := \pr{(\mT(\{\Gamma_a\}_{a \in \nvertex}),\emptyset)_d = (\tilde{T}, \emptyset) },
  \end{equation*}
under the probability in Definition~\ref{def:ugwt(P)}. With the above notation, we have 
\begin{equation}
\label{eq:pi-tT}
  \pi_{\tT} := P(\gamma^{\tT}_\emptyset) \prod_{t=1}^{r} \prod_{j = 1}^{|e^{\tT}_t| - 1} \hat{P}_{|e^{\tT}_t|} (\gamma^{\tT}_{(e^{\tilde{T}}_t, j)}).
\end{equation}
By the definition of  the distribution $\ugwt(P)$, we have
\begin{equation*}
  \mu(A_{[T, o]}) = \sum_{\tilde{T} \in \mC_{[T, o]}} \pi_{\tilde{T}}.
\end{equation*}

Recall that $R_t$ denotes the hypertree in $\mT(\nvertex, \nedge)$ which results from the exploration process at step $t$.
Note that if $(H_n, v_0)_d$ is a simple hypertree, we have $(H_n, v_0)_d \equiv (R_{\beta_n},\emptyset)_d$ where $\beta_n$ is defined in Proposition~\ref{prop:Hn-v0-d-simple-converge-1} above. With this, we have 
\begin{align*}
  \pr{(H_n^e, v_0)_d \equiv (T, o)} &= \pr{(H^e_n, v_0)_d \equiv (T, o) \text{ and } (H_n, v_0)_d \text{ is a simple hypertree}} \\
  &\quad + \pr{(H^e_n, v_0)_d \equiv (T, o) \text{ and } (H_n, v_0)_d \text{ is not a simple hypertree}}
\end{align*}
As is shown in Proposition~\ref{prop:Hn-v0-d-simple-converge-1}, the second term converges to zero; therefore, we need to study only the first term. 
But, if $(H_n, v_0)_d$ is a simple hypertree, $(H_n^e, v_0)_d \equiv (T, o)$ if and only if $R_{r+l+1} = \tilde{T}$ for some $\tilde{T} \in \mC_{[T, o]}$. Consequently, it suffices for us to show that
\begin{equation}
\label{eq:lim-pr-R-t-l--pi-tilde-T}
  \lim_{n \rightarrow \infty} \pr{R_{r+l+1} = \tilde{T} \text{ and } (H_n, v_0)_d \text{ is a simple hypertree}} = \pi_{\tilde{T}} \qquad \forall \tilde{T} \in \mC_{[T, o]}.
\end{equation}
For $1 \leq t \leq r$, let $E_t$ be the event defined in Proposition~\ref{prop:Hn-v0-d-simple-converge-1}. Recall that $E_t^c$ is the event that $u_{t, 1}, \dots, u_{t, k_t-1}$ are all distinct and are not in $N_t$. From Proposition~\ref{prop:Hn-v0-d-simple-converge-1}, we know that the probabilities of  both the events ``$(H_n, v_0)_d \text{ is a simple hypertree}$'' and $\cap_{t=1}^r E_t^c$ converge to 1 as $n\rightarrow \infty$. Therefore, it suffices to show that 
\begin{equation}
\label{eq:lim-pr-R-t-l-E--pi-tilde-T}
  \lim_{n \rightarrow \infty} \pr{(R_{r+l+1} = \tilde{T}) \cap ( \cap_{t=1}^r E_t^c)} = \pi_{\tilde{T}} \qquad \forall \tilde{T} \in \mC_{[T, o]}.
\end{equation}

To prove this, fix some $\tilde{T} \in \mT(\nvertex, \nedge)$ with depth at most $d$ and define $S_0$ to be the event that $\gamma^{(n)}_{v_0} = \gamma^{\tilde{T}}_{\emptyset}$. From \eqref{eq:fraction-converge-P(gamma)}, $\pr{S_0} \rightarrow P(\gamma^{\tT}_\emptyset)$ as $n\rightarrow \infty$. Moreover, for $1 \leq t \leq r$, define $\tilde{S}_t$  to be the event that 
\begin{equation*}
  \gamma^{(n)}_{u_{t, s}} = \gamma^{\tilde{T}}_{(\bi_t, k_t, \tilde{j}_t, s)} + \typee_{k_t} \qquad 1 \leq s \leq k_t - 1,
\end{equation*}
and let $S_t = \tilde{S}_t \cap E_t^c$, which is in fact the intersection of $\tilde{S}_t$ and the event
\begin{equation*}
  u_{t, j}, 1 \leq j \leq k_t - 1 \text{ are distinct and are not in } N_t.
\end{equation*}
With this, let $S^t = \cap_{i=0}^t S_i$. We claim that the event 
$(R_{r+l+1} = \tilde{T}) \cap ( \cap_{t=1}^r E_t^c)$ coinsides with $S^r$. The reason is that on the event $S^r$, for each $1 \leq t \leq r$, the type of all the $k_t - 1$ subnodes of edge formed at step $t$ matches with that of $e^{\tilde{T}}_t$. 
Moreover, on the event  $\cap_{t=1}^r E_t^c$, there is no loops or cycles formed during the exploration process. In particular, those partial edges connected to the vertex $u_{t,s}$ which are added to the active set are not used until the process goes to vertex $u_{t, s}$ itself. 
Also, note that as  $\tilde{T}$ has depth at most $d$, its structure is determined by the type of the vertices of depth at most $d-1$, which are subnodes of edges of depth at most $d-1$ in $\tilde{T}$, which are precisely $e^{\tilde{T}}_1, \dots, e^{\tilde{T}}_r$. 

Now, we prove by induction that for $0 \leq t \leq r$, 
\begin{equation*}
  \pr{S^t} \rightarrow \pi_{\tilde{T}}(t) := P(\gamma^{\tT}_\emptyset) \prod_{t'=1}^{t} \prod_{j = 1}^{|e^{\tT}_{t'}| - 1} \hat{P}_{|e^{\tT}_{t'}|} (\gamma^{\tT}_{(e^{\tilde{T}}_{t'}, j)}).
\end{equation*}
If $\pi_{\tilde{T}}(t-1) = 0$, then $\pr{S^t} \leq \pr{S^{t-1}} \rightarrow 0= \pi_{\tilde{T}}(t)$. If $\pi_{\tilde{T}}(t-1) \neq 0$, we have $\pr{S^{t-1}} > 0$ for $n$ large enough. Note that $\pr{S^t} = \pr{S^{t-1} \cap \tilde{S}_t \cap E_t^c}$. Thereby,
\begin{equation*}
  \pr{S^{t-1}} \pr{\tilde{S}_t|S^{t-1}} - \pr{E_t} \leq \pr{S^t} \leq \pr{S^{t-1}} \pr{\tilde{S}_t|S^{t-1}}.
\end{equation*}
But, we know that $\pr{E_t} \rightarrow 0$. Consequently, it suffices to prove that 
\begin{equation}
\label{eq:tildeSt-convergence}
  \pr{\tilde{S}_t | S^{t-1}} \rightarrow \prod_{j = 1}^{|e^{\tT}_t| - 1} \hat{P}_{|e^{\tT}_t|} (\gamma^{\tT}_{(e^{\tilde{T}}_t, j)}).
\end{equation}

Since we construct one edge at a time in the exploration process, conditioned on $S^{t-1}$, the first $t-1$ edges are constructed in a way consistent with $\tilde{T}$. Therefore, it is easy to see that 
\begin{equation*}
  \pr{\tilde{S}_t|S^{t-1}} = \pr{\gamma^{(n)}_{u_{t, j}} = \gamma^{\tilde{T}}_{(e^{\tilde{T}}_t, j)}+\typee_{|e^{\tilde{T}}_t|} \text{ for } 1 \leq j \leq |e^{\tilde{T}}_t| - 1 }.
\end{equation*}
For $1 \leq j \leq |e^{\tilde{T}}_t| - 1$, let $\tilde{S}_{t, j}$ denote the event that $\gamma^{(n)}_{u_{t, j}} = \gamma^{\tilde{T}}_{(e^{\tilde{T}}_t, j)}+\typee_{|e^{\tilde{T}}_t|}$. Now, we study the probability of $\tilde{S}_{t, j}$ conditioned on $S^{t-1}$ and $\tilde{S}_{t,1}, \dots, \tilde{S}_{t, j-1}$. Note that, having chosen $e_{t, 1}, \dots, e_{t, j-1}$, there are $|\Delta^{(n)}(k_t)| - |\Delta^{(n)}(k_t) \cap C_t| - j$ many candidates for $e_{t, j}$, each having the same chance. 
With $B_{t,j} := \{ e \in \Delta^{(n)}(|e^{\tilde{T}}_t|): \gamma^{(n)}_{v(e)} = \gamma^{\tilde{T}}_{(e^{\tilde{T}}_t, j)}+\typee_{|e^{\tilde{T}}_t|} \}$, the event $\tilde{S}_{t, j}$ happens iff $e_{t, j}$ is chosen among the set $B_{t, j} \setminus (C_t \cup \{e_{t, 1}, \dots, e_{t, j-1} \})$. Therefore, 
\begin{equation*}
  \pr{\tilde{S}_{t, j}|S^{t-1}, \tilde{S}_{t, 1}, \dots, \tilde{S}_{t, j-1}} = \frac{|B_{t, j} \setminus (C_t \cup \{e_{t, 1}, \dots, e_{t, j-1} \})|}{|\Delta^{(n)}(|e^{\tilde{T}}_t|)| - |\Delta^{(n)}(|e^{\tilde{T}}_t|) \cap C_t| - j}.
\end{equation*}
Note that 
\begin{align*}
  \frac{1}{n}|B_{t, j}| &= \frac{1}{n} \sum_{i=1}^n (\gamma^{\tilde{T}}_{(e^{\tilde{T}}_t, j)}(|e^{\tilde{T}}_t|)+1)\one{\gamma^{(n)}_i = \gamma^{\tilde{T}}_{(e^{\tilde{T}}_t, j)} + \typee_{|e^{\tilde{T}}_t|} }.
\end{align*}
Using~\eqref{eq:fraction-converge-P(gamma)}, we have 
\begin{equation*}
    \frac{1}{n}|B_{t, j}| \rightarrow (\gamma^{\tilde{T}}_{(e^{\tilde{T}}_t, j)}(|e^{\tilde{T}}_t|)+1)  P(\gamma^{\tilde{T}}_{(e^{\tilde{T}}_t, j)} + \typee_{|e^{\tilde{T}}_t|}).
\end{equation*}
On the other hand, 
conditioned on $S^{t-1}$, $|C_t| = \sum_{j=1}^{t-1} |e^{\tilde{T}}_j|$, which is a constant.
Consequently, 
\begin{equation}
  \label{eq:1n-Brj-converge}
    \frac{1}{n}|B_{t, j}\setminus (C_t \cup \{e_{t, 1}, \dots, e_{t, j-1} \})| \rightarrow (\gamma^{\tilde{T}}_{(e^{\tilde{T}}_t, j)}(|e^{\tilde{T}}_t|)+1)  P(\gamma^{\tilde{T}}_{(e^{\tilde{T}}_t, j)} + \typee_{|e^{\tilde{T}}_t|}).
\end{equation}
Moreover, using \eqref{eq:ev-type-first-moment-converge},
\begin{equation}
\label{eq:Delta-n-Gamma-eT-t}
  \frac{1}{n} |\Delta^{(n)}(|e^{\tilde{T}}_t|)| = \frac{1}{n} \sum_{i=1}^n \gamma^{(n)}_i(|e^{\tilde{T}}_t|) \rightarrow \ev{\Gamma(|e^{\tilde{T}}_t|)}.
\end{equation}
Note that we are conditioning on $S^{t-1}$ and assuming that $\pr{S^{t-1}} \neq 0$. On the other hand, $|e^{\tilde{T}}_t|$ is equal to the size of the partial edge $e_t$ which is a member of $\Delta^{(n)}$. Using the assumption~\eqref{eq:type-zero-outside-index-set}, we have $|e^{\tilde{T}}_t| \in I$ and hence $\ev{\Gamma(|e^{\tilde{T}}_t|)} > 0$.  Putting \eqref{eq:1n-Brj-converge} and \eqref{eq:Delta-n-Gamma-eT-t} together, we have 
\begin{equation*}
  \pr{\tilde{S}_{t, j}|S^{t-1}, \tilde{S}_{t, 1}, \dots, \tilde{S}_{t, j-1}} \rightarrow \frac{(\gamma^{\tilde{T}}_{(e^{\tilde{T}}_t, j)}(|e^{\tilde{T}}_t|)+1)  P(\gamma^{\tilde{T}}_{(e^{\tilde{T}}_t, j)} + \typee_{|e^{\tilde{T}}_t|})}{\ev{\Gamma(|e^{\tilde{T}}_t|)}} = \hat{P}_{|e^{\tilde{T}}_t|}(\gamma^{\tilde{T}}_{(e^{\tilde{T}}_t, j)})
\end{equation*}
Multiplying for $1 \leq j \leq |e^{\tilde{T}}_t|-1$, we get \eqref{eq:tildeSt-convergence} which completes the proof.
\end{proof}

\subsection{Almost sure convergence}
\label{sec:alsm-sure-conv}

In this section we prove that, with the assumptions of Theorem~\ref{thm:conf-model-convegence-Un}, $u_{H^e_n} \Rightarrow \ugwt(P)$ almost surely.

  For a fixed $n$, Let $\Delta^{(n)}(k_1) \dots \Delta^{(n)}(k_L)$ be the nonempty sets among $\Delta^{(n)}(2), \dots, \Delta^{(n)}(n)$. From \eqref{eq:norm-infty-gamma-log} we know that $L \leq c_1 (\log n)^{c_2}$ and also $k_i \leq c_1 (\log n)^{c_2}$ for $1 \leq i \leq L$. For the sake of simplicity, write $\sigma$ for $(\sigma_{k_1}, \dots, \sigma_{k_{L}})$ and $M_i$ for $\mM_{k_i}(\Delta^{(n)}(k_i))$, $1 \leq i \leq L$. From our construction, we know that $\sigma_{k_i}$ is drawn uniformly at random from $M_i$ and is independent from $\sigma_{k_j}$, $j \neq i$. With $H_n^e$ being the simple hypergraph constructed by the configuration model, for a fixed $d > 0$ and a rooted tree $(T, o)$ of depth at most $d$, define
  \begin{equation*}
    F(\sigma) := \frac{1}{n} \sum_{i=1}^n \oneu{(H^e_n, i)_d \equiv (T, i)}.
  \end{equation*}
From Proposition~\ref{prop:conf-model_unerased-convergence} we know that $\lim_{n \rightarrow \infty} \ev{F(\sigma)} = \mu(A_{(T, o)})$.
We will show that $F$ is concentrated around its mean via a bounded difference argument. 

Now, for each $1 \leq j \leq L$, fix a permutation $\pi_{k_j} \in M_j$ and define $\pi = (\pi_{k_1}, \dots, \pi_{k_L})$. Moreover, fix $1 \leq i \leq L$ and  $e, e' \in \Delta^{(n)}(k_i)$. With this, define $\pi'_{k_i} := \text{swap}_{e,e'} \circ \pi_{k_i} \circ \text{swap}_{e,e'}$, which is the conjugation of $\pi_{k_i}$ with the permutation that swaps $e$ and $e'$. In fact, the cycle representation of $\pi'_{k_i}$ is obtained by swapping $e$ and $e'$ in the cycle representation of $\pi_{k_i}$. Moreover, let  $\pi' = (\pi_{k_1}, \dots, \pi'_{k_i}, \dots, \pi_{k_L})$ which differs from $\pi$ only on the $i^\text{th}$ coordinate. With this, the hypergraph obtained from $\pi$ and the hypergraph obtained from $\pi'$ differ only in at most two edges. Since all edge sizes and degrees in the graph are bounded to $c_1 (\log n)^{c_2}$, there are at most $2(c_1 (\log n)^{c_2})^{2d+1}$ many vertices in the hypergraph which have distance at most $d$ to a vertex in any of these two edges. Consequently, 
  \begin{equation}
    \label{eq:F-sigma-F-sigma'}
    | F(\pi) - F(\pi') | \leq \frac{2(c_1 (\log n)^{c_2})^{2d+1}}{n}.
  \end{equation}

Now, fix $1 \leq i \leq L$ and $\pi_{k_j} \in M_j$ for $ j \neq i$. Let $\sigma_{k_i}$ being chosen uniformly at random in $M_i$ and  define $F_i(\sigma_{k_i}) = F(\pi_{k_1}, \dots, \sigma_{k_i}, \dots, \pi_{k_L})$.
Since $\Delta^{(n)}(k_i)$ is finite, we can equip it with an arbitrary total order. Let $X_1$ be the smallest element in $\Delta^{(n)}(k_i)$ and define $Y_1 = (X_1, \sigma_{k_i}(X_1), \dots, \sigma_{k_i}^{(k_i - 1)}(X_1))$, which is in fact the orbit of $X_1$, or in the configuration model language, the edge containing the partial edge $X_1$. Let $X_2$ be the smallest element that does not appear in $Y_1$ and let $Y_2 = (X_2, \sigma_{k_i}(X_2), \dots, \sigma_{k_i}^{(k_i - 1)}(X_2))$. We continue this process inductively, i.e.\ let $X_j$ be the smallest element that has not appeared in $Y_1, \dots, Y_{j-1}$ and let $Y_j = (X_j, \sigma_{k_i}(X_j), \dots, \sigma_{k_i}^{(k_i - 1)}(X_j))$. This process yields $Y_1, \dots, Y_{|\Delta^{(n)}(k_i)|/k_i}$. For $1 \leq j \leq |\Delta^{(n)}(k_i)|/k_i$, let  $\mF_j$ be the sigma field generated by $Y_1, \dots, Y_j$. Moreover, let $Z_j = \ev{F_i(\sigma_{k_i}) | \mF_j}$ for $1 \leq j \leq |\Delta^{(n)}(k_i)| / k_i$ and let $Z_0 = \ev{F_i(\sigma_{k_i})}$. Note that $\pi_{k_j}$, $j \neq i$ are fixed; therefore, the randomness in the expression is with respect to $\sigma_{k_i}$ only. Indeed, $(Z_j, 0 \leq j \leq |\Delta^{(n)}(k_i)| / k_i)$ is a martingale. We claim that, almost surely, we have
\begin{equation*}
  |Z_{j+1} - Z_j| \leq k_i \frac{2(c_1 (\log n)^{c_2})^{2d+1}}{n}.
\end{equation*}
The reason is that changing the value of  the $k_i$ variables in $Y_{j+1}$ can change the value of $F_i$ by at most $k_i \frac{2(c_1 (\log n)^{c_2})^{2d+1}}{n}$ and the above inequality results from \eqref{eq:F-sigma-F-sigma'}. 
Using Azuma's inequality and the fact that $k_i \leq c_1 (\log n)^{c_2}$, we have 
\begin{equation}
\label{eq:concentration-Fi-1}
  \pr{|F_i(\sigma_{k_i}) - \ev{F_i(\sigma_{k_i})}| > \delta} < 2 \exp \left ( - \frac{\delta^2 n^2 }{4 |\Delta^{(n)}(k_i)| (c_1 (\log n)^{c_2})^{4d+3}} \right ).
\end{equation}
To obtain an upper bound for $|\Delta^{(n)}(k_i)|$, note that 
\begin{equation*}
  | \Delta^{(n)}(k_i) | = \left ( \sum_{j=1}^n \gn_j(k_i) \right) \leq n \left ( \frac{1}{n} \sum_{j=1}^n \snorm{\gamma^{(n)}_j}_1^2 \right ).
\end{equation*}
From \eqref{eq:second-moment-of-one-norm-bounded}, 
there is a constant $\alpha$ independent of $n$ and $i$ that $\sum \snorm{\gamma^{(n)}_j}_1^2 < \alpha n$. Hence, $|\Delta^{(n)}(k_i)| < \alpha n$.
Incorporating this into \eqref{eq:concentration-Fi-1}, we have, for $1 \leq i \leq L$,
\begin{equation}
\label{eq:concentration-Fi-2}
  \pr{|F_i(\sigma_{k_i}) - \ev{F_i(\sigma_{k_i})}| > \delta} < 2 \exp \left ( - \frac{\delta^2 n }{4\alpha (c_1 (\log n)^{c_2})^{4d+3}} \right ).
\end{equation}
Since this is true for all $i$ and $\pi_{k_j}$, $j \neq i$ and also the $\sigma_{k_j}$ are independent, using the above inequality $L$ times and using the fact that $L \leq c_1(\log n)^{c_2}$, we have 
\begin{equation*}
  \pr{|F(\sigma) - \ev{F(\sigma)}| > \delta} \leq 2c_1(\log n)^{c_2} \exp \left ( - \frac{\delta^2 n }{4\alpha (c_1 (\log n)^{c_2})^{4d+3}} \right ).
\end{equation*}
As the sum of the RHS over $n$ is finite, using the Borel--Cantelli lemma and the fact that $\ev{F(\sigma)} \rightarrow \mu(A_{(T, o)})$, we have $F(\sigma) \rightarrow \mu(A_{(T, o)})$ almost surely. But there are countably  many choices for $d$ and the rooted hypertree $(T, o)$. Thus, outside a measure zero set, $u_{H_n^e}(A_{(T, o)}) \rightarrow \mu(A_{(T, o)})$ for all rooted tree $(T, o)$ with finite depth. The proof is complete, using Lemma~\ref{lem:eq-condition-local-weak-convergence}.

\section{Proof of Proposition~\ref{prop:epsilon-balanced-uniqueness}}
\label{sec:proof-proposition-eba-uniqueness}

\begin{proof}[Proof of Proposition~\ref{prop:epsilon-balanced-uniqueness}]
Since $\Theta'_\epsilon$ is $\epsilon$--balanced, from Definition~\ref{def:epsilon-balanced-allocation-H*}, for $\vmu$--almost every $[H, e, i] \in \mH_{**}$, we have 
\begin{equation}
  \label{eq:Theta'-epsilon-balaced}
  \Theta'_\epsilon(H, e, i) = \frac{ \exp \left ( - \frac{\partial \Theta'_\epsilon(H, i) }{\epsilon} \right ) }{\sum_{j \in e} \exp \left ( - \frac{\partial \Theta'_\epsilon(H, j)}{\epsilon} \right )}. 
\end{equation}
Using Proposition~\ref{prop:everyting-shows-at-the-root}, there exists a $A \subset \mH_{**}$ such that $\vmu(A^c) = 0$ and, for all $[H, e, i] \in A$, we have 
\begin{equation*}
  \Theta'_\epsilon(H, e', i') = \frac{ \exp \left ( - \frac{\partial \Theta'_\epsilon(H, i') }{\epsilon} \right ) }{\sum_{j \in e'} \exp \left ( - \frac{\partial \Theta'_\epsilon(H, j)}{\epsilon} \right )} \qquad \forall\, (e', i') \in \evpair(H).
\end{equation*}
Now, fix some $[H, e, i] \in A$ and take an arbitrary element of this equivalence class $(H, e, i) \in [H, e, i]$. The above equation guarantees that if we define the allocation $\theta'^H_\epsilon$ on $H$ as $\theta'^H_\epsilon(e', i') := \Theta'_\epsilon(H, e', i')$ for $(e',i') \in \evpair(H)$,  then $\theta'^H_\epsilon$ is an $\epsilon$--balanced allocation on $H$. Now, assume that $\theta^{H^\Delta}_\epsilon$ is the (unique) $\epsilon$--balanced allocation on the truncated hypergraph $H^\Delta$ defined in Section~\ref{sec:epsil-balanc-alloc} (uniqueness comes from boundedness of $H^\Delta$). Proposition~\ref{prop:bounded-partial-b-increasing} then implies that $\partial \theta^{H^\Delta}_\epsilon(i') \leq \partial \theta'^H_\epsilon(i')$ for all $i' \in V(H)$. Sending $\Delta$ to infinity, this means that $\partial \theta^H_\epsilon(i') \leq \partial \theta'^H_\epsilon(i')$ for all $i' \in V(H)$, with $\theta^H_\epsilon$ being the canonical $\epsilon$--balanced allocation on $H$. Using Remark~\ref{rem:Theta-epsilon-H**-canonical} and the definition of $\theta'^H_\epsilon$, this means that for $\vmu$--almost all $[H, e, i] \in \mH_{**}$, $\partial \Theta_\epsilon(H, i) \leq \partial \Theta_{\epsilon}'(H, i)$. From part $(ii)$ of Lemma~\ref{lem:A-as-Atilde-ae}, $\mu$--almost surely we have 
\begin{equation}
  \label{eq:del-Theta<del-Theta'}
  \partial \Theta_\epsilon \leq \partial \Theta'_\epsilon.
\end{equation}

On the other hand, using unimodularity of $\mu$ and the fact that $\Theta_\epsilon$ is a Borel  allocation, we have
\begin{equation*}
  \int \partial \Theta_\epsilon d \mu = \int \Theta_\epsilon d \vmu = \int \nabla \Theta_\epsilon d \vmu = \int \frac{1}{|e|} d \vmu([H, e,i]). 
\end{equation*}
Using the same logic, $\int \partial \Theta'_\epsilon d \mu = \int \frac{1}{|e|} d \vmu([H, e,i])$. This means that $\int \partial \Theta_\epsilon d \mu = \int \partial \Theta'_\epsilon d \mu$. As $\deg(\mu) < \infty$, this common value is finite. This, together with \eqref{eq:del-Theta<del-Theta'}, implies that $\partial \Theta_\epsilon = \partial \Theta'_\epsilon$, $\mu$--almost surely. Therefore, Proposition~\ref{prop:everyting-shows-at-the-root} implies that for $\mu$--almost all $[H, i] \in \mH_*$, we have $\partial \Theta_\epsilon(H, j) = \partial \Theta'_\epsilon(H, j)$ for all $j \in V(H)$. Then, part $(i)$ of Lemma~\ref{lem:A-as-Atilde-ae} implies that for $\vmu$--almost all $[H, e, i] \in \mH_{**}$, $\partial \Theta_\epsilon(H, j) = \partial \Theta'_\epsilon(H, j)$ for all $j \in V(H)$. Thereby, using \eqref{eq:Theta'-epsilon-balaced} for $\Theta_\epsilon$ and $\Theta'_\epsilon$, we have $\Theta_\epsilon(H, e, i) = \Theta'_\epsilon(H, e, i)$ for $\vmu$--almost all $[H,e, i] \in \mH_{**}$, which completes the proof. 
\end{proof}


\section*{Acknowledgements}
\label{sec:acknowledgements}

The authors acknowledge support from the NSF grants ECCS-1343398, CNS-
1527846, CCF-1618145, the NSF Science \& Technology Center grant CCF-
0939370 (Science of Information), and the William and Flora Hewlett Foundation
supported Center for Long Term Cybersecurity at Berkeley.


\newcommand{\etalchar}[1]{$^{#1}$}

\end{document}